\numberwithin{equation}{section}
\DeclarePairedDelimiter\parentheses{\lparen}{\rparen}
\DeclarePairedDelimiter\vertparen{\lvert}{\rvert}
\DeclarePairedDelimiter\Vertparen{\lVert}{\rVert}
\def\@seccntformat#1{%
	\protect\textup{%
		\protect\@secnumfont%
		\expandafter\protect\csname format#1\endcsname%
		\csname the#1\endcsname%
		\protect\@secnumpunct%
	}%
}
\renewcommand\l@section{\@tocline{1}{0pt}{1pc}{}{}}
\renewcommand\l@subsection{\@tocline{2}{0pt}{2pc}{5pc}{}}
\renewcommand\l@subsubsection{\@tocline{3}{0pt}{4pc}{7pc}{}}
\def\namedlabel#1#2{%
    \begingroup%
    \def\@currentlabel{#2}\label{#1}
    \endgroup
}
\newcommand\newexplainedtextmacro[3]{\newcommand#1{\global\def#1{#2\xspace}#2\footnote{#3}\xspace}}
\newexplainedtextmacro\scs{SCS}{strongly continuous semigroup}
\newcommand*\vdef{\mathrel{\rlap{%
    \raisebox{.3ex}{$\m@th\cdot$}}%
    \raisebox{-.3ex}{$\m@th\cdot$}}%
    =}      
\newcommand*\fedv{=\mathrel{\rlap{%
    \raisebox{.3ex}{$\m@th\cdot$}}%
    \raisebox{-.3ex}{$\m@th\cdot$}}%
    }       
\newcommand\longmapsin{\ensuremath{\mathrel{\longrightarrow}}}
\newcommand\nullspace[1]{\ensuremath{\mathrm{Null}\parentheses*{#1}}\xspace}
\DeclareMathOperator\supp{supp}
\newcommand\euler{\ensuremath{\mathord{\mathrm{e}}}}
\NewDocumentCommand\ID{g}{%
    \ensuremath{\mathord{\mathrm{Id}%
        \IfValueT{#1}{_{#1}}%
    }}\xspace%
    }				
\newcommand\indicator[1]{\ensuremath{\mathord{\mathbbm{1}_{#1}}}}
\newcommand\pdot{\ensuremath{\mathbin{\cdot}}}
\newcommand\bincirc{\ensuremath{\mathbin{\circ}}}
\newcommand\tensorprod{\ensuremath{\mathbin{\otimes}}}
\NewDocumentCommand\abs{smg}{%
    \ensuremath{%
        \IfBooleanTF{#1}{\vertparen*{#2}}{\vertparen{#2}}%
        \IfValueT{#3}{_{#3}}%
        }%
    }
\NewDocumentCommand\norm{smg}{%
    \ensuremath{%
        \IfBooleanTF{#1}{\Vertparen*{#2}}{\Vertparen{#2}}%
        \IfValueT{#3}{_{#3}}%
        }%
    }
\NewDocumentCommand\borel{g}{%
    \ensuremath{\mathop{\mathfrak{B}%
        \IfValueT{#1}{(#1)}%
    }}\xspace%
    }
\NewDocumentCommand\setcomplement{sm}{%
    \ensuremath{%
        \IfBooleanTF{#1}{\parentheses*{#2}}{#2}^{\mathrm{c}}%
    }}
\newcommand\diffd{\ensuremath{\mathrm{d}}}
\NewDocumentCommand\Leb{g}{%
    \ensuremath{\mathord{\lambda%
        \IfValueT{#1}{_{#1}}%
    }}\xspace%
    }
\newcommand\spheresurfmeasure{%
    \ensuremath{\mathord{%
        \mathrm{S}
    }}\xspace
    }
\newcommand\stratonovich{\ensuremath{\mathbin{\circ}}}
\newcommand\dualpair[2]{\ensuremath{\mathord{\left\langle#1\mathbin{,}#2\right\rangle}}}
\NewDocumentCommand\scalarprod{mmg}{%
	\ensuremath{\parentheses*{#1\mathbin{,}#2}%
		\IfValueT{#3}{_{#3}}%
	}%
	}
\DeclareMathOperator\proj{pr}
\DeclareMathOperator\linhull{span}
\newcommand\sigmaalgebra{\ensuremath{\sigma}\-/\text{algebra}\xspace}
\newcommand\measurable[2]{\ensuremath{#1}\=/\ensuremath{#2}\-/\text{measurable}}
\NewDocumentCommand\probargs{sO{}g}{%
    \IfValueT{#2}{_{#2}}
    \IfValueT{#3}{%
        \IfBooleanTF{#1}%
            {\probarg*{#3}}%
            {\probarg{#3}}
        }
    }
\DeclarePairedDelimiterX\probarg[1]{\lparen}{\rparen}{%
    \ifnum\currentgrouptype=16 \else \begingroup \fi
    \activatebar#1
    \ifnum\currentgrouptype=16 \else \endgroup \fi
    }
\newcommand\expect{%
    \operatorname{E}%
    \expectargs%
    }
\NewDocumentCommand\expectargs{sO{}g}{%
    \IfValueT{#2}{_{#2}}
    \IfValueT{#3}{%
        \IfBooleanTF{#1}%
            {\expectarg*{#3}}%
            {\expectarg{#3}}
        }
    }
\DeclarePairedDelimiterX\expectarg[1]{[}{]}{%
    \ifnum\currentgrouptype=16 \else \begingroup \fi
    \activatebar#1
    \ifnum\currentgrouptype=16 \else \endgroup \fi
    }
\newcommand\activatebar{%
    \begingroup\lccode`\~=`\|
    \lowercase{\endgroup\let~}\innermid
    \mathcode`|=\string"8000
    }
\newcommand\innermid{\nonscript\:\delimsize\vert\nonscript\:}
\NewDocumentCommand\filtration{gg}{%
	\ensuremath{\mathord{\mathfrak{F}%
		\IfValueT{#1}{_{#1}}%
		\IfValueT{#2}{^{#2}}%
		}}%
	}
\newcommand\normaldist[2]{\ensuremath{\mathord{\mathrm{N}\parentheses*{#1; #2}}}}
\newcommand\contidiff[2]{%
    \ensuremath{\mathord{C^{#1}%
    \parentheses*{#2}}}%
    }
\newcommand\smoothfunc[1]{%
    \ensuremath{\mathord{C^{\infty}%
    \parentheses*{#1}}}%
    }
\newcommand\smoothcompact[1]{%
    \ensuremath{\mathord{C_{c}^{\infty}%
    \parentheses*{#1}}}%
    }
\NewDocumentCommand\bigcalL{mgg}{%
	\ensuremath{\mathord{\mathcal{L}^{#1}%
		\IfValueT{#2}{\parentheses*{#2%
			\IfValueT{#3}{;#3}%
			}}%
		}}%
	}
\NewDocumentCommand\bigL{mgg}{%
	\ensuremath{\mathord{L^{#1}%
		\IfValueT{#2}{\parentheses*{#2%
			\IfValueT{#3}{;#3}%
			}}%
	    }}%
	}
\NewDocumentCommand\Lloc{mgg}{%
	\ensuremath{\mathord{L_{\mathrm{loc}}^{#1}%
		\IfValueT{#2}{\parentheses*{#2%
			\IfValueT{#3}{;#3}%
			}}%
		}}%
	}
\NewDocumentCommand\sobolev{mmgg}{%
    \ensuremath{\mathord{H^{#1,#2}%
        \IfValueT{#3}{\parentheses*{#3%
            \IfValueT{#4}{;#4}%
            }}
        }}%
    }
\NewDocumentCommand\diriboundsobolev{mmgg}{%
    \ensuremath{\mathord{H_0^{#1,#2}%
        \IfValueT{#3}{\parentheses*{#3%
            \IfValueT{#4}{;#4}%
            }} %
        }}%
    }
\newcommand\opdomain[1]{\ensuremath{\mathord{D\parentheses*{#1}}}}
\newcommand\operator[1]{\ensuremath{\mathord{\parentheses*{#1,\opdomain{#1}}}}}%
\NewDocumentCommand\adjoint{sm}{%
    \ensuremath{
        \mathchoice%
        {\displaystyle\IfBooleanTF{#1}{\parentheses*{#2}}{#2}^{\ast}}%
        {\textstyle\IfBooleanTF{#1}{\parentheses*{#2}}{#2}^{\raise.2ex\hbox{$\scriptstyle\ast$}}}%
        {\scriptstyle\IfBooleanTF{#1}{\parentheses*{#2}}{#2}^{\ast}}%
        {\scriptscriptstyle\IfBooleanTF{#1}{\parentheses*{#2}}{#2}^{\raise-.1ex\hbox{$\scriptscriptstyle\ast$}}}%
        }%
    }
\NewDocumentCommand\trace{g}{%
	\ensuremath{\mathord{\mathrm{tr}%
	    \IfValueT{#1}{_{#1}}%
	}}%
	}
\NewDocumentCommand\grad{gg}{%
	\ensuremath{\mathord{\nabla%
	    \IfValueT{#1}{_{\mkern-3mu#1}}%
	    \IfValueT{#2}{^{#2}}%
	}}%
	}
\newcommand\spheregrad{%
    \ensuremath{\grad{\sphere}}\xspace 
    }
\newcommand\wspheregrad{%
    \ensuremath{\grad{\wvertmetric* {\scriptscriptstyle\vert\mathrm{S}}}}\xspace %
    }
\NewDocumentCommand\divergence{g}{%
	\ensuremath{\mathord{\mathrm{div}%
	    \IfValueT{#1}{_{#1}}%
	}}%
	}
\newcommand\spheredivergence{%
    \ensuremath{\divergence{\sphere}}\xspace 
    }
\NewDocumentCommand\laplace{gg}{%
	\ensuremath{\mathop{\Delta%
	    \IfValueT{#1}{_{#1}}%
	    \IfValueT{#2}{^{#2}}%
	}}%
	}
\newcommand\spherelaplace{%
    \ensuremath{\laplace{\sphere}}\xspace 
    }
\NewDocumentCommand\connection{gg}{%
	\ensuremath{\mathord{\nabla%
	    \IfValueT{#1}{_{#1}}%
	    \IfValueT{#2}{^{#2}}%
	}}\xspace%
	}
\NewDocumentCommand\Liederivative{g}{%
	\ensuremath{\mathop{\mathscr{L}%
	    \IfValueT{#1}{_{#1}}%
	}}%
	}
\newcommand\Liebracket[2]{\ensuremath{\mathord{\left[#1\mathbin{,}#2\right]}}}
\NewDocumentCommand\Liespan{mg}{%
    \ensuremath{\mathord{%
        \mathrm{Lie}%
        \IfValueT{#2}{_{#2}}%
        \parentheses*{#1}%
    }}\xspace%
    }
\newcommand\insertmorphism{\ensuremath{\mathrel{%
    \raisebox{\depth}{\scalebox{1}[-1]{$\lnot$}}%
    }} %
    }
\newcommand\Nnum{\ensuremath{\mathbb{N}}\xspace}
\newcommand\Rnum{\ensuremath{\mathbb{R}}\xspace}
\newcommand\indexset{\ensuremath{\mathord{\mathbb{I}}}\xspace}
\newcommand\pathprob{\ensuremath{\mathord{\mathbb{P}}}\xspace}
\newcommand\callidim[1]{\text{\usefont{U}{BOONDOX-cal}{m}{n}#1}}
\newcommand\posfold{\ensuremath{\mathord{\mathbb{M}}}\xspace}
\newcommand\posdim{\callidim{n}\xspace}
\newcommand\basefold{\ensuremath{\mathord{\mathbb{B}}}\xspace}
\newcommand\totalfold{\ensuremath{\mathord{\mathbb{E}}}\xspace}
\NewDocumentCommand\ball{gggg}{%
	\ensuremath{\mathord{\mathbb{U}%
        \IfValueTF{#1}{(#1,\IfValueTF{#2}{#2}{1})}{\IfValueT{#2}{\parentheses*{0,#2}}}
	    \IfValueT{#3}{_{#3}}%
	    \IfValueT{#4}{^{#4}}%
	}}\xspace%
	}
\NewDocumentCommand\sphere{gg}{%
	\ensuremath{\mathord{\mathbb{S}%
	    \IfValueT{#1}{^{#1}}%
	    \IfValueT{#2}{_{#2}}%
	}}\xspace%
	}
\NewDocumentCommand\torus{g}{%
	\ensuremath{\mathord{\mathbb{T}%
	    \IfValueT{#1}{^{#1}}%
	}}\xspace%
	}
\NewDocumentCommand\bundleproj{g}{%
    \ensuremath{\mathord{\pi%
        \IfValueT{#1}{_{#1}}}
    }\xspace
}
\NewDocumentCommand\tanproj{g}{%
    \ensuremath{\pi%
        \IfValueTF{#1}{_{#1}}{_0}%
    }\xspace%
    }
\newcommand\unittanproj{\ensuremath{\mathord{\pi_{0\vert\mathrm{S}}}}\xspace}
\newcommand\vertproj{\ensuremath{\mathord{\mathrm{vpr}}}\xspace}
\newcommand\horproj{\ensuremath{\mathord{\mathrm{hpr}}}\xspace}
\newcommand\tantanproj{\tanproj{1}}
\NewDocumentCommand\tanbundle{smg}{%
	\ensuremath{\mathord{%
	\IfValueTF{#3}{%
        \mathchoice%
        {\displaystyle\mathrm{T}_{#3}%
            \IfBooleanT{#1}{^{\ast}}}%
        {\textstyle\mathrm{T}_{\raise.2ex\hbox{$\scriptscriptstyle#3$}}%
            \IfBooleanT{#1}{^{\raise.2ex\hbox{$\scriptscriptstyle\ast$}}}}%
        {\scriptstyle\mathrm{T}_{#3}%
            \IfBooleanT{#1}{^{\ast}}}%
        {\scriptscriptstyle\mathrm{T}_{\raise-.1ex\hbox{$\scriptscriptstyle#3$}}%
            \IfBooleanT{#1}{^{\raise-.1ex\hbox{$\scriptscriptstyle\ast$}}}}%
    }{\mathrm{T}\IfBooleanT{#1}{^{\ast}}}%
	#2}}\xspace%
	}
\NewDocumentCommand\unittanbundle{mg}{%
	\ensuremath{\mathord{%
	\IfValueTF{#2}{%
        \mathchoice%
        {\displaystyle\mathrm{S}_{#2}}%
        {\textstyle\mathrm{S}_{\raise.2ex\hbox{$\scriptscriptstyle#2$}}}%
        {\scriptstyle\mathrm{S}_{#2}}%
        {\scriptscriptstyle\mathrm{S}_{\raise-.1ex\hbox{$\scriptscriptstyle#2$}}}%
    }{\mathrm{S}}%
	#1}}\xspace%
	}
\NewDocumentCommand\normalbundle{mg}{%
	\ensuremath{\mathord{%
	\IfValueTF{#2}{%
        \mathchoice%
        {\displaystyle\mathrm{N}_{#2}}%
        {\textstyle\mathrm{N}_{\raise.2ex\hbox{$\scriptscriptstyle#2$}}}%
        {\scriptstyle\mathrm{N}_{#2}}%
        {\scriptscriptstyle\mathrm{S}_{\raise-.1ex\hbox{$\scriptscriptstyle#2$}}}%
    }{\mathrm{N}}%
	#1}}\xspace%
	}
\NewDocumentCommand\tantanbundle{smg}{%
	\ensuremath{\mathord{%
	\IfValueTF{#3}{%
        \mathchoice%
        {\displaystyle\mathrm{T}_{#3}%
            \IfBooleanT{#1}{^{\ast}}}%
        {\textstyle\mathrm{T}_{\raise.2ex\hbox{$\scriptscriptstyle#3$}}%
            \IfBooleanT{#1}{^{\raise.2ex\hbox{$\scriptscriptstyle\ast$}}}}%
        {\scriptstyle\mathrm{T}_{#3}%
            \IfBooleanT{#1}{^{\ast}}}%
        {\scriptscriptstyle\mathrm{T}_{\raise-.1ex\hbox{$\scriptscriptstyle#3$}}%
            \IfBooleanT{#1}{^{\raise-.1ex\hbox{$\scriptscriptstyle\ast$}}}}%
    }{\mathrm{T}\IfBooleanT{#1}{^{\ast}}}%
	\mathrm{T}#2}}\xspace%
	}
\NewDocumentCommand\tanunittanbundle{smg}{%
	\ensuremath{\mathord{%
	\IfValueTF{#3}{%
        \mathchoice%
        {\displaystyle\mathrm{T}_{#3}%
            \IfBooleanT{#1}{^{\ast}}}%
        {\textstyle\mathrm{T}_{\raise.2ex\hbox{$\scriptscriptstyle#3$}}%
            \IfBooleanT{#1}{^{\raise.2ex\hbox{$\scriptscriptstyle\ast$}}}}%
        {\scriptstyle\mathrm{T}_{#3}%
            \IfBooleanT{#1}{^{\ast}}}%
        {\scriptscriptstyle\mathrm{T}_{\raise-.1ex\hbox{$\scriptscriptstyle#3$}}%
            \IfBooleanT{#1}{^{\raise-.1ex\hbox{$\scriptscriptstyle\ast$}}}}%
    }{\mathrm{T}\IfBooleanT{#1}{^{\ast}}}%
	\mathrm{S}#2}}\xspace%
	}
\NewDocumentCommand\normalunittanbundle{smg}{%
	\ensuremath{\mathord{%
	\IfValueTF{#3}{%
        \mathchoice%
        {\displaystyle\mathrm{N}_{#3}%
            \IfBooleanT{#1}{^{\ast}}}%
        {\textstyle\mathrm{N}_{\raise.2ex\hbox{$\scriptscriptstyle#3$}}%
            \IfBooleanT{#1}{^{\raise.2ex\hbox{$\scriptscriptstyle\ast$}}}}%
        {\scriptstyle\mathrm{N}_{#3}%
            \IfBooleanT{#1}{^{\ast}}}%
        {\scriptscriptstyle\mathrm{N}_{\raise-.1ex\hbox{$\scriptscriptstyle#3$}}%
            \IfBooleanT{#1}{^{\raise-.1ex\hbox{$\scriptscriptstyle\ast$}}}}%
    }{\mathrm{N}\IfBooleanT{#1}{^{\ast}}}%
	\mathrm{S}#2}}\xspace%
	}
\NewDocumentCommand\vertbundle{mg}{%
	\ensuremath{\mathord{%
	\IfValueTF{#2}{%
        \mathchoice%
        {\displaystyle\mathrm{V}_{#2}}%
        {\textstyle\mathrm{V}_{\raise.2ex\hbox{$\scriptscriptstyle#2$}}}%
        {\scriptstyle\mathrm{V}_{#2}}%
        {\scriptscriptstyle\mathrm{V}_{\raise-.1ex\hbox{$\scriptscriptstyle#2$}}}%
    }{\mathrm{V}}%
	#1}}\xspace%
	}
\NewDocumentCommand\horbundle{mg}{%
	\ensuremath{\mathord{%
	\IfValueTF{#2}{%
        \mathchoice%
        {\displaystyle\mathrm{H}_{#2}}%
        {\textstyle\mathrm{H}_{\raise.2ex\hbox{$\scriptscriptstyle#2$}}}%
        {\scriptstyle\mathrm{H}_{#2}}%
        {\scriptscriptstyle\mathrm{H}_{\raise-.1ex\hbox{$\scriptscriptstyle#2$}}}%
    }{\mathrm{H}}%
	#1}}\xspace%
	}
\NewDocumentCommand\framebundle{sgg}{%
	\ensuremath{\mathord{\mathrm{F}%
	\IfNoValueTF{#3}%
        {\IfBooleanT{#1}{_{\text{orth}}}}%
        {_{\IfBooleanT{#1}{\text{orth};}#3}
    }%
	\IfValueT{#2}{#2}%
    }}\xspace%
	}
\NewDocumentCommand\tensorbundle{mgg}{%
	\ensuremath{\mathord{\mathrm{T}%
	\IfValueT{#2}{^{#2}}%
	\IfValueT{#3}{_{\raise.2ex\hbox{$\scriptstyle#3$}}}%
	\parentheses*{#1}}}\xspace%
	}
\NewDocumentCommand\densitybundle{mgg}{%
	\ensuremath{\mathord{\vert\Lambda\vert%
	_{\raise.2ex\hbox{$\scriptstyle#1$}}%
	\IfValueT{#2}{^{#2}}%
	\IfValueT{#3}{\parentheses*{#3}}}}\xspace%
	}
\NewDocumentCommand\basemetric{sgggg}{%
	\ensuremath{\mathord{\mathrm{b}%
    \IfValueT{#4}{_{#4}}\IfValueT{#5}{^{#5}}%
    \IfBooleanF{#1}{%
        \IfNoValueTF{#2}%
            {\parentheses*{\cdot\mathbin{,}\IfNoValueTF{#3}{\cdot}{#3}}}%
            {\parentheses*{#2\mathbin{,}\IfNoValueTF{#3}{\cdot}{#3}}}%
	    }%
    }}\xspace%
	}
\NewDocumentCommand\wbasemetric{sgggg}{%
	\ensuremath{\mathord{\boldsymbol{\mathrm{b}}%
    \IfValueT{#4}{_{#4}}\IfValueT{#5}{^{#5}}%
    \IfBooleanF{#1}{%
        \IfNoValueTF{#2}%
            {\parentheses*{\cdot\mathbin{,}\IfNoValueTF{#3}{\cdot}{#3}}}%
            {\parentheses*{#2\mathbin{,}\IfNoValueTF{#3}{\cdot}{#3}}}%
	    }%
    }}\xspace%
	}
\NewDocumentCommand\riemannmetric{sgggg}{%
	\ensuremath{\mathord{\mathrm{m}%
    \IfValueT{#4}{_{#4}}\IfValueT{#5}{^{#5}}%
    \IfBooleanF{#1}{%
        \IfNoValueTF{#2}%
            {\parentheses*{\cdot\mathbin{,}\IfNoValueTF{#3}{\cdot}{#3}}}%
            {\parentheses*{#2\mathbin{,}\IfNoValueTF{#3}{\cdot}{#3}}}%
	    }%
    }}\xspace%
	}
\NewDocumentCommand\wriemannmetric{sgggg}{%
	\ensuremath{\mathord{\boldsymbol{\mathrm{m}}%
    \IfValueT{#4}{_{#4}}\IfValueT{#5}{^{#5}}%
    \IfBooleanF{#1}{%
        \IfNoValueTF{#2}%
            {\parentheses*{\cdot\mathbin{,}\IfNoValueTF{#3}{\cdot}{#3}}}%
            {\parentheses*{#2\mathbin{,}\IfNoValueTF{#3}{\cdot}{#3}}}%
	    }%
    }}\xspace%
	}
\NewDocumentCommand\sasakimetric{sgggg}{%
	\ensuremath{\mathord{\mathrm{s}%
    \IfValueT{#4}{_{#4}}\IfValueT{#5}{^{#5}}%
    \IfBooleanF{#1}{%
        \IfNoValueTF{#2}%
            {\parentheses*{\cdot\mathbin{,}\IfNoValueTF{#3}{\cdot}{#3}}}%
            {\parentheses*{#2\mathbin{,}\IfNoValueTF{#3}{\cdot}{#3}}}%
	    }%
    }}\xspace%
	}
\NewDocumentCommand\wsasakimetric{sgggg}{%
	\ensuremath{\mathord{\boldsymbol{\mathrm{s}}%
    \IfValueT{#4}{_{#4}}\IfValueT{#5}{^{#5}}%
    \IfBooleanF{#1}{%
        \IfNoValueTF{#2}%
            {\parentheses*{\cdot\mathbin{,}\IfNoValueTF{#3}{\cdot}{#3}}}%
            {\parentheses*{#2\mathbin{,}\IfNoValueTF{#3}{\cdot}{#3}}}%
	    }%
    }}\xspace%
	}
\NewDocumentCommand\vertmetric{sgggg}{%
	\ensuremath{\mathord{\mathrm{v}%
    \IfValueT{#4}{_{#4}}\IfValueT{#5}{^{#5}}%
    \IfBooleanF{#1}{%
        \IfNoValueTF{#2}%
            {\parentheses*{\cdot\mathbin{,}\IfNoValueTF{#3}{\cdot}{#3}}}%
            {\parentheses*{#2\mathbin{,}\IfNoValueTF{#3}{\cdot}{#3}}}%
	    }%
    }}\xspace%
	}
\NewDocumentCommand\wvertmetric{sgggg}{%
	\ensuremath{\mathord{\boldsymbol{\mathrm{v}}%
    \IfValueT{#4}{_{#4}}\IfValueT{#5}{^{#5}}%
    \IfBooleanF{#1}{%
        \IfNoValueTF{#2}%
            {\parentheses*{\cdot\mathbin{,}\IfNoValueTF{#3}{\cdot}{#3}}}%
            {\parentheses*{#2\mathbin{,}\IfNoValueTF{#3}{\cdot}{#3}}}%
	    }%
    }}\xspace%
	}
\NewDocumentCommand\hormetric{sgggg}{%
	\ensuremath{\mathord{\mathrm{h}%
    \IfValueT{#4}{_{#4}}\IfValueT{#5}{^{#5}}%
    \IfBooleanF{#1}{%
        \IfNoValueTF{#2}%
            {\parentheses*{\cdot\mathbin{,}\IfNoValueTF{#3}{\cdot}{#3}}}%
            {\parentheses*{#2\mathbin{,}\IfNoValueTF{#3}{\cdot}{#3}}}%
	    }%
    }}\xspace%
	}
\NewDocumentCommand\whormetric{sgggg}{%
	\ensuremath{\mathord{\boldsymbol{\mathrm{h}}%
    \IfValueT{#4}{_{#4}}\IfValueT{#5}{^{#5}}%
    \IfBooleanF{#1}{%
        \IfNoValueTF{#2}%
            {\parentheses*{\cdot\mathbin{,}\IfNoValueTF{#3}{\cdot}{#3}}}%
            {\parentheses*{#2\mathbin{,}\IfNoValueTF{#3}{\cdot}{#3}}}%
	    }%
    }}\xspace%
	}
\NewDocumentCommand\configmetric{sgggg}{%
	\ensuremath{\mathord{\mathrm{q}%
    \IfValueT{#4}{_{#4}}\IfValueT{#5}{^{#5}}%
    \IfBooleanF{#1}{%
        \IfNoValueTF{#2}%
            {\parentheses*{\cdot\mathbin{,}\IfNoValueTF{#3}{\cdot}{#3}}}%
            {\parentheses*{#2\mathbin{,}\IfNoValueTF{#3}{\cdot}{#3}}}%
	    }%
    }}\xspace%
	}
\NewDocumentCommand\measurablesec{mg}{%
	\ensuremath{\mathord{%
	    \Gamma\parentheses*{#1\IfValueT{#2}{;#2}}%
	}}\xspace%
	}
\NewDocumentCommand\contidiffsec{mgg}{%
	\ensuremath{\mathord{%
        \mathchoice%
        {\displaystyle\Gamma^{#1}}%
        {\textstyle\Gamma^{\raise.2ex\hbox{$\scriptscriptstyle#1$}}}%
        {\scriptstyle\Gamma^{#1}}%
        {\scriptscriptstyle\Gamma^{\raise-.1ex\hbox{$\scriptscriptstyle#1$}}}%
	\IfValueT{#2}{(#2\IfValueT{#3}{;#3})}%
	}}\xspace%
	}
\NewDocumentCommand\contsec{gg}{%
	\ensuremath{\mathord{%
        \mathchoice%
        {\displaystyle\Gamma^0}%
        {\textstyle\Gamma^{\raise.2ex\hbox{$\scriptscriptstyle0$}}}%
        {\scriptstyle\Gamma^0}%
        {\scriptscriptstyle\Gamma^{\raise-.1ex\hbox{$\scriptscriptstyle0$}}}%
	\IfValueT{#1}{(#1\IfValueT{#2}{;#2})}%
	}}\xspace%
	}
\NewDocumentCommand\smoothsec{gg}{%
	\ensuremath{\mathord{%
        \mathchoice%
        {\displaystyle\Gamma^{\infty}}%
        {\textstyle\Gamma^{\raise.2ex\hbox{$\scriptscriptstyle\infty$}}}%
        {\scriptstyle\Gamma^{\infty}}%
        {\scriptscriptstyle\Gamma^{\raise-.1ex\hbox{$\scriptscriptstyle\infty$}}}%
	\IfValueT{#1}{\parentheses*{#1\IfValueT{#2}{;#2}}}%
	}}\xspace%
	}
\NewDocumentCommand\smoothcompactsec{gg}{%
	\ensuremath{\mathord{%
        \mathchoice%
        {\displaystyle\Gamma_c^{\infty}}%
        {\textstyle\Gamma_c^{\raise.2ex\hbox{$\scriptscriptstyle\infty$}}}%
        {\scriptstyle\Gamma_c^{\infty}}%
        {\scriptscriptstyle\Gamma_c^{\raise-.1ex\hbox{$\scriptscriptstyle\infty$}}}%
	\IfValueT{#1}{(#1\IfValueT{#2}{;#2})}%
	}}\xspace%
	}
\NewDocumentCommand\smoothintsec{mgg}{%
	\ensuremath{\mathord{%
        \mathchoice%
        {\displaystyle\Gamma^{\infty,#1}}%
        {\textstyle\Gamma^{\raise.2ex\hbox{$\scriptscriptstyle\infty,#1$}}}%
        {\scriptstyle\Gamma^{\infty,#1}}%
        {\scriptscriptstyle\Gamma^{\raise-.1ex\hbox{$\scriptscriptstyle\infty,#1$}}}%
	\IfValueT{#2}{\parentheses*{#2\IfValueT{#3}{;#3}}}%
	}}\xspace%
	}
\NewDocumentCommand\diffintsec{mmgg}{%
	\ensuremath{\mathord{%
        \mathchoice%
        {\displaystyle\Gamma^{#1,#2}}%
        {\textstyle\Gamma^{\raise.2ex\hbox{$\scriptscriptstyle#1,#2$}}}%
        {\scriptstyle\Gamma^{#1,#2}}%
        {\scriptscriptstyle\Gamma^{\raise-.1ex\hbox{$\scriptscriptstyle#1,#2$}}}%
	\IfValueT{#3}{\parentheses*{#3\IfValueT{#4}{;#4}}}%
	}}\xspace%
	}
\NewDocumentCommand\diffforms{mmg}{%
	\ensuremath{\mathord{\Omega^{#1}%
	\parentheses*{#2\IfValueT{#3}{;#3}}%
	}}\xspace%
	}
    \def\convarrow#1{%
        \mathchoice%
        {\displaystyle\downarrow^{#1}_0}%
        {\textstyle\kern-.7ex\downarrow^{#1}_0\kern-.2ex}%
        {\scriptstyle\kern-.2ex\downarrow^{#1}_0\kern.2ex}%
        {\scriptscriptstyle\downarrow^{#1}_0}%
    }
\DeclareMathOperator\paralleltransport{pt}
\newcommand\locprod{\ensuremath{\mathbin{\otimes_{\mathrm{loc}}}}}
\DeclareMathOperator\horlift{hl}
\NewDocumentCommand\hlfunc{sm}{%
    \ensuremath{\IfBooleanTF{#1}%
    {\parentheses*{#2}^{\mathrm{h}}}%
    {#2^{\mathrm{h}}}%
    }\xspace
}
\DeclareMathOperator\vertlift{vl}
\NewDocumentCommand\vlfunc{sm}{%
    \ensuremath{\IfBooleanTF{#1}%
    {\parentheses*{#2}^{\mathrm{v}}}%
    {#2^{\mathrm{v}}}%
    }
}
\DeclareMathOperator\tanlift{tl}
\newcommand\Ufield{\ensuremath{\mathcal{U}}\xspace}
\newcommand\Xfield{\ensuremath{\mathcal{X}}\xspace}
\newcommand\Yfield{\ensuremath{\mathcal{Y}}\xspace}
\NewDocumentCommand\spray{g}{\ensuremath{%
    \mathcal{H}%
    \IfValueT{#1}{_{#1}}%
    }\xspace%
    }
\newcommand\canonfield{\ensuremath{\mathcal{V}}\xspace}
\NewDocumentCommand\normalfield{g}{\ensuremath{%
    \mathcal{N}%
    \IfValueT{#1}{_{#1}}%
    }\xspace%
    }
\newcommand\configfold{\ensuremath{\mathsf{Q}}\xspace} 
\newcommand\Lagrange{\ensuremath{\mathsf{L}}\xspace}
\NewDocumentCommand\Hamilton{g}{\ensuremath{%
    \mathsf{H}%
    \IfValueT{#1}{_{#1}}%
    }\xspace%
}
\newcommand\Poissonbracket[2]{\ensuremath{\mathord{\left\{#1\mathbin{,}#2\right\}}}}
\theoremstyle{plain}
\newtheorem{theorem}{Theorem}[section]
\newtheorem{lemma}[theorem]{Lemma}
\newtheorem{corollary}[theorem]{Corollary}
\newtheorem{proposition}[theorem]{Proposition}
\theoremstyle{definition}
\newenvironment{definition}%
    {\pushQED{\qed}\definitionx}%
    {\popQED\enddefinitionx}
\newenvironment{example}%
    {\pushQED{\qed}\examplex}%
    {\popQED\endexamplex}
\newenvironment{condition}%
    {\pushQED{\qed}\conditionx}%
    {\popQED\endconditionx}
\theoremstyle{remark}
\newenvironment{remark}%
    {\pushQED{\qed}\remarkx}%
    {\popQED\endremarkx}
\newenvironment{notation}%
    {\pushQED{\qed}\notationx}%
    {\popQED\endnotationx}
\begin{document}

\title[Hypocoercivity of manifold-valued Langevin dynamics]%
    {Hypocoercivity of Langevin-type dynamics on abstract smooth manifolds}
\author[M.~Grothaus]{%
    Martin Grothaus%
    }
\address{%
    Department of Mathematics, Functional Analysis and Stochastic Analysis Group \\ %
    Technische Universit\"at Kaiserslautern \\ %
    P.\,O.\ Box 3049, 67663 Kaiserslautern, Germany}
\email{grothaus@mathematik.uni-kl.de}
\author[M.\,C.~Mertin]{%
    Maximilian Constantin Mertin %
    }
\address{%
    Department of Mathematics, Functional Analysis and Stochastic Analysis Group \\ %
    Technische Universit\"at Kaiserslautern \\ %
    P.\,O.\ Box 3049, 67663 Kaiserslautern, Germany}
\email{mertin@mathematik.uni-kl.de}
\date{\DTMdisplaydate{\the\year}{\the\month}{\the\day}{-1}} 

\makeatletter
    \@namedef{subjclassname@2020}{\textup{2020} Mathematics Subject Classification}
\makeatother
\subjclass[2020]{
    Primary 58J65; 
    Secondary 37A25
}

\keywords{
    Hypocoercivity, 
    Kolmogorov backward equation, 
    Langevin dynamics, 
    Semispray, 
    Ehresmann connection
}

\begin{abstract}
    In this article we investigate hypocoercivity of Langevin-type dynamics in nonlinear smooth geometries.
    The main result stating exponential decay to an equilibrium state with explicitly computable rate of convergence 
    is rooted in an appealing Hilbert space strategy by Dolbeault, Mouhot and Schmeiser.
    This strategy was extended in~\cite{HypocoercJFA} 
    to Kolmogorov backward evolution equations in contrast to the dual Fokker-Planck framework.
    We use this mathematically complete elaboration 
    to investigate wide ranging classes of Langevin-type SDEs
    in an abstract manifold setting, 
    i.\,e.\ (at least) the position variables obey certain smooth side conditions.
    Such equations occur e.\,g.\ as fibre lay-down processes in industrial applications. 
    We contribute the Lagrangian-type formulation of such geometric Langevin dynamics 
    in terms of (semi-)sprays 
    and point to the necessity of fibre bundle measure spaces 
    to specify the model Hilbert space.
\end{abstract}

\maketitle\mbox{}
\pagenumbering{roman}
\thispagestyle{empty}

\pagenumbering{arabic}

\section{Introduction}
\label{sec:intro}

A huge amount of research is going on in the area of hypocoercivity, hypoellipticity and diverse analytic methods 
to study long-time behaviour of degenerated stochastically perturbed systems. 
Herein, we concentrate on a hypocoercivity method applied to (geometric) Langevin equations, 
see~\cite{CoffeyKalmykovWaldron} for some background of these equations and physical or chemical applications.
For the hypocoercivity approach we think of Langevin equations 
as evolution equations of Kolmogorov backward type. 
Formulated as an abstract Cauchy problem in a Hilbert space 
a clever choice of an entropy functional gives rise of a certain norm on this Hilbert space 
measuring the desired exponential decay towards an equilibrium.
This is the fundamental idea by J.~Dolbeault, C.~Mouhot and C.~Schmeiser
for a hypocoercivity strategy, see~\cite{DMS15}.
However, we use the Kolmogorov backward (hypocoercivity) setting 
developed in~\cite{HypocoercJFA},  
because our focus lies on SDEs.
It's by no means clear how to apply their~\ref{thm:hypocoercivity-thm-named}
in case of stochastically perturbed mechanics on an abstract position manifold, 
since there are various approaches and terminologies:
Y.~Gliklikh discusses Langevin equations on manifolds of It\^o\-/type, 
see~\cite[Section~17]{Gliklikh}; 
in his opinion the It\^o~formulation is the most natural one. 
However, he rarely talks about generators
whereas by~\cite[Theorem~V.1.2]{IkedaWatanabe} one easily gets 
the infinitesimal generators of certain Stratonovich SDEs. 
Besides, in~\cite[Section~V.4]{IkedaWatanabe} it is explained 
how the theory of diffusions on manifolds in terms of Stratonovich SDEs
as motion in the frame bundle is strongly connected to It\^o's stochastic parallel displacement.
V.~Kolokoltsov uses a notion of stochastic Hamiltonian systems 
to study a `curvilinear Ornstein-Uhlenbeck processes' on the cotangent space, 
see~\cite[Chapter~4]{Kolokoltsov}. 
Using other ideas from classical mechanics 
one could investigate critical points of the stochastic Hamilton\-/Pontryagin action integral
see e.\,g.~\cite{Bou-RabeeOwhadi}, in particular~\cite[Theorem~3.2]{Bou-RabeeOwhadi}. 
This might be interesting from a computational point of view
and could be linked to the other formulations via local Lagrangian vector fields.
But Kolokoltsov's approach is directly based on a certain generator -- 
another approach of this kind can be found in~\cite{Soloveitchik}.
Kolokoltsov prefers local coordinate forms, 
however he is aware of~\cite{Joergensen} 
providing a construction of an `Ornstein-Uhlenbeck process' in the tangent space in invariant form.
E.~J{\o}rgensen uses the McKean-Gangolli injection scheme, 
see~\cite{McKean} and~\cite{Gangolli}, 
to construct his process projecting a process in the frame bundle 
which is a slightly inconvenient state space for real life applications.

The term (classical) Langevin equation refers in the purely Euclidean setting 
to the following system of equations:
\begin{align}\label{eq:classical-Langevin}
\begin{aligned}
    \diffd x_t 
    &= 
    v_t
    \ \diffd t
    \\
    \diffd v_t 
    &= 
    -\grad{}\Psi(x_t)
    \ \diffd t 
    + \sigma
    \stratonovich\diffd W_t 
    -\alpha\pdot v_t
    \ \diffd t
    ,
\end{aligned}
\end{align}
where $x_t\in\Rnum_x^\callidim{d}$ are positions in a space $\Rnum_x^\callidim{d}$ and 
$v_t\in\Rnum_v^\callidim{d}$ are velocities
for all times $t\in[0,\infty)$ respectively -- 
the spaces $\Rnum_x^\callidim{d}$ and $\Rnum_v^\callidim{d}$ are thought as 
independent copies of~$\Rnum^\callidim{d}$. 
The model parameter~$\alpha$ is interpreted as a friction parameter, similarly
$\sigma$~as a diffusion parameter -- 
both are nonnegative. 
The potential $\Psi\colon\,\Rnum_x^\callidim{d}\to\Rnum$ satisfies certain (weak) regularity properties 
and $W=(W_t)_{t\in[0,\infty)}$ is a $\callidim{d}$\=/dimensional Wiener process.
In fibre lay-down applications, 
where the Langevin equation is used as a surrogate model, 
one additionally would assume that 
the (Euclidean) norm of the velocities is~1 constantly, i.\,e.\ 
$v_t\in\sphere{\callidim{d}-1}\subseteq\Rnum_v^\callidim{d}$ for all times~$t$.

We follow the philosophy of describing a stochastic dynamic 
via its Kolmogorov backwards generator in invariant form.
In~\autoref{sec:Langevin}, 
we stay rather close to classical Lagrangian mechanics, 
since the `space of velocities~$v$' 
$\configfold = \tanbundle\posfold$ 
serves as configuration manifold\footnote{%
    E.\,g.\ in~\cite{MarsdenRatiu}, this is also called configuration \emph{space}, 
    however we reserve this term for the configuration space as a model of multiparticle systems. 
    The configuration space formalism might become handy, 
    if one studies fibre lay-down models of multiple filaments at once.
    } 
over the `space of positions~$x$' namely~\posfold,  
and we consider an evolution in the velocity phase space 
$\tanbundle\configfold = \tantanbundle\posfold$, 
so in a (double) tangent space. 
Together with the fact that 
Langevin-type equations are second order differential equations 
this leads quite naturally to Ehresmann connections and semisprays; 
both concepts are closely related on a purely geometric level.
We have been imbued with these ideas during the reading of works by I.~Bucataru
e.\,g.~\cite{habil-Bucataru,BucataruConstantinescuDahl}. 
In turn, those are rooted in results e.\,g.\ by M.~Crampin, J.~Grifone or R.~Miron. 
Later on, in~\autoref{sec:fld} 
we talk about fibre lay-down models 
and demonstrate how to geometrically implement an algebraic side condition 
like normalised velocities.
For sake of completeness, we also mention~\cite{Bismut}
wherein J.-M.~Bismut talks about Langevin processes in terms of hypoelliptic Laplacians, 
i.\,e.\ as a diffusion interpolating Brownian motion and geodesic flow.  

A key achievement of this paper is that
we explicitly don't need parallelisability of~\posfold.
It's a wide spread mistake to work in tangent bundles 
treating them like trivial bundles, 
even though it's very well-known that 
most of the spheres are not parallelisable -- 
e.\,g.~\sphere{2} as the Hairy Ball Theorem shows.
Indeed, since the works~\cite{Kervaire} by M.~Kervaire 
and independently \cite{BottMilnor} by R.~Bott and J.~Milnor it's known that
\sphere{\callidim{d}-1} is parallelisable exactly for $\callidim{d}\in\{1,2,4,8\}$.
E.\,g.\ E.~J\o{}rgensen was sensitised to the issue 
as the last remark in~\cite[Section 1]{Joergensen} shows.
If our model fails to capture the geometry of general spheres, 
we didn't find a reasonable model.
For that reason, we abolish notation like `$(x,v)\in\tanbundle\posfold$' 
with position~$x$ and velocity component~$v$ completely. 
Instead we will be very careful to always emphasise the bundle structure:
Throughout this paper, 
we always denote by~\tanproj a tangent bundle projection no matter what the base manifold is 
and extract the information on the position from a tangent vector~$v$ 
via $x = \tanproj(v)$ -- 
i.\,e.\ \tanproj~serves as an accessor or `getter' method.
In the absence of a product structure of tangent spaces,  
we rely on the `almost product' structure induced by the universal property of local trivialisation.
As the example of spheres illustrates, 
this is not just a technicality to be wiped out crudely with embeddings into larger Euclidean spaces.

Note that our configuration manifold is going to be $\configfold = \tanbundle\posfold$ -- 
or some sub\-/fibre bundle, see~\autoref{sec:fld}.
Therefore, we don't need~\posfold itself to be orientable. 
The commonly used integration of differential forms on manifolds instead of functions 
is extended by integration wrt.\ so-called 1\=/densities, 
where the latter concept does not need orientability.
We won't discuss this in~\autoref{sec:preliminaries}, 
but refer to~\cite[Section~11.4]{Analysis-Folland} and~\cite[\S~3.4.1]{Nicolaescu}  
as well as to~\cite[p.~34]{Heat-kernels-BGV} for the existence 
and to~\cite[Example~3.4.2]{Nicolaescu} for elementary properties 
of the canonical 1\=/density associated to the Riemannian metric.
Such a notion of integration on~\posfold is absolutely sufficient for our purposes.
The integration by parts formula, 
which enables us to use techniques related to generalised Dirichlet form theory in the first place, 
has to hold not for integration over the position manifold~\posfold, 
but for integration over the configuration manifold~\configfold 
which will automatically be orientable.
Hence, we will just talk about a `Riemannian volume measure \Leb{\riemannmetric*}' 
on the Riemannian manifold~$(\posfold,\riemannmetric*)$: 
either there is some orientation and this measure is induced 
by the canonical volume form~$\diffd\Leb{\riemannmetric*}$,  
or there is none and the measure is induced
by the canonical 1\=/density~$\abs{\diffd\Leb{\riemannmetric*}}$. 

In~\autoref{sec:ergo} we first prove existence of Markov processes 
solving the SDEs~\eqref{eq:geomLangevin} and~\eqref{eq:fld} treated in the preceding sections. 
By finding appropriate cores for the corresponding generators~\operator{L} 
and proving m-dissipativity in~\autoref{sec:Langevin} or~\autoref{sec:fld} respectively, 
we have a strongly continuous semigroup $(T_t)_{t\in[0,\infty)}$ generated by~\operator{L}
on the model Hilbert space~$H=\bigL2{\configfold}{\mu}$, 
where~$\mu$ is the equilibrium distribution. 
For a suitable test function $g\colon\,\configfold\to\Rnum$
the assignment $u(t,\cdot)=T_tg$ yields a solution 
to the abstract Cauchy problem $\dot{u}=Lu$ on the model Hilbert space.
Using the theory of generalised Dirichlet forms 
we show that there is a {\configfold}\=/valued Markov process $(X_t)_{t\in[0,\infty)}$
which solves the $L$\=/martingale problem, 
has~$\mu$ as invariant measure, 
is conservative, 
and most importantly it's properly associated to~\operator{L} in the resolvent sense.
The latter means that the transition resolvent 
$\int_{(0,\infty)} \exp(-as)\expect[\ID]{g(X_t)} \,\Leb(\diffd s)$
is a quasi-continuous $\mu$\=/version of the resolvent 
$\int_{(0,\infty)} \exp(-as)T_tg \,\Leb(\diffd s)$
for all $a\in(0,\infty)$ and functions~$g\in H$.
Second, we show that these Markov processes are {\bigL2}\=/exponentially ergodic 
in the sense of~\cite{ConradGrothausNparticle}.

We summarise our main results as follows:
\begin{itemize}
    \item
    We heavily make use of rather geometric concepts like semisprays and Ehresmann connections
    to formulate analytic problems and objects in invariant form.
    Thereby, we choose a quite accessible Lagrangian-type approach to higher-order SDEs on manifolds.
    Even though the geometric tools themselves are well-known, 
    they have not been used 
    to treat higher-order SDEs on abstract manifolds systematically.
    \item
    The applications of the general Hilbert space hypocoercivity strategy 
    presented in~\cite{HypocoercJFA} and~\cite{HypocoercMFAT}  
    are extended to the case of a quite abstract position manifold.
    During this course we see 
    that strong mixing with exponential convergence to equilibrium of the corresponding semigroups are features of Langevin-type equations 
    all across (finite-dimensional) smooth geometry with just view natural geometric assumptions.
    The main theorems are~\hyperref[thm:Langevin:main]{Theorem~\ref{thm:Langevin:main}} 
    and~\hyperref[thm:fld:main]{Theorem~\ref{thm:fld:main}}.
    \item 
    Among possible applications of Langevin equations on manifolds 
    we spotlight their usage as surrogate model of fibre lay-down processes 
    in industrial production of nonwovens.
    See~\autoref{sec:fld} and in particular~\hyperref[thm:fld:main]{Theorem~\ref{thm:fld:main}}.
    Compare these results to e.\,g.~\cite{GKMW07,KlarMaringerWegener,GS13}.
    \item 
    In~\autoref{sec:ergo} 
    we first prove in~\hyperref[thm:ergo:existence]{Theorem~\ref{thm:ergo:existence}} 
    the existence of a Markov process solving the martingale problem for our Langevin-type SDEs, 
    namely~\eqref{eq:geomLangevin} and~\eqref{eq:fld}.
    Afterwards we deduce that these solutions are {\bigL2}\=/exponentially ergodic in the sense of~\cite{ConradGrothausNparticle}, 
    meaning that the rate of ergodicity corresponds to exponential convergence of the semigroups.
    In the end, we argue that the {\bigL2}\=/exponential rate is even optimal.
    See~\hyperref[cor:ergodicity]{Corollary~\ref{cor:ergodicity}} and the subsequent remark.
\end{itemize}

\section{Preliminaries}
\label{sec:preliminaries}

Before we give a brief recap on the general hypocoercivity method 
and establish several geometrical tools, 
we fix 
the assumptions on the position manifold.

\begin{condition}[Position manifold~\ref{cond:M}]
\namedlabel{cond:M}{(M)}
\hfill

\begin{enumerate}[label={(M\arabic*)}, ref={(M\arabic*)}]
    \item\label{cond:M:general}
    \emph{general geometry:}\ 
        Let $(\posfold,\riemannmetric*)$ be a 
        real, 
        finite dimensional, 
        connected 
        Riemannian manifold 
        with $\posdim\vdef\dim(\posfold)\geq2$.
    \item\label{cond:M:complete}
    \emph{completeness:}\ 
        Let~\posfold endowed with the intrinsic metric be a complete metric space\footnote{
            The intrinsic metric induces the same topology as given on~\posfold originally. 
            This is a consequence of the very definition of a (topological) manifold 
            as in~\cite{Lee}
            which we choose to use here.
        }.
\end{enumerate}
\vspace*{-1.5em}
\end{condition}

Note that following a common way of speaking,  
the manifold as described in~\ref{cond:M:general} has trivial boundary $\partial\posfold = \emptyset$. 
In this paper, we do not address boundary problems for sake of simplicity. 
From an analytic point of view, 
assumption~\ref{cond:M:complete} is plausible in it's own right
and necessary e.\,g.\ for a discussion of Sobolev spaces on noncompact manifolds, 
see~\cite[Chapter~3]{Hebey}.
By the Hopf-Rinow Theorem, 
we have that~\ref{cond:M:complete} is equivalent to geodesic completeness of~\posfold.
If we additionally assume an orientation on a manifold 
which satisfies~\ref{cond:M:general} and~\ref{cond:M:complete}, 
then it is well-known that
the usual Laplace-Beltrami operator on smooth test functions is essentially self-adjoint. 
See~\cite{Gaffney-harmonic-op,Gaffney-special-Stokes,Gaffney-heat-eq} 
for generalisations of the standard case on compact manifolds 
as well as~\cite[Section II.5]{LawsonMichelsohn}  
for general statements on so-called Dirac operators on spinors.
Moreover, we also want to recommend~\cite{Wolf73}.
If we would assume nontrivial boundary, 
then essentially self-adjointness of the Laplace-Beltrami operator is rather delicate.
This challenging problem has been tackled in~\cite{PhD-Perez-Pardo} 
and the noteworthy paper~\cite{ILP-JFA}.


\subsection{General hypocoercivity method}
\label{subsec:general-method}

At this point, we want to give an almost criminally brief overview of the hypocoercivity method 
in the abstract Hilbert space setting.
If the reader is familiar with this topic, 
this section just clarifies some notation.

Originally, the method was algebraically developed 
by J.~Dolbeault, C.~Mouhot and C.~Schmeiser
in~\cite{DMS09} and~\cite{DMS15} 
studying linear kinetic equations -- 
algebraically in the sense that
issues of operator domains have been neglected. 
Substantial contributions on the general hypocoercivity approach can be found 
in~\cite{Villani07} and~\cite{Villani09} -- 
e.\,g.\ the oncoming condition~\ref{cond:Langevin:P3} on the potentials Hessian appears in~\cite[Theorem~35]{Villani09} first.
Our main reference is~\cite{HypocoercJFA} for two reasons.
On conceptional side, 
the authors really tackled the long-time behaviour of solutions of the SDE
via the Kolmogorov backward setting 
instead of investigating the Fokker-Planck equation.
In the case of Euclidean position space, normalised velocity, 
and the invariant measure having a density wrt.\ Lebesgue measure
there is an isometric isomorphism the Kolmogorov backward to the Fokker-Planck setting, 
see~\cite{unpublishedGS}. 
However, in a general manifold it is not clear how to map between these two settings.
On technical side, 
to gain a rigorous proof of the~\ref{thm:hypocoercivity-thm-named} 
and to check its assumptions in applications 
several domain issues have to be taken into account, 
which are often times just omitted.
For broader discussion and more detailed explanations 
we also refer to~\cite{PhD-Stilgenbauer}.
The main result, we aim to apply to our fibre lay-down model,  
tells us what microstructure we have to expect in the fleece: 
The faster the convergence to the equilibrium state 
the more uniform the nonwoven material will appear.
The formal result reads as follows.

\begin{theorem}[Hypocoercivity Theorem]
\namedlabel{thm:hypocoercivity-thm-named}{Hypocoercivity Theorem}\label{thm:hypocoercivity-thm}
Assume the conditions~\ref{cond:D} as well as~\ref{cond:H} (as given below)
and denote by $(T_t)_{t\in[0,\infty)}$ 
the operator semigroup generated by~$L$ on the Hilbert space~$H$. 

Then, there exist constants $\kappa_1,\kappa_2\in(0,\infty)$ 
computable in terms of the constants $\Lambda_m$, $\Lambda_M$, $c_1$ and $c_2$ appearing in the assumptions 
such that it holds 
\begin{displaymath}
    \norm{ T_tg-\scalarprod{g}{1}{H} }{H} 
    \leq 
    \kappa_1 \euler^{-\kappa_2 t}\norm{ g-\scalarprod{g}{1}{H} }{H}
    \qquad\text{for all times } t\geq0
\end{displaymath}
and for all $g\in H$. 
\end{theorem}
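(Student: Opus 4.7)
The plan is to apply the Dolbeault--Mouhot--Schmeiser Hilbert space strategy in its Kolmogorov backward formulation as laid out in~\cite{HypocoercJFA}. Since condition~\ref{cond:H} is expected to provide a decomposition $L = S - A$ with $S$ symmetric (and nonpositive) and $A$ antisymmetric on a suitable core, together with microscopic coercivity on the orthogonal complement of $\ker(S)$, macroscopic coercivity for $AP$ where $P$ denotes the orthogonal projection onto $\ker(S)$, and boundedness of the auxiliary operators, the structure of the proof is essentially dictated by these assumptions. Condition~\ref{cond:D} will supply the domain compatibilities needed to rigorously perform the derivative calculation and to identify appropriate cores.

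The first step is to introduce the auxiliary bounded operator $B := (I + (AP)^\ast (AP))^{-1}(AP)^\ast$ on $H$ and, for a small parameter $\varepsilon \in (0,1)$ to be fixed later, to define the modified entropy functional
\begin{displaymath}
    \mathcal{E}_\varepsilon(g)
    :=
    \tfrac{1}{2}\norm{g}{H}^2
    + \varepsilon\operatorname{Re}\scalarprod{Bg}{g}{H}.
\end{displaymath}
Standard arguments, which rely only on $\norm{B}{} \leq \tfrac{1}{2}$, show that $\mathcal{E}_\varepsilon$ is equivalent to $\tfrac{1}{2}\norm{\cdot}{H}^2$ uniformly in $\varepsilon$ small. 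Next, for an initial datum $g_0 \in H$ with $\scalarprod{g_0}{1}{H}=0$ and its semigroup evolution $g_t := T_t g_0$ (which still has mean zero by invariance of~$\mu$), I would differentiate $\mathcal{E}_\varepsilon(g_t)$ in time. Using the decomposition $L = S - A$ and the antisymmetry of $A$, this derivative splits into a dissipative core term $-\scalarprod{Sg_t}{g_t}{H} - \varepsilon\operatorname{Re}\scalarprod{BAPg_t}{g_t}{H}$ plus remainder terms involving $\scalarprod{BSg_t}{g_t}{H}$, $\scalarprod{BA(I-P)g_t}{g_t}{H}$ and $\scalarprod{B^\ast g_t}{Sg_t}{H}$ (plus symmetric counterparts).

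The technical heart of the argument is to bound each of these pieces. Microscopic coercivity with constant $c_1 = \Lambda_m$ (say) gives $-\scalarprod{Sg_t}{g_t}{H} \leq -\Lambda_m \norm{(I-P)g_t}{H}^2$, while the spectral identity $\scalarprod{BAPg_t}{g_t}{H} = \scalarprod{(I+(AP)^\ast AP)^{-1}(AP)^\ast(AP)g_t}{g_t}{H}$ combined with macroscopic coercivity (a Poincar{\'e}-type inequality with constant $c_2$) yields a lower bound of order $\norm{Pg_t}{H}^2$. The remainder terms are controlled by the boundedness assumptions on $BS$ and $BA(I-P)$ (with constants $\Lambda_M$) via the Cauchy--Schwarz and Young inequalities, producing contributions of order $\norm{(I-P)g_t}{H}\norm{g_t}{H}$. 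Combining the estimates, one obtains
\begin{displaymath}
    \tfrac{\diffd}{\diffd t}\mathcal{E}_\varepsilon(g_t)
    \leq
    -\kappa\parentheses*{ \norm{Pg_t}{H}^2 + \norm{(I-P)g_t}{H}^2 }
    =
    -\kappa\norm{g_t}{H}^2
\end{displaymath}
for some $\kappa > 0$ once $\varepsilon$ is chosen small enough (explicitly in terms of $\Lambda_m,\Lambda_M,c_1,c_2$). The equivalence of $\mathcal{E}_\varepsilon$ and $\norm{\cdot}{H}^2$ then upgrades this differential inequality to exponential decay via Gronwall, giving constants $\kappa_1,\kappa_2$ as required.

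The main obstacle, and the reason~\cite{HypocoercJFA} is needed in place of a purely algebraic derivation, is the rigorous justification of the derivative computation: $B$, $BA$ and $BS$ need to be well-defined on the relevant dense subspace, the mapping $t \mapsto \scalarprod{Bg_t}{g_t}{H}$ must be shown to be differentiable, and one has to verify that the manipulations with $L$, $S$, $A$ on the trajectory $g_t$ remain valid outside of a dense core. This is precisely what condition~\ref{cond:D} is designed for and where the passage to Kolmogorov backward, as opposed to Fokker--Planck, language requires careful bookkeeping; once this is settled, the algebraic calculus above goes through unchanged and delivers the stated decay estimate for arbitrary $g \in H$ after subtracting the projection $\scalarprod{g}{1}{H}$ onto the equilibrium.
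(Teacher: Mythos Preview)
Your outline is correct and is exactly the Dolbeault--Mouhot--Schmeiser modified entropy argument that the paper defers to via~\cite[Theorem~2.18]{HypocoercJFA}; the paper itself supplies no independent proof beyond that citation. One notational slip worth fixing: you have swapped the roles of the constants relative to the paper's conventions in~\ref{cond:H} --- there $\Lambda_m$ and $\Lambda_M$ are the microscopic and macroscopic coercivity constants, while $c_1,c_2$ are the bounds on $BS$ and $BA(\ID-P)$, not the other way around; also the dissipative contribution from $\frac{d}{dt}\tfrac12\norm{g_t}{H}^2$ is $\scalarprod{Sg_t}{g_t}{H}$ (already nonpositive), not $-\scalarprod{Sg_t}{g_t}{H}$.
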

\begin{proof}
    See~\cite[Theorem~2.18]{HypocoercJFA};
    in its proof one learns 
    how to compute the constants~$\kappa_1$ and~$\kappa_2$.
\end{proof}

Now, we are going to explain what the sets of conditions~\ref{cond:D} and~\ref{cond:H} are.
We just mention as a fact that there are more general data assumptions such that
the~\ref{thm:hypocoercivity-thm-named} is still valid, 
see~\cite[Section~2.2.3]{PhD-Stilgenbauer}.

\begin{condition}[Data conditions~\ref{cond:D}]
\namedlabel{cond:D}{(D)}
\hfill

\begin{enumerate}[label={(D\arabic*)}, ref={(D\arabic*)}]
    \item\label{cond:D1}
        \emph{model Hilbert space:}\ 
        Let $(E,\mathfrak{E},\mu)$ be a probability space and 
        define the Hilbert space~$H$ to be $\bigL2{E}{\mu}=\bigL2{\mu}$.
    \item\label{cond:D2}
        \emph{Strongly continuous semigroup and its infinitesimal generator:}\ 
        Let $(L,\opdomain{L})$ be a linear operator on~$H$ and 
        $(T_t)_{t\in[0,\infty)}$ be the strongly continuous semigroup generated by $L$, i.\,e.\ 
        $T_0 =\ID_H$ and $T_tf\rightarrow f$ as $t\downarrow0$ for all $f\in H$.
    \item\label{cond:D3}
        \emph{Core property:}\ 
        Let $D\subseteq\opdomain{L}$ be dense in~$H$ and 
        an operator core of $(L,\opdomain{L})$, i.\,e.\ 
        the closure of $(L,D)$ coincides with $(L,\opdomain{L})$.
    \item\label{cond:D4}
        \emph{SAD-Decomposition of generator into symmetric and antisymmetric part:}\ 
        Let $(S,\opdomain{S})$ be symmetric and 
        let $(A,\opdomain{A})$ be closed and antisymmetric on $H$ s.\,t.\ 
        $D \subseteq \opdomain{S}\cap\opdomain{A}$ 
        and the restriction of $L$ to the core can be decomposed as $L\vert_{D}=S-A$.
    \item\label{cond:D5}
        \emph{Projection:}\ 
        Let $P\colon\, H\to H$ be an orthogonal projection such that 
        $P(H)\subseteq\opdomain{S}$ and $SP=0$ as well as 
        $P(D)\subseteq\opdomain{A}$ and $AP(D)\subseteq\opdomain{A}$.
        Define 
        \begin{displaymath}
            P_S\colon\ H \longmapsin H,\ 
            f \longmapsto Pf + \scalarprod{f}{1}{H}
        .
        \end{displaymath}
    \item\label{cond:D6}
        \emph{Invariant measure:}\ 
        Let~$\mu$ be invariant for $(L,D)$ in the sense that 
        \begin{displaymath}
            \scalarprod{Lf}{1}{H} 
            = 
            \int_E Lf\ \diffd\mu 
            = 
            0
            \qquad\text{for all } f\in D
        .
        \end{displaymath}
    \item\label{cond:D7}
        \emph{semigroup conservativity:}
        Let $1\in\opdomain{L}$ and $L1=0$.
\end{enumerate}
\vspace*{-1.5em}
\end{condition}

\begin{condition}[Hypocoercivity conditions~\ref{cond:H}]
\namedlabel{cond:H}{(H)}
\hfill

\begin{enumerate}[label={(H\arabic*)}, ref={(H\arabic*)}]
    \item\label{cond:H1}
        \emph{algebraic relation:}
        \begin{displaymath}
            PAP\vert_D = 0
        \end{displaymath}
    \item\label{cond:H2}
        \emph{microscopic coercivity:}
        \begin{displaymath}
            \exists\,\Lambda_m\in(0,\infty)\,
            \forall\,f\in D\colon\
            \Lambda_m \|(\ID-P_S)f\|_H^2 \leq -\scalarprod{Sf}{f}{H}
        \end{displaymath}
    \item\label{cond:H3}
        \emph{macroscopic coercivity:}
        \begin{displaymath}
            \exists\, \Lambda_M\in(0,\infty)\,
            \forall\, f\in\opdomain{\adjoint{(AP)}(AP)} \colon\ 
            \Lambda_M \|Pf\|_H^2 \leq \|APf\|_H^2
        \end{displaymath}
    \item\label{cond:H4}
        \emph{boundedness of auxiliary operators:}
        \begin{align*}
            \exists\,c_1, c_2\in(0,\infty)\,\forall\,f\in D\colon{} 
            & \|BSf\|_H \leq c_1 \|(\ID-P_j)f\|_H \\
            \land\ 
            & \|BA(\ID-P)f\|_H \leq c_2 \|(\ID-P_j)f\|_H
        \end{align*}
        for $B\vdef (\ID-(AP)\adjoint{(AP)})\adjoint{(AP)}$ on~$D(\adjoint{(AP)})$
        and projections $P_j\in\{P,P_S\}$, $j\in\{1,2\}$.
\end{enumerate}
\vspace*{-1.5em}
\end{condition}


\subsection{Bundle measures and weighted bundles}
\label{sec:weighting}

We start with defining bundle measures:
Thinking of measures on some manifold~\basefold 
we should search for induced measures in bundles over~\basefold, 
i.\,e.\ natural measures on the respective total spaces.
Proceeding on the measure theoretic path in the direction of Radon-Nikod\'ym derivatives, 
the well-known notion of weighted manifolds is revisited.
The concept of multiplying a density weight function to a volume measure 
is not to be confused with conformal transformations of manifolds. 
Indeed, conformal transformations affect the geometry; 
the difference is the definition of the divergence and 
thus the integration by parts formula~\eqref{eq:weighted-ibp} below.
As far as we know, 
there is very few literature on measure theory paired with the more geometric concept of fibre bundles. 
After developing a notion of bundle measures on our own, 
we found the seemingly unknown paper~\cite{Goetz}, 
where the author concedes more degrees of topological freedom. 
Also, there is~\cite[Section~3.4.5]{Nicolaescu} 
wherein the author describes a `fibred calculus' just for smoothly indexed families of manifolds.
We give these references for sake of completeness, 
even though we stay with our bespoke construction of bundle measures.

Throughout this section, 
let $\bundleproj\colon\,\totalfold\longmapsin\basefold$ be a smooth fibre bundle 
with standard fibre~$F$ 
and define $\totalfold_b\vdef\bundleproj^{-1}(\{b\})$ for all $b\in\basefold$.
Furthermore, we suppose that
the fibre is actually a measurable space $(F,\mathfrak{F})$.
The property of local trivialisation yields a natural \sigmaalgebra 
$\borel{\basefold}\locprod\mathfrak{F}\vdef\sigma(\mathfrak{G})$ on~\totalfold with generator
\begin{displaymath}
    \mathfrak{G}
    \vdef
    \left\{
        \varphi^{-1}(U\times U_F)
    \ \middle\vert\ 
        \begin{matrix}
            b\in\basefold,\ 
            U_F\in\mathfrak{F},\ 
            W\text{ chart domain at~}b, \\ 
            U_b\subseteq\basefold \text{ open neighbourhood of~}b \text{ s.\,t.}\\
            \text{diffeomorphism }\varphi\text{ renders the diagram} \\
            \text{in~\autoref{fig:local-trivialisation} commutative},\\
            U\vdef U_b\cap W.
        \end{matrix}
    \right\}
.
\end{displaymath}
One might say that $\borel{\basefold}\locprod\mathfrak{F}$ is the canonical \sigmaalgebra on~\totalfold,  
however we will refer to it as the \emph{local product\-/\sigmaalgebra}. 
If $\mathfrak{F}=\borel{F}$, then 
$
    \borel{\basefold}\locprod\mathfrak{F} 
    = 
    \borel{\totalfold} 
$.
\begin{figure}[ht]\large
    \begin{tikzcd}
	    \arrow{r}{\varphi} \arrow[swap]{d}{\bundleproj} \bundleproj^{-1}(U_b) 
        & U_b\times F \arrow{dl}{\proj_{1}} \\
	    U_b & 
    \end{tikzcd}
    \caption{Local trivialisation of a fibre bundle}\label{fig:local-trivialisation}
\end{figure}

In the next definition, 
we restrict ourselves to probability measures 
as this is the sole situation of interest in the present paper.

\begin{definition}[bundle measure]
\label{def:bundle-measure}
    Let $\mu_{\basefold}$ and $\nu_F$ be probability measures 
    on $(\basefold,\borel{\basefold})$ and~$(F,\mathfrak{F})$ respectively.
    The \emph{(fibre) bundle measure on the total space \totalfold} is the pullback measure 
    $\mu_{\totalfold} \vdef \bundleproj^\ast \mu_{\basefold}$ 
    of $\mu_{\basefold}$ wrt.~\bundleproj supplemented with the fibre measure $\nu_F$.
    That is the unique probability measure on~\totalfold satisfying
    \begin{displaymath}
        \int_{\totalfold} f\ \diffd \mu_{\totalfold}
        =
        \int_{\basefold} \int_{\bundleproj^{-1}(\{b\})} %
            f\vert_{\bundleproj^{-1}(\{b\})}(e) %
        \ \diffd \nu_F(e)\,\diffd \mu_{\basefold}(b)
    \end{displaymath}
    for all bounded \measurable{\borel{\basefold}\locprod\mathfrak{F}}{\borel{\Rnum}} 
    functions $f\colon\,\totalfold\to\Rnum$.
\end{definition}

\begin{remark}
    In~\hyperref[def:bundle-measure]{Definition~\ref{def:bundle-measure}}, 
    the measure~$\nu_F$ on the fibre~$\bundleproj^{-1}(\{b\})$ is thought as an independent copy of~$\nu_F$ defined on~$F$.
    By definition a bundle measure yields a disintegration, 
    but obviously the former concept is motivated by the local product structure of fibre bundles 
    and tries to find an analogue of product measures respecting this structure, 
    whereas the latter is kind of the `factorisation' of measures 
    that are not necessarily product measures.
    In all generality, 
    it's a difficult problem to give conditions for a disintegration to exists;
    in our situation it appears as a byproduct.
\end{remark}

The following lemma is stated as an equivalent characterisation of bundle measures, 
however it could be formulated as an existence statement, since its proof is constructive.

\begin{lemma}[bundle measures locally are product measures]
\label{lem:loc-prod-measure}
    Let $\mu_{\basefold}$ and $\nu_F$ be probability measures on \basefold and $F$ respectively.
    Denote by $\mu_{\totalfold}\vdef \bundleproj^\ast \mu_{\basefold}$ the (fibre) bundle measure on \totalfold supplemented with fibre measure~$\nu_F$.
    Then, $\mu_{\totalfold}$ is the unique measure on \totalfold 
    which locally trivialises to the product measure $\mu_{\basefold}\mathbin{\otimes}\nu_F$ in the following sense: 
    By definition of fibre bundles, 
    for any $b\in\basefold$ there is a neighbourhood $U_b\subseteq\basefold$ of~$b$ 
    as well as a diffeomorphism~$\varphi$ 
    which renders the diagram in~\autoref{fig:local-trivialisation} commutative.
    Let $V\in\borel{\basefold}\locprod\mathfrak{F}$, $W$~a chart domain at~$b$ 
    and define $U\vdef W\cap U_b$ as well as $V_U\vdef V\cap \bundleproj^{-1}(U)$. 
    Then, $\mu_{\totalfold}$ obeys the transformation rule
    \begin{displaymath}
        \mu_{\totalfold}
        =
        \parentheses*{\varphi^{-1}}_\ast
        \parentheses*{
            \mu_{\basefold} \tensorprod \nu_F
        }
        \qquad\text{on }V_U
    .
    \end{displaymath}
    We say that $\mu_{\totalfold}$ is a loc\=/product measure and 
    introduce $\mu_{\totalfold}=\mu_{\basefold}\locprod\nu_F$ as the corresponding notation.
\end{lemma}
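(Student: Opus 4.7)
The statement consists of two parts: (a) the bundle measure $\mu_\totalfold$ of~\hyperref[def:bundle-measure]{Definition~\ref{def:bundle-measure}} actually does trivialise locally to $\mu_\basefold\tensorprod\nu_F$, and (b) this local product property characterises it uniquely. My plan is to prove (a) by evaluating $\mu_\totalfold$ on a generating family of rectangular cylinders and then to deduce (b) from a standard $\pi$-$\lambda$/uniqueness of extension argument on the local product-{\sigmaalgebra}.

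For (a), I would fix $b\in\basefold$, a chart neighbourhood $W$ of~$b$, a trivialising neighbourhood $U_b$ with diffeomorphism $\varphi\colon\bundleproj^{-1}(U_b)\to U_b\times F$, and set $U=W\cap U_b$. The decisive observation is that for every $b'\in U$, the restriction $\varphi\vert_{\bundleproj^{-1}(\{b'\})}$ is a diffeomorphism onto $\{b'\}\times F\cong F$, and under this identification $\nu_F$ on the abstract fibre $\bundleproj^{-1}(\{b'\})$ of~\hyperref[def:bundle-measure]{Definition~\ref{def:bundle-measure}} corresponds to $\nu_F$ on~$F$ itself. Hence for a generator $A=\varphi^{-1}(A_1\times A_2)$ with $A_1\in\borel{U}$ and $A_2\in\mathfrak{F}$, the defining formula of~$\mu_\totalfold$ applied to $f=\indicator{A}$ yields
\begin{displaymath}
    \mu_\totalfold(A)
    =
    \int_{\basefold}\indicator{A_1}(b')\,\nu_F(A_2)\ \diffd\mu_\basefold(b')
    =
    \mu_\basefold(A_1)\,\nu_F(A_2)
    =
    \parentheses*{\mu_\basefold\tensorprod\nu_F}\!\parentheses*{A_1\times A_2}
,
\end{displaymath}
which is exactly $(\varphi^{-1})_\ast(\mu_\basefold\tensorprod\nu_F)(A)$. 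Since such rectangles form a $\pi$-system generating the trace of $\borel{\basefold}\locprod\mathfrak{F}$ on $\bundleproj^{-1}(U)$, the uniqueness of measure extension (Dynkin's $\pi$-$\lambda$ theorem, applicable because everything is a probability measure) extends the identity to all $V_U\in\borel{\basefold}\locprod\mathfrak{F}$ contained in $\bundleproj^{-1}(U)$.

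For (b), suppose $\tilde\mu$ is any probability measure on $(\totalfold,\borel{\basefold}\locprod\mathfrak{F})$ satisfying the stated local transformation rule for every local trivialisation $\varphi$. Then $\tilde\mu$ and $\mu_\totalfold$ agree on the full generating $\pi$-system $\mathfrak{G}$ defined above~\autoref{fig:local-trivialisation}; by Dynkin's theorem they agree on $\borel{\basefold}\locprod\mathfrak{F}$.

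I expect the only delicate point to be the careful bookkeeping of what $\nu_F$ means on the abstract fibre $\bundleproj^{-1}(\{b'\})$ versus on the standard fibre~$F$, and the corresponding check that the identification is independent of the chosen trivialisation on overlaps—but this is automatic once one notes that the \emph{same} prescribed measure $\nu_F$ sits on every fibre via the universal property of the local trivialisation, so different choices of $\varphi$ only differ by a fibrewise diffeomorphism under which the trivialisation identity for $\mu_\totalfold$ is tautologically preserved. No compatibility cocycle condition on~$\nu_F$ is needed because $\nu_F$ is not transported via transition functions but simply stipulated on each fibre by pullback through any local trivialisation.
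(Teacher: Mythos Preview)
Your proposal is correct and follows essentially the same route as the paper: both arguments reduce to evaluating the measure on the $\pi$-system of trivialised cylinders $\varphi^{-1}(A_1\times A_2)$, identify the result with $(\mu_\basefold\tensorprod\nu_F)(A_1\times A_2)$ via Fubini, and then invoke the uniqueness theorem for measures on a $\pi$-system stable under intersections. The only cosmetic difference is the direction of travel---the paper phrases the argument as \emph{constructing} $\mu_\totalfold$ from the local transformation rule and then verifying the defining integral formula of~\hyperref[def:bundle-measure]{Definition~\ref{def:bundle-measure}} via simple-function approximation, whereas you start from that definition and verify the local rule---but the mathematical content is identical.
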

\begin{proof}
    First, we construct the bundle measure just from the local transformation rule --
    basically, that's the proof of existence of such a measure.
    Note that measurable sets of the same form as~$V_U$ 
    generate the local product\-/\sigmaalgebra on~\totalfold, 
    that they are linked to cylinder sets of the product\-/\sigmaalgebra on $U\times F$ via~$\varphi$,
    and that they form a family stable wrt.\ intersections, 
    when the empty set is included of course.
    Since $\varphi$ is continuously invertible, its inverse is measurable. 
    As the pushforward measure of~$\mu_{\totalfold}$ wrt.~$\varphi$ should be $\mu_{\basefold}\mathbin{\otimes}\nu_F$,  
    we declare~$\mu_{\totalfold}$ to be 
    the pushforward measure of $\mu_{\basefold}\mathbin{\otimes}\nu_F$ wrt.~$\varphi^{-1}$. 
    It satisfies the desired transformation rule by construction.
    By Fubini-Tonelli this is the particular instance of the integral equation 
    \begin{displaymath}
        \int_{\totalfold} f\ \diffd\mu_{\totalfold}
        =
        \int_{\basefold}\int_{\bundleproj^{-1}(\{b\})}
            f\vert_{\bundleproj^{-1}(\{b\})}(e)\
        \diffd\nu_F(e)\,\diffd \mu_{\basefold}(b)
    \end{displaymath}
    with $f=\indicator{V_U}$. 
    We use this to obtain the general equation via 
    approximating bounded measurable functions with simple functions and 
    apply Lebesgue dominated convergence.
    The uniqueness assertion is fulfilled by the general uniqueness theorem for measures 
    which finishes the proof.
\end{proof}

\begin{example}[
M\"obius strip]
    A M\"obius strip can be thought as fibre bundle
    with base manifold $\basefold = \sphere{1}$
    and standard fibre~$F$ being an open interval of finite length. 
    The natural volume measure on the strip coincides on~$\mathfrak{G}$
    with a bundle measure 
    where the base measure~$\mu_{\sphere{1}}$ is the volume measure on the circle 
    and the fibre measure is the Lebesgue measure on an interval: $\nu_F=\Leb$.
    Since~$\mathfrak{G}\cup\{\emptyset\}$ is a generator stable wrt.\ intersections, 
    the volume measure on the M\"obius strip coincides with the Lebesgue\-/type bundle measure.
    For a set $V_U\vdef V\cap\bundleproj^{-1}(U)$ we have that
    \begin{displaymath}
        \mu_{\totalfold}(V_U)
        =
        \int_{\bundleproj(V_U)\times\proj_2\bincirc\varphi(V_U)}
            \abs*{\det\parentheses*{d\varphi^{-1}}}
        \ \diffd \mu_{\basefold}\tensorprod\nu_F
    ,
    \end{displaymath}
    where $\proj_2$ denotes projection to the second component.
\end{example}

\begin{remark}
\label{rem:L2dense-testfunctions}
    At this point, it should be clear that 
    \bigL2{\totalfold}{\mu_{\basefold}\locprod\nu_F}
    is not isomorphic to the Hilbert space tensor product
    $
        \bigL2{\basefold}{\mu_{\basefold}}
        \tensorprod
        \bigL2{F}{\nu_F}
    $.
    Anyway, there might be nice spaces of test functions 
    that are dense in \bigL2\=/spaces for bundle measures.
    For sake of simplicity, 
    suppose that the base measure is absolutely continuous wrt.\ a given Riemannian volume measure 
    and moreover that 
    $F\subseteq\Rnum^\callidim{d}$ is a smooth submanifold, 
    $\mathfrak{F}\vdef F\cap\borel{\Rnum^\callidim{d}}$
    and $\nu_F$ is absolutely continuous wrt.\ restricted Lebesgue measure.
    Let $\tanproj^{-1}(U) = \varphi^{-1}(U\times F)\in\mathfrak{G}$. 
    By~\cite[Theorem~II.10]{ReedSimonI}
    we know that
    $\smoothcompact{U}\tensorprod\smoothcompact{F}$
    is dense in
    $\bigL2{U\times F}{\mu_{\basefold}\tensorprod\nu_F}$, 
    hence
    \begin{align*}
        \varphi_\ast\parentheses*{
            \smoothcompact{U}\tensorprod\smoothcompact{F}
        }
        &=
        \varphi_\ast\smoothcompact{U}\tensorprod\varphi_\ast\smoothcompact{F}
        \\
        &\vdef
        \linhull\left\{
            (f\pdot g) \bincirc\varphi
        \ \middle\vert\ 
            f\in\smoothcompact{U},
            g\in\smoothcompact{F}
        \right\}
    \end{align*}
    is dense in
    $\bigL2{\tanproj^{-1}(U)}{\mu_{\basefold}\locprod\nu_F}$. 
    From a partition of unity argument we can infer 
    that~\smoothcompact{\totalfold} is dense in 
    \bigL2{\totalfold}{\mu_{\basefold}\locprod\nu_F}.
\end{remark}
 
\begin{definition}[weighted (fibre) bundles]
\label{def:weighted-bundle}
    Consider functions
    $\rho_{\basefold}\colon\,\basefold\to[0,\infty)$
    and $\rho_F\colon\,F\to[0,\infty)$.
    A function~$\rho\colon\,\totalfold\to[0,\infty)$ is called 
    \emph{bundle weighting with base weight~$\rho_{\basefold}$ and fibre weight~$\rho_F$}
    if it locally trivialises to the product function 
    $\rho_{\basefold}\pdot\rho_F$ 
    in the following sense: 
    Consider the local trivialisation over an open neighbourhood~$U_b\subseteq\basefold$ at~$b$
    with diffeomorphism~$\varphi$ rendering the diagram in~\autoref{fig:local-trivialisation} commutative.
    Then, $\rho$~satisfies
    \begin{displaymath}
        \rho(v)
        =
        \rho_{\basefold}(\bundleproj(v))\pdot\rho_F(\proj_2\bincirc\varphi(v))
        \qquad\text{for all }v\in\bundleproj^{-1}(U_b)
    .
    \end{displaymath}
    A fibre bundle together with a bundle weight is called a \emph{weighted (fibre) bundle}.
\end{definition}

\begin{lemma}[existence and uniqueness of bundle weightings]
    In the situation of~\hyperref[def:weighted-bundle]{Definition~\ref{def:weighted-bundle}}
    a bundle weight~$\rho$ exists 
    and is uniquely determined by base weight and fibre weight.
    Therefore, we introduce the corresponding notation 
    $\rho = \rho_{\basefold}\locprod\rho_F$.
\end{lemma}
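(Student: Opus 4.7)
The plan is to establish uniqueness first, since it falls straight out of the defining property, and then to construct~$\rho$ by gluing local pieces coming from bundle trivialisations. Uniqueness will be immediate: if $\rho_1$ and $\rho_2$ are both bundle weightings with the prescribed base and fibre weights, then at any $v\in\totalfold$ the defining formula applied in some trivialisation around $\bundleproj(v)$ forces both values to equal $\rho_{\basefold}(\bundleproj(v))\pdot\rho_F(\proj_2\bincirc\varphi(v))$, whence $\rho_1=\rho_2$ on all of~$\totalfold$.

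For existence I would fix an atlas $\{(U_\alpha,\varphi_\alpha)\}_\alpha$ of trivialising neighbourhoods covering~$\basefold$ and tentatively declare
\[
    \rho_\alpha(v)
    \vdef
    \rho_{\basefold}(\bundleproj(v))\pdot\rho_F(\proj_2\bincirc\varphi_\alpha(v)),
    \qquad v\in\bundleproj^{-1}(U_\alpha).
\]
The decisive step, and the main technical obstacle I expect, is to verify that $\rho_\alpha$ and $\rho_\beta$ agree on the overlap $\bundleproj^{-1}(U_\alpha\cap U_\beta)$. Writing the transition as $\varphi_\beta\bincirc\varphi_\alpha^{-1}(b,f)=(b,g_{\alpha\beta}(b)(f))$, with a cocycle $g_{\alpha\beta}$ valued in the structure group of the bundle, this agreement is equivalent to the invariance identity $\rho_F(g_{\alpha\beta}(b)(f))=\rho_F(f)$ for all admissible~$b$ and~$f$.

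This invariance is the load-bearing compatibility condition implicit in the notion of a bundle weighting; in the concrete Riemannian (unit) tangent bundles that figure in the subsequent sections the structure group reduces to $\mathrm{O}(\posdim)$, and every fibre weight appearing there depends only on a rotation-invariant quantity such as the fibrewise norm, so the required invariance holds automatically. Once the overlap compatibility is secured, the $\rho_\alpha$ glue to a globally defined $\rho\colon\,\totalfold\to[0,\infty)$, and the same invariance argument extends the defining equation from the chosen atlas to every local trivialisation, yielding existence and justifying the notation $\rho=\rho_{\basefold}\locprod\rho_F$.
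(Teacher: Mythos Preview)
Your argument follows the same broad outline as the paper---define the weight locally via trivialisations and then globalise---but you are more explicit about the crucial overlap compatibility, and this is a genuine improvement over the paper's treatment.

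The paper's proof simply writes down the local formula and then invokes a partition of unity subordinate to a cover by trivialising chart domains to ``glue the definitions of~$\rho$ in the respective local trivialisations together.''  This is somewhat misleading: a partition-of-unity combination $\sum_\alpha\psi_\alpha\rho_\alpha$ only yields a bundle weighting in the sense of the definition if the local pieces $\rho_\alpha$ already agree on overlaps, in which case the partition of unity is superfluous and one glues directly as you do.  If the $\rho_\alpha$ disagreed on overlaps, the convex combination would fail the defining equation in at least one of the competing trivialisations.  So the partition of unity is not carrying any load; the real content is exactly the invariance $\rho_F\bigl(g_{\alpha\beta}(b)(f)\bigr)=\rho_F(f)$ you isolate.

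You are also right that this invariance is an implicit compatibility hypothesis rather than a theorem, and that in the paper's applications (tangent and unit tangent bundles with orthogonal structure group and rotation-invariant fibre weights such as Gaussian densities or the uniform spherical density) it is automatic.  Your proof is therefore correct and, in making the hypothesis visible, arguably cleaner than the paper's.
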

\begin{proof}
    In the local trivialisation 
    the only possible weight function is given via
    \begin{displaymath}
        \rho
        =
        ((\rho_{\basefold}\bincirc\proj_1) \pdot (\rho_F\bincirc\proj_2))\bincirc\varphi
        \qquad\text{on }\bundleproj^{-1}(U_b)
    .
    \end{displaymath}
    Fix an open cover of~\basefold by chart domains, 
    then the preimages form an open cover of~\totalfold. 
    With a partition of unity subordinate to the latter cover, 
    we can glue the definitions of~$\rho$ in the respective local trivialisations together
    which finishes the proof.
\end{proof}

\begin{remark}[local Radon-Nikod\'ym derivatives]
    Suppose some bundle measure $\mu_{\totalfold} = \mu_{\basefold}\locprod\nu_F$ such that
    $\mu_{\basefold}$ has a Radon-Nikod\'ym derivative 
    $\rho_{\basefold}\vdef\frac{d\mu_{\basefold}}{dm_{\basefold}}$ 
    wrt.\ another measure~$m_{\basefold}$ on~$(\basefold,\borel{\basefold})$ 
    and also $\nu_F$ has a Radon-Nikod\'ym derivative
    $\rho_F\vdef\frac{d\nu_F}{dm_F}$ 
    wrt.\ another measure~$m_F$ on~$(F,\mathfrak{F})$.
    Then, we think of the induced bundle weighting 
    $\rho = \rho_{\basefold}\locprod\rho_F$
    as a local Radon-Nikod\'ym derivative
    in view of it holds
    \begin{align*}
        \mu_{\totalfold} = \mu_{\basefold}\locprod\nu_F
        &=
        (\rho_{\basefold} m_{\basefold})\locprod(\rho_F m_F)
        \\
        &=
        (\rho_{\basefold}\locprod\rho_F) (m_{\basefold}\locprod m_F)
        =
        \rho (m_{\basefold}\locprod m_F)
    .
    \end{align*}
    For sake of brev'ty, we call~$\rho$ \emph{loc\=/density of~$\mu_{\totalfold}$}.
\end{remark}

\begin{remark}[Ehresmann connection and weighting]
    We emphasise again that weighting does not effect geometry: 
    The Ehresmann connection induced by the metric~\riemannmetric* on~\posfold, 
    see~\autoref{subsec:Ehresmann-Sasaki} below, 
    does not change in the course of the weighting procedure.
    Indeed, the corresponding Levi-Civita connection \connection{}{\riemannmetric*} 
    is not affected by reweighting as one can easily check 
    using the Leibniz rule when checking the metric compatibility condition.
    Also several geometric objects associated to this connection 
    just depend on the original metric~\riemannmetric* and not on the weight function.
\end{remark}

\begin{example}[trivial bundles]
    If the fibre bundle is trivial, 
    then bundle measures are pushforwards of product measures 
    wrt.\ the trivialisation isomorphism.
    Loc\=/densities basically are products of the densities for the respective measures.

    One particularly simple example arises with the standard fibre being a singleton.
    Then, every fibre measure is absolutely continuous wrt.\ the Dirac measure for the single point in~$F$
    and fibre weightings reduce to multiplication with a constant factor.
\end{example}

\begin{example}[weighted manifolds]
\label{ex:weighted-manifold}
    Consider an orientable Riemannian manifold~$(\basefold,\basemetric*)$ 
    and a strictly positive, nonconstant, smooth\footnote{%
        Smoothness is actually not required, 
        we could use e.\,g.\ a loc\=/Lipschitzian weight function instead.
        But in order to have~\hyperref[eq:weighted-ibp]{Equation~\eqref{eq:weighted-ibp}} 
        we need that at least locally a weak gradient of~$\rho_{\basefold}$ exists 
        and a weak version of Stokes Theorem.
    } weight function $\rho_{\basefold}\in\smoothfunc{\basefold}$. 
    The \emph{Riemannian metric weighted by~$\rho_{\basefold}$} 
    or just \emph{$\rho_{\basefold}$\=/weighted metric} on~\basefold
    is given as
    \begin{displaymath}
        \wbasemetric{v}{w} 
        \vdef
        \basemetric{\rho_{\basefold}v}{\rho_{\basefold}w} 
    \end{displaymath}
    for all $v,w\in\tanbundle{\basefold}$ with $\tanproj(v)=\tanproj(w)$.
    We do not keep the usual definition of gradients for this weighted metric, 
    but in fact choose the \emph{$\rho_{\basefold}$\=/weighted gradient} as 
    $\grad{\wbasemetric*} \vdef \frac1{\rho_{\basefold}}\grad{\basemetric*}$.
    I.\,e.\ for all smooth vector fields~$\Xfield$ on~\basefold 
    and functions $f\in\smoothfunc{\basefold}$ holds 
    $
        \wbasemetric{\Xfield}{\grad{\wbasemetric*}f}
        =
        \rho_{\basefold}\pdot\partial_{\Xfield}f
    $.
    The \emph{$\rho_{\basefold}$\=/weighted divergence} is to be defined as
    \begin{displaymath}
        \divergence{\wbasemetric*}
        \vdef
        \frac1{\rho_{\basefold}}\, \divergence{\basemetric*}(\rho_{\basefold}^2 \pdot\ID)
        ,
    \end{displaymath}
    since the \emph{$\rho_{\basefold}$\=/weighted Laplace-Beltrami operator}
    \begin{displaymath}
        \laplace{\wbasemetric*} 
        \vdef 
        \divergence{\wbasemetric*}(\grad{\wbasemetric*})
        = 
        \frac1{\rho_{\basefold}}\, 
        \divergence{\basemetric*}(\rho_{\basefold}\grad{\basemetric*})
    \end{displaymath}
    shall obey the following integration by parts formula:
    \begin{equation}
    \label{eq:weighted-ibp}
        \int_{\basefold} 
            \laplace{\wbasemetric*}f \pdot g
        \ \diffd\Leb{\wbasemetric*}
        =
        - \int_{\basefold}
            \wbasemetric{\grad{\wbasemetric*}f}{\grad{\wbasemetric*}g}
        \ \diffd\Leb{\wbasemetric*}
        =
        - \int_{\basefold}
            \basemetric{\grad{\basemetric*}f}{\grad{\basemetric*}g}
        \ \diffd\Leb{\wbasemetric*}
        =
        \int_{\basefold} 
            f \pdot \laplace{\wbasemetric*}g
        \ \diffd\Leb{\wbasemetric*}
    \end{equation}
    for all $f,g\in\smoothcompact{\posfold}$, 
    where~$\Leb{\basemetric*}$ and $\Leb{\wbasemetric*}=\rho_{\basefold}\Leb{\basemetric*}$ 
    refer to Riemannian volume measures. 
    If the weight function is constant, 
    then the definition of the gradient is not to be changed: 
    $\grad{\wbasemetric*}=\grad{\basemetric*}$.
\end{example}

\begin{notation}
\label{not:Lp-measure}
    For sake of readability, 
    we write~\bigL{p}{\basefold}{\wbasemetric*} 
    instead of~\bigL{p}{\basefold}{\Leb{\wbasemetric*}}.
    Similar notation is used for other spaces depending on a weighted Riemannian volume measure.
\end{notation}

\begin{remark}[adjoint vector fields wrt.\ weighted metric]
\label{rem:adjoint-sec}
    Let $(\basefold,\basemetric*)$ be an orientable Riemannian manifold.
    The general form of an adjoint vector field is given via the Divergence Theorem: 
    If \Xfield is a vector field on~\basefold, 
    then its adjoint wrt.\ the Riemannian metric, 
    i\,e.\ wrt.\ the \bigL2{\basefold}{\basemetric*}\=/scalar product, 
    is $\adjoint{\Xfield} = - \Xfield - \divergence_{\basemetric*}(\Xfield)$.
    Thus, solenoidal vector fields could be viewed as antisymmetric operators.
    Introducing a smooth, nonconstant weight function~$\rho_{\basefold}$ on~\basefold as above yields
    \begin{align*}
        \int_{\basefold} \Xfield f\pdot g \ \diffd\Leb{\wbasemetric*}
        &=
        \int_{\basefold} 
            - f\Xfield(\rho_{\basefold} g)
            - \divergence{\basemetric*}(\Xfield)\rho_{\basefold} fg 
        \ \diffd\Leb{\basemetric*}
        \\
        &=
        \int_{\basefold} 
            - \rho_{\basefold} f \Xfield g
            - fg \Xfield\rho_{\basefold}
            - \divergence{\basemetric*}(\Xfield)\rho_{\basefold} fg 
        \ \diffd\Leb{\basemetric*}
        \\
        &=
        \int_{\basefold} 
            - f \Xfield g
            - fg \,\frac1{\rho_{\basefold}}\, \Xfield\rho_{\basefold}
            - \divergence{\basemetric*}(\Xfield) fg 
        \ \diffd\Leb{\wbasemetric*}
    \end{align*}
    for all $f,g\in\smoothcompact{\basefold}$.
    Hence, the adjoint of~\Xfield wrt.~\wbasemetric* reads as 
    \begin{displaymath}
        \adjoint{\Xfield} 
        = 
        - \Xfield 
        - \divergence_{\basemetric*}(\Xfield) 
        - \frac1{\rho_{\basefold}}\Xfield\rho_{\basefold}
    .
    \end{displaymath}
    Therefore, we would correct the differential operator~\Xfield 
    for a vector field~\Xfield solenoidal wrt.~\basemetric* 
    by the \emph{logarithmic derivative of $\rho$ along~\Xfield} to become antisymmetric again, 
    i.\,e.\ replace~\Xfield by~$\Xfield+\frac1{\rho_{\basefold}}\Xfield\rho_{\basefold}$.
\end{remark}

\begin{lemma}[weighted Laplace-Beltrami in terms of logarithmic derivative]
\label{lem:weighted-Laplace}
    Again, let $(\basefold,\basemetric*)$ be an orientable Riemannian manifold 
    weighted by~$\rho_{\basefold}$ strictly positive, nonconstant and smooth.
    The weighted Laplace-Beltrami is written as
    \begin{displaymath}
        \laplace{\wbasemetric*} 
        = 
        \laplace{\basemetric*} 
        + 
        \frac1{\rho_{\basefold}}\,\basemetric{\grad{\basemetric*}\rho_{\basefold}}{\grad{\basemetric*}}
        =
        \laplace{\basemetric*} 
        + 
        \frac1{\rho_{\basefold}}\,\grad{\basemetric*}\rho_{\basefold}
    .
    \end{displaymath}
    We call the second summand the \emph{logarithmic derivative of $\rho_{\basefold}$}.
    In particular, for $\rho_{\basefold}=\exp(-\psi)$ with $\psi\in\smoothfunc{\basefold}$ 
    we obtain
    \begin{displaymath}
        \laplace{\wbasemetric*} 
        =
        \laplace{\basemetric*} - \grad{\basemetric*}\psi
    .
    \end{displaymath}
\end{lemma}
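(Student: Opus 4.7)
The plan is to compute $\laplace{\wbasemetric*}$ directly from its definition given in~\hyperref[ex:weighted-manifold]{Example~\ref{ex:weighted-manifold}}, namely $\laplace{\wbasemetric*} = \frac{1}{\rho_{\basefold}}\,\divergence{\basemetric*}(\rho_{\basefold}\grad{\basemetric*})$, by applying the standard Leibniz rule for the divergence of a scalar multiple of a vector field.

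First, I would recall that for any $f\in\smoothfunc{\basefold}$ and any smooth vector field \Xfield on~\basefold, the Leibniz rule for the unweighted divergence yields
\begin{displaymath}
    \divergence{\basemetric*}(f\Xfield)
    = f\,\divergence{\basemetric*}(\Xfield)
    + \basemetric{\grad{\basemetric*}f}{\Xfield}
    .
\end{displaymath}
This is entirely classical on an orientable Riemannian manifold and follows e.g.\ by testing against a compactly supported smooth function and performing a single integration by parts against $\grad{\basemetric*}$. Applying this identity with $f=\rho_{\basefold}$ and $\Xfield=\grad{\basemetric*}g$ for a test function~$g\in\smoothfunc{\basefold}$ gives
\begin{displaymath}
    \divergence{\basemetric*}(\rho_{\basefold}\grad{\basemetric*}g)
    = \rho_{\basefold}\,\laplace{\basemetric*}g
    + \basemetric{\grad{\basemetric*}\rho_{\basefold}}{\grad{\basemetric*}g}
    .
\end{displaymath}
Dividing by~$\rho_{\basefold}$ (permissible by strict positivity) immediately yields the first claimed identity. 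Identifying $\basemetric{\grad{\basemetric*}\rho_{\basefold}}{\grad{\basemetric*}g}$ with the differential operator $\grad{\basemetric*}\rho_{\basefold}$ acting on~$g$ then produces the second, more compact form.

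For the special case $\rho_{\basefold}=\exp(-\psi)$ with $\psi\in\smoothfunc{\basefold}$, I would use the chain rule for the gradient, which gives $\grad{\basemetric*}\rho_{\basefold}=-\exp(-\psi)\grad{\basemetric*}\psi=-\rho_{\basefold}\grad{\basemetric*}\psi$. Substituting this into the formula just derived, the factor $\rho_{\basefold}$ cancels against $1/\rho_{\basefold}$ and one arrives at $\laplace{\wbasemetric*}=\laplace{\basemetric*}-\grad{\basemetric*}\psi$, again understood as a differential operator identity.

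There is essentially no obstacle here: the result is a direct computation from the definition together with the Leibniz rule for the divergence. The only point of minor care is to ensure that the operator form $\grad{\basemetric*}\rho_{\basefold}$ in the second line of the claim is interpreted as the first-order differential operator $g\mapsto\basemetric{\grad{\basemetric*}\rho_{\basefold}}{\grad{\basemetric*}g}$, which is the convention fixed in~\hyperref[ex:weighted-manifold]{Example~\ref{ex:weighted-manifold}}. Once this convention is in place, both stated equalities follow without further work.
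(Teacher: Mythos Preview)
Your proposal is correct and follows essentially the same approach as the paper, which simply remarks that the proof works as in the Euclidean case using the Leibniz rule, Stokes' Theorem, and the defining characterisation of the gradient. Your explicit computation via $\divergence{\basemetric*}(\rho_{\basefold}\grad{\basemetric*}g) = \rho_{\basefold}\,\laplace{\basemetric*}g + \basemetric{\grad{\basemetric*}\rho_{\basefold}}{\grad{\basemetric*}g}$ is exactly what the paper has in mind.
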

\begin{proof}
    Substantially, the proof looks like in the Euclidean case 
    as we need just the Leibniz rule, Stokes Theorem and the defining characterisation of the gradient.
\end{proof}

Later on, we will have to talk about Poincar\'e inequalities. 
For this purpose among others, 
we shall fix some notions concerning sections in general. 
In a nutshell, we are relying on the assumption that 
smooth vector fields viewed as first order differential operators 
could equivalently seen as smooth sections.
This is true in all applications we are interested in, 
but we are aware of counterexamples like noncommutative tori, 
see~\cite{Rosenberg}. 
In this paper, we just leave issues of noncommutative geometry aside 
and won't mention them again.

\begin{notation}[space of sections]
    We denote by~\measurablesec{\basefold}{\totalfold} the space of measurable sections.
    If no confusion is possible, 
    we omit the base manifold writing just~\measurablesec{\totalfold}.
    Moreover, we denote the space of $m$-times continuously differentiable sections 
    by~\contidiffsec{m}{\basefold}{\totalfold} or just~\contidiffsec{m}{\totalfold}. 
    As usual the differentiability parameter equals the regularity of the differentiable structure, 
    thus it is $m=\infty$ in our context for sake of simplicity.
\end{notation}

For the rest of this subsection, 
we consider~$F$ to be a Banach space, i.\,e.\ 
the fibre bundle is a Banach bundle, 
see e.\,g.~\cite[Chapter~III]{SergeLang}.
This gives us a section~$\abs{\cdot}{\totalfold}$ 
in the bundle of functions $\totalfold\to\Rnum$ over~\basefold such that
$
    \abs{\cdot}{\totalfold, b}\vdef\abs{\cdot}{\totalfold}(b)\colon\,
    \totalfold_b\to\Rnum
$ 
is a norm 
and $(\totalfold_b,\abs{\cdot}{\totalfold, b})$ is a Banach space
for all $b\in\basefold$.
One might think of~$\abs{\cdot}{\totalfold}$ as a `Riemannian norm', 
but should be careful since it is not necessarily related to a Riemannian metric.
Furthermore, we assume a $\sigma$\=/finite measure $\mu_{\basefold}$  on~$(\basefold,\borel{\basefold})$.

\begin{definition}[Integrable sections]
    Let $p\in[1,\infty)$ and $\Xfield\in\measurablesec{\totalfold}$.
    We call~\Xfield \emph{$p$\=/integrable wrt.~$\mu_{\basefold}$},
    if the integral  
    $
        \int_{\basefold}
            \abs{ \Xfield(b) }{\totalfold,b}^p
        \ \diffd\mu_{\basefold}(b)
    $
    is finite.
    By~\bigL{p}{\basefold\to\totalfold}{\mu_{\basefold}} 
    we denote the set of equivalence classes of $p$\=/integrable sections 
    wrt.\ equality {$\mu_{\basefold}$}\=/almost everywhere.
    Clearly, we endow~\bigL{p}{\basefold\to\totalfold}{\mu_{\basefold}} with the norm 
    \begin{displaymath}
        \norm{\cdot}{\bigL{p}{\basefold\to\totalfold}{\mu_{\basefold}}}
        \vdef
        \parentheses*{
            \int_{\basefold}
                \abs{ \cdot }{\totalfold,b}^p
            \ \diffd\mu_{\basefold}(b)
        }^{\sfrac1p}
    .
    \end{displaymath}
    As usual for the case $p=\infty$, we define~\bigL\infty{\basefold\to\totalfold}{\mu_{\basefold}}
    as the space of measurable sections $\Xfield\in\measurablesec{\totalfold}$ which are bounded almost everywhere, i.\,e.\ 
    there is a $c\in(0,\infty)$ such that 
    $\abs{ \Xfield(b) }{\totalfold,b} \leq c$
    for $\mu_{\basefold}$\=/almost all $b\in\basefold$.
    The norm of such a $\Xfield\in\bigL\infty{\basefold\to\totalfold}{\mu_{\basefold}}$ 
    is the infimum of all such bounds~$c$.
\end{definition}

Completeness of these spaces of integrable sections is shown 
like in the Fischer-Riesz Theorem for usual \bigL{p}\=/spaces.
One might say that we defined~\bigL{p}{\basefold\to\totalfold}{\mu_{\basefold}} 
as a `direct integral of Banach spaces'.
This is merely a verbalisation of a more general concept of 
integrating homoousios fibres wrt.\ some ($\sigma$\=/finite) measure on the base space.
This belongs to mathematical folklore and 
we can not discuss this construction here in exhausting detail.

\begin{example}[direct integral of Hilbert space fibres]
    Let \basefold be a {\callidim{b}}\=/dimensional Riemannian manifold, 
    the standard fibre~$F$ be $\Rnum^{\callidim{d}}$ for some natural number~\callidim{d}
    and let a section~\basemetric* 
    in the bundle of symmetric bilinear forms
    $\mathrm{Sym}(\totalfold)\rightarrow\basefold$
    such that it is pointwise positive semidefinite. 
    Then, \basemetric* induces a section of norms via 
    \begin{displaymath}
        \abs{ e }{\basemetric*}
        \vdef
        \basemetric{e}{e}{\bundleproj(e)}^{\frac12}
        \qquad\text{for all }e\in\totalfold
    \end{displaymath}
    and all fibres are Hilbert spaces, 
    since all norms on~$\Rnum^{\callidim{d}}$ are equivalent.
    Thus, we can describe the space of square-integrable sections 
    as the direct integral 
    $\int_{\basefold}^{\oplus} \totalfold_b \ \diffd\mu_{\basefold}(b)$
    wrt.~$\mu_{\basefold}$.
    This gives us a Hilbert space again.
    Of course, $(\basefold,\basemetric*)$ being a Riemannian manifold
    would be the most interesting case, i.\,e.\ 
    $\totalfold=\tanbundle\basefold$ with $F=\Rnum^{\callidim{b}}$ 
    and the section~\basemetric* being a Riemannian metric.
    Then, embracing~\hyperref[not:Lp-measure]{Notation~\ref{not:Lp-measure}} we have that
    \begin{displaymath}
        \bigL2{\basefold\to\tanbundle\basefold}{\basemetric*}
        =
        \int_{\basefold}^{\oplus} \tanbundle{\basefold}{b} \ \diffd\Leb{\basemetric*}(b)
    .
    \end{displaymath}
\end{example}

In principle, a function $u\in\Lloc1{\basefold}{\basemetric*}$ on a Riemannian manifold $(\basefold,\basemetric*)$
is weakly differentiable 
if the composition $u\bincirc b$ is weakly differentiable for every chart~$b$.
For the more formal (topological) definition of Sobolev spaces~\sobolev{m}{p}{\basefold} on~\basefold 
we refer to~\cite[Section~2.2]{Hebey}.
As in the case of Sobolev spaces, 
we are not content with characterising a weak gradient in charts 
as vector field with locally integrable components, 
but from the existence proof for Sobolev spaces as function spaces 
we know the coordinate-free description of weak gradients:
A function~$u$ is weakly differentiable 
if there is an element 
$\Ufield\in\Lloc1{\basefold\to\tanbundle\basefold}{\basemetric*}$ satisfying
\begin{displaymath}
    \int_{\basefold} 
        \basemetric{\Xfield}{\Ufield}
    \ \diffd\Leb{\basemetric*}
    =
    -
    \int_{\basefold} 
        \divergence{\basemetric*}(\Xfield) \pdot u
    \ \diffd\Leb{\basemetric*}
    \qquad\text{for all }\Xfield\in\smoothcompactsec{\tanbundle\basefold}
.
\end{displaymath}
We denote such a weak gradient as~$\Ufield = \grad{\basemetric*}u$ in analogy to the usual gradient.
In case of $u\in\sobolev1{p}{\basefold}$ 
we have that 
$\grad{\basemetric*}u\in\bigL{p}{\basefold\to\tanbundle\basefold}{\basemetric*}$.
The previous characterisation instances to
\begin{displaymath}
    \int_{\basefold} 
        \Ufield\varphi
    \ \diffd\Leb{\basemetric*}
    =
    \int_{\basefold} 
        \basemetric{\grad{\basemetric*}\varphi}{\Ufield}
    \ \diffd\Leb{\basemetric*}
    =
    -\int_{\basefold} 
        u\laplace{\basemetric*}\varphi
    \ \diffd\Leb{\basemetric*}
    \qquad\text{for all }\varphi\in\smoothcompact{\basefold}
.
\end{displaymath}
If 
$
    \int_{\basefold} 
        u \laplace{\basemetric*}\varphi
    \,\diffd\Leb{\basemetric*}
    =
    0
$ for all test functions $\varphi\in\smoothcompact{\basefold}$, 
then we call~$u$ \emph{weakly harmonic}.
Using this definition 
we bypass the problem to extend the divergence operator -- 
which is taking the trace of the covariant derivative -- 
with domain~\smoothsec{\tanbundle\basefold} 
to all the possible weak gradients in the Banach space \bigL{p}{\basefold\to\tanbundle\basefold}. 
In view of Weyl's Lemma characterising weakly harmonic functions 
those functions can be thought as harmonic functions, 
since we don't have boundaries in this paper.

\begin{example}[loc-Lipschitz potentials]
\label{ex:loc-Lipschitz}
    We know that
    if~$\psi$ is Lipschitzian\footnote{%
        We say that functions~$f\colon\,\basefold\to\Rnum$ are Lipschitzian if
        there is a positive constant~$C$ such that 
        $\vert f(x)-f(y) \vert \leq C d_{\basemetric*}(x,y)$ 
        for all $x,y\in\basefold$, 
        where we denote by~$d_{\basemetric*}$ the intrinsic metric induced by the Riemannian metric~\basemetric*.
        As a consequence, 
        if we carelessly embed~$\basefold$ into some~$\Rnum^{\callidim{d}}$ large enough, 
        the embedding might change the structure of the metric space and 
        thus the class of Lipschitz functions.
    } 
    with compact support, 
    then $\psi\in\sobolev1p{\basefold}$ for all $p\in[1,\infty]$
    meaning that~$\grad{\basemetric*}\psi$ exists as a weak gradient, 
    see~\cite[Proposition~2.4]{Hebey}. 
    Indeed, if~\basefold is a compact manifold, 
    then the proposition applies to any Lipschitzian~$\psi$; 
    in general, we have not a compact manifold, 
    but consider arbitrary compact subsets instead.
\end{example}

\subsection{Ehresmann connections and Sasaki metric}
\label{subsec:Ehresmann-Sasaki}

In this subsection, 
we introduce Ehresmann connections 
as a decomposition of the double tangent bundle in terms of a Whitney sum 
as well as the Sasaki metric on the tangent space 
as the Riemannian metric respecting the entire Ehresmann connection.

\begin{definition}[vertical bundle]
    Let $\bundleproj\colon\,\totalfold\longmapsin\basefold$ be a (smooth) fibre bundle. 
    The \emph{space of vertical (tangent) vectors} is 
    $\vertbundle{\totalfold}\vdef\nullspace{d\bundleproj}$, 
    the nullspace of the differential 
    $d\bundleproj\colon\,\tanbundle{\totalfold}\to\tanbundle{\basefold}$.
    Vertical vectors are thought as being tangent to the fibres of~\bundleproj.
    This yields the so-called \emph{vertical bundle} 
    $\tanproj\colon\,\vertbundle{\totalfold}\longmapsin\totalfold$. 
    As $d_v\bundleproj$ is surjective for all $v\in\totalfold$, 
    the vertical bundle is a smooth subbundle. 
    Smooth sections in this bundle are called \emph{vertical}.

    Additionally, if $\bundleproj\colon\,\totalfold\longmapsin\basefold$ happens to be a vector bundle, 
    than we can define the \emph{vertical lift at $v$}  
    $\vertlift_v\colon\ 
    \totalfold_{\bundleproj(v)} \to \tanbundle{\totalfold}{v}$
    for $v\in\totalfold$ fixed 
    via the action on arbitrary test functions $f\in\smoothfunc{\totalfold_{\bundleproj(v)}}$ as
    $
        \dualpair{\vertlift_v(w)}{d_vf}
        =
        \frac{d}{dt} f(v+tw)\bigr\vert_{t=0}
    $.
    If $\totalfold=\tanbundle\basefold$, then
    \begin{displaymath}
        \dualpair{\vertlift_v(w)}{d_v(df)}
        =
        \dualpair{w}{df}
        \qquad\text{for all }f\in\smoothfunc{\basefold} 
    \end{displaymath}
    determines the lift uniquely.
    The smooth section 
    $\canonfield\in\smoothsec{\totalfold}{\vertbundle{\totalfold}}$ given as 
    $v\mapsto\canonfield(v)\vdef\vertlift_v(v)$ 
    is called \emph{canonical vector field}.
    Furthermore, 
    the \emph{vertical projection \vertproj} is given at $v\in\totalfold$ as 
    the projection mapping $\tanbundle{\totalfold}{v}\to\vertbundle{\totalfold}{v}$.
\end{definition}

Now, we restrict ourselves to the case of vector bundles, but 
a definition of Ehresmann connections for fibre bundles can be found 
e.\,g.\ in~\cite[Section~9]{KolvarMichorSlovak}. 

\begin{definition}[Ehresmann connection]
    Let $\bundleproj\colon\,\totalfold\longmapsin\basefold$ be a (smooth) vector bundle. 
    A (smooth) subbundle $\horbundle{\totalfold}\leq\tanbundle{\totalfold}$
    is called~\emph{Ehresmann connection} or \emph{horizontal (tangent) bundle} if 
    \begin{displaymath}
        \tanbundle{\totalfold}{v} 
        = 
        \vertbundle{\totalfold}{v} 
        \oplus 
        \horbundle{\totalfold}{v}
        \qquad\text{for all }
        v\in\totalfold
        .
    \end{displaymath}
    For sake of readability we just write 
    $\tanbundle{\totalfold} = \vertbundle{\totalfold} \oplus \horbundle{\totalfold}$
    in the sense of a Whitney sum.

    The \emph{horizontal lift at $v$} of $w\in \totalfold$ is the unique vector $\horlift_v(w)\in\horbundle{\totalfold}{v}$ such that
    \begin{displaymath}
        w = \dualpair{\horlift_v(w)}{d\bundleproj}
        .
    \end{displaymath} 
    Finally, the projection of tangent vectors to their horizontal parts is denoted by~\horproj.
\end{definition}

Compare this usage of the terms `vertical' and `horizontal' 
to the usage in stochastic analysis as e.\,g.\ in~\cite[Section~V.4]{IkedaWatanabe} or~\cite[Chapter~2]{Hsu}.
Luckily, we escaped the frame bundle via the McKean-Gangolli injection scheme 
as described by E.~J\o{}ergensen.

Henceforth, we consider $\basefold = \posfold$ with the properties~\ref{cond:M}, 
since this is the sole instance of interest for this paper. 
Furthermore, we just take the tangent space $\totalfold = \tanbundle\posfold$, 
but note that we are interested in $\unittanbundle\posfold\subseteq\tanbundle\posfold$ 
in case of the fibre lay-down model.
Moreover, let us specify the one Ehresmann connection 
we always will consider without further mentioning: 
the Riemannian horizontal bundle.

\begin{definition}[connector map and Riemannian horizontal bundle]
    Let $U\subseteq\posfold$ be a neighbourhood of $o\in\posfold$ 
    with preimage $V\vdef \tanproj^{-1}(U)\subseteq\tanbundle{\posfold}$ such that
    the exponential $\exp_o\colon\,\tanbundle{\posfold}{o}\to\posfold$ 
    maps a 0\=/neighbourhood to~$V$ diffeomorphically. 
    Let $\tau\colon\,V\to\tanbundle{\posfold}{o}$ denote parallel transport of~$v\in V$ along the unique geodesic arc connecting~$\pi(v)$ and~$o$.
    Let $r_{-u}\colon\,\tanbundle{\posfold}{o}\to\tanbundle{\posfold}{o}$ 
    be the translation $w\mapsto w-u$ by the vector $u\in\tanbundle{\posfold}{o}$.
    Now, consider the mapping
    \begin{displaymath}
        \kappa\colon\ V\longmapsin\posfold,\ 
        v \longmapsto \parentheses*{\exp_{o} \bincirc r_{-u} \bincirc \tau}(v)
    .
    \end{displaymath}
    The dependency on the chart vanishes when passing to the differential  
    \begin{displaymath}
        d_u\kappa\colon\ \tantanbundle\posfold{u}\longmapsin\tanbundle\posfold{\tanproj(u)},\ 
        a \longmapsto \dualpair{a}{d(\exp_{\tanproj(u)} \bincirc r_{-u} \bincirc \tau)}
    \end{displaymath}
    which is called the \emph{connector map}, 
    cf.~\cite[Section~2]{Dombrowski}. 
    Also see~\cite[Section~II.4]{Sakai} for explanation in terms of local coordinates.
    Via the assignment
    $\horbundle{\totalfold} \vdef \nullspace{d\kappa}$ 
    we gain an Ehresmann connection
    which we call the \emph{Riemannian horizontal bundle}, 
    see e.\,g\ \cite[Appendix~(ii)]{Dombrowski}.
    As the exponential map depends on the given Riemannian metric, 
    so does this horizontal bundle.
\end{definition}

Some authors call $d\kappa$ the vertical projection 
and $d\tanproj$ the horizontal projection. 
E.\,g.\ in~\hyperref[not:func-lift]{Notation~\ref{not:func-lift}} 
we use the mappings in a way that would justify such a naming. 
However, we do not recommend this terminology 
and introduced the notions~`$\vertproj$' and~`$\horproj$' to avoid confusion. 
Concerning the vertical or horizontal lift of functions 
there is a well-established consent what it should be.

\begin{notation}[vertical/horizontal lift of functions]
\label{not:func-lift}
    Let~$f_0$ be a real\-/valued function with domain in~\posfold.
    We call the pullback of~$f_0$  wrt.\ the tangent bundle projection~\tanproj
    the \emph{vertical lift of~$f_0$}. 
    For sake of brev'ty we define $\vlfunc{f_0}\vdef\tanproj^\ast f_0$ 
    whenever it's defined.

    A direct analogy would be that 
    the horizontal lift of~$f_0$ is the pullback wrt.\ the connector map 
    and we declare $\hlfunc{f_0}\vdef\kappa^\ast f_0$
    whenever it's defined.
    However, this is not straight forward, 
    since~$\kappa$ depends on the choice of base point~$o$ of the exponential and the translation vector~$u$.
    At first, the horizontal lift of a smooth function~$f_0$ is a function
    \begin{displaymath}
        (u,v)\longmapsto \parentheses*{f_0\bincirc\exp_{\tanproj(u)}\bincirc r_{-u}\bincirc \tau}(v)
    \end{displaymath}
    or a bevy of functions indexed by~$u$; 
    we should exclude the case that~$u$ and~$v$ are in the same fibre 
    as this would yield the vertical lift again. 
    We do not worry too much about well-definedness and differentiability: 
    Due to assumption~\ref{cond:M:complete} and the Hopf-Rinow Theorem 
    the exponential mapping at~$\tanproj(u)$ is defined everywhere on the tangent space,
    furthermore it is almost everywhere a diffeomorphism by~\cite[Lemma~III.4.4]{Sakai}.
    But as aforementioned earlier 
    we can project from the double tangent space into the tangent space via~$d\tanproj$ and~$d\kappa$ 
    and get 
    $df_0 (d\tanproj + d\kappa) = d\vlfunc{f_0} + df_0\bincirc d\kappa$.
    The \emph{horizontal lift of~$f_0$} is a function~\hlfunc{f_0} such that 
    $
        \dualpair{a}{d\hlfunc{f_0}} 
        =
        \dualpair{a}{df_0\bincirc d\kappa} 
    $
    for all $a\in\tantanbundle\posfold$ -- 
    algebraically this makes perfectly sense and is unique up to constant offsets.
    A more analytic intuition would be 
    $
        \hlfunc{f_0}(v)
        =
        \int_{\tantanbundle\posfold{v}} df_0\bincirc d\kappa\,\diffd\Leb
    $, 
    where we consider the Lebesgue measure on the standard fibre $F=\Rnum^{2\posdim}$ of the double tangent bundle.
\end{notation}

\begin{lemma}
\label{lem:vert-hor-acting}
    Let $f_0\in\smoothfunc{\posfold}$ 
    and $\Xfield\in\smoothsec{\tanbundle\posfold}$.
    Then, the following relations hold
    \begin{align*}
        \vertlift(\Xfield)\vlfunc{f_0} = 0
        \quad&\text{and}\quad
        \horlift(\Xfield)\vlfunc{f_0} = \vlfunc*{\Xfield f_0},
        \\
        \vertlift(\Xfield)\hlfunc{f_0} = \vlfunc*{\Xfield f_0}
        \quad&\text{and}\quad
        \horlift(\Xfield)\hlfunc{f_0} = 0
    .
    \end{align*}
\end{lemma}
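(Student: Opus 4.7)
The plan is to reduce all four identities to the chain rule for the function lifts combined with the elementary behaviour of the vector-field lifts against the two projections $d\tanproj$ and $d\kappa$.

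First I would record the differentials of the two function lifts. From $\vlfunc{f_0} = \tanproj^\ast f_0$ the chain rule yields $d\vlfunc{f_0} = df_0 \bincirc d\tanproj$, so that $a\vlfunc{f_0} = df_0(d\tanproj(a))$ for every $a \in \tantanbundle\posfold$. The horizontal lift $\hlfunc{f_0}$ was characterised in \hyperref[not:func-lift]{Notation~\ref{not:func-lift}} (up to an additive constant that plays no role under differentiation) by $d\hlfunc{f_0} = df_0 \bincirc d\kappa$, whence $a\hlfunc{f_0} = df_0(d\kappa(a))$.

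Second I would collect the four kernel/identity relations linking the vector-field lifts to these two maps: $d\tanproj \bincirc \vertlift_v = 0$, since $\vertbundle{\tanbundle\posfold} = \nullspace{d\tanproj}$ by definition; $d\kappa \bincirc \horlift_v = 0$, since $\horbundle{\tanbundle\posfold} = \nullspace{d\kappa}$ by the definition of the Riemannian horizontal bundle; $d_v\tanproj \bincirc \horlift_v = \ID$ on $\tanbundle\posfold{\tanproj(v)}$, which is the defining property of the horizontal lift of tangent vectors; and $d_v\kappa \bincirc \vertlift_v = \ID$, the standard fact that the connector map inverts the vertical lift on each vertical subspace. Substituting these into the chain-rule formulas above, evaluating at $v \in \tanbundle\posfold$, and using $df_0(\Xfield(\tanproj(v))) = (\Xfield f_0)(\tanproj(v)) = \vlfunc{\Xfield f_0}(v)$ delivers each of the four asserted identities directly, with the two vanishing cases following from the two kernel statements.

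The only geometric step that requires more than unpacking definitions is the relation $d_v\kappa \bincirc \vertlift_v = \ID$. I would verify it by restricting $\kappa$ to the fibre $\tanbundle\posfold{\tanproj(v)}$: on this fibre the parallel transport $\tau$ acts trivially, $r_{-v}$ is the honest affine translation $w \mapsto w-v$ on that single tangent space, and $\exp_{\tanproj(v)}$ has differential the identity at the origin. Because $\vertlift_v(w)$ is realised by the curve $t \mapsto v+tw$ staying inside the fibre, its image under $\kappa$ is $t \mapsto \exp_{\tanproj(v)}(tw)$, whose velocity at $t=0$ equals $w$, as required. The remaining three bullets are immediate from the respective definitions, so this one computation carries the essential geometric content of the lemma.
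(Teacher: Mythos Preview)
Your proposal is correct and is exactly the elementary unpacking the paper has in mind; the paper's own proof reads simply ``Elementary.'' Your explicit verification of $d_v\kappa\bincirc\vertlift_v=\ID$ via the fibrewise curve $t\mapsto v+tw$ is the one substantive ingredient, and it is done correctly.
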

\begin{proof}
    Elementary.
\end{proof}

Next, we define the Sasaki metric. 
It's original definition was given in the famous paper~\cite{Sasaki58} 
containing many other results of general interest. 
Sasaki refered just to the Riemannian horizontal bundle and so do we, 
but it's clear that the following definition can easily be adapted to a situation without connector map.

\begin{definition}[Sasaki metric]
\label{def:Sasaki}
    The \emph{Sasaki metric \sasakimetric* on \tanbundle{\posfold}} 
    is uniquely characterised as 
    the Riemannian metric respecting inner products in the given Ehresmann connection, i.\,e.\ 
    the metric \sasakimetric* is \emph{natural} in the sense of~\cite{GudmundssonKappos} meaning that 
    \begin{displaymath}
        \sasakimetric{\horlift(\Xfield)}{\horlift(\Yfield)}
        =
        \riemannmetric{\Xfield}{\Yfield} \bincirc\tanproj
        \qquad\text{and}\qquad
        \sasakimetric{\horlift(\Xfield)}{\vertlift(\Yfield)}
        =
        0
    \end{displaymath}
    for all $\Xfield,\Yfield\in\smoothsec{\tanbundle{\posfold}}$,  
    and additionally the metric respects the inner product under vertical lifting in the sense that
    \begin{displaymath}
        \sasakimetric{\vertlift(\Xfield)}{\vertlift(\Yfield)}
        =
        \riemannmetric{\Xfield}{\Yfield} \bincirc\tanproj
    \end{displaymath}
    for all $\Xfield,\Yfield\in\smoothsec{\tanbundle{\posfold}}$.  
    This metric is explicitly given as the sum of pullbacks of the metric tensor of the base manifold:
    \begin{align*}
        \sasakimetric{a}{b}
        &{}\vdef
        \kappa^\ast\riemannmetric{a}{b}
        +
        \tanproj^\ast\riemannmetric{a}{b}
        \\
        &=
        \riemannmetric{\dualpair{a}{d\kappa}}{\dualpair{b}{d\kappa}}
        +
        \riemannmetric{\dualpair{a}{d\tanproj}}{\dualpair{b}{d\tanproj}}
    \end{align*}
    for all $a,b\in\tantanbundle{\posfold}$.
    We call the component $\hormetric*\vdef\tanproj^\ast\riemannmetric*$ the \emph{horizontal (Sasaki) metric}.
    Mutatis mutandis, the \emph{vertical (Sasaki) metric} \vertmetric* is $\vertmetric*\vdef\kappa^\ast\riemannmetric*$.
    This yields $\sasakimetric* = \vertmetric* + \hormetric*$.
\end{definition}

\begin{notation}
    If we consider the $\rho_{\posfold}$\=/weighted manifold $(\posfold,\wriemannmetric*)$, 
    then the weighting procedure is naturally reflected 
    in the horizontal bundle via the vertical lift.  
    Explicitly, we think of $(\tanbundle\posfold,\whormetric*)$ 
    as a $\vlfunc{\rho_{\posfold}}$\=/weighted Riemannian manifold, 
    where~\whormetric* is the weighted version of the horizontal Sasaki metric~\hormetric*. 
    In the first place, the vertical bundle is not affected.
    If we weight the fibre $F=\Rnum^{\posdim}$ by~$\rho_F$, 
    then the vertical metric changes and the horizontal one does not. 
    The weighted vertical Sasaki metric is denoted by~\wvertmetric*.
    If and only if a bundle weighting is specified and no confusion possible,
    we denote the corresponding weighted Sasaki metric by~\wsasakimetric*
    and the weighted tangent bundle just as $(\tanbundle\posfold,\wsasakimetric*)$.
\end{notation}

It turns out that the volume form corresponding to the Sasaki metric --
    it exists independently of orientability of~\posfold 
    as tangent spaces always are orientable --
induces a volume measure 
with very neat loc-product structure.

\begin{lemma}[Sasakian volume measure]
    The volume measure~\Leb{\sasakimetric*} wrt.\ the Sasaki metric 
    coincides with the bundle measure on~\tanbundle{\posfold} 
    supplemented with Lebesgue fibre measure, 
    i.\,e.\ it holds 
    $\Leb{\sasakimetric*} = \Leb{\riemannmetric*} \locprod \Leb$, 
    where we abbreviated the $\posdim$\=/fold Lebesgue measure 
    $\Leb^{\otimes\posdim}$ just by~\Leb.
\end{lemma}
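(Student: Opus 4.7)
The plan is to reduce to a local computation and invoke the uniqueness of loc-product measures stated in Lemma~\ref{lem:loc-prod-measure}. Specifically, I would fix a chart $(U,\psi)$ on $\posfold$ with coordinates $x^i$, which induces a local trivialization of $\tanproj^{-1}(U)$ via coordinates $(x^i,v^j)$, and then show that both measures have identical coordinate expressions there.

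The central computation is to identify $\diffd\Leb{\sasakimetric*}$ in these coordinates. Using the adapted frame $H_i \vdef \horlift(\partial_{x^i})$, $V_j \vdef \vertlift(\partial_{x^j})$ spanning the decomposition $\tantanbundle{\posfold} = \horbundle{\tanbundle{\posfold}} \oplus \vertbundle{\tanbundle{\posfold}}$, the defining property of the Sasaki metric from Definition~\ref{def:Sasaki} makes its matrix block-diagonal with both diagonal blocks equal to $[g_{ij}]\bincirc\tanproj$, so the determinant in this frame is $(\det[g_{ij}])^2\bincirc\tanproj$. Since $\horbundle{\tanbundle{\posfold}} = \nullspace{d\kappa}$ and $\kappa$ is built from parallel transport along geodesics, a direct computation yields $H_i = \partial_{x^i} - \Gamma^k_{ij}v^j\partial_{v^k}$, where $\Gamma^k_{ij}$ are the Christoffel symbols of the Levi-Civita connection of $\riemannmetric*$. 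Hence the change of basis matrix from $\{H_i,V_j\}$ to $\{\partial_{x^i},\partial_{v^j}\}$ is lower block-triangular with identity diagonal blocks, so unimodular, and the Sasakian volume 1-density reads
\[
    \diffd\Leb{\sasakimetric*} = \abs*{\det[g_{ij}]}\,\diffd x\,\diffd v
\]
on $\tanproj^{-1}(U)$. This formula makes sense even when $\posfold$ fails to be orientable, since $\tanbundle{\posfold}$ always is.

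On the other hand, $\diffd\Leb{\riemannmetric*} = \sqrt{\abs*{\det[g_{ij}]}}\,\diffd x$ on $U$, and the fiber Lebesgue measure---interpreted as the canonical Euclidean Lebesgue on each fiber $\tanbundle{\posfold}{o}$ endowed with the inner product $\riemannmetric{\cdot}{\cdot}{o}$, which matches the standard Lebesgue measure on $\Rnum^{\posdim}$ under any orthonormal trivialization of the fiber---has coordinate density $\sqrt{\abs*{\det[g_{ij}](o)}}\,\diffd v$ in the (generally non-orthonormal) basis $\{\partial_{x^i}|_o\}$. Forming the local product therefore yields $\abs*{\det[g_{ij}]}\,\diffd x\,\diffd v$, matching the above expression. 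Uniqueness of loc-product measures in Lemma~\ref{lem:loc-prod-measure} then completes the argument.

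The main technical obstacle I expect is the change-of-basis computation, especially the explicit identification of the horizontal lift $H_i$ in local coordinates and its consistency with the connector map $\kappa$ coming from parallel transport along the unique geodesic arc to a reference point; once this is in hand, the determinant factorization and the identification with the loc-product measure are routine. Some care is additionally required to phrase the argument in terms of 1-densities rather than top-degree forms in the non-orientable case, but the computations are otherwise unchanged.
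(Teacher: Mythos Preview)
Your proposal is correct and follows essentially the same route as the paper: exploit the block-diagonal form of the Sasaki metric in an adapted frame to factor the volume density, then invoke the uniqueness statement of Lemma~\ref{lem:loc-prod-measure}. Your version is in fact more explicit than the paper's---you spell out the unimodular change of basis from $\{H_i,V_j\}$ to the coordinate frame $\{\partial_{x^i},\partial_{v^j}\}$ and you flag the interpretation of the fibre Lebesgue measure as the Riemannian volume on each fibre $\tanbundle{\posfold}{o}$, a point the paper glosses over but which is exactly what makes the two densities match.
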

\begin{proof}
    Form~\hyperref[def:Sasaki]{Definition~\ref{def:Sasaki}}
    we know that the Sasaki metric can be written as the sum 
    $\sasakimetric* = \vertmetric* + \hormetric*$ 
    of vertical and horizontal metric.
    We encounter this situation when considering a product of Riemannian manifolds and 
    thus, we know that the Sasakian volume measure is a product measure basically. 
    Indeed, let a chart $(v^j)_{j=1}^{2\posdim}$ with domain $V\subseteq\tanbundle{\posfold}$
    respecting the Ehresmann connection 
    in the sense that $(\partial v^j)_{j=1}^{\posdim}$ provides local basis for 
    either the vertical or the horizontal vector fields and 
    $(\partial v^j)_{j=\posdim+1}^{2\posdim}$ provides a basis for the complementary type of vector fields;
    the matrix representation of the Sasaki metric is of block diagonal form 
    with zero matrix at south west and north east position. 
    Therefore, the Sasakian volume form~$\diffd\Leb{\sasakimetric*}$ reads in those coordinates as 
    \begin{displaymath}
        \diffd\Leb{\sasakimetric*}
        =
        \sqrt{\abs*{\tanproj^\ast\riemannmetric*}}
        \sqrt{\abs*{\kappa^\ast\riemannmetric*}}
        \bigwedge_{j=1}^{2\posdim} \diffd v^j
        .
    \end{displaymath} 
    Wlog.\ $V$ is preimage of a chart domain in~\posfold, i.\,e.\ $V=\tanproj^{-1}(U)$, 
    such that there is a diffeomorphism~$\varphi$
    rendering the diagram in~\autoref{fig:local-trivialisation} commutative.
    Hence the pushforward measure $\varphi_\ast\Leb{\sasakimetric*}$ has to coincide 
    with product volume measure on $U\times F$, 
    where the fibre $F=\Rnum^{\posdim}$ is naturally endowed with the Lebesgue measure. 
    But~\hyperref[lem:loc-prod-measure]{Lemma~\ref{lem:loc-prod-measure}}
    uniquely determines the bundle measure on~\tanbundle{\posfold} 
    supplemented the fibre measure $\nu_F=\Leb$, 
    so the Sasakian volume measure has to be this bundle measure.
\end{proof}

Note that by~\hyperref[lem:vert-hor-acting]{Lemma~\ref{lem:vert-hor-acting}} 
we already know that 
the gradients~\grad{\vertmetric*} and~\grad{\hormetric*} wrt.\ vertical and horizontal Sasaki metric respectively
satisfy $\grad{\vertmetric*}\vlfunc{f_0} = 0$ and $\grad{\hormetric*}\hlfunc{f_0} = 0$.
In the next lemma we give more insight into the three basic operators 
induced by the metrics from the Ehresmann connection.

\begin{lemma}[Sasakian gradient, divergence and Laplacian]
\label{lem:Sasaki-operators}
    The gradient, divergence and Laplace-Beltrami operators 
    corresponding to the vertical Sasaki metric are characterised by
    \begin{displaymath}
        \grad{\vertmetric*}\hlfunc{f_0}
        =
        \vertlift\parentheses*{\grad{\riemannmetric*}f_0\bincirc\tanproj}
        ,\
        \divergence{\vertmetric*}\parentheses*{\vertlift{\Xfield}}
        =
        \parentheses*{\divergence{\riemannmetric*}\Xfield}\bincirc\tanproj
        \text{ and }
        \laplace{\vertmetric*}\hlfunc{f_0}
        =
        \parentheses*{\laplace{\riemannmetric*}f_0}\bincirc\tanproj
    \end{displaymath}
    for all $f_0\in\smoothfunc{\posfold}$ 
    and $\Xfield\in\smoothsec{\tanbundle{\posfold}}$. 
    Similarly, for the case of the horizontal Sasaki metric we have that
    \begin{displaymath}
        \grad{\hormetric*}\vlfunc{f_0}
        =
        \horlift\parentheses*{\grad{\riemannmetric*}f_0\bincirc\tanproj}
        ,\
        \divergence{\hormetric*}\parentheses*{\horlift{\Xfield}}
        =
        \parentheses*{\divergence{\riemannmetric*}\Xfield}\bincirc\tanproj
        \text{ and }
        \laplace{\hormetric*}\vlfunc{f_0}
        =
        \parentheses*{\laplace{\riemannmetric*}f_0}\bincirc\tanproj
    \end{displaymath}
    for all $f_0\in\smoothfunc{\posfold}$ 
    and $\Xfield\in\smoothsec{\tanbundle{\posfold}}$. 

    Eventually, we have 
    \begin{displaymath}
        \divergence{\sasakimetric*}(\Yfield)
        =
        \divergence{\vertmetric*}(\vertproj(\Yfield))
        +
        \divergence{\hormetric*}(\horproj(\Yfield))
        \qquad\text{for all }\Yfield\in\smoothsec{\tanbundle\posfold}{\tantanbundle\posfold}
    .
    \end{displaymath}
\end{lemma}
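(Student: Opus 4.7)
The plan is to verify each of the three families of identities in turn, testing the defining equations against arbitrary vector fields on $\tanbundle\posfold$ and exploiting the splitting $\tanbundle\tanbundle\posfold = \vertbundle{\tanbundle\posfold} \oplus \horbundle{\tanbundle\posfold}$ afforded by the Ehresmann connection. For the gradient formulas, I would start from the characterising equation $\vertmetric{Y}{\grad{\vertmetric*}\hlfunc{f_0}} = Y\hlfunc{f_0}$ and test it against vector fields of the form $Y = \vertlift(\Xfield_v) + \horlift(\Xfield_h)$. By \hyperref[lem:vert-hor-acting]{Lemma~\ref{lem:vert-hor-acting}}, $\horlift(\Xfield_h)\hlfunc{f_0} = 0$ and $\vertlift(\Xfield_v)\hlfunc{f_0} = \vlfunc*{\Xfield_v f_0}$. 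On the metric side, $\vertmetric* = \kappa^\ast\riemannmetric*$ annihilates horizontal vectors since $d\kappa\vert_{\horbundle{}} = 0$, so one is forced to choose $\grad{\vertmetric*}\hlfunc{f_0}$ inside the vertical bundle; writing it as $\vertlift(U)$, the characterisation reduces to $\riemannmetric{\Xfield_v}{U}\bincirc\tanproj = \vlfunc*{\Xfield_v f_0}$, which pins down $U = \grad{\riemannmetric*}f_0\bincirc\tanproj$. The horizontal case runs the same way with $\kappa$ and $\tanproj$ exchanged.

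For the divergence identities I would work in a coordinate neighbourhood adapted to the Ehresmann connection, so that the matrix representations of $\vertmetric*$ and $\hormetric*$ are block-diagonal with the only non-trivial block being a copy of the $\riemannmetric*$-matrix on $\posfold$. The volume form factorisation $\sqrt{\abs{\sasakimetric*}} = \sqrt{\abs{\vertmetric*}}\sqrt{\abs{\hormetric*}}$ established in the preceding lemma then lets me substitute $\vertlift(\Xfield)$ into the coordinate formula for the divergence and observe that the resulting expression reduces exactly to $\divergence{\riemannmetric*}(\Xfield)$ pulled back to the fibre. Alternatively, and perhaps more elegantly, one can verify the identity by integration by parts against a test function of the form $\vlfunc{g_0}\pdot\hlfunc{h_0}$ using Fubini together with the Sasakian volume decomposition and reducing to the integration by parts formula on $(\posfold,\riemannmetric*)$.

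The Laplacian identities follow immediately by composing: $\laplace{\vertmetric*}\hlfunc{f_0} = \divergence{\vertmetric*}\parentheses*{\grad{\vertmetric*}\hlfunc{f_0}} = \divergence{\vertmetric*}\parentheses*{\vertlift(\grad{\riemannmetric*}f_0\bincirc\tanproj)} = (\laplace{\riemannmetric*}f_0)\bincirc\tanproj$, and mutatis mutandis for the horizontal side. Finally, the decomposition of $\divergence{\sasakimetric*}(\Yfield)$ is a direct consequence of $\sasakimetric* = \vertmetric* + \hormetric*$, the block-diagonal structure, and $\Yfield = \vertproj(\Yfield) + \horproj(\Yfield)$, once the individual divergence operators have been established. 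I expect the main obstacle to lie not in any computation but in handling the degeneracy of $\vertmetric*$ and $\hormetric*$: neither is a Riemannian metric on $\tanbundle\posfold$, so $\grad{\vertmetric*}$ is only determined modulo horizontal fields and the operator $\divergence{\vertmetric*}$ requires an unambiguous convention (the natural one being to read it as the divergence along the vertical fibres with respect to the non-degenerate restricted metric). Making this bookkeeping precise — ideally by viewing $\tanproj\colon\,(\tanbundle\posfold,\sasakimetric*)\to(\posfold,\riemannmetric*)$ as a Riemannian submersion with totally geodesic fibres — is the delicate part; once fixed, all three assertions reduce to the computations sketched above.
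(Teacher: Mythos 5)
Your handling of the two gradient identities is correct and is essentially the paper's argument: both test the defining relation against lifted vector fields and invoke \hyperref[lem:vert-hor-acting]{Lemma~\ref{lem:vert-hor-acting}}, and your explicit remark that the degeneracy of~$\vertmetric*$ (resp.~$\hormetric*$) forces the gradient into the vertical (resp.\ horizontal) subbundle is a useful sharpening of what the paper leaves implicit.

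The divergence identities are where your proposal breaks down, and the failure is concrete rather than a matter of bookkeeping. All three mechanisms you offer --- the coordinate formula in an adapted chart, fibrewise integration by parts via Fubini, and the convention of reading $\divergence{\vertmetric*}$ as the divergence along the fibres with respect to the restricted metric --- compute the \emph{fibrewise} divergence of $\vertlift(\Xfield)$, and that is identically zero: in a local trivialisation the components of $\vertlift(\Xfield)$ are $X^i\bincirc\tanproj$, constant along each fibre, and the fibre volume density $\sqrt{\abs*{\riemannmetric*}}\bincirc\tanproj$ is constant along each fibre as well, so no vertical derivative survives. The base derivatives $\partial_{x^j}X^i$, which are precisely what make up $\divergence{\riemannmetric*}\Xfield$, never enter a computation confined to the fibres; already for flat $\posfold=\Rnum^{\posdim}$ one checks directly that the Euclidean divergence of $(x,v)\mapsto(0,\Xfield(x))$ on $\Rnum^{2\posdim}$ vanishes. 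Consequently your route would also return $\laplace{\vertmetric*}\hlfunc{f_0}=0$ rather than $\parentheses*{\laplace{\riemannmetric*}f_0}\bincirc\tanproj$. The paper obtains the stated identity by a different mechanism: it \emph{defines} $\divergence{\vertmetric*}$ through the Lie derivative of the pulled-back volume form, $\Liederivative{\vertlift\Xfield}\Leb{\vertmetric*}=\divergence{\vertmetric*}(\vertlift\Xfield)\pdot\Leb{\vertmetric*}$ with $\Leb{\vertmetric*}=\kappa^\ast\Leb{\riemannmetric*}$, and evaluates the left-hand side by Cartan's formula; the exterior derivative of $\vertlift(\Xfield)\insertmorphism\kappa^\ast\Leb{\riemannmetric*}$ is exactly the step that produces the $\partial_{x^j}X^i$ terms, because the connector map~$\kappa$ couples fibre to base through parallel transport. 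If you want a coordinate or integration-by-parts proof, you must first adopt this (or an equivalent) definition of the vertical and horizontal divergences, e.g.\ by transporting $\divergence{\riemannmetric*}$ through the isomorphism $\vertbundle{\tanbundle\posfold}\simeq\tanproj^\ast\tanbundle\posfold$; the remaining content is then only the final splitting of $\divergence{\sasakimetric*}$, which the paper derives from $\diffd\Leb{\sasakimetric*}=\diffd\Leb{\vertmetric*}\wedge\diffd\Leb{\hormetric*}$ and the Leibniz rule for the Lie derivative. The ``delicate part'' you postpone to the end is therefore not a formality: it is the entire content of the divergence and Laplacian statements, and the convention you propose is the one under which they fail.
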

\begin{proof}
    We restrict ourselves to the vertical case, since the other statements follow analogously.
    Let $f_0\in\smoothfunc{\posfold}$ 
    and $\Xfield\in\smoothsec{\tanbundle{\posfold}}$ arbitrary be fixed. 
    Then, by definition and from~\hyperref[lem:vert-hor-acting]{Lemma~\ref{lem:vert-hor-acting}} 
    we know that the following two equations hold simultaneously 
    which characterises the vertical gradient:
    \begin{align*}
        \vertmetric{\vertlift\Xfield}{\grad{\vertmetric*}\hlfunc{f_0}}
        &=
        (\Xfield f_0)\bincirc\tanproj
        \\
        \text{and}\quad
        \vertmetric{\vertlift\Xfield}{\vertlift\parentheses*{\grad{\riemannmetric*}f_0}}
        &=
        \riemannmetric{\Xfield}{\grad{\riemannmetric*}f_0}
        =
        (\Xfield f_0)\bincirc\tanproj
    .
    \end{align*}
    Regarding the divergence, 
    we use the definition in terms of the Lie derivative, 
    which is mostly preferred in manifold theory, 
    and apply Cartan's magical formula.
    We have for the Riemannian metrics \riemannmetric* and~\vertmetric* the defining equations of divergence:
    \begin{align*}
        \Liederivative{\Xfield} \Leb{\riemannmetric*}
        &=
        \divergence{\riemannmetric*}(\Xfield) \pdot \Leb{\riemannmetric*}
        \\
        \text{and}\quad
        \Liederivative{\vertlift\Xfield} \Leb{\vertmetric*}
        &=
        \divergence{\vertmetric*}(\vertlift\Xfield) \pdot \Leb{\vertmetric*}
    ,
    \end{align*}
    where `\Liederivative' denotes the Lie derivative.
    Again, the symbol~\Leb{\riemannmetric*} is to be interpreted 
    as either volume form or 1\=/density.
    Now, in case of a volume form Cartan's formula yields the equation
    \begin{align*}
        \Liederivative{\vertlift\Xfield} \Leb{\vertmetric*}
        &=
        d\parentheses*{\vertlift(\Xfield)\insertmorphism \kappa^\ast\Leb{\riemannmetric*}}
        +
        \vertlift(\Xfield)\insertmorphism d\kappa^\ast\Leb{\riemannmetric*}
        =
        \parentheses*{d \Xfield\insertmorphism\Leb{\riemannmetric*}}\bincirc\tanproj
        +
        \vertlift(\Xfield)\insertmorphism d\kappa^\ast\Leb{\riemannmetric*}
        \\
        &=
        \parentheses*{d \Xfield\insertmorphism\Leb{\riemannmetric*}}\bincirc\tanproj
        +
        \parentheses*{\Xfield\insertmorphism d\Leb{\riemannmetric*}}\bincirc\tanproj
        =
        \parentheses*{\Liederivative{\Xfield} \Leb{\riemannmetric*}}\bincirc\tanproj
    .
    \end{align*}
    Thus, the statement follows; 
    this can locally be reused for the case of a 1\=/density, 
    cf.\ the proof of~\cite[Proposition~3.4.3]{Nicolaescu}.
    Combining the results for gradient and divergence 
    we get the equation for the Laplace-Beltrami operator.

    Finally, let $\Yfield\in\smoothsec{\tanbundle\posfold}{\tantanbundle\posfold}$.
    Recall that 
    the Sasakian volume measure is realised by a volume form~$\diffd\Leb{\sasakimetric*}$ 
    which could be written as wedge product of a vertical and horizontal volume form: 
    $
        \diffd\Leb{\sasakimetric*}
        =
        \diffd\Leb{\vertmetric*}
        \wedge
        \diffd\Leb{\hormetric*}
    $. 
    Then, the divergence wrt.\ Sasaki metric is characterised by 
    \begin{align}
        \Liederivative{\Yfield} \diffd\Leb{\sasakimetric*}
        &=
        \Liederivative{\Yfield}
        \parentheses*{
            \diffd\Leb{\vertmetric*} \wedge \diffd\Leb{\hormetric*}
        }
        =
        \Liederivative{\Yfield}
            \diffd\Leb{\vertmetric*}
        \wedge
        \diffd\Leb{\hormetric*}
        +
        \diffd\Leb{\vertmetric*}
        \wedge
        \Liederivative{\Yfield}
             \diffd\Leb{\hormetric*}
        \notag
        \\
        &\begin{aligned}
        =
        \Liederivative{\vertproj(\Yfield)}
            \diffd\Leb{\vertmetric*}
        \wedge
        \diffd\Leb{\hormetric*}
        +
        \diffd\Leb{\vertmetric*}
        \wedge
        \Liederivative{\horproj(\Yfield)}
             \diffd\Leb{\hormetric*}
        \end{aligned}
        \label{eq:vertical-horizontal-Lie-derivative}
        \\
        &=
        \parentheses*{
            \divergence{\vertmetric*}(\vertproj(\Yfield))
            +
            \divergence{\hormetric*}(\horproj(\Yfield))
        }
        \pdot
        \diffd\Leb{\vertmetric*}
        \wedge
        \diffd\Leb{\hormetric*}
        =
        \divergence{\sasakimetric*}(\Yfield) \pdot \diffd\Leb{\sasakimetric*}
        \notag
    ,
    \end{align}
    where we used the local coordinate description of Lie derivatives in line~\eqref{eq:vertical-horizontal-Lie-derivative}.
\end{proof}

\begin{remark}[Sasaki gradient of loc-density]
\label{rem:grad-loc-density}
    Basically, the Sasakian gradient of 
    a tangent bundle weighting~$\rho = \rho_{\posfold}\locprod\rho_F$ 
    decomposes as 
    \begin{displaymath}
        \grad{\sasakimetric*}\rho
        =
        \vlfunc{\rho_{\posfold}} \pdot \grad{\vertmetric*}\rho_F
        \oplus
        \rho_F \pdot \grad{\hormetric*}\vlfunc{\rho_{\posfold}}
    \end{displaymath}
    reminiscent of the Leibniz rule.
    Here, we think of~$\rho_F$ as an independent copy defined on the pointwise tangent spaces. 
    Actually, the symbol 
    $\grad{\vertmetric*}\rho_F\vdef\vertproj(\grad{\sasakimetric*}\rho)$
    is an intuitive short hand for the vertical component of the local vector field 
    $
        \varphi_2^{-1}(\ID,\grad{\mathrm{euc}}\rho_F\bincirc\proj_2\bincirc\varphi_1)
    $, 
    where the diffeomorphisms $\varphi_1$, $\varphi_2$ render the diagram 
    in~\autoref{fig:trivialisation-for-grad} commutative.
    \begin{figure}[ht]\large
    \begin{tikzcd}
	    \arrow{r}{\varphi_2} 
        \arrow[swap]{d}{\tanproj} 
        \arrow[swap, bend right=55]{dd}{\tantanproj} 
        \tantanproj^{-1}(U)
        & 
        \tanproj^{-1}\parentheses*{ U\times\Rnum^{\posdim} }
        \arrow{d}{\tanproj} 
        \arrow{r}{\simeq} 
        &
        \tanproj^{-1}(U)\times\Rnum^{\posdim}
        \arrow{dl}{(\tanproj,\ID)} 
        \\
	    \arrow{r}{\varphi_1} 
        \arrow[swap]{d}{\tanproj} 
        \tanproj^{-1}(U) 
        & 
        U\times\Rnum^{\posdim} \arrow{dl}{\proj_{1}} 
        &
        \\
	    U & &
    \end{tikzcd}
    \caption{Local trivialisation reflected in the double tangent bundle}\label{fig:trivialisation-for-grad}
    \end{figure}
\end{remark}

We define the basic concept of semisprays 
which arises in the abstract study of second order ordinary differential equations -- 
particularly, of Newton's (second) law on manifolds, see~\cite[Section~4]{Gliklikh} --,  
and also encodes geometric extra structure in several interesting ways.
As a consequence of the discussion, 
the direct sum $\canonfield\oplus\spray$ of canonical vector field and semispray~\spray 
is understood as the diagonal mapping
\begin{displaymath}
    \tanbundle\posfold\to\tantanbundle\posfold,\ 
    v\longmapsto \vertlift(v)\oplus\horlift(v)
.
\end{displaymath}

\begin{definition}[Semispray]
\label{def:semispray}
    A section $\spray\in\smoothsec{\tanbundle{\posfold}}{\tantanbundle{\posfold}}$ 
    is a \emph{semispray} if it satisfies
    $\dualpair{\spray}{d\tanproj} = \ID_{\tanbundle{\posfold}}$ 
    or equivalently if
    any integral curve $s\colon\,\indexset\to\tanbundle{\posfold}$ 
    takes the form $s = (\tanproj\bincirc s)'$.
    A curve $c\colon\,\indexset\to\posfold$ is called \emph{geodesic of the semispray \spray} if
    there is an integral curve $s\colon\,\indexset\to\tanbundle{\posfold}$ 
    such that $c = \tanproj\bincirc s$.
    Equivalently, $c$ is geodesic if $\spray\bincirc c' = c''$.
    For a local coordinate form of semisprays 
    see the~\autoref{app:loc-coord}.
\end{definition}

\begin{example}[Semispray associated to an Ehresmann connection]
\label{ex:Riemann-spray}
    Consider an Ehresmann connection 
    $\tantanbundle{\posfold} = \vertbundle{\tanbundle{\posfold}} \oplus \horbundle{\tanbundle{\posfold}}$.
    Then, given $v\in\tanbundle{\posfold}$ there is a unique horizontal vector $a$ -- 
    namely the horizontal lift of $v$ -- 
    such that $v = \dualpair{a}{d\tanproj}$. 
    Furthermore, $a$~depends on~$v$ smoothly.
    Hence, there is $\spray\in\smoothsec{\tanbundle{\posfold}}{\horbundle{\tanbundle\posfold}}$
    such that $\ID_{\tanbundle{\posfold}} = \dualpair{\spray}{d\tanproj}$. 
    Then, \spray~is a semispray 
    and we say that it is the \emph{semispray associated to the Ehresmann connection}. 
    In particular, there is the semispray~$\spray_{\riemannmetric*}$ 
    associated to the Riemann metric~\riemannmetric* via the corresponding Ehresmann connection.
    We call it \emph{Riemannian semispray}. 

    Another common name is \emph{geodesic spray}, 
    since it can be constructed just in terms of geodesics corresponding to the Riemannian metric, 
    see~\cite[Lemma~III.2.3]{doCarmo}. 
    So, the geodesics of this semispray are just classical geodesics.
    In fact, this semispray satisfies a homogeneity condition making it a (full) spray 
    whence the name -- 
    we will use the spray structure a few times indeed.
\end{example}

\begin{remark}
\label{rem:Ehresmann-connection-semispray-relation}
    In a nutshell, choosing an Ehresmann connection adds the same geometric information 
    as choosing a covariant derivative or a semispray. 
    See~\cite{habil-Bucataru} and various references therein.
    We think that this observation might be a good starting point for 
    generalisations even to rough geometries, 
    as it translates well to Lagrange and Finsler spaces.
\end{remark}

\begin{example}[Semisprays induced by Lagrangians]
\label{ex:spray-as-gradient}
    Let the Lagrangian $\Lagrange = \frac12\,\abs{\cdot}{\riemannmetric*}^2$.
    Then, the Lagrangian vector field~\spray{\Lagrange} 
    is a semispray\footnote{In fact, it is the geodesic spray again.}, 
    see~\cite[Section~7.5]{MarsdenRatiu}.
    As outlined in~\cite[Section~VII.6]{SergeLang} 
    this relation can be translated to general Riemannian manifolds modelled on some Hilbert space. 
    Note that existence of such a vector field is due to 
    the more general result~\cite[Proposition~VII.5.9]{SergeLang}: 
    Let $\configfold\vdef\tanbundle\posfold$, 
    then the cotangent space of~\configfold naturally\footnote{%
        Up to sign conventions: 
        Let~$\Theta$ denote the canonical 1\=/form, 
        then we define the 2\=/form~$\Omega$ by $\Omega=-d\Theta$.
    } yields the structure of a symplectic manifold
    $(\tanbundle*\configfold,\Omega)$.
    Furthermore, we make~\configfold into a Riemannian manifold
    endowing it with a `natural metric' in the sense of Gudmundsson-Kappos.
    This specifies the musical isomorphisms, in particular~$\sharp$.
    Thus, there is a unique section 
    $\spray{\Lagrange}\in\smoothsec{\tanbundle\configfold}$ such that
    \begin{displaymath}
        \parentheses*{\Omega^\sharp\bincirc\spray{\Lagrange}}_w(a)
        \vdef
        \Omega_w^\sharp(\spray{\Lagrange}(w),a) 
        = 
        d_w\Lagrange(a)
        \qquad\text{for all }w\in\configfold,a\in\tanbundle\configfold{w}
    , 
    \end{displaymath}
    where $\Omega^\sharp$ denotes the pullback of~$\Omega$ wrt.~$\sharp$.
    With that, we can say that 
    $
        \spray{\Lagrange} 
        = 
        (d\,\Lagrange)^\sharp
        =
        \grad\Lagrange
    $, 
    where the gradient is taken wrt.\ the `natural' metric chosen before.
    See~\cite[Section~3]{metric-nonlin-connections-Bucataru} 
    for another short discussion of semisprays on general Lagrange spaces.
\end{example}

\begin{remark}[Revisiting {\hyperref[ex:Riemann-spray]{Example~\ref{ex:Riemann-spray}}}]
\label{rem:Riemann-spray-revisited}
    We shall characterise the Riemannian semispray using~\cite[Proposition~VII.5.9]{SergeLang}
    similar to the previous example.
    Again, let $\configfold\vdef\tanbundle\posfold$ 
    and the symplectic manifold $(\tanbundle*\configfold,\Omega)$.
    Let $\hormetric*=\tanproj^\ast\riemannmetric*$ be the horizontal metric, 
    which is the simplest `natural' metric,  
    and denote the pullback of the canonical 2\=/form~$\Omega$ wrt.\ this metric by~\hlfunc{\Omega}.
    Locally we can think of 
    $a^\prime,b^\prime\in\tanbundle*\configfold{w}$, $w\in\configfold$,  
    as tuples $a^\prime=(u_1,u_2^\prime)$ and $b^\prime=(v_1,v_2^\prime)$ 
    with $u_1,u_2,v_1,v_2\in\configfold$ 
    and 
    $
        u_2^\prime=\riemannmetric{\cdot}{u_2}{x},
        v_2^\prime=\riemannmetric{\cdot}{v_2}{x}\in\configfold^\ast
    $, 
    where $x\vdef\tanproj(w)$.
    Then, the form~{\hlfunc{\Omega}}\xspace reads as
    \begin{align*}
        \hlfunc{\Omega}_w((u_1,u_2),(v_1,v_2))
        &=
        \riemannmetric{u_1}{v_2}{x}
        -
        \riemannmetric{v_1}{u_2}{x}
        \\
        &=
        \dualpair{u_1}{v_2^\prime} 
        - 
        \dualpair{v_1}{u_2^\prime}
        =
        \Omega_w(a^\prime,b^\prime)
    .
    \end{align*}
    There is a 1\=/form~$\omega$ 
    which reads in such local regimes as 
    $\omega_w(b)=\omega_w(v_1,v_2)=\riemannmetric{w}{v_1-v_2}{\tanproj(w)}$.
    The unique section $\spray\in\smoothsec{\tanbundle\configfold}$ 
    such that $\hlfunc{\Omega}\bincirc\spray=\omega$ 
    is the Riemannian semispray.
\end{remark}

\begin{example}[Euclidean case]
\label{ex:spray-acting}
    The Riemannian semispray~\spray{\riemannmetric*} 
    acts on vertically lifted functions~$f=\vlfunc{f_0}$ with $f_0\in\smoothfunc{\posfold}$ as 
    \begin{displaymath}
        \spray{\riemannmetric*}f
        =
        \hormetric%
            {\spray{\riemannmetric*}}%
            {\grad{\hormetric*}\vlfunc{f_0}}
        =
        \riemannmetric%
            {\ID_{\tanbundle\posfold}}%
            {\grad{\riemannmetric*}f_0 \bincirc\tanproj}%
            {\tanproj}
    .
    \end{displaymath}
    In case of $\posfold = \Rnum^{\posdim}$ with standard Riemannian metric 
    this action is written as  
    $   \spray{\mathrm{euc}}f(x,v) 
        = 
        \scalarprod{v}{\nabla_x f_0(x)}{\mathrm{euc}}
    $
    for smooth functions 
    $f\colon\,\Rnum^{\posdim}_x\times\Rnum^{\posdim}_v\to\Rnum,\,(x,v)\mapsto f_0(x)$ 
    and $x,v\in\Rnum^{\posdim}$.

    By analogy, the canonical vector field~\canonfield acts on
    horizontally lifted functions~$f=\hlfunc{f_0}$ with $f_0\in\smoothfunc{\posfold}$ as 
    \begin{displaymath}
        \canonfield f
        =
        \vertmetric%
            {\canonfield}%
            {\grad{\vertmetric*}\hlfunc{f_0}}
        =
        \riemannmetric%
            {\ID_{\tanbundle\posfold}}%
            {\grad{\riemannmetric*}f_0 \bincirc\tanproj}%
            {\tanproj}
    .
    \end{displaymath}
    In the Euclidean case,  
    this action is written as  
    $   \canonfield f(x,v) 
        = 
        \scalarprod{v}{\nabla_v f_0(v)}{\mathrm{euc}}
    $
    for smooth functions 
    $f\colon\,\Rnum^{\posdim}_x\times\Rnum^{\posdim}_v\to\Rnum,\,(x,v)\mapsto f_0(v)$ 
    and $x,v\in\Rnum^{\posdim}$.
\end{example}

The following theorem is well-known, 
but comes in a few quite different formulations e.\,g.: 
The geodesic flow preserves the volume of~\tanbundle{\posfold}. 
We just stick to the formulation below, 
since it tells us that 
the Riemannian semispray is an antisymmetric operator 
wrt.~\bigL2{\tanbundle\posfold}{\sasakimetric*}\=/scalar product. 

\begin{theorem}[Liouville's Theorem]
\label{thm:Liouville-thm}
\namedlabel{thm:named:Liouville-thm}{Liouville's Theorem}
    The semispray $\spray{\riemannmetric*}$ is solenoidal wrt.\ Sasaki metric~\sasakimetric*.
\end{theorem}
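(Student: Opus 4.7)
The plan is to pass to natural local coordinates on~\tanbundle\posfold and carry out the classical coordinate computation, relying on metric compatibility of the Levi-Civita connection. First I would fix a chart $(x^i)_{i=1}^{\posdim}$ on~\posfold with the induced natural coordinates $(x^i,y^i)_{i=1}^{\posdim}$ on~\tanbundle\posfold, where the $y^i$ are the fibre coordinates wrt.\ the coordinate frame $(\partial_{x^i})$. In these coordinates the Riemannian semispray reads $\spray{\riemannmetric*}=y^i\,\partial_{x^i}-\Gamma^i_{jk}\,y^jy^k\,\partial_{y^i}$ with $\Gamma^i_{jk}$ the Christoffel symbols; this is either the local form referred to in~\autoref{app:loc-coord}, or it follows directly from the defining equation $c^{\prime\prime}=\spray{\riemannmetric*}\bincirc c^\prime$ for geodesics.

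The harder preliminary step is identifying the Sasakian volume form, since the Sasaki metric is \emph{not} block-diagonal in the coordinate basis $(\partial_{x^i},\partial_{y^i})$: the horizontal subbundle is tilted by the connection coefficients. My plan to handle this is to pass to the adapted horizontal frame $\delta_i\vdef\partial_{x^i}-\Gamma^k_{ij}\,y^j\,\partial_{y^k}$; in the basis $(\delta_i,\partial_{y^i})$ the Sasaki metric becomes block-diagonal with both blocks equal to $(\riemannmetric{\partial_{x^i}}{\partial_{x^j}})$. The corresponding coframe transition $(\diffd x^i,\delta y^i)\to(\diffd x^i,\diffd y^i)$ has unit Jacobian, yielding
\begin{equation*}
    \diffd\Leb{\sasakimetric*}
    =
    \abs{\det g}\,
    \diffd x^1\wedge\cdots\wedge\diffd x^{\posdim}\wedge\diffd y^1\wedge\cdots\wedge\diffd y^{\posdim}
.
\end{equation*}
With this in hand the proof reduces to the standard density formula $\divergence{}(V)=\mu^{-1}\partial_a(\mu V^a)$ for $\mu\vdef\abs{\det g}$. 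Since $\mu$ depends only on~$x$ while $\Gamma^i_{jk}y^jy^k$ is quadratic in~$y$, and using the symmetry of $\Gamma^i_{jk}$ in its lower indices, one arrives at
$
    \divergence{\sasakimetric*}(\spray{\riemannmetric*})
    =
    y^i\,(\mu^{-1}\partial_{x^i}\mu-2\,\Gamma^j_{ji})
$.
The classical Christoffel identity $\Gamma^j_{ji}=\tfrac{1}{2}\,\partial_{x^i}\log\abs{\det g}$, equivalent to metric compatibility of the Levi-Civita connection, then forces the bracket to vanish.

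The main obstacle is the coordinate identification of $\diffd\Leb{\sasakimetric*}$ above, as this is the only place where the geometric content of the Sasaki metric enters nontrivially; the rest is coordinate bookkeeping plus a well-known Christoffel identity. A more conceptual alternative, which I would mention briefly as a consistency check, uses~\autoref{ex:spray-as-gradient} and~\autoref{rem:Riemann-spray-revisited} to view $\spray{\riemannmetric*}$ as the Hamiltonian vector field of the Lagrangian $\Lagrange=\tfrac{1}{2}\,\abs{\cdot}{\riemannmetric*}^2$ wrt.\ the symplectic form~\hlfunc{\Omega}; Cartan's magic formula and closedness of \hlfunc{\Omega} immediately give $\Liederivative{\spray{\riemannmetric*}}\hlfunc{\Omega}=0$, so $\spray{\riemannmetric*}$ preserves the Liouville form $(\hlfunc{\Omega})^{\wedge\posdim}$, which agrees with $\diffd\Leb{\sasakimetric*}$ up to a nonzero scalar, and hence its divergence wrt.~\sasakimetric* vanishes.
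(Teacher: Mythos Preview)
Your proposal is correct and follows essentially the same route the paper points to: an explicit coordinate computation of the Sasakian volume form followed by the divergence formula, with the Christoffel identity $\Gamma^j_{ji}=\tfrac12\,\partial_{x^i}\log\abs{\det g}$ doing the work. The only difference is that the paper (following do~Carmo) suggests working in \emph{normal} coordinates, whereas you carry out the computation in arbitrary natural coordinates via the adapted frame $(\delta_i,\partial_{y^i})$; your version is slightly more laborious but has the advantage of being global on each chart domain rather than pointwise, and your symplectic consistency check is a pleasant bonus the paper does not mention.
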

\begin{proof}
    A proof could be done via 
    explicitly calculating the Sasakian volume form in normal coordinates.
    This approach is taken from the neat book by M.\ do~Carmo, 
    cf.~\cite[Exercise~3.14]{doCarmo}.
\end{proof}

The final lemma of this section 
characterises the test functions on the tangent space 
exploiting again the `almost product' structure due to local trivialisation.
This result will be very important arguing 
for a reasonable set of test functions being a core of the (Langevin/fibre lay-down) generator.

\begin{lemma}[Tensor product of lifted test functions]
\label{lem:D0isdense}
    Define the tensor product space of pulled back test functions
    \begin{align*}
        D_0
        &{}\vdef
        \tanproj^\ast\smoothcompact{\posfold}
        \tensorprod
        \kappa^\ast\smoothcompact{\posfold}
        \\
        &=
        \linhull
        \left\{ 
            \tanproj^\ast f_0 
            \tensorprod 
            \kappa^\ast g_0 
            \vdef
            \tanproj^\ast f_0 
            \pdot 
            \kappa^\ast g_0 
            \ \middle\vert\ 
            f_0, g_0\in\smoothcompact{\posfold}
        \right\}
        \\
        &{}=
        \linhull
        \left\{ 
            \vlfunc{f_0} 
            \tensorprod 
            \hlfunc{g_0} 
            \ \middle\vert\ 
            f_0, g_0\in\smoothcompact{\posfold}
        \right\}
    .
    \end{align*}
    Then, $D_0$ is dense\footnote{%
        Dense wrt.~the usual locally convex topology induced by appropriate seminorms, 
        which implies uniform convergence of all derivatives on compacts; 
        for the well-known Euclidean case see~\cite[Example~2.4.10]{Horvath} 
        and for the generalised geometric case see~\cite[Section~3.1.3]{GKOS}.
        In common notation for test function spaces with this topology, 
        the result reads as 
        $\tanproj^\ast\mathcal{D}(\posfold)\tensorprod\kappa^\ast\mathcal{D}(\posfold)$
        is dense in
        $\mathcal{D}(\tanbundle\posfold)$.
        By the way, 
        the authors of~\cite{GKOS} not only discuss generalised distribution spaces rigorously,  
        they shed some light on integration of 1\=/densities from a very analytic perspective. 
    }
    in $\smoothcompact{\tanbundle{\posfold}}$.
\end{lemma}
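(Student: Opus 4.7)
The plan is to reduce to the classical density of algebraic tensor products of test functions on a product manifold, via a local-diffeomorphism construction using the Ehresmann connection and a partition of unity argument.

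The first key step is to observe that, around any $v_0\in\tanbundle\posfold$, the map $\Phi\vdef(\tanproj,\kappa)$ (with $\kappa$ defined by a suitable local choice of base point, translation vector and neighborhood) is a local diffeomorphism into $\posfold\times\posfold$. Indeed, the Whitney sum decomposition $\tantanbundle\posfold=\vertbundle{\tanbundle\posfold}\oplus\horbundle{\tanbundle\posfold}$ identifies the vertical subbundle with $\nullspace{d\tanproj}$ and the horizontal subbundle with $\nullspace{d\kappa}$, so that $d\Phi=(d\tanproj,d\kappa)$ is fibrewise a linear isomorphism onto $\tanbundle\posfold\oplus\tanbundle\posfold$. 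Geodesic completeness~\ref{cond:M:complete} ensures the exponential is globally defined, so such local $\kappa$'s exist everywhere, and the inverse function theorem furnishes an open neighborhood $V\subseteq\tanbundle\posfold$ of $v_0$ on which $\Phi$ restricts to a diffeomorphism onto some open $W\subseteq\posfold\times\posfold$.

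Now given $f\in\smoothcompact{\tanbundle\posfold}$ with compact support $K$, cover $K$ by finitely many such neighborhoods $V_1,\ldots,V_N$ with diffeomorphisms $\Phi_i\colon V_i\to W_i$, and take a smooth partition of unity $(\chi_i)_{i=1}^N$ subordinate to $(V_i)$ summing to $1$ on $K$. It suffices to approximate each compactly supported summand $f\chi_i\in\smoothcompact{V_i}$, and transporting yields $\tilde f_i\vdef(f\chi_i)\bincirc\Phi_i^{-1}\in\smoothcompact{W_i}$. The classical density of tensor products of test functions -- proved for open subsets of Euclidean space in~\cite[Example~2.4.10]{Horvath} and extended to the smooth manifold $\posfold\times\posfold$ via charts and a further partition of unity -- shows that $\smoothcompact{\posfold}\tensorprod\smoothcompact{\posfold}$ is dense in $\smoothcompact{\posfold\times\posfold}$ in the usual inductive limit topology. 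Hence $\tilde f_i$ is the limit of finite sums $\sum_\alpha f_\alpha^{(n)}\tensorprod g_\alpha^{(n)}$ with $f_\alpha^{(n)},g_\alpha^{(n)}\in\smoothcompact{\posfold}$ and supports uniformly contained in a fixed compact subset of $W_i$. Pulling back via $\Phi_i$ gives
\[
    \sum_\alpha \tanproj^\ast f_\alpha^{(n)}\pdot \kappa^\ast g_\alpha^{(n)}
    =\sum_\alpha \vlfunc{f_\alpha^{(n)}}\tensorprod \hlfunc{g_\alpha^{(n)}}\in D_0,
\]
approximating $f\chi_i$ on $V_i$ (extended by zero outside), and summing over $i$ produces the desired approximation of $f$ in $\smoothcompact{\tanbundle\posfold}$.

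The main subtlety is the local nature of $\kappa$: it depends on choices of base point and translation, so no single global map produces all of $D_0$. This is handled precisely by the partition of unity, which allows local versions $\kappa_i$ on each $V_i$ while keeping the support of each approximating summand compact inside $V_i$. A minor bookkeeping check is that the pullback via $\Phi_i$ of an elementary tensor is indeed an element of $D_0$, which follows tautologically from the defining identity $\tanproj^\ast f\pdot\kappa^\ast g=(f\tensorprod g)\bincirc\Phi$.
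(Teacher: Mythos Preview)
Your proof is correct and follows essentially the same route as the paper's: localize via a diffeomorphism to a product, invoke the classical density of algebraic tensor products of test functions, and globalize with a partition of unity. The only cosmetic difference is that the paper works directly with the vector-bundle trivialisation $\varphi\colon\tanproj^{-1}(U)\to U\times\Rnum^{\posdim}$ and identifies $\tanproj^\ast\smoothcompact{U}=\varphi^\ast(\smoothcompact{U}\tensorprod\{1\})$ and $\kappa^\ast\smoothcompact{U}=\varphi^\ast(\{1\}\tensorprod\smoothcompact{\Rnum^{\posdim}})$, so Horvath's Euclidean density statement applies immediately without the additional passage through $\posfold\times\posfold$ and the inverse function theorem for $(\tanproj,\kappa)$.
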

\begin{proof}
    Consider a local trivialisation in $U\vdef U_x\cap W$ 
    for a chart domain $W\subseteq\posfold$ at $x\in\posfold$
    with $V\vdef\tanproj^{-1}(U)$ and the diffeomorphism~$\varphi$
    rendering the diagram in~\autoref{fig:local-trivialisation} commutative.
    Then, we immediately conclude the relations
    \begin{displaymath}
        \tanproj^\ast\smoothcompact{U}
        =
        \varphi^\ast\parentheses*{ \smoothcompact{U}\tensorprod\{1\} }
        \qquad\text{and}\qquad
        \kappa^\ast\smoothcompact{U}
        =
        \varphi^\ast\parentheses*{ \{1\}\tensorprod\smoothcompact{\Rnum^\posdim} }
        .
    \end{displaymath}
    Thus, by~\cite[Proposition~4.8.1]{Horvath} the tensor product 
    $
    \tanproj^\ast\smoothcompact{U}
    \tensorprod
    \kappa^\ast\smoothcompact{U}
    $
    is dense in~%
    $
    \smoothcompact{V} 
    =
    \varphi^\ast\smoothcompact{U\times\Rnum^{\posdim}}
    $
    and the proof is finished via a partition of unity argument.
\end{proof}

\section{Hypocoercivity for geometric Langevin dynamics}
\label{sec:Langevin}

In this section we apply the abstract Hilbert space hypocoercivity method
to the Langevin equation with some Riemannian manifold as position space, 
see~\hyperref[thm:Langevin:main]{Theorem~\ref{thm:Langevin:main}} below.
That is the direct generalisation of the situation of~\cite{HypocoercMFAT}.
The techniques of proving we learned from~\cite{HypocoercJFA}. 
Our interest arose from industrial fibre lay-down applications 
and qualitative analysis of the nonwoven.
In this context, the position manifold~\posfold could e.\,g.\ 
reflect a sagging conveyor belt 
or a belt moving over a cylindrical roller.
As we mentioned in the introduction, 
Langevin dynamics have wide ranging applications
and our approach offers the freedom to include any `smooth' side condition on the position variable.

Consider the following Stratonovich~SDE in~\tanbundle\posfold: 
\begin{equation}
\label{eq:geomLangevin}
    \diffd \eta
    =
    \spray{\riemannmetric*}
    \ \diffd t
    +
    \vertlift_\eta(-\grad{\riemannmetric*}\Psi) 
    \ \diffd t
    + 
    \sigma \pdot \vertlift_\eta\left( %
        \sum_{j=1}^\posdim \frac{\partial}{\partial{x_\eta^j}} %
        \right)
    \stratonovich\diffd W_t
    -
    \alpha \pdot \canonfield
    \ \diffd t
,
\end{equation}
where $\eta\colon\,\indexset\to\tanbundle\posfold$ is a curve with time interval~\indexset 
and $\parentheses*{x_\eta^1,x_\eta^2,\ldots,x_\eta^\posdim}$ is a chart at~$\tanproj(\eta)$ 
providing normal coordinates. 
Recall that~\canonfield denotes the canonical vector field.
Bellow we specify certain assumptions on the potential $\Psi\colon\,\posfold\to\Rnum$.
The nonnegative model parameters~$\alpha$ and~$\sigma$ are related by 
$\sigma = \sqrt{\sfrac{2\alpha}\beta}$, 
where~$\beta$ is a nonnegative rescaling of the potential as $\Phi=\beta\Psi$. 
Recall that 
the horizontal motion in \hyperref[eq:geomLangevin]{Equation~\eqref{eq:geomLangevin}}, i.\,e.\ 
$
    \horproj(\diffd \eta)
    =
    \spray{\riemannmetric*}
    \ \diffd t
$,
reflects the natural requirement of 
$
    (\tanproj \bincirc \eta)^\prime
    =
    \eta
$ 
which is the form that any integral curve of~\spray{\riemannmetric*} attains by definition. 
We call~\hyperref[eq:geomLangevin]{Equation~\eqref{eq:geomLangevin}}
the \emph{Langevin equation on~\posfold} 
or just \emph{geometric Langevin equation}.

Using~\cite[Theorem~V.1.2]{IkedaWatanabe} 
the Kolmogorov generator~$L$ is given as
\begin{equation}
\label{eq:Langevin-generator}
    L
    =
    \spray{\riemannmetric*}
    -\frac1\beta\,
    \grad{\vertmetric*}\hlfunc{\Phi}
    +
    \frac\alpha\beta\,\laplace{\vertmetric*}
    -
    \alpha\canonfield
.
\end{equation}
This operator is defined for all smooth functions on~\tanbundle\posfold, 
but we consider the domain $D\vdef\smoothcompact{\tanbundle\posfold}$ of smooth test functions 
with compact support.
Note that $(L,D)$ is densely defined 
by~\hyperref[rem:L2dense-testfunctions]{Remark~\ref{rem:L2dense-testfunctions}}.
We usually call~$L$ as in~\hyperref[eq:Langevin-generator]{Equation~\eqref{eq:Langevin-generator}}
the \emph{Langevin generator}.
Compare this generator to~\cite[Equation~(1.2)]{HypocoercMFAT} 
taking~\hyperref[ex:spray-acting]{Example~\ref{ex:spray-acting}} into account.
As we will see in~\hyperref[lem:Langevin:SAD]{Lemma~\ref{lem:Langevin:SAD}}, 
the Langevin generator basically decomposes into two components: 
the vertical diffusion 
and the (not entirely horizontal) component liaising the appropriate notion of second order differential equations.
Compare this decomposition to so-called hypoelliptic Laplacians; 
via this concept J.-M.~Bismut links Brownian motion on manifolds and geodesic flow 
in order to find a Langevin process in~\cite{Bismut}. 

Before we start checking the data and hypocoercivity conditions, 
we shall fix conditions on the potential in the geometric Langevin equation, 
and thus on the base weight
$
    \rho_{\posfold}
    =
    \exp(-\Phi)
    =
    \exp(-\beta\Psi)
$. 

\begin{condition}[Potential conditions~\ref{cond:named:Langevin:P}]
\label{cond:Langevin:P}
\namedlabel{cond:named:Langevin:P}{(P)}
\hfill

\begin{enumerate}[label={(P\arabic*)}, ref={(P\arabic*)}]
    \item\label{cond:Langevin:P1}
        \emph{General regularity and boundedness:}\ 
        Let $\Phi=\beta\Psi$ a loc\=/Lipschitzian potential 
        which is bounded from below 
        and such that
        $
            \Leb{\wriemannmetric*}
            =
            \rho_{\posfold}\,\Leb{\riemannmetric*}
            =
            \exp(-\Phi)\,\Leb{\riemannmetric*}
        $ 
        is a probability measure on $(\posfold,\borel{\posfold})$.
    \item\label{cond:Langevin:P2}
        \emph{Poincar\'e inequality:}\ 
        The weighted Riemannian measure~\Leb{\wriemannmetric*}
        satisfies the Poincar\'e inequality
        \begin{equation}
        \label{eq:Langevin:Poincare}
            \norm{ %
                \grad{\riemannmetric*} f_0 %
            }{\bigL2{\posfold\to\tanbundle\posfold}{\wriemannmetric*}}^2
            \geq
            \Lambda
            \norm{ %
                f_0-\scalarprod{f_0}{1}{\bigL2{\posfold}{\wriemannmetric*}} %
            }{\bigL2{\posfold}{\wriemannmetric*}}^2 
        \end{equation}
        for all $f_0\in\smoothcompact{\posfold}$
        and some $\Lambda\in(0,\infty)$.  
    \item\label{cond:Langevin:P3}
        \emph{Hessian dominated by gradient:}\ 
        Assume $\Phi\in\contidiff2{\posfold}$.
        There is a constant $c\in(0,\infty)$ such that
        \begin{displaymath}
            \abs{ \operatorname{Hess}_{\riemannmetric*}(\Phi)(x) } 
            \leq 
            c\, (1+\abs{\grad{\riemannmetric*}\Phi(x)}{\riemannmetric*})
            \qquad\text{holds for all } x\in\posfold
        .
        \end{displaymath}
        Here, `$\operatorname{Hess}_{\riemannmetric*}(\cdot)$' 
        denotes the Hessian wrt.\ the given Riemannian metric 
        and the norm `$\abs{\operatorname{Hess}_{\riemannmetric*}(\cdot)}$' 
        of the Hessian is the Frobenius tensor norm induced by the Riemannian metric.
\vspace*{-3ex}
\end{enumerate}
\end{condition}

As explained in~\cite[Remark~3.16]{HypocoercJFA} 
the condition~\ref{cond:Langevin:P3} as above can be weakened.
Clearly, the~\hyperref[eq:Langevin:Poincare]{Poincar\'e inequality~\eqref{eq:Langevin:Poincare}}
is the most restrictive condition wrt.\ 
the geometry of the weighted position manifold~$(\posfold,\wriemannmetric*)$.
It's satisfied for compact manifolds, see~\cite[Theorems~2.10, 2.11]{Hebey}. 
For the noncompact case we refer to~\cite[Lemma~3.1]{Hebey}, 
where the necessary geometric assumption is that 
the Ricci curvature is bounded from below by some multiple of~\wriemannmetric*. 
Furthermore, we point out that 
in~\hyperref[lem:Langevin:SAD]{Lemma~\ref{lem:Langevin:SAD}}
the nasty assumption of a weakly harmonic potential appears. 
In view of Weyl's theorem it would be quite a restrictive assumption. 
That's why 
in~\hyperref[prop:Langevin:SAD]{Proposition~\ref{prop:Langevin:SAD}} 
we remove this assumption 
for the general setting of this section.
Note that by~\hyperref[lem:weighted-M-complete]{Lemma~\ref{lem:weighted-M-complete}}
the Riemannian manifold $(\posfold,\wriemannmetric*)$ weighted by a potential as above
is complete again.
\smallskip

We formulate the main theorem of this section:
\begin{theorem}[Hypocoercivity of the geometric Langevin dynamic]\hfill
\label{thm:Langevin:main}

    Let $\alpha,\beta\in(0,\infty)$ 
    and $(\posfold,\riemannmetric*)$ be a Riemannian manifold satisfying~\ref{cond:M}. 
    We assume that the potential $\Phi\colon\,\posfold\to\Rnum$ 
    fulfils the conditions~\ref{cond:named:Langevin:P} above.
    Denote by~$\nu$ the zero-mean Gaussian measure with covariance matrix~$\beta^{-1}\ID$
    and define 
    $
        \mu
        \vdef
        \Leb{\wriemannmetric*}\locprod\nu  
    $.

    Then, the Langevin operator 
    \begin{displaymath}
        \parentheses*{
            L, 
            \smoothcompact{\tanbundle{\posfold}}
        }
        =
        \parentheses*{
            \frac\alpha\beta\,\laplace{\vertmetric*}
            -
            \alpha\canonfield
            +
            \spray{\riemannmetric*}
            -
            \frac1\beta\,
            \grad{\vertmetric*}\hlfunc{\Phi},
            \smoothcompact{\tanbundle{\posfold}}
        }
    \end{displaymath}
    is closable in $H\vdef\bigL2{\tanbundle\posfold}{\mu}$. 
    Moreover, its closure~\operator{L} generates a strongly continuous contraction semigroup $(T_t)_{t\in[0,\infty)}$.
    Finally, there are constants $\kappa_1,\kappa_2\in(0,\infty)$ 
    computable in terms of $\alpha$, $\beta$, $\Lambda$ and $c$
    such that for all $g\in H$ and times $t\in[0,\infty)$ holds 
    \begin{displaymath}
        \norm{ T_tg-\scalarprod{g}{1}{H} }{H} 
        \leq 
        \kappa_1 \euler^{-\kappa_2 t}\norm{ g-\scalarprod{g}{1}{H} }{H}
    .
    \end{displaymath}
\end{theorem}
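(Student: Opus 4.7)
The plan is to cast the theorem as an application of the abstract~\ref{thm:hypocoercivity-thm-named} and to verify the data conditions~\ref{cond:D} together with the hypocoercivity conditions~\ref{cond:H}. I identify the symmetric part as the vertical Ornstein-Uhlenbeck operator $S\vdef \tfrac{\alpha}{\beta}\laplace{\vertmetric*}-\alpha\canonfield$ and the antisymmetric part as $A\vdef \tfrac{1}{\beta}\grad{\vertmetric*}\hlfunc{\Phi}-\spray{\riemannmetric*}$, so that $L\vert_D=S-A$ on $D\vdef\smoothcompact{\tanbundle\posfold}$. The relevant orthogonal projection $P\colon\, H\to H$ is the fibre-wise integration against the Gaussian $\nu$; its range consists of the vertically lifted functions $\vlfunc{f_0}$ with $f_0\in\bigL2{\posfold}{\wriemannmetric*}$.

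First I would verify~\ref{cond:D}. Symmetry of $S$ is a fibrewise statement about the classical Ornstein-Uhlenbeck generator for $\nu$; it lifts to $\mu=\Leb{\wriemannmetric*}\locprod\nu$ via the loc-product structure developed in~\hyperref[lem:loc-prod-measure]{Lemma~\ref{lem:loc-prod-measure}}. Antisymmetry of $A$ rests on two ingredients: by~\ref{thm:named:Liouville-thm} the semispray $\spray{\riemannmetric*}$ is solenoidal wrt.~\sasakimetric*, and the logarithmic derivative correction (\hyperref[rem:adjoint-sec]{Remark~\ref{rem:adjoint-sec}}, \hyperref[rem:grad-loc-density]{Remark~\ref{rem:grad-loc-density}}) encoded in $\tfrac{1}{\beta}\grad{\vertmetric*}\hlfunc{\Phi}$ precisely compensates the weighting of $\mu$ relative to the Sasakian volume measure. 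The m-dissipativity of $L$ on $D$, needed both for closability and for the generation of a strongly continuous contraction semigroup via the Lumer-Phillips theorem, follows from the dissipativity estimate $\scalarprod{Lf}{f}{H}=-\tfrac{\alpha}{\beta}\norm{\grad{\vertmetric*}f}{H}^2\leq 0$ together with a standard range argument; the core property and density of $D$ are provided by~\hyperref[lem:D0isdense]{Lemma~\ref{lem:D0isdense}}. Invariance of $\mu$ and $L1=0$ are then immediate from the decomposition.

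Next I would verify the hypocoercivity conditions. For~\ref{cond:H1}, applying $A$ to a position-only function $\vlfunc{f_0}$ annihilates the vertical gradient term by~\hyperref[lem:vert-hor-acting]{Lemma~\ref{lem:vert-hor-acting}}, and by~\hyperref[ex:spray-acting]{Example~\ref{ex:spray-acting}} the remaining spray contribution is linear, hence odd, in the velocity; integration against the centered Gaussian $\nu$ eliminates it. Condition~\ref{cond:H2} is the fibrewise sharp Gaussian Poincar\'e inequality with constant $\Lambda_m=\alpha$. For~\ref{cond:H3}, the same reduction combined with the Gaussian second-moment formula gives $\norm{APf}{H}^2 = \tfrac{1}{\beta}\,\norm{\grad{\riemannmetric*}f_0}{\bigL2{\posfold\to\tanbundle\posfold}{\wriemannmetric*}}^2$, and the assumed Poincar\'e inequality~\ref{cond:Langevin:P2} delivers $\Lambda_M = \Lambda/\beta$ on the orthogonal complement of constants.

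The main obstacle is~\ref{cond:H4}, the boundedness of the auxiliary operators. Under the identification $Pf\leftrightarrow f_0$, the operator $\adjoint{(AP)}(AP)$ corresponds up to constants to the weighted Laplace-Beltrami operator $-\laplace{\wriemannmetric*}$ on~\posfold, and $B=(\ID+\adjoint{(AP)}(AP))^{-1}\adjoint{(AP)}$ to a Riesz-type operator built from the Riemannian gradient and an elliptic resolvent. The two required estimates then reduce to $\bigL2{\posfold}{\wriemannmetric*}$-bounds governed by $\operatorname{Hess}_{\riemannmetric*}(\Phi)$ and $\grad{\riemannmetric*}\Phi$; this is exactly what the Hessian control~\ref{cond:Langevin:P3} is tailored to deliver, with a Bochner-Weitzenb\"ock identity replacing the elementary commutator computations carried out in~\cite[Section~3]{HypocoercJFA}. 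A further subtlety is that the naive symmetric-antisymmetric decomposition presupposes $\Phi$ to be weakly harmonic, so that no $\laplace{\riemannmetric*}\Phi$-term survives in the adjoint of $A$; for general $\Phi$ this is circumvented by a more delicate splitting carried out in~\hyperref[prop:Langevin:SAD]{Proposition~\ref{prop:Langevin:SAD}}. Once all conditions hold, the conclusion and the explicit expressions for $\kappa_1,\kappa_2$ in terms of $\alpha,\beta,\Lambda,c$ follow from the~\ref{thm:hypocoercivity-thm-named}.
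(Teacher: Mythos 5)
Your overall architecture coincides with the paper's: the same decomposition $L=S-A$ on $D=\smoothcompact{\tanbundle\posfold}$, the fibrewise Gaussian average as (essentially) the projection, and verification of~\ref{cond:D} and~\ref{cond:H} followed by the abstract~\ref{thm:hypocoercivity-thm-named}; your handling of~\ref{cond:H1}, \ref{cond:H2}, the Poincar\'e-based constant $\Lambda_M=\Lambda/\beta$, and your awareness that antisymmetry of $A$ for non-harmonic $\Phi$ requires the Poisson-bracket argument of Proposition~\ref{prop:Langevin:SAD} all match the paper. The genuine gap is condition~\ref{cond:D2}: you dispose of essential m-dissipativity with ``a standard range argument'', but dissipativity of $(L,D)$ only yields closability, not density of $(\ID_H-L)(D)$ in $H$, and for a degenerate, non-elliptic operator on a noncompact manifold this range condition is the hardest analytic step of the entire proof. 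The paper needs a H\"ormander hypoellipticity argument (Lemma~\ref{lem:Langevin:generator-Hörmander}, Theorem~\ref{thm:Langevin:D2-smooth-case}) to show that any $f$ orthogonal to $(\ID_H-L)(D)$ has a smooth representative --- this is precisely where the bracket of $\spray{\riemannmetric*}$ with the vertically lifted coordinate fields supplies the missing horizontal directions --- followed by a cut-off argument; then a Kato-type perturbation theorem to handle globally Lipschitz potentials (Lemma~\ref{lem:Langevin:D2-global-Lipschitz}); and finally a localisation to reach merely loc-Lipschitz potentials (Theorem~\ref{thm:Langevin:D2-loc-Lipschitz}). None of this is supplied or even sketched in your proposal, and without it the semigroup $(T_t)_{t\in[0,\infty)}$ whose decay you wish to estimate is not known to exist.

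Two smaller omissions. First, the route to~\ref{cond:H3} via the cited corollary of~\cite{HypocoercJFA} additionally requires essential m-dissipativity of $(PA^2P,D)$, which the paper obtains by identifying $PA^2P$ with $\tfrac1\beta\,\laplace{\whormetric*}\bincirc P_S$ in~\eqref{eq:Langevin:PA2P} and invoking essential self-adjointness of the horizontal Laplace--Beltrami operator (Corollary~\ref{cor:Langevin:PA2P-m-dissipative}); you omit this step. Second, for the first half of~\ref{cond:H4} the paper does not use a Hessian or Bochner--Weitzenb\"ock estimate at all: it proves the exact intertwining $SAP=\alpha AP$ on $D$, which rests on $\spray{\riemannmetric*}$ being a full spray, i.\,e.\ $\Liebracket{\canonfield}{\spray{\riemannmetric*}}=\spray{\riemannmetric*}$, and yields the clean constant $c_1=\alpha/2$. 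The Hessian control~\ref{cond:Langevin:P3} together with the elliptic a priori estimates is only what the second half of~\ref{cond:H4} requires.
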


Clearly, we are going to prove it by applying the~\ref{thm:hypocoercivity-thm-named}.

\subsection{Data conditions}
\label{subsec:Langevin:D}

\begin{definition}[model Hilbert space~\ref{cond:D1}]
\label{def:Langevin:D1}
    Consider the probability space 
    \begin{displaymath} 
        (E,\mathfrak{E},\mu)
        =
        \parentheses*{%
            \tanbundle{\posfold}, 
            \borel{\tanbundle{\posfold}},
            \Leb{\wsasakimetric*}
        }
    ,
    \end{displaymath} 
    where $\Leb{\wsasakimetric*} = \Leb{\wriemannmetric*} \locprod \nu$ 
    is the weighted Sasaki volume measure with 
    $\wriemannmetric*$ is weighted by 
    $\rho_{\posfold}\vdef \exp(-\Phi)= \exp(-\beta\Psi)$ with $\beta\in(0,\infty)$ such that 
    \Leb{\wriemannmetric*} is a probability measure on~$(\posfold,\borel{\posfold})$, 
    and $\nu = \normaldist{0}{\beta^{-1}\ID_\posdim}$ 
    is the zero-mean normal distribution on the fibre~$F=\Rnum^\posdim$ 
    with covariance matrix~$\beta^{-1}\ID_\posdim$.
    In other words, 
    \Leb{\wsasakimetric*} has the loc\=/density 
    $\exp(-\Phi) \locprod \prod_{j=1}^\posdim\,\varphi_{0,\beta^{-1}}\bincirc\proj_j$, 
    where~$\varphi_{0,\beta^{-1}}$ denotes the density of a one-dimensional normal distribution with variance~$\beta^{-1}$.
    The model Hilbert space is 
    $H \vdef \bigL2{E}{\mu} = \bigL2{\tanbundle{\posfold}}{\wsasakimetric*}$, 
    cf.~\hyperref[not:Lp-measure]{Notation~\ref{not:Lp-measure}}.
\end{definition}

\begin{lemma}[SAD-decomposition~\ref{cond:D3},~\ref{cond:D4},~\ref{cond:D6}]
\label{lem:Langevin:SAD}
    Let condition~\ref{cond:Langevin:P1} hold and let~$\Phi$ be weakly harmonic.
    Consider the SAD-decomposition $L=S-A$ on~$D$ with
    \begin{align*}
        Sf 
        &\vdef
        \frac\alpha\beta\,\laplace{\wvertmetric*}f
        =
        \frac\alpha\beta\,\laplace{\vertmetric*}f
        -
        \alpha\canonfield f
        \\
        \quad\text{and}\quad
        Af 
        &=
        -\spray{\wriemannmetric*}f
        \vdef
        -\spray{\riemannmetric*}f
        +
        \frac1\beta\,
        \parentheses*{\grad{\vertmetric*}\hlfunc{\Phi}}(f)
    \end{align*}
    for all $f\in D$.
    Then, the following assertions hold:
    \begin{enumerate}[label={(\roman*)}]
        \item 
        \label{itm:Langevin:SAD:S}
        $(S,D)$ is symmetric and negative semidefinite.
        \item 
        \label{itm:Langevin:SAD:A}
        $(A,D)$ is antisymmetric.
        \item 
        \label{itm:Langevin:SAD:L}
        For all $f\in D$ we have that 
        $Lf \in \bigL1{\tanbundle{\posfold}}{\mu}$ and 
        $\int_{\tanbundle{\posfold}} Lf\ \diffd\mu = 0$.
    \end{enumerate}
\end{lemma}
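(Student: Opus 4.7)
The plan is to verify the three claims via the weighted integration-by-parts calculus developed in~\autoref{sec:weighting}, leaning on~\ref{thm:named:Liouville-thm}, the Sasaki divergence decomposition from~\hyperref[lem:Sasaki-operators]{Lemma~\ref{lem:Sasaki-operators}}, and the adjoint and Laplacian identities of~\hyperref[rem:adjoint-sec]{Remark~\ref{rem:adjoint-sec}} and~\hyperref[lem:weighted-Laplace]{Lemma~\ref{lem:weighted-Laplace}}. As a preliminary sanity check, the fibre weight is Gaussian with $\log\rho_F = -\tfrac{\beta}{2}\abs{\cdot}{\riemannmetric*}^2+\mathrm{const}$; hence $\grad{\vertmetric*}\log\rho_F=-\beta\canonfield$ fibrewise, and~\hyperref[lem:weighted-Laplace]{Lemma~\ref{lem:weighted-Laplace}} applied in each vertical fibre delivers $\frac{\alpha}{\beta}\laplace{\wvertmetric*} = \frac{\alpha}{\beta}\laplace{\vertmetric*} - \alpha\canonfield = S$, so the stated decomposition $L=S-A$ does hold on~$D$.

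For~\ref{itm:Langevin:SAD:S} I would use that $S$ is purely vertical. Since $\mu = \Leb{\wriemannmetric*}\locprod\nu$ disintegrates over the base by~\hyperref[lem:loc-prod-measure]{Lemma~\ref{lem:loc-prod-measure}}, Fubini--Tonelli reduces the scalar product $\scalarprod{Sf}{g}{H}$ to a fibrewise Ornstein--Uhlenbeck computation on $(\Rnum^\posdim,\nu)$. The weighted integration-by-parts formula~\eqref{eq:weighted-ibp} applied on each fibre then yields
\begin{displaymath}
    \scalarprod{Sf}{g}{H}
    =
    -\frac{\alpha}{\beta}\int_{\tanbundle\posfold} \vertmetric{\grad{\vertmetric*}f}{\grad{\vertmetric*}g} \ \diffd\Leb{\wsasakimetric*}
    \qquad\text{for all } f,g\in D,
\end{displaymath}
which is symmetric in $f,g$ and nonpositive for $f=g$, giving both symmetry and negative semidefiniteness at once.

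The main work concerns~\ref{itm:Langevin:SAD:A}, where I would compute the $H$-adjoint of each summand of $A$ via~\hyperref[rem:adjoint-sec]{Remark~\ref{rem:adjoint-sec}}. By~\ref{thm:named:Liouville-thm} the Riemannian semispray is solenoidal wrt.~\sasakimetric*, so its $\mu$-adjoint reduces to $\adjoint{\spray{\riemannmetric*}} = -\spray{\riemannmetric*}-\spray{\riemannmetric*}\log\rho$ with the loc-density $\rho=\vlfunc{\rho_{\posfold}}\locprod\rho_F$. The geodesic flow preserves the velocity norm $\abs{\cdot}{\riemannmetric*}^2$, whence $\spray{\riemannmetric*}\log\rho_F=0$, and~\hyperref[ex:spray-acting]{Example~\ref{ex:spray-acting}} identifies $\spray{\riemannmetric*}\vlfunc{\Phi}$ with the function $\Xi(v)\vdef\riemannmetric{v}{\grad{\riemannmetric*}\Phi(\tanproj(v))}{\tanproj(v)}$, so that $\spray{\riemannmetric*}\log\rho=-\Xi$. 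For the purely vertical field $\grad{\vertmetric*}\hlfunc{\Phi}=\vertlift(\grad{\riemannmetric*}\Phi\bincirc\tanproj)$,~\hyperref[lem:Sasaki-operators]{Lemma~\ref{lem:Sasaki-operators}} gives $\divergence{\sasakimetric*}(\grad{\vertmetric*}\hlfunc{\Phi})=\vlfunc{\laplace{\riemannmetric*}\Phi}$, which vanishes \emph{precisely by the weak-harmonicity hypothesis}; meanwhile~\hyperref[lem:vert-hor-acting]{Lemma~\ref{lem:vert-hor-acting}} kills the $\vlfunc{\Phi}$-part of $\log\rho$ under the vertical lift, while the pointwise identity $\vertlift(\Xfield)(\tfrac12\abs{\cdot}{\riemannmetric*}^2)=\riemannmetric{\ID_{\tanbundle\posfold}}{\Xfield\bincirc\tanproj}{\tanproj}$ yields $(\grad{\vertmetric*}\hlfunc{\Phi})(\log\rho)=-\beta\Xi$. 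Summing the two adjoints, the $\Xi$-contributions cancel exactly and $\adjoint{A}=-A$.

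Claim~\ref{itm:Langevin:SAD:L} then follows routinely: $Lf\in\smoothcompact{\tanbundle\posfold}\subseteq\bigL1{\tanbundle\posfold}{\mu}$, and $\int Lf\,\diffd\mu=0$ is obtained either by inserting $g=1$ in the adjoint identities established in~\ref{itm:Langevin:SAD:S} and~\ref{itm:Langevin:SAD:A}---legitimate because $S$ and $A$ are first-order expressions annihilating constants and the total correction $\divergence{\sasakimetric*}(\cdot)+\rho^{-1}(\cdot)\rho$ was just shown to vanish---or by a fibrewise Gaussian cut-off approximation of the constant function. The only delicate ingredient is the weak-harmonicity assumption on~$\Phi$: it is exactly what eliminates the otherwise obstructing $\tfrac{1}{\beta}\vlfunc{\laplace{\riemannmetric*}\Phi}$ term in~$\adjoint A + A$, which is why the naive splitting has to be reshuffled later in~\hyperref[prop:Langevin:SAD]{Proposition~\ref{prop:Langevin:SAD}} to free the main theorem from this hypothesis.
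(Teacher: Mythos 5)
Your proposal is correct and follows essentially the same route as the paper: the fibrewise identity $\tfrac1{\rho_F}\grad{\vertmetric*}\rho_F=-\beta\canonfield$ combined with weighted integration by parts gives the Dirichlet-form representation of~$S$, and the adjoints of the two summands of~$A$ are computed via~\ref{thm:named:Liouville-thm} plus the logarithmic-derivative correction, with weak harmonicity killing $\divergence{\sasakimetric*}(\grad{\vertmetric*}\hlfunc{\Phi})$ and the two cross terms (your~$\Xi$, the paper's $\beta\spray{\riemannmetric*}\vlfunc{\Psi}$) cancelling exactly. The only cosmetic difference is that the paper first establishes~(i) on the tensor-product core~$D_0$ and extends by graph-norm density, whereas you argue directly on~$D$; both are fine.
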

\begin{proof}
\hfill

    \begin{enumerate}[label={(\roman*)}]
        \item 
        Combining the form of~$\grad{\sasakimetric*}\rho$ 
        from~\hyperref[rem:grad-loc-density]{Remark~\ref{rem:grad-loc-density}}
        with the result on the weighted Laplace-Beltrami, 
        see~\hyperref[lem:weighted-Laplace]{Lemma~\ref{lem:weighted-Laplace}}, 
        we know that 
        we have to look at $\frac1{\rho_F}\grad{\vertmetric*}\rho_F$.
        Condensing notation a bit 
        we calculate that
        \begin{align*}
            \frac1{\rho_F}\grad{\vertmetric*}\rho_F(f)
            &=
            \grad{\mathrm{euc}}\parentheses*{%
                -\frac\beta2\,\abs{\ID_F}{\mathrm{euc}}^2
            }(f)
            =
            -\beta\dualpair{\ID_{\tanbundle\posfold}}{df_0}
            =
            -\beta\pdot\canonfield f
        \end{align*}
        holds for all $f=\hlfunc{f_0}\in\kappa^\ast\smoothcompact{\posfold}$. 
        Hence, it follows 
        \begin{displaymath} 
            \scalarprod{Sf}{g}{H}
            =
            -\int_{\tanbundle\posfold} 
                \vertmetric{\grad{\vertmetric*}f}{\grad{\vertmetric*}g}
            \ \diffd\mu
            \qquad\text{for all }f,g\in D_0
        ,
        \end{displaymath} 
        and therefore $(S,D_0)$ is symmetric and nonpositive definite. 
        Since~$S$ is well-defined on~$D$ and~$D_0$ is dense in~$D$ wrt.\ graph norm, 
        part~\ref{itm:Langevin:SAD:S} follows directly. 
        \item
        By~\ref{thm:named:Liouville-thm} 
        and similar reasoning as in~\hyperref[rem:adjoint-sec]{Remark~\ref{rem:adjoint-sec}}
        we know the adjoint of the Riemannian semispray wrt.~\bigL2{\tanbundle\posfold}{\wsasakimetric*}\=/scalar product:
        \begin{align*}
            \adjoint{\spray{\riemannmetric*}}
            &=
            -\spray{\riemannmetric*}
            - 0
            - \frac1\rho\, \spray{\riemannmetric*}\rho
            =
            -\spray{\riemannmetric*}
            + \frac1{\rho_F}\, \beta\rho_F \pdot \spray{\riemannmetric*}\vlfunc{\Psi}
            \\
            &=
            -\spray{\riemannmetric*}
            + \beta \pdot \spray{\riemannmetric*}\vlfunc{\Psi}
        .
        \end{align*}
        Furthermore, we compute the adjoint wrt.~\bigL2{\tanbundle\posfold}{\wsasakimetric*}\=/scalar product
        of $\grad{\vertmetric*}\hlfunc{\Psi} = \frac1\beta\,\grad{\vertmetric*}\hlfunc{\Phi}$
        using that~$\Psi$ is weakly harmonic
        and adapting~\hyperref[rem:grad-loc-density]{Remark~\ref{rem:grad-loc-density}} accordingly:
        \begin{align*}
            \adjoint*{\grad{\vertmetric*}\hlfunc{\Psi}}
            &=
            -
            \grad{\vertmetric*}\hlfunc{\Psi}
            - 
            \underbrace{
                \divergence{\sasakimetric*}\parentheses*{%
                    \grad{\vertmetric*}\hlfunc{\Psi}%
                    }%
                }_{=0}
            -
            \frac1\rho\, \parentheses*{ \grad{\vertmetric*}\hlfunc{\Psi} }(\rho)
            \\
            &=
            -\grad{\vertmetric*}\hlfunc{\Psi}
            -\frac1\rho\, 
            \vlfunc{\rho_{\posfold}} 
            \pdot
            \underbrace{%
                \parentheses*{ \grad{\vertmetric*}\hlfunc{\Psi} }(\rho_F)%
            }_{%
                =\,\vertmetric%
                {\grad{\vertmetric*}\hlfunc{\Psi}}%
                {\grad{\vertmetric*}\rho_F}%
            }
            \\
            &=
            -\grad{\vertmetric*}\hlfunc{\Psi}
            -\underbrace{%
                \vertmetric%
                {\grad{\vertmetric*}\hlfunc{\Psi}}%
                {-\beta\canonfield}%
            }_{%
                =\,\hormetric%
                {\grad{\hormetric*}\vlfunc{\Psi}}%
                {-\beta\spray{\riemannmetric*}}%
            }
            =
            -\grad{\vertmetric*}\hlfunc{\Psi}
            +\beta \pdot \spray{\riemannmetric*}\vlfunc{\Psi}
        .
        \end{align*}
        Hence, $(A,D)$ is antisymmetric. 
        Since $(A,D)$ is densely defined on~$H$, 
        it is closable.
        \item
        The integrability statement is clear: 
        Let $h\in D$ arbitrary, 
        then we conclude that
        $\int_{\tanbundle{\posfold}} Sh\,\diffd\mu = 0$
        by part~\ref{itm:Langevin:SAD:S}
        and 
        $\int_{\tanbundle{\posfold}} Ah\,\diffd\mu = 0$
        by part~\ref{itm:Langevin:SAD:A}.
        We finish the proof pointing out 
        that $(L,D)$ is densely defined on~$H$ and dissipative, 
        thus it is closable.
    \end{enumerate}
\end{proof}

\begin{notation}
    Due~\hyperref[lem:Langevin:SAD]{Lemma~\ref{lem:Langevin:SAD}}
    $(S,D)$, $(A,D)$ and $(L,D)$ are closable.
    The closures we denote by~\operator{S}, \operator{A} and~\operator{L} respectively.
\end{notation}

From the easiest example of Euclidean space 
we learn that one actually doesn't need a weakly harmonic potential. 
To see this, we make use of Poisson manifolds.

\begin{example}
    Let $\posfold\vdef\Rnum_x^\posdim$.
    Then, the inverse~$\begin{pmatrix} 0 & -\ID{\posdim} \\ \ID{\posdim} & 0\end{pmatrix}$ 
    of `the' symplectic matrix 
    yields an almost complex structure~$\mathbb{J}$ on 
    $\tanbundle{\Rnum_x^\posdim}\simeq\Rnum_x^\posdim\times\Rnum_v^\posdim$.
    It's the same as the one constructed 
    in~\cite[Paragraph~5]{Dombrowski} or later in~\cite{TachibanaOkumura}, 
    by our convention of listing the vertical component first and the horizontal one second.
    Moreover, it is \emph{compatible} with the Euclidean metric 
    and the canonical symplectic form~$\Omega$ on 
    $\tanbundle*{\Rnum_x^\posdim}\simeq\Rnum_x^\posdim\times\parentheses*{\Rnum^\posdim}^\ast$ 
    in the sense that
    \begin{displaymath}
        \scalarprod%
            {v}%
            {w}%
            {\mathrm{euc}}%
        =
        \Omega(v,\mathbb{J}w)
        \quad\text{and}\quad
        \Omega(v,w)
        =
        \scalarprod%
            {\mathbb{J}v}%
            {w}%
            {\mathrm{euc}}%
        \qquad\text{for all } v,w\in\Rnum^{2\posdim}
    , 
    \end{displaymath} 
    cf.~\cite[Exercise~2.2.1]{MarsdenRatiu}. 
    The symplectic form gives rise to a Poisson bracket~\Poissonbracket{\cdot}{\cdot} 
    such that the correpsonding Poisson tensor reads as
    \begin{displaymath}
        \Poissonbracket{f}{g}(x,v)
        =
        -
        \scalarprod%
            {\grad{\mathrm{euc}}f(x,v)}%
            {\mathbb{J}\grad{\mathrm{euc}}g(x,v)}%
            {\mathrm{euc}}%
        =
        \Omega_{(x,v)}(df,dg)
    \end{displaymath}
    for all $f,g\in\smoothfunc{\Rnum^{2\posdim}}$ and $(x,v)\in\Rnum_x^\posdim\times\Rnum_v^\posdim$.

    In this terminology, the proof of~\cite[Lemma~3.4 part~ii)]{HypocoercMFAT} relies on 
    linking the operator~$(A,D)$ to the antisymmetric bilinear form~$(\mathcal{A},D)$ 
    of integrating minus the Poisson bracket wrt.~$\mu$:
    $
        \mathcal{A}(f,g)
        \vdef
        \int_{\Rnum^{2\posdim}} 
            - \Poissonbracket{f}{g}
        \ \diffd\mu
    $
    for all $f,g\in D=\smoothcompact{\Rnum^{2\posdim}}$.
    Via integration by parts one can show that 
    $
        \scalarprod%
            {Af}%
            {g}%
            {\bigL2{\mu}}
        =
        \mathcal{A}(f,g)
    $ holds for all $f,g\in\smoothcompact{\Rnum^{2\posdim}}$.
    This can be done \emph{without the assumption} of a weakly harmonic potential, 
    since the vector field action can be represented in terms of the Hamiltonian vector fields.
    Indeed: 
    Denote by~\Hamilton{f} the \emph{Hamiltonian vector field} of~$f\in\smoothfunc{\Rnum^{2\posdim}}$, i.\,e.\ 
    it fulfils $\Hamilton{f}(g)=\Poissonbracket{f}{g}$
    for all $g\in\smoothfunc{\Rnum^{2\posdim}}$.
    We get the explicit formula 
    $
        -\Hamilton{f}
        =
        \scalarprod%
            {\grad{v}f}%
            {\grad{x}}%
            {\mathrm{euc}}%
        - 
        \scalarprod%
            {\grad{x}f}%
            {\grad{v}}%
            {\mathrm{euc}}%
    $
    and conclude that~\Hamilton{f} is solenoidal by Schwarz's theorem.
    Hence, the following equality is true:
    \begin{displaymath}
        \mathcal{A}(f,g)
        =
        \int_{\Rnum^{2\posdim}} 
            \Hamilton{f}(\rho)
            \pdot g
        \ \diffd\Leb
        =
        \scalarprod%
            {Af}%
            {g}%
            {\bigL2{\mu}}
        \qquad\text{for all }f,g\in D
    ,
    \end{displaymath}
    where $\rho\vdef\frac{d\mu}{d\Leb}$ is the (loc-)density of~$\mu$.
\end{example}

\begin{proposition}[SAD-decomposition (2nd version)]
\label{prop:Langevin:SAD}
    The assertions of~\hyperref[lem:Langevin:SAD]{Lemma~\ref{lem:Langevin:SAD}}
    are true without the assumption of~$\Psi$ being weakly harmonic.
\end{proposition}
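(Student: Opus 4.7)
The plan is to prove antisymmetry of $A$ directly by showing that $A$ is $\mu$\=/preserving, using two geometric facts that together replace harmonicity of $\Psi$: (a)~the geodesic spray preserves the kinetic energy $\tfrac{1}{2}\abs{\cdot}{\riemannmetric*}^{2}$ along its flow, and (b)~the Sasakian density of $\mu$ has a Christoffel-symbol structure that exactly compensates the Christoffel terms appearing in the Riemannian semispray. Conceptually this is the Hamiltonian-vector-field argument from the preceding Euclidean example, transported to the manifold level via the Sasaki structure.

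Parts~\ref{itm:Langevin:SAD:S} and~\ref{itm:Langevin:SAD:L} of~\hyperref[lem:Langevin:SAD]{Lemma~\ref{lem:Langevin:SAD}} do not use weak harmonicity in the first place and I would simply reproduce the original argument: the symmetric part $S = \frac{\alpha}{\beta}\laplace{\wvertmetric*}$ is the fibrewise Ornstein-Uhlenbeck operator built from the Gaussian factor~$\nu$, and assertion~\ref{itm:Langevin:SAD:L} follows from~\ref{itm:Langevin:SAD:S} together with antisymmetry of $A$ by pairing with $1 \in D$. The entire work lies in re-proving~\ref{itm:Langevin:SAD:A}.

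Write $\diffd\mu = \rho\,\diffd\Leb{\sasakimetric*}$ with $\log\rho = -\vlfunc{\Phi} - \tfrac{\beta}{2}\abs{\cdot}{\riemannmetric*}^{2} + \mathrm{const}$ and split $\divergence{\mu}(A) = \divergence{\sasakimetric*}(A) + A(\log\rho)$. For the second summand the relevant actions are given by~\hyperref[ex:spray-acting]{Example~\ref{ex:spray-acting}} and~\hyperref[lem:vert-hor-acting]{Lemma~\ref{lem:vert-hor-acting}}: the geodesic spray annihilates $\abs{\cdot}{\riemannmetric*}^{2}$, the vertical gradient of $\hlfunc{\Phi}$ annihilates $\vlfunc{\Phi}$, and the remaining two cross-terms $\spray{\riemannmetric*}\vlfunc{\Phi}(v) = \riemannmetric{v}{\grad{\riemannmetric*}\Phi}{\tanproj(v)}$ and $\grad{\vertmetric*}\hlfunc{\Phi}(\abs{\cdot}{\riemannmetric*}^{2})(v) = 2\riemannmetric{\grad{\riemannmetric*}\Phi}{v}{\tanproj(v)}$ cancel by symmetry of $\riemannmetric*$ once the $\beta$-prefactors match; hence $A(\log\rho) = 0$. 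Invariantly, this identity is the Hamiltonian conservation $\spray{\Lagrange}(H) = 0$ for $\Lagrange = \tfrac{1}{2}\abs{\cdot}{\riemannmetric*}^{2} - \vlfunc{\Psi}$ and $H = \tfrac{1}{2}\abs{\cdot}{\riemannmetric*}^{2} + \vlfunc{\Psi}$, after identifying $-A$ with $\spray{\Lagrange}$ through~\hyperref[ex:spray-as-gradient]{Example~\ref{ex:spray-as-gradient}} and~\hyperref[rem:Riemann-spray-revisited]{Remark~\ref{rem:Riemann-spray-revisited}}.

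For the Sasakian divergence, $\divergence{\sasakimetric*}(\spray{\riemannmetric*}) = 0$ by~\ref{thm:named:Liouville-thm}. For the vertical force $\frac{1}{\beta}\grad{\vertmetric*}\hlfunc{\Phi} = \frac{1}{\beta}\vertlift(\grad{\riemannmetric*}\Phi\bincirc\tanproj)$, a chart computation in coordinates $(x^{i},v^{j})$ with Sasakian density $\det g(x)\,\diffd x \wedge \diffd v$ shows that the only contribution is a $v$-derivative of a $v$-independent quantity, so $\divergence{\sasakimetric*}\bigl(\grad{\vertmetric*}\hlfunc{\Phi}\bigr) = 0$ without any assumption on $\laplace{\riemannmetric*}\Phi$. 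Combining the two vanishings yields $\divergence{\mu}(A) = 0$, which is the integrated antisymmetry identity on the core $D$; antisymmetry of $\operator{A}$ then follows by density, and closability of $\operator{L} = \operator{S} - \operator{A}$ is exactly as in the original lemma. The main delicate point is precisely this chart-level Sasakian divergence: it must be kept distinct from the `vertical divergence' $\divergence{\vertmetric*}$ of~\hyperref[lem:Sasaki-operators]{Lemma~\ref{lem:Sasaki-operators}}, which is taken against the partial volume form $\kappa^{\ast}\Leb{\riemannmetric*}$ rather than the full Sasakian volume and does not agree with $\divergence{\sasakimetric*}$ in general.
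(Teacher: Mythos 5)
Your proof is correct and reaches the conclusion by a genuinely different route from the paper's. The paper passes through the symplectic structure: it builds the Poisson bracket from the Sasaki metric, the almost complex structure and the canonical symplectic form, notes that $\mathcal{A}(f,g)=-\int\Poissonbracket{f}{g}\,\diffd\mu$ is manifestly antisymmetric, checks in adapted coordinates that every Hamiltonian field $\Hamilton{f}$ is solenoidal, and then identifies $\scalarprod{Af}{g}{H}=\mathcal{A}(f,g)$ via the single computation $\Hamilton{f}(\rho)=-\rho\pdot\spray{\wriemannmetric*}f$. You instead verify directly that the drift field $A$ has vanishing $\mu$-divergence, splitting it into the unweighted Sasakian divergence (Liouville for the spray plus phase-space volume preservation of the position-dependent vertical force) and the logarithmic derivative $A(\log\rho)$, which cancels by the energy identity $\spray{\riemannmetric*}\vlfunc{\Phi}=\tfrac12\,\grad{\vertmetric*}\hlfunc{\Phi}\parentheses*{\abs{\cdot}{\riemannmetric*}^2}$ once the $\beta$-prefactors are matched; I checked the cancellation and it is exact. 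Your closing caveat is indeed the crux, and you resolve it correctly: in natural coordinates the Sasakian density and the components of $\vertlift\parentheses*{\grad{\riemannmetric*}\Phi\bincirc\tanproj}$ depend only on the base point while the field points purely in fibre directions, so $\divergence{\sasakimetric*}\parentheses*{\grad{\vertmetric*}\hlfunc{\Phi}}=0$ unconditionally (the flat model $\sum_i\partial_i\Phi(x)\,\partial_{v^i}$ already exhibits this), whereas $\divergence{\vertmetric*}\parentheses*{\vertlift\Xfield}=\parentheses*{\divergence{\riemannmetric*}\Xfield}\bincirc\tanproj$ from \hyperref[lem:Sasaki-operators]{Lemma~\ref{lem:Sasaki-operators}} is a different object for non-solenoidal $\Xfield$; conflating the two is exactly what produced the superfluous harmonicity hypothesis in \hyperref[lem:Langevin:SAD]{Lemma~\ref{lem:Langevin:SAD}}. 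The paper's route never has to adjudicate between these two divergences, since solenoidality is only ever invoked for the Hamiltonian fields $\Hamilton{f}$; your route is shorter, more elementary, and explains where the original hypothesis came from.
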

\begin{proof}
    We enhance the proof of~\hyperref[lem:Langevin:SAD]{Lemma~\ref{lem:Langevin:SAD}} part~\ref{itm:Langevin:SAD:A} 
    via the technique discussed in the previous example
    which works due to the particular choices of 
    the Gaussian fibre measure and the Sasaki metric wrt.\ the most natural Ehresmann connection on~\tantanbundle\posfold.

    Let's abbreviate by 
    $
        \rho
        \vdef
        \frac{d\mu}{d\Leb{\sasakimetric*}} 
        = 
        \frac{d\Leb{\wsasakimetric*}}{d\Leb{\sasakimetric*}} 
    $ 
    the (loc-)density of~$\mu$.
    Denote by~$\mathbb{J}$ minus the almost complex structure on~\tanbundle\posfold 
    constructed in~\cite{Dombrowski},  
    and let~$\Omega$ be the canonical symplectic form on~\tanbundle*\posfold.
    By construction, the Sasaki metric, $\mathbb{J}$ and~$\Omega$ are compatible, 
    cf.~\cite[page~341]{MarsdenRatiu}. 
    Hence, they define the same Poisson bracket \Poissonbracket{\cdot}{\cdot} on~\tanbundle\posfold via the assignments
    \begin{displaymath}
        \Omega_{v}(df,dg)
        \fedv
        \Poissonbracket{f}{g}(v)
        \vdef
        -
        \sasakimetric%
            {\grad{\sasakimetric*}f(v)}%
            {\mathbb{J}\grad{\sasakimetric*}g(v)}%
            {v}%
    \end{displaymath}
    for all $f,g\in\smoothfunc{\tanbundle\posfold}$ and $v\in\tanbundle\posfold$.
    Thus, for any fixed $f\in\smoothfunc{\tanbundle\posfold}$ 
    there is a unique Hamilton vector field~\Hamilton{f} 
    by~\cite[Proposition~10.2.1]{MarsdenRatiu}.
    Let a chart $(v^j)_{j=1}^{2\posdim}$ 
    that gives normal coordinates and respects the Ehresmann connection 
    such that $(\partial v^i)_{i=1}^{\posdim}$ provides local basis for vertical vector fields 
    and $(\partial v^{k+\posdim})_{k=1}^{\posdim}$ provides a basis for the horizontal vector fields; 
    we find that in these coordinates the Hamiltonian vector fields attain the form 
    \begin{displaymath}
        -\Hamilton{f}
        =
        \sum_{k=1}^{\posdim}
            \partial_{v^k}f \pdot \partial_{v^{k+\posdim}}
        -
        \sum_{i=1}^{\posdim}
            \partial_{v^{i+\posdim}}f \pdot \partial_{v^i}
    .
    \end{displaymath}
    In any such coordinates we easily compute that 
    $\divergence{\sasakimetric*}(\Hamilton{f})=0$ using Schwarz's Theorem, 
    in other words all~\Hamilton{f} are solenoidal.

    Define the antisymmetric bilinear form $(\mathcal{A},D)$ by
    $
        \mathcal{A}(f,g)
        \vdef
        \int_{\tanbundle\posfold} 
            - \Poissonbracket{f}{g}
        \ \diffd\mu
    $
    for all $f,g\in D=\smoothcompact{\tanbundle\posfold}$.
    From the Divergence Theorem it follows that
    \begin{displaymath}
        \mathcal{A}(f,g)
        =
        \int_{\tanbundle\posfold} 
            -\Hamilton{f}(g)
        \ \diffd\Leb{\wsasakimetric*}
        =
        \int_{\tanbundle\posfold} 
            \Hamilton{f}(\rho)
            \pdot g
        \ \diffd\Leb{\sasakimetric*}
    .
    \end{displaymath}
    Using our comments on the Sasaki gradient of a loc\=/density, 
    see~\hyperref[rem:grad-loc-density]{Remark~\ref{rem:grad-loc-density}},
    we can infer that 
    $\Hamilton{f}(\rho) = -\rho\pdot\spray{\wriemannmetric*}f$. 
    In a nutshell, 
    we localise in the support of~$f$ 
    via a partition of unity argument 
    where the corresponding open cover is formed by 
    charts $(v^j)_{j=1}^{2\posdim}$ 
    that are respecting the Ehresmann connection
    and also are restricted to domains of local trivialisation; 
    therein, we can use the local coordinate form of~\Hamilton{f} 
    and that the loc\=/density~$\rho$ trivialises to a product of exponential-type densities.
    Hence, we gain that 
    $
        \scalarprod%
            {Af}%
            {g}%
            {H}
        =
        \mathcal{A}(f,g)
    $ for all $f,g\in D$
    which finishes the proof.
\end{proof}

We are going to construct the projection~$P$ mentioned in~\ref{cond:D5}.
For every $f\in H$ we call the mapping
\begin{displaymath}
    \expect[\nu]{f}\colon\ 
    \posfold\to\Rnum,\ 
    x \longmapsto \int_{\tanbundle{\posfold}{x}} f \ \diffd\nu
\end{displaymath}
the \emph{fibrewise average of~$f$}.
Clearly, the operator $\expect[\nu]$ acts trivially on vertically lifted functions, 
since they are fibrewise constant: 
$\expect[\nu]{\vlfunc{f_0}} = f_0$ 
for all $f_0\in\bigL2{\posfold}{\wriemannmetric*}$.
We define $P_Sf\vdef\expect[\nu]{f}\bincirc\tanproj$ assigning 
to an element of the tangent space the average of~$f$ over the fibre corresponding to this element.
Thinking in a local trivialisation 
the vertical lift of the fibrewise average erases the dependency of~$f$ on the velocity component, 
since~$f$ can be thought as a bivariate function of position and velocity in this localisation.
The range of~$P_S$ is precisely
$P_S(H) = \tanproj^\ast\parentheses*{ \bigL2{\posfold}{\wriemannmetric*} }$, 
the set of vertically lifted \bigL2\=/functions on the weighted position manifold, 
and~$P_S$ is a projection.

Moreover, we have that
\begin{align*}
    \int_{\tanbundle{\posfold}} (P_Sf)^2 \ \diffd\mu
    &=
    \int_{\tanbundle{\posfold}} 
        (\expect[\nu]{f}\bincirc\tanproj)^2
    \ \diffd\Leb{\wriemannmetric*}\locprod\nu
    =
    \int_{\tanbundle{\posfold}} 
        (\expect[\nu]{f})^2\bincirc\tanproj 
    \ \diffd\Leb{\wriemannmetric*}\locprod\nu
    \\
    &=
    \int_{\tanbundle{\posfold}} 
        (\expect[\nu]{f})^2\bincirc\tanproj 
    \ \diffd\tanproj^\ast\Leb{\wriemannmetric*}
    =
    \int_{\posfold} 
        \expect[\nu]{f}^2
    \ \diffd\Leb{\wriemannmetric*}
\end{align*}
for all $f\in H$. 
This implies 
$
    \norm{P_Sf}{H}
    =
    \norm{ \expect[\nu]{f} }{\bigL2{\posfold}{\wriemannmetric*}}
    \leq
    \norm{ f }{H}
$
meaning that~$P_S$ is continuous with norm~1.
Last but not least, $P_S$ also is an orthogonal projection, 
cf.~\cite[Proposition~3.3]{Conway}. 
Finally, we define $P \vdef P_S - \scalarprod{\cdot}{1}{H}$.

\begin{lemma}[Properties of projection~$P$ and semigroup conservativity~\ref{cond:D5}, \ref{cond:D7}]
\label{lem:Langevin:D5D7}
    Let condition~\ref{cond:Langevin:P1} hold.
    Then, we have
    $P(H)\subseteq\opdomain{S}$, $SP=0$, 
    $P(D)\subseteq\opdomain{A}$ and $AP(D)\subseteq\opdomain{A}$.
    Furthermore, $1\in\opdomain{L}$ and $L1 = 0$.
\end{lemma}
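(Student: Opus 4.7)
The plan is to verify all five claims by approximating non-compactly-supported targets by sequences in $D=\smoothcompact{\tanbundle\posfold}$ and invoking closedness of~$\operator{\bar S}$, $\operator{\bar A}$, $\operator{\bar L}$. Two families of cutoffs are needed:
\begin{itemize}
    \item \emph{Base cutoffs.} By~\ref{cond:M:complete} and Hopf\=/Rinow, closed geodesic balls in~\posfold are compact. Fixing $o\in\posfold$ and mollifying the intrinsic distance to~$o$, construct $\phi_k\in\smoothcompact{\posfold}$ with $\phi_k\nearrow 1$ and $\sup_k\norm{\grad{\riemannmetric*}\phi_k}\infty<\infty$.
    \item \emph{Fibre cutoffs.} Let $\chi\in\smoothfunc{\Rnum_{\geq0}}$ with $\chi\equiv1$ on $[0,1]$ and $\chi\equiv0$ on $[2,\infty)$, and set $\rho_k(v)\vdef\chi(\riemannmetric{v}{v}{\tanproj(v)}/k^2)$. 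Using $\riemannmetric{v}{v}{\cdot}$ (smooth on $\tanbundle\posfold$), this is smooth on $\tanbundle\posfold$ and fibre\-/wise compactly supported. Each product $(\phi_k\bincirc\tanproj)\pdot\rho_k$ lies in~$D$.
\end{itemize}

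\textbf{Step 1 (semigroup conservativity~\ref{cond:D7}).} Set $\psi_k\vdef(\phi_k\bincirc\tanproj)\pdot\rho_k\in D$. Clearly $\psi_k\to 1$ in $H$ by dominated convergence. Apply $L$, expand by the product rule, and observe that each resulting term contains at least one derivative of~$\phi_k$ or of~$\rho_k$. The terms carrying derivatives of~$\rho_k$ are integrated against the Gaussian fibre measure~$\nu$, which absorbs any polynomial growth in~$\abs{v}{\riemannmetric*}$; the terms carrying derivatives of~$\phi_k$ are controlled by the uniform bound on $\norm{\grad{\riemannmetric*}\phi_k}\infty$ together with $\Leb{\wriemannmetric*}(\posfold)=1$. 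A dominated convergence argument yields $L\psi_k\to 0$ in~$H$, and closedness of~\operator{\bar L} gives $1\in\opdomain{\bar L}$ with $\bar L1=0$. In particular, $1\in\opdomain{\bar S}\cap\opdomain{\bar A}$ with $\bar S1=\bar A1=0$.

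\textbf{Step 2 (claims $P(H)\subseteq\opdomain{S}$ and $SP=0$).} Since $Pf=P_Sf-\scalarprod{f}{1}{H}$ and constants are handled by Step 1, it suffices to verify $P_S(H)\subseteq\opdomain{\bar S}$ with $\bar SP_S=0$. For $f\in H$ write $P_Sf=\vlfunc{f_0}$ with $f_0\vdef\expect[\nu]{f}\in\bigL2{\posfold}{\wriemannmetric*}$. By~\hyperref[rem:L2dense-testfunctions]{Remark~\ref{rem:L2dense-testfunctions}} approximate $f_0$ in $\bigL2{\posfold}{\wriemannmetric*}$ by $g_n\in\smoothcompact{\posfold}$; using $\mu=\Leb{\wriemannmetric*}\locprod\nu$ and $\nu(\Rnum^{\posdim})=1$ one has $\vlfunc{g_n}\to P_Sf$ in~$H$. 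Form $h_{n,k}\vdef\vlfunc{g_n}\pdot\rho_k\in D$. Because $\grad{\vertmetric*}\vlfunc{g_n}=0$ (\hyperref[lem:vert-hor-acting]{Lemma~\ref{lem:vert-hor-acting}}), the product rule reduces to
\begin{displaymath}
    Sh_{n,k}
    =\frac\alpha\beta\,\vlfunc{g_n}\pdot\laplace{\wvertmetric*}\rho_k,
\end{displaymath}
and a Gaussian computation gives $\norm{\laplace{\wvertmetric*}\rho_k}{\bigL2{\nu}}\to 0$ as $k\to\infty$ (fibrewise the integrand is dominated by a Schwartz function). A diagonal subsequence $(h_{n,k(n)})$ then satisfies $h_{n,k(n)}\to P_Sf$ and $Sh_{n,k(n)}\to 0$ in~$H$, and closedness yields the claim.

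\textbf{Step 3 (claims $P(D)\subseteq\opdomain{A}$ and $AP(D)\subseteq\opdomain{A}$).} For $f\in D$, smoothness and compact support of $E_\nu[f]$ in~\posfold follow from differentiation under the Gaussian integral and compactness of $\tanproj(\supp f)$, so $P_Sf=\vlfunc{f_0}$ with $f_0\in\smoothcompact{\posfold}$. Using the same cutoffs $\rho_k$, the sequence $h_k\vdef\vlfunc{f_0}\pdot\rho_k\in D$ converges to $P_Sf$ in~$H$. Applying $A=-\spray{\wriemannmetric*}$ and using \hyperref[lem:vert-hor-acting]{Lemma~\ref{lem:vert-hor-acting}} together with~\hyperref[ex:spray-acting]{Example~\ref{ex:spray-acting}} shows that $Ah_k$ consists of a principal piece $-\rho_k\pdot\mathcal{H}_{\riemannmetric*}\vlfunc{f_0}$ and remainders involving derivatives of~$\rho_k$; all remainder terms vanish in~$H$ by Gaussian domination as in Step~1, and the principal piece converges to $-\mathcal{H}_{\riemannmetric*}\vlfunc{f_0}\in H$ (note: integrability is guaranteed because $\mathcal{H}_{\riemannmetric*}\vlfunc{f_0}=\riemannmetric{\ID_{\tanbundle\posfold}}{\grad{\riemannmetric*}f_0\bincirc\tanproj}{\tanproj}$ is linear in~$v$, hence square-integrable against the Gaussian fibre measure on the compact base set $\supp f_0$). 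Thus $P_Sf\in\opdomain{\bar A}$ and $\bar A P_Sf=-\mathcal{H}_{\riemannmetric*}\vlfunc{f_0}$. For the last inclusion, apply the same cutoff procedure to the function $-\mathcal{H}_{\riemannmetric*}\vlfunc{f_0}\in H$: its product with $\rho_k$ lies in~$D$ (its base support is that of~$f_0$), and the explicit formula for~$A$ together with the polynomial\-/in\-/$v$ character of $\mathcal{H}_{\riemannmetric*}\vlfunc{f_0}$ ensures that the images under~$A$ form a Cauchy sequence in~$H$ whose limit is the desired element; closedness of~\operator{\bar A} concludes.

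The principal obstacle is the bookkeeping of the joint cutoff convergence, in particular verifying that all product\-/rule remainders in Steps~1--3 vanish in~$H$; the decisive technical ingredients are the uniform Riemannian control on $\phi_k$ afforded by~\ref{cond:M:complete}, the Gaussian tail decay of $\nu$ on each fibre, and the two identities $\grad{\vertmetric*}\vlfunc{\cdot}=0$ and $\grad{\hormetric*}\hlfunc{\cdot}=0$ which decouple base and fibre derivatives.
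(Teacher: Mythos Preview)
Your overall strategy matches the paper's: approximate the non\-/compactly\-/supported targets by elements of~$D$ and invoke closedness of the respective closures. The genuine difference is the choice of fibre cutoff. The paper takes $f_n=\vlfunc{f_0}\cdot\hlfunc{\varphi_n}$ with $\varphi_n=\varphi(\cdot/n)$ a bump on~\posfold whose derivatives decay like $1/n$ and $1/n^2$; the horizontal lift interacts with the vertical operators through $\grad{\vertmetric*}\hlfunc{\varphi_n}=\vertlift(\grad{\riemannmetric*}\varphi_n\circ\tanproj)$ and $\laplace{\vertmetric*}\hlfunc{\varphi_n}=(\laplace{\riemannmetric*}\varphi_n)\circ\tanproj$, so the decay is read off directly. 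Your radial cutoff $\rho_k=\chi\bigl(\abs{v}{\riemannmetric*}^2/k^2\bigr)$ is an equally valid choice and buys you something the paper does not exploit: $\spray{\riemannmetric*}\rho_k=0$ identically, because the geodesic flow preserves $\abs{v}{\riemannmetric*}^2$. You use this implicitly in Step~3 (it is why the only remainder there is the $\grad{\vertmetric*}\hlfunc{\Phi}(\rho_k)$ term), but you should state it; without it your ``remainders involving derivatives of $\rho_k$'' would include a horizontal piece that is harder to control. Conversely, the paper's cutoffs live in the tensor core $D_0$ of Lemma~\ref{lem:D0isdense}, which keeps the presentation consistent with the rest of the paper; your $\rho_k$ is not of the form $\hlfunc{g_0}$.

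Two points need tightening. First, the sentence ``In particular, $1\in\opdomain{\bar S}\cap\opdomain{\bar A}$ with $\bar S1=\bar A1=0$'' at the end of Step~1 is a non sequitur: $L\psi_k\to 0$ in~$H$ does not by itself split into $S\psi_k\to 0$ and $A\psi_k\to 0$. You must run the same sequence through~$S$ and~$A$ separately (which works with the same estimates). This matters because Step~3 genuinely needs $1\in\opdomain{\bar A}$ to absorb the constant offset in $P=P_S-\scalarprod{\cdot}{1}{H}$. Second, in Step~1 the phrase ``controlled by the uniform bound on $\norm{\grad{\riemannmetric*}\phi_k}\infty$ together with $\Leb{\wriemannmetric*}(\posfold)=1$'' is misleading: a uniform bound alone yields no convergence. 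The correct mechanism is dominated convergence --- $\grad{\riemannmetric*}\phi_k(\tanproj(v))\to 0$ pointwise because its support escapes to infinity, and the term is dominated by a fixed multiple of $\abs{v}{\riemannmetric*}\in H$. The paper avoids this subtlety by building the decay $\abs{\grad{\riemannmetric*}\varphi_n}{\riemannmetric*}\leq c/n$ directly into its cutoffs.
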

\begin{proof}
    We follow the lines of the proof of~\cite[Lemma~3.4~(iv)]{HypocoercMFAT} 
    for the Euclidean case.

    First of all, we start with the statements related to~$S$.
    Let $\varphi\in\smoothcompact{\posfold;[0,1]}$ be a cut-off function\footnote{%
        An explicit choice of~$\varphi$ is given as follows: 
        Let~$h$ be a chart at~$o$ -- 
        wlog.\ the ball~\ball{o}{2} is contained in the chart domain.
        Define the auxiliary `mountain' function 
        \begin{displaymath}
            m\colon\ \Rnum\to\Rnum,\ 
            t\longmapsto \exp\parentheses*{-\frac1{t(1-t)}}\pdot\indicator{(0,1)}(t)
        .
        \end{displaymath}
        We transform the `mountain' to a `table mountain' 
        by the assignment 
        \begin{displaymath}
            \tau\colon\
            \Rnum^\posdim\to[0,1],\,
            y\longmapsto\frac{\int_0^{2-\norm{y}{2}} m(t)\ \diffd t}{\int_0^1 m(t)\ \diffd t} 
        .
        \end{displaymath}
        Then, the choice 
        $\varphi\vdef\tau\bincirc h\pdot\indicator{\ball{o}{2}}$
        yields a smooth function with the desired properties.
    }
    such that $\varphi=1$ on~\ball{o}{1} 
    and $\varphi=0$ on~\ball{o}{2}.
    Define $\varphi_n\vdef\varphi(\sfrac{\ID}{n})$ for all $n\in\Nnum\setminus\{0\}$.
    Note that there is some constant $c\in(0,\infty)$ such that
    \begin{displaymath}
        \abs{\grad{\riemannmetric*}\varphi_n(x)}{\riemannmetric*} 
        \leq 
        \frac{c}n\,
        \quad\text{and}\quad
        \abs{\operatorname{Hess}_{\riemannmetric*}\varphi_n(x)}{\infty} 
        \leq 
        \frac{c}{n^2}\,
    \end{displaymath}
    for all $x\in\posfold$ and $n\in\Nnum\setminus\{0\}$,
    cf.~\cite[Definition~3.3]{HypocoercMFAT}.
    Clearly, $(\varphi_n)_{n\in\Nnum}$ pointwisely converges to the constant function~1 as $n\to\infty$.
    Let $f_0\in\smoothcompact{\posfold}$ 
    and define
    \begin{equation}
    \label{eq:Langevin:approxvertical1}
        f_n
        \vdef 
        \vlfunc{f_0}\tensorprod\hlfunc{\varphi_n}
        =
        \vlfunc{f_0}\pdot\hlfunc{\varphi_n}
        \qquad\text{for all }n\in\Nnum\setminus\{0\}
    . 
    \end{equation}
    This yields a sequence in~$D$ converging to~$\vlfunc{f_0}$ in~$H$.
    From dominated convergence we can conclude that
    \begin{displaymath}
        Sf_n
        =
        \frac\alpha\beta\,\vlfunc{f_0}
        \pdot
        \laplace{\vertmetric*}\hlfunc{\varphi_n}
        - 
        \alpha \vlfunc{f_0}\pdot\canonfield\hlfunc{\varphi_n}
        \longrightarrow
        0
        \qquad\text{in }H\text{ as } n\to\infty
    ,
    \end{displaymath}
    where we use 
    that the function 
    $
        \tanbundle\posfold\to\Rnum,\,
        v \mapsto
        \abs{\canonfield(v)}{\sasakimetric*}
        =
        \abs{v}{\riemannmetric*}
    $
    is in~$H$ 
    as on each fibre $\tanbundle{\posfold}{x}\simeq\Rnum^\posdim$
    the function 
    $
        \abs{\ID_{\Rnum^\posdim}}{\riemannmetric*}
    $
    is in~\bigL2{\Rnum^\posdim}{\nu}.
    Since \operator{S} is closed, 
    we have shown that $\vlfunc{f_0}\in\opdomain{S}$, and more specifically $S\vlfunc{f_0}=0$.
    Next, we prove that~$P$ maps into the null space of \operator{S}. 
    As the range of~$P$ is contained in~$\tanproj^\ast\bigL2{\posfold}{\wriemannmetric*}$, 
    we pick an $g_0\in\bigL2{\posfold}{\wriemannmetric*}$ 
    and show that $\vlfunc{g_0}\in\opdomain{S}$ and $S\vlfunc{g_0}=0$.
    The space~\smoothcompact{\posfold} is dense in~\bigL2{\posfold}{\wriemannmetric*}, 
    so there is a sequence $(g_n)_{n\in\Nnum\setminus\{0\}}$ in~\smoothcompact{\posfold} 
    approximating~$g_0$ in \bigL2{\posfold}{\wriemannmetric*}.
    We have seen just before 
    that $(\vlfunc{g_n})_{n\in\Nnum\setminus\{0\}}$ is a sequence in~\opdomain{S}
    and $S\vlfunc{g_n}=0$ for all $n\in\Nnum$.
    Again, with closedness of \operator{S} 
    it follows that $\vlfunc{g_0}\in\opdomain{S}$ and $S\vlfunc{g_0}=0$.

    Now, we turn to the statements involving~$A$.
    Let $f_0\in\smoothcompact{\posfold}$ 
    and define the sequence $(f_n)_{n\in\Nnum\setminus\{0\}}$ as in~\eqref{eq:Langevin:approxvertical1}
    approximating $\vlfunc{f_0}$ in~$H$.
    Using 
    $
        \abs{\spray{\riemannmetric*}}{\sasakimetric*}
        =
        \abs{\ID_{\tanbundle\posfold}}{\riemannmetric*}\in H
    $ 
    as above and that 
    $\grad{\riemannmetric*}\Phi\in\bigL2{\posfold\to\tanbundle\posfold}{\wriemannmetric*}$
    we can infer via dominated convergence that
    \begin{align*}
        Af_n
        &=
        -\hlfunc{\varphi_n}\pdot\spray{\riemannmetric*}\vlfunc{f_0}
        +
        \frac1\beta\vlfunc{f_0}
        \pdot
        \parentheses*{\grad{\vertmetric*}\hlfunc{\Phi}}\hlfunc{\varphi_n}
        =
        -\hlfunc{\varphi_n}\pdot\spray{\riemannmetric*}\vlfunc{f_0}
        +
        \frac1\beta\vlfunc{f_0}
        \pdot
        \riemannmetric{\grad{\riemannmetric*}\Phi}{\grad{\riemannmetric*}\varphi_n}
        \\
        &\longrightarrow
        -\spray{\riemannmetric*}\vlfunc{f_0}
        +
        \frac1\beta\vlfunc{f_0}
        \pdot
        0
        \qquad\text{in } H\text{ as }n\to\infty
    .
    \end{align*}
    Closedness of \operator{A} implies $\vlfunc{f_0}\in\opdomain{A}$ as well as 
    $
        A\vlfunc{f_0}
        =
        -\spray{\riemannmetric*}\vlfunc{f_0}
    $.
    For the inclusion $P(D)\subseteq\opdomain{A}$ 
    it's enough to show that $1\in\opdomain{A}$ and $A1=0$.
    Similar as before the function 
    $
        \tanbundle\posfold\to\Rnum,\,
        v \mapsto
        \abs{\spray{\riemannmetric*}(v)}{\sasakimetric*}
        =
        \abs{v}{\riemannmetric*}
    $
    is in~$H$, 
    and thus dominated convergence gives us 
    \begin{displaymath}
        A\vlfunc{\varphi_n}
        =
        -\spray{\riemannmetric*}\vlfunc{\varphi_n}
        \longrightarrow
        0
        \qquad\text{in } H\text{ as }n\to\infty
    .
    \end{displaymath}
    In order to prove now that $AP(D)\subseteq\opdomain{A}$ 
    we adhere to our approximation strategy and define 
    $
        h_n
        \vdef
        \hlfunc{\varphi_n}\pdot\spray{\riemannmetric*}\vlfunc{f_0}
    $ 
    for all $n\in\Nnum\setminus\{0\}$. 
    The sequence $(h_n)_{n\in\Nnum}$ converges to~$\spray{\riemannmetric*}\vlfunc{f_0}$ 
    both pointwisely and in~$H$.
    We note that the function
    $
        \spray{\riemannmetric*}^2\vlfunc{f_0}
        =
        \spray{\riemannmetric*}(\spray{\riemannmetric*}\vlfunc{f_0})
    $
    is dominated by 
    \begin{displaymath}
        \norm{\spray{\riemannmetric*}}{\bigL2{\tanbundle\posfold\to\tantanbundle\posfold}{\mu}}
        \pdot
        \norm{\spray{\riemannmetric*}\vlfunc{f_0}}{H}
        \leq
        \norm{\spray{\riemannmetric*}}{\bigL2{\tanbundle\posfold\to\tantanbundle\posfold}{\mu}}^2
        \pdot
        \norm{\vlfunc{f_0}}{H}
        =
        \norm{\abs{\ID_{\tanbundle\posfold}}{\riemannmetric*}}{H}^2
        \pdot
        \norm{\vlfunc{f_0}}{H}
    \end{displaymath}
    due to the Cauchy-Bunyakovsky-Schwarz inequality applied twice.
    This dominating function is in~$H$
    as on each fibre $\tanbundle{\posfold}{x}\simeq\Rnum^\posdim$
    the function 
    $
        \abs{\ID_{\Rnum^\posdim}}^2
    $
    is in~\bigL2{\Rnum^\posdim}{\nu}.
    Together with afore-mentioned facts that 
    $\abs{\ID_{\tanbundle\posfold}}{\riemannmetric*}\in H$ 
    and 
    $\grad{\riemannmetric*}\Phi\in\bigL2{\posfold\to\tanbundle\posfold}{\wriemannmetric*}$
    this yields 
    \begin{align*}
        Ah_n
        &=
        -\hlfunc{\varphi_n}\pdot\spray{\riemannmetric*}^2\vlfunc{f_0}
        +
        \frac1\beta
        \pdot
        \parentheses*{\grad{\vertmetric*}\hlfunc{\Phi}}h_n
        \\
        &=
        -\hlfunc{\varphi_n}\pdot\spray{\riemannmetric*}^2\vlfunc{f_0}
        +
        \frac1\beta
        \pdot
        \parentheses*{
            \parentheses*{\spray{\riemannmetric*}\vlfunc{f_0}}
            \pdot
            \parentheses*{\grad{\vertmetric*}\hlfunc{\Phi}}\hlfunc{\varphi_n}
            +
            \hlfunc{\varphi_n}
            \pdot
            \parentheses*{\grad{\vertmetric*}\hlfunc{\Phi}}(\spray{\riemannmetric*}\vlfunc{f_0})
        }
        \\
        &\longrightarrow
        -\spray{\riemannmetric*}^2\vlfunc{f_0}
        +
        0
        +
        \frac1\beta
        \pdot
        \parentheses*{\grad{\vertmetric*}\hlfunc{\Phi}}(\spray{\riemannmetric*}\vlfunc{f_0})
        \qquad\text{in } H\text{ as }n\to\infty
    \end{align*}
    by dominated convergence. 
    Since \operator{A} is closed,
    the function $\spray{\riemannmetric*}\vlfunc{f_0}$ is an element of~\opdomain{A}.
 
    Finally, the statements on~$L$ follow the very same way:
    Let $f_0\in\smoothcompact{\posfold}$ 
    and define the sequence $(f_n)_{n\in\Nnum\setminus\{0\}}$ as in~\eqref{eq:Langevin:approxvertical1}; 
    repeat the previous steps and conclude from closedness of~\operator{L} 
    that $f=\vlfunc{f_0}\in\opdomain{L}$ and $L\vlfunc{f_0} = - A\vlfunc{f_0}$. 
    In particular, the sequence 
    $\parentheses*{L\vlfunc{\varphi_n}}_{n\in\Nnum}$ 
    converges in~$H$ to~0 as $n\to\infty$,
    and by closedness it follows 
    $1\in\opdomain{L}$ with $L1=0$.
\end{proof}

\begin{remark}
\label{rem:Langevin:APf-Eq2}
    As we deduced $Pf\in\opdomain{A}$ for all $f\in D$, 
    we can calculate that
    \begin{align}
    \label{eq:Langevin:APf-Eq1}
    \begin{aligned}
        APf
        &=
        -\spray{\wriemannmetric*}\parentheses*{\vlfunc{\expect[\nu]{f}}}
        =
        -\dualpair{%
            \spray{\riemannmetric*}%
        }{%
            d\expect[\nu]{f} \bincirc d\tanproj
        }
        \\
        &=
        -\dualpair{ \ID_{\tanbundle\posfold} }{ d\expect[\nu]{f} }
        =
        -d\expect[\nu]{f}
    .
    \end{aligned}
    \end{align}
    However, one can give this formula a more intuitive form:
    \begin{align}
    \label{eq:Langevin:APf-Eq2}
    \begin{aligned}
        APf
        &=
        -\spray{\wriemannmetric*}\parentheses*{Pf}
        =
        -\spray{\riemannmetric*}\parentheses*{\vlfunc{\expect[\nu]{f}}}
        =
        -\hormetric{%
            \spray{\riemannmetric*}
        }{%
            \grad{\hormetric*}(P_Sf)
        }
        \\
        &=
        -\hormetric{%
            \spray{\riemannmetric*} %
        }{%
            \horlift\parentheses*{ \grad{\riemannmetric*}\expect[\nu]{f} } %
        }
        =
        -\riemannmetric{%
            \ID_{\tanbundle\posfold} %
        }{%
            \grad{\riemannmetric*}\expect[\nu]{f} \bincirc \tanproj %
        }{\tanproj}
    .
    \end{aligned}
    \end{align}
    Unlike \hyperref[eq:Langevin:APf-Eq1]{Equation~\eqref{eq:Langevin:APf-Eq1}}, 
    we recognise that~\hyperref[eq:Langevin:APf-Eq2]{Equation~\eqref{eq:Langevin:APf-Eq2}} 
    is in perfect correspondence to~\cite[Equation~(3.12)]{HypocoercMFAT}.
\end{remark}

Turning to the remaining condition~\ref{cond:D2}, i.\,e.\ 
the question whether~\operator{L} generates a strongly continuous semigroup, 
we first look into the case of a smooth potential $\Psi\in\smoothfunc{\posfold}$.  
The proof relies on methods from~\cite{HelfferNier} 
as explained in~\cite[Section~4]{HypocoercJFA}.
We briefly quote a consequence of the Hörmander~Theorem, 
namely~\cite[Proposition~A.1]{HelfferNier}.
Let~$T$ be a second order differential operator 
on a Riemannian manifold~$(\basefold,\basemetric*)$
of the form $T = c + \Xfield_0 + \sum_{k=1}^\ell \Xfield_k$
with $c\in\smoothfunc{\basefold}$ 
and $\Xfield_k\in\smoothsec{\tanbundle\basefold}$ for all $k\in\{0,\ldots,\ell\}$.
We say that~$T$ satisfies the \emph{Hörmander condition} 
if at any point $b\in\basefold$ holds
\begin{displaymath}
    \dim(\Liespan{\Xfield_0,\ldots,\Xfield_\ell}{b})
    =
    \dim(\tanbundle{\basefold}{b}) 
    =
    \dim(\basefold)     
    ,
\end{displaymath}
where \Liespan{\Xfield_0,\ldots,\Xfield_\ell} denotes the generated Lie~algebra.

\begin{proposition}
    Let~$T$ satisfy the Hörmander condition 
    and let $f\in\Lloc1{\basefold}{\basemetric*}$ such that
    \begin{displaymath}
        \int_{\basefold} f \pdot T\psi \ \diffd\Leb{\basemetric*}
        = 
        0
        \qquad\text{for all }
        \psi\in\smoothcompact{\basefold}
    .
    \end{displaymath}
    Then, $f$ has a smooth representative.
\end{proposition}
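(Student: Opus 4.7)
The plan is to recognize this statement as a direct consequence of Hörmander's celebrated hypoellipticity theorem, transferred from the operator $T$ to its formal $\bigL2{\basefold}{\basemetric*}$\=/adjoint~$T^{\ast}$ by duality. The hypothesis that $\int_{\basefold} f\pdot T\psi\ \diffd\Leb{\basemetric*}=0$ for all $\psi\in\smoothcompact{\basefold}$ precisely says that $T^{\ast}f=0$ holds in the sense of distributions on~\basefold. So the crux is to show that $T^{\ast}$ is hypoelliptic under the Hörmander condition imposed on~$T$.

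First, I would localise: smoothness is a local property, so via a partition of unity subordinate to an atlas of charts it suffices to prove the statement on an open subset of~$\Rnum^{\dim\basefold}$, where the Riemannian volume has a strictly positive smooth density with respect to Lebesgue measure. On each such chart one can rewrite the integral identity as a distributional equation $T^{\ast}f=0$, after computing the formal adjoint: using the divergence theorem one sees that
\begin{displaymath}
    T^{\ast}
    =
    c - \Xfield_0 + \sum_{k=1}^{\ell} \Xfield_k^2 + \text{(lower-order terms)}
,
\end{displaymath}
where the second-order principal part involves the very same vector fields $\Xfield_1,\ldots,\Xfield_\ell$ and the drift merely flips sign modulo smooth first-order corrections coming from $\divergence{\basemetric*}(\Xfield_k)$. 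In particular, the collection of vector fields generating the principal and drift part of~$T^{\ast}$ is $\{-\Xfield_0,\Xfield_1,\ldots,\Xfield_\ell\}$ up to smooth multiplication, whose Lie algebra at every point $b\in\basefold$ coincides with $\Liespan{\Xfield_0,\ldots,\Xfield_\ell}{b}$. Hence the Hörmander bracket condition on~$T$ transfers verbatim to~$T^{\ast}$.

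Second, I would invoke the classical Hörmander hypoellipticity theorem (see e.\,g.\ the original reference by L.~Hörmander on second-order differential operators with non-negative characteristic form): any second-order operator whose principal vector fields, together with its drift, satisfy the bracket generating condition is hypoelliptic, meaning that every distributional solution of $T^{\ast}u=0$ is actually smooth. Applied to our $f\in\Lloc1{\basefold}{\basemetric*}\subseteq\mathcal{D}^{\prime}(\basefold)$, this immediately produces a smooth representative of~$f$.

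The only genuinely delicate step is the verification that passing to the formal adjoint preserves the Hörmander condition; everything else is localisation plus a black-box appeal to the hypoellipticity theorem. Since the adjoint only adds smooth zeroth- and first-order correction terms and sign changes that do not alter the Lie algebra spanned by $\Xfield_0,\ldots,\Xfield_\ell$, this check is routine but needs to be spelled out cleanly in local coordinates so that one may cite Hörmander's result in its standard Euclidean formulation. After that, the conclusion follows at once.
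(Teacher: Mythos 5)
Your proposal is correct and follows essentially the same route as the paper: localise to chart domains via a partition of unity and apply H\"ormander's hypoellipticity theorem to the formal adjoint, whose bracket-generating condition is inherited from~$T$ because passing to~$T^{\ast}$ only flips the sign of the drift and adds smooth lower-order corrections. The paper simply delegates the Euclidean duality step to the cited Proposition~A.1 of Helffer--Nier, whereas you spell out that adjoint computation explicitly.
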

\begin{proof}
    The proof of this proposition works as for~\cite[Proposition~A.1]{HypocoercJFA}: 
    It is done in chart domains, 
    and within these domains it's perfectly fine to consider 
    $\Xfield_j = \partial x^j = \frac{\partial}{\partial x^j}$, 
    where $\parentheses*{x^j}_{j=1}^{\dim(\basefold)}$ denotes local coordinates provided by the chart.
    Then, we have smooth representatives in chart domains 
    serving as starting point for a partition of unity argument.
    Thus, even if~$T$ is just available in local coordinate form, 
    we can apply the previous proposition 
    as soon as the respective chart domains form an open cover.
\end{proof}

\begin{lemma}[Hörmander condition for the Langevin generator]
\label{lem:Langevin:generator-Hörmander}
    Consider a smooth potential $\Psi\in\smoothfunc{\posfold}$.
    Then, the Langevin generator satisfies the Hörmander condition.
    More precisely, let $v\in\tanbundle\posfold$ 
    and $\parentheses*{x^j}_{j=1}^{\posdim}$ be local coordinates corresponding to a chart at~$\tanproj(v)$.
    Then, we have that
    \begin{align*}
        \dim\parentheses*{\Liespan{%
            \spray{\riemannmetric*},
            \vertlift\parentheses*{\grad_{\riemannmetric*}\Psi}, %
            \vertlift\parentheses*{\partial x^1}, %
            \ldots, %
            \vertlift\parentheses*{\partial x^\posdim}, %
            \canonfield
        }{v}}
        &=
        \dim(\tantanbundle{\posfold}{v})     
        \\
        &=
        \dim(\tanbundle\posfold) 
        =
        2\posdim 
    .
    \end{align*}
\end{lemma}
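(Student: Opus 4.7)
The plan is to verify the Hörmander condition by exploiting the Whitney decomposition $\tantanbundle{\posfold}{v} = \vertbundle{\tanbundle\posfold}{v} \oplus \horbundle{\tanbundle\posfold}{v}$ and producing $\posdim$ linearly independent elements of the Lie algebra whose images fall into each of the two $\posdim$-dimensional summands. A first observation is that the potential $\Psi$ and the canonical field $\canonfield$ will turn out to be redundant for the Hörmander rank: already the semispray together with the $\posdim$ vertical lifts from the diffusion term suffice.

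For the vertical summand, the $\posdim$ fields $\vertlift(\partial x^j)$, $j=1,\dots,\posdim$, are already among the generators. Since $(\partial x^j|_{\tanproj(v)})_{j=1}^{\posdim}$ is a basis of $\tanbundle{\posfold}{\tanproj(v)}$ and the vertical lift is a fibrewise linear isomorphism onto the vertical subspace, these $\posdim$ fields span $\vertbundle{\tanbundle\posfold}{v}$ outright.

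For the horizontal summand, I would compute the Lie brackets $\Liebracket{\vertlift(\partial x^k)}{\spray{\riemannmetric*}}$ for $k=1,\dots,\posdim$. Equipping $\tanproj^{-1}(U)$ with the natural fibre coordinates $(v^j)$ induced by the chart $(x^j)$, the semispray identity $\dualpair{\spray{\riemannmetric*}}{d\tanproj} = \ID_{\tanbundle\posfold}$ from the definition of a semispray forces the local coordinate form
\[
    \spray{\riemannmetric*}
    =
    \sum_{j=1}^{\posdim} v^j\,\frac{\partial}{\partial x^j}
    +
    \sum_{j=1}^{\posdim} G^j(x,v)\,\frac{\partial}{\partial v^j}
\]
for some smooth coefficients $G^j$ (which, for $\spray{\riemannmetric*}$, encode the Christoffel symbols of $\riemannmetric*$). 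In these coordinates $\vertlift(\partial x^k) = \partial / \partial v^k$, so
\[
    \Liebracket{\vertlift(\partial x^k)}{\spray{\riemannmetric*}}
    =
    \frac{\partial}{\partial x^k}
    +
    \sum_{j=1}^{\posdim} \frac{\partial G^j}{\partial v^k}\,\frac{\partial}{\partial v^j}.
\]
Modulo the vertical subspace already handled in the previous paragraph, these $\posdim$ brackets deliver precisely the coordinate vectors $\partial / \partial x^k$, which form a basis of a complement to $\vertbundle{\tanbundle\posfold}{v}$ inside $\tantanbundle{\posfold}{v}$.

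Combining both steps yields $2\posdim$ elements of the Lie algebra linearly independent at $v$, matching $\dim(\tantanbundle{\posfold}{v}) = 2\posdim$. The only mild obstacle I foresee is bookkeeping: justifying the local coordinate form of $\spray{\riemannmetric*}$ invariantly from the semispray property and checking that the pointwise rank argument transports uniformly along a chart cover of $\tanbundle\posfold$. Neither $\vertlift(\grad_{\riemannmetric*}\Psi)$ nor $\canonfield$ is needed for the argument, reflecting the fact that the diffusion already acts in all velocity directions and the geodesic part of the drift alone transfers this regularisation to the positions through the canonical coupling encoded by the semispray.
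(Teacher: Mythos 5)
Your proposal is correct and follows essentially the same route as the paper: the vertical lifts $\vertlift(\partial x^j)$ span the vertical subspace, and the brackets of these with the semispray produce, modulo vertical terms, a basis of a horizontal complement — the paper performs exactly this bracket computation in its Appendix~A (\hyperref[lem:app:bracket-with-spray]{Lemma~\ref{lem:app:bracket-with-spray}}), obtaining $\Liebracket{\spray{\riemannmetric*}}{\vertlift(\partial x^k)} = \horlift(\partial x^k) - \sum_j N^j_k\,\vertlift(\partial x^j)$, which coincides with your formula up to the (irrelevant) replacement of $\partial/\partial x^k$ by $\horlift(\partial x^k)$, as the two differ only by vertical terms. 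Your observation that $\vertlift(\grad{\riemannmetric*}\Psi)$ and $\canonfield$ are redundant is also made explicitly in the paper.
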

\begin{proof}
    Obviously, neither $\vertlift\parentheses*{\grad_{\riemannmetric*}\Psi}$ nor~\canonfield
    contribute anything to the generated Lie algebra.
    From~\cite[Proposition~5.1]{GudmundssonKappos} we know explicit forms 
    for the Lie brackets of vertically and horizontally lifted vector fields in any combination. 
    With that said, we have the equations
    \begin{displaymath}
        \Liebracket{\vertlift(\Xfield)}{\vertlift(\Yfield)}
        =
        0
        \qquad\text{and}\qquad
        \Liebracket{\horlift(\Yfield)}{\vertlift(\Xfield)}
        =
        \vertlift\parentheses*{\connection{\Yfield}{\riemannmetric*}\Xfield}
    \end{displaymath}
    for all $\Xfield,\Yfield\in\smoothsec{\tanbundle\posfold}$. 
    Thus, the only nontrivial pairing of the seeding vector fields 
    $
        \Xfield,\Yfield
        \in
        \{%
            \partial x^1,%
            \ldots,%
            \partial x^\posdim%
        \}%
    $
    is
    $
        \Liebracket{\horlift(\Yfield)}{\vertlift(\Xfield)}
        =
        \vertlift\parentheses*{\connection{\Yfield}{\riemannmetric*}\Xfield}
    $, 
    but clearly all vector fields of this form are linear dependent of the vertical vector fields  
    $
        \left\{%
            \vertlift\parentheses*{\partial x^1},%
            \ldots,%
            \vertlift\parentheses*{\partial x^{\posdim}}%
        \right\}%
    $
    generating the Lie algebra.
    This just shows that the collection
    $
        \{%
            \partial x^1,%
            \ldots,%
            \partial x^{\posdim}%
        \}%
    $
    might not serve our purpose
    even if we would build our generator with both kinds of liftings. 

    The statement concerning Lie brackets of one vertically and one horizontally lifted vector fields 
    does not apply to~\spray{\riemannmetric*}, 
    as a semispray can not arise as horizontal lift of a vector field.
    However, in~\hyperref[lem:app:bracket-with-spray]{Lemma~\ref{lem:app:bracket-with-spray}}
    we prove that 
    $
        \Liebracket{%
            \spray{\riemannmetric*}
        }{%
            \vertlift\parentheses*{\partial x^j}
        }
        =
        \horlift\parentheses*{\partial x^j}
        -
        \sum_{i=1}^{\posdim} 
            N_j^i \pdot \vertlift\parentheses*{\partial x^i}
    $ 
    for certain functions~$N_j^i$.
    This yields \posdim~many linear independent horizontal vector fields in the generated Lie~algebra 
    which finishes the proof.
\end{proof}

\begin{theorem}[\ref{cond:D2}~for smooth potentials using a hypoellipticity startegy]
\label{thm:Langevin:D2-smooth-case}
    Let $\Psi\in\smoothfunc{\posfold}$ be a smooth potential. 
    Then, $(L,D)$~is essentially m-dissipative. 
    Thus, its closure~\operator{L} generates a strongly continuous semigroup.
\end{theorem}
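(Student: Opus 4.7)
The plan is to invoke the Lumer-Phillips theorem. Dissipativity of $(L,D)$ has already been established in~\hyperref[prop:Langevin:SAD]{Proposition~\ref{prop:Langevin:SAD}}, since $\scalarprod{Lf}{f}{H} = \scalarprod{Sf}{f}{H} \leq 0$ for every $f \in D$. Hence it suffices to show that $\range{\lambda - L}$ is dense in $H$ for some $\lambda \in (0,\infty)$, equivalently that the kernel of the Hilbert-space adjoint $(\lambda - L)^{\ast}$ is trivial. So suppose $f \in H$ satisfies $\int_{\tanbundle\posfold} f \pdot (\lambda - L)\varphi \,\diffd\mu = 0$ for every $\varphi \in D$.

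Since $\Psi$ is smooth, the loc-density $\rho \vdef \vlfunc{\exp(-\Phi)} \locprod \prod_{j=1}^{\posdim} \varphi_{0,\beta^{-1}} \bincirc \proj_j$ of $\mu$ with respect to the Sasakian Lebesgue measure $\Leb{\sasakimetric*}$ is smooth and strictly positive. Consequently, the above identity rewrites as a distributional equation $(\lambda - L^{\dagger})(f\rho) = 0$ on $\tanbundle\posfold$, where $L^{\dagger}$ denotes the formal adjoint of $L$ with respect to $\Leb{\sasakimetric*}$. Passing to the formal adjoint only modifies first-order drift and zero-order terms; in particular the collection of vector fields appearing in $L^{\dagger}$ spans the same Lie algebra at every point as in~\hyperref[lem:Langevin:generator-Hörmander]{Lemma~\ref{lem:Langevin:generator-Hörmander}}. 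Hence the Hörmander hypoellipticity proposition quoted above applies to $\lambda - L^{\dagger}$, and $f$ admits a smooth representative.

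Having $f \in \smoothfunc{\tanbundle\posfold}$ solving pointwise $(\lambda - L^{\ast})f = 0$ with $L^{\ast} = S + A$, it remains to deduce $f = 0$. I localize the equation using the horizontal cutoffs $\hlfunc{\varphi_n}$ from the proof of~\hyperref[lem:Langevin:D5D7]{Lemma~\ref{lem:Langevin:D5D7}} (with $\abs{\grad{\riemannmetric*} \varphi_n}{\riemannmetric*} \leq c/n$ and the corresponding Hessian bound), combined with a smooth radial cutoff on each fibre that is eventually sent to infinity against the Gaussian tails of $\nu$. Testing $(\lambda - L^{\ast})f = 0$ against $\hlfunc{\varphi_n}^2 f$ and integrating by parts yields an identity of the form
\begin{displaymath}
    \lambda \int_{\tanbundle\posfold} \hlfunc{\varphi_n}^2 f^2 \,\diffd\mu
    + \frac{\alpha}{\beta} \int_{\tanbundle\posfold} \hlfunc{\varphi_n}^2 \abs{\grad{\vertmetric*} f}{\vertmetric*}^2 \,\diffd\mu
    = \mathcal{R}_n,
\end{displaymath}
where $\mathcal{R}_n$ collects commutator contributions $[S,\hlfunc{\varphi_n}^2]$ and $[A,\hlfunc{\varphi_n}^2]$. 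Dominated convergence, together with the $O(1/n)$ and $O(1/n^2)$ bounds on the derivatives of $\varphi_n$ and the fact that $\abs{\ID_{\tanbundle\posfold}}{\riemannmetric*}$ and its square belong to $H$ by virtue of the Gaussian fibre measure, forces $\mathcal{R}_n \to 0$ as $n \to \infty$. Consequently $\lambda \norm{f}{H}^2 \leq 0$ and therefore $f = 0$.

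The step I expect to be most delicate is the simultaneous control of the horizontal and vertical truncations in $\mathcal{R}_n$: one must verify that the cross-terms produced by the antisymmetric part $A = -\spray{\riemannmetric*} + \beta^{-1}\grad{\vertmetric*}\hlfunc{\Phi}$ acting on $\hlfunc{\varphi_n}^2$ remain uniformly integrable in $n$, which relies on precisely the Hessian bound for $\varphi_n$ and the Gaussian integrability on each fibre that already powered~\hyperref[lem:Langevin:D5D7]{Lemma~\ref{lem:Langevin:D5D7}}. Once this is done, Lumer-Phillips produces essential m-dissipativity of $(L,D)$, so that its closure $\operator{L}$ generates a strongly continuous contraction semigroup on $H$.
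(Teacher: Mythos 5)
Your proposal follows the paper's proof essentially step for step: dissipativity from the SAD decomposition plus Lumer--Phillips, reduction to showing that any $f\in H$ orthogonal to $(\lambda-L)(D)$ vanishes, smoothness of such an $f$ via the Hörmander condition verified in Lemma~\ref{lem:Langevin:generator-Hörmander} (the paper takes $\lambda=1$), and then a cutoff/integration-by-parts argument exploiting that $S$ is nonpositive and $A$ is antisymmetric. One correction to the localisation step, though: in the paper's conventions the cutoff that is constant along the fibres is the \emph{vertical} lift $\vlfunc{\varphi_n}=\tanproj^\ast\varphi_n$, not the horizontal lift $\hlfunc{\varphi_n}=\kappa^\ast\varphi_n$, which varies in the velocity variable. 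The paper tests against $u_n=(\vlfunc{\varphi_n})^2f$ precisely because $\vlfunc{\varphi_n}$ commutes with the purely vertical operator $S$ and is annihilated by $\grad{\vertmetric*}\hlfunc{\Phi}$, so the only surviving remainder is $A\parentheses*{\vlfunc{\varphi_n}}=-\spray{\riemannmetric*}\vlfunc{\varphi_n}$, of size $O(n^{-1})\,\abs{v}{\riemannmetric*}$. If you literally use $\hlfunc{\varphi_n}$, your remainder $\mathcal{R}_n$ acquires the nonvanishing commutator of $S$ with multiplication by $\hlfunc{\varphi_n}^2$ and a term proportional to $\riemannmetric{\grad{\riemannmetric*}\Phi}{\grad{\riemannmetric*}\varphi_n}$, which is not obviously $o(1)$ for a general smooth potential satisfying only~\ref{cond:Langevin:P1}; and with no position cutoff at all the horizontal integration by parts on a noncompact $\posfold$ is unjustified. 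Your additional radial truncation in each fibre is a legitimate (indeed welcome) precaution, since $(\vlfunc{\varphi_n})^2f$ is not compactly supported in the fibre direction and so does not literally lie in $D$; otherwise the argument is the paper's.
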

\begin{proof}
    From~\hyperref[lem:Langevin:SAD]{Lemma~\ref{lem:Langevin:SAD}} we know that
    $(L,D)$ is dissipative.
    In view of the Lumer-Philips~Theorem, 
    we have to show that 
    the range $(\ID_H-L)(D)$ is dense in~$H$.
    Let $f\in H$ fixed such that
    \begin{equation}
    \label{eq:aux:hypoellip-D2}
        \scalarprod{(\ID_H-L)u}{f}{H}
        =
        0
        \qquad\text{for all }u\in D
    .
    \end{equation}
    We claim that $f=0$.

    Due to the choice of~$f$ we have that 
    $\exp(-\vlfunc{\Phi})f\in\Lloc1{\tanbundle\posfold}{\Leb{\riemannmetric*}\locprod\nu}$ 
    and by~\hyperref[lem:Langevin:generator-Hörmander]{Lemma~\ref{lem:Langevin:generator-Hörmander}}
    we can assume that $\exp(-\vlfunc{\Phi})f$ is smooth.
    Let $(\varphi_n)_{n\in\Nnum}$ be a sequence of cut-off functions in \smoothcompact{\posfold;[0,1]}
    with $\varphi_n\rightarrow1$ pointwise as $n\to\infty$ 
    and such that 
    $
        \norm{ \grad{\riemannmetric*}\varphi_n }{\bigL\infty{\posfold\to\tanbundle\posfold}{\wriemannmetric*}}^2 
        \leq 
        \frac1n\,C
    $ 
    holds for all $n\in\Nnum\setminus\{0\}$ and some $C\in(0,\infty)$.
    Now, define $u_n\vdef\parentheses*{\vlfunc{\varphi_n}}^2f$ for all $n\in\Nnum$.
    It's clear that
    \begin{displaymath}
        \scalarprod{u_n}{f}{H}
        \overset{\eqref{eq:aux:hypoellip-D2}}{=}
        \scalarprod{Lu_n}{f}{H}
        =
        \scalarprod{Su_n}{f}{H}
        -
        \scalarprod{Au_n}{f}{H}
    .
    \end{displaymath}
    Due to the fact that $(S,D)$ is nonpositive definite and multiplying with $\varphi$ commutes with the action of~$S$ 
    we see that
    $
        \scalarprod{Su_n}{f}{H}
        =
        \scalarprod{S(\vlfunc{\varphi_n}f)}{\vlfunc{\varphi_n}f}{H}
        \leq
        0
    $.
    Similarly, using antisymmetry we get that
    \begin{displaymath}
        \scalarprod{Au_n}{f}{H}
        =   
        \scalarprod{%
            A\parentheses*{\vlfunc{\varphi_n}}\,\vlfunc{\varphi_n}f%
        }{f}{H}
        +
        \scalarprod{A(\vlfunc{\varphi_n}f)}{\vlfunc{\varphi_n}f}{H}
        =
        \scalarprod{%
            A\parentheses*{\vlfunc{\varphi_n}}\,\vlfunc{\varphi_n}f%
        }{f}{H}
    .
    \end{displaymath}
    Altogether, this yields the estimate
    \begin{displaymath}
        \scalarprod{u_n}{f}{H}
        =
        \int_{\tanbundle\posfold} \parentheses*{\vlfunc{\varphi_n}}^2f^2 \ \diffd\mu
        \leq
        \frac1n\,C
        \int_{\tanbundle\posfold} \vlfunc{\varphi_n} f^2 \ \diffd\mu
        \leq
        \frac1n\,C
        \norm{ f }{H}
    \end{displaymath}
    using~\eqref{eq:Langevin:APf-Eq2}.
    By dominated convergence this implies that 
    $\norm{ f }{H}^2 \leq 0$, 
    thus $f=0$.
\end{proof}

For the case of a loc-Lipschitzian potential, 
we leave the base weight aside for a moment, 
but keep the fibre weight.
In other words, 
we could think of~\tanbundle\posfold endowed with bundle weight $1\locprod\rho_F$.
Let 
$\Psi\in\Lloc1{\posfold}{\riemannmetric*}$,
and  
$L_0 \vdef \frac\alpha\beta\,\laplace{\wvertmetric*} +\, \spray{\riemannmetric*}$
be defined on the very same set~$D_0$ as before -- 
it should be clear at this point that 
$L_0$ has to consist of the weighted diffusion and the \emph{non}-corrected semispray 
instead of~\spray{\wriemannmetric*}. 

\begin{lemma}
\label{lem:Langevin:L0D0-ess-m-dissipative}
    $(L_0,D_0)$ is essentially m-dissipative on~\bigL2{\tanbundle\posfold}{\Leb{\riemannmetric*}\locprod\nu}.
\end{lemma}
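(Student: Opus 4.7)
The plan is to verify essential m\-/dissipativity via the Lumer\-/Phillips theorem: it suffices to establish dissipativity of $(L_0,D_0)$ together with density of the range $(\ID-L_0)(D_0)$ in $H_0\vdef\bigL2{\tanbundle\posfold}{\Leb{\riemannmetric*}\locprod\nu}$. On $D_0$ I would decompose $L_0$ into its symmetric part $S_0\vdef\frac{\alpha}{\beta}\laplace{\wvertmetric*}$ and its antisymmetric part $A_0\vdef\spray{\riemannmetric*}$, following the template of~\hyperref[lem:Langevin:SAD]{Lemma~\ref{lem:Langevin:SAD}}.

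Dissipativity then reduces to nonpositivity of $S_0$ and antisymmetry of $A_0$ in $H_0$. For $S_0$ I would repeat the argument in part~\ref{itm:Langevin:SAD:S} of~\hyperref[lem:Langevin:SAD]{Lemma~\ref{lem:Langevin:SAD}} specialised to trivial base weight: on each fibre $\tanbundle{\posfold}{x}\cong\Rnum^\posdim$ the operator $\frac{\alpha}{\beta}\laplace{\wvertmetric*}$ is a rescaled Ornstein\-/Uhlenbeck generator for $\nu$, and vertical integration by parts yields the desired nonpositive Dirichlet form on $D_0$. For antisymmetry of $A_0$ I would apply~\hyperref[rem:adjoint-sec]{Remark~\ref{rem:adjoint-sec}}: by~\ref{thm:named:Liouville-thm} the semispray is solenoidal with respect to $\Leb{\sasakimetric*}$, and since the Riemannian speed $\abs{\cdot}{\riemannmetric*}$ is conserved along geodesics, the logarithmic derivative $\frac{1}{\rho_F}\spray{\riemannmetric*}\rho_F$ vanishes pointwise.

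For range density I would adapt the hypoellipticity strategy from the proof of~\hyperref[thm:Langevin:D2-smooth-case]{Theorem~\ref{thm:Langevin:D2-smooth-case}}. Let $f\in H_0$ be orthogonal to $(\ID-L_0)(D_0)$; by~\hyperref[lem:D0isdense]{Lemma~\ref{lem:D0isdense}} this extends to orthogonality against $(\ID-L_0)u$ for every $u\in\smoothcompact{\tanbundle\posfold}$. Reading this as a distributional equation for $f$ times the smooth local density of $\Leb{\riemannmetric*}\locprod\nu$ and invoking~\hyperref[lem:Langevin:generator-Hörmander]{Lemma~\ref{lem:Langevin:generator-Hörmander}} (the bracket computation there is insensitive to $\Psi$, so the Hörmander condition persists), I would promote $f$ to a smooth representative. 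Testing against $u_{n,m}\vdef\chi_{n,m}^2 f$ for a product cutoff $\chi_{n,m}$ combining a position cutoff $\vlfunc{\varphi_n}$ with $\abs{\grad{\riemannmetric*}\varphi_n}{\riemannmetric*}\leq C/n$ and a fibre cutoff supported in $\{\abs{\cdot}{\riemannmetric*}\leq 2m\}$, the principal parts get absorbed by antisymmetry of $A_0$ and nonpositivity of $S_0$, so that only commutator errors controlled by the gradients of $\chi_{n,m}$ remain and force $\norm{f}{H_0}^2=0$ after letting $n,m\to\infty$.

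The main obstacle is precisely this range\-/density step. Unlike in~\hyperref[thm:Langevin:D2-smooth-case]{Theorem~\ref{thm:Langevin:D2-smooth-case}}, where the base weight $\exp(-\vlfunc{\Phi})$ supplied the integrability $\abs{\ID_{\tanbundle\posfold}}{\riemannmetric*}\in H$ which dominated the growth of $\spray{\riemannmetric*}$ under a single position cutoff, here the base measure $\Leb{\riemannmetric*}$ is merely $\sigma$\-/finite and carries no decay. A supplementary fibre cutoff becomes necessary, and one has to verify that the extra commutator between $\spray{\riemannmetric*}$ and the fibre cutoff is absorbed by the Gaussian integrability of $\nu$ uniformly in the position\-/cutoff parameter, while simultaneously ensuring $u_{n,m}\in\smoothcompact{\tanbundle\posfold}$ and that the vertical commutator with $S_0$ produces no surviving boundary term. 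Balancing the two cutoff scales is the core technical difficulty.
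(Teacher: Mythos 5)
Your opening reduction is the right one, but after it you take a long detour where the paper takes a shortcut, and the detour is left unfinished at its hardest point. The paper's proof has exactly two ingredients: (a) \hyperref[thm:Langevin:D2-smooth-case]{Theorem~\ref{thm:Langevin:D2-smooth-case}} applied to the \emph{zero} potential (which is smooth) already gives essential m\-/dissipativity of $(L_0,D)$ on \bigL2{\tanbundle\posfold}{\Leb{\riemannmetric*}\locprod\nu}; (b) $(L_0,D)$ is contained in the closure of $(L_0,D_0)$, because by \hyperref[lem:D0isdense]{Lemma~\ref{lem:D0isdense}} any $f\in D$ admits approximants $f_n\in D_0$ converging with all derivatives uniformly on a common compact containing all supports, whence $f_n\to f$ and $L_0f_n\to L_0f$ in \bigL2{\tanbundle\posfold}{\Leb{\riemannmetric*}\locprod\nu}. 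You actually perform the substance of step (b) yourself when you use \hyperref[lem:D0isdense]{Lemma~\ref{lem:D0isdense}} to upgrade orthogonality against $(\ID-L_0)(D_0)$ to orthogonality against $(\ID-L_0)(D)$; at that point you could simply invoke the already established range density from \hyperref[thm:Langevin:D2-smooth-case]{Theorem~\ref{thm:Langevin:D2-smooth-case}} and stop. Instead you re-derive that theorem from scratch.

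The re-derivation is where the genuine gap sits: you yourself flag the two-parameter cutoff $\chi_{n,m}$ and the balancing of the position and fibre scales as ``the core technical difficulty'' and then do not carry it out, so the range-density step is not actually proved. I would also push back on the diagnosis. The loss of a finite base measure is not the real obstruction: in the cutoff argument all integrals are localised to $\tanproj^{-1}(\supp\varphi_n)$, over which $\Leb{\riemannmetric*}$ is finite, and the integrability of $v\mapsto\abs{v}{\riemannmetric*}$ needed to control $\spray{\riemannmetric*}$ comes from the Gaussian fibre measure~$\nu$, which is unchanged here. The issue of functions like $\parentheses*{\vlfunc{\varphi_n}}^2f$ not being compactly supported in the fibre direction is present in the weighted setting of \hyperref[thm:Langevin:D2-smooth-case]{Theorem~\ref{thm:Langevin:D2-smooth-case}} just as much as here, so it is not a new difficulty created by dropping the base weight, and in any case it need not be resolved a second time. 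Your dissipativity discussion (vanishing of $\spray{\riemannmetric*}\rho_F$ since the geodesic flow preserves $\abs{\cdot}{\riemannmetric*}$, hence antisymmetry of $\spray{\riemannmetric*}$ wrt.\ $\Leb{\riemannmetric*}\locprod\nu$) is correct and consistent with \hyperref[lem:Langevin:SAD]{Lemma~\ref{lem:Langevin:SAD}}.
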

\begin{proof}
    From~\autoref{thm:Langevin:D2-smooth-case} applied to the smooth case of the zero potential,
    we know that
    $(L_0,D)$ is essentially m-dissipative 
    on~$\bigL2{\tanbundle\posfold}{\Leb{\riemannmetric*}\locprod\nu}$.
    We have to show that 
    $(L_0,D)$ is contained in the closure of $(L_0,D_0)$.

    For any $f\in D$ there is an approximating sequence $(f_n)_{n\in\Nnum}$ in~$D_0$
    wrt.\ usual locally convex topology implying uniform convergence of all derivatives on compacts
    cf.~\hyperref[lem:D0isdense]{Lemma~\ref{lem:D0isdense}}.
    Furthermore, there is a common compact set in~\tanbundle{\posfold}  large enough
    containing~$\supp(f)$ and~$\supp(f_n)$ for all $n\in\Nnum$.
    Therefore, we have that
    $\sup_{v\in\tanbundle\posfold} \abs{ L_0 f_n(v) - L_0 f(v) } \rightarrow0$ 
    as $n\to\infty$. 
    Thus, $f_n\rightarrow f$ and $L_0 f_n \rightarrow L_0 f$ 
    in~\bigL2{\tanbundle\posfold}{\Leb{\riemannmetric*}\locprod\nu} as $n\to\infty$.
\end{proof}

Denote by~$\diriboundsobolev1\infty{\posfold}$ 
the closure of~\smoothcompact{\posfold} wrt.\ \sobolev1\infty\=/norm.
Define 
\begin{displaymath}
    D_1
    \vdef
    \tanproj^\ast\diriboundsobolev1\infty{\posfold}
    \tensorprod
    \kappa^\ast\smoothcompact{\posfold}
.
\end{displaymath}
Considering the operator $(L,D_1)$ we realise that
\hyperref[lem:Langevin:SAD]{Lemma~\ref{lem:Langevin:SAD}} still does apply -- 
the proof has to be adapted just slightly.
Further on, we define the unitary isomorphism 
\begin{displaymath}
    U\colon\ 
    H=\bigL2{\tanbundle\posfold}{\Leb{\wriemannmetric*}\locprod\nu} 
    \to
    \bigL2{\tanbundle\posfold}{\Leb{\riemannmetric*}\locprod\nu},\ 
    f \longmapsto \vlfunc{\exp\parentheses*{-\sfrac\Phi2}}\pdot f
.
\end{displaymath}
Note that $U(D_1) = D_1$.
Thus, we define the operator $\widetilde{L}\vdef ULU^{-1}$ on~$D_1$. 
One directly observes that 
\begin{displaymath}
    \widetilde{L} 
    = 
    \widetilde{S} - \widetilde{A} 
    \vdef
    U\laplace{\wvertmetric*}U^{-1} - UAU^{-1}
    \qquad\text{holds on } D_1 
\end{displaymath}
with operators $(\widetilde{S},D_1)$ symmetric, negative semidefinite 
and $(\widetilde{A},D_1)$ antisymmetric 
both on~\bigL2{\tanbundle\posfold}{\Leb{\riemannmetric*}\locprod\nu}, 
as well as 
\begin{align*}
    \widetilde{A}\vlfunc{f_0}
    &=
    -U
    \parentheses*{%
        \vlfunc{f_0}
        \pdot
        \riemannmetric%
            {\ID_{\tanbundle\posfold}}%
            {\grad{\riemannmetric*}\exp(\sfrac\Phi2)\bincirc\tanproj}%
            {\tanproj}
        +
        \vlfunc{\exp(\sfrac\Phi2)}
        \pdot
        \riemannmetric%
            {\ID_{\tanbundle\posfold}}%
            {\grad{\riemannmetric*}f_0\bincirc\tanproj}%
            {\tanproj}
    }
    \\
    &=
    -\frac12\,\vlfunc{f_0}
    \pdot
    \riemannmetric%
        {\ID_{\tanbundle\posfold}}%
        {\grad{\riemannmetric*}\Phi\bincirc\tanproj}%
        {\tanproj}
    -
    \riemannmetric%
        {\ID_{\tanbundle\posfold}}%
        {\grad{\riemannmetric*}f_0\bincirc\tanproj}%
        {\tanproj}
    \\
    &=
    -\frac12\,\vlfunc{f_0}
    \pdot
    \spray{\riemannmetric*}\vlfunc{\Phi}
    -
    \spray{\riemannmetric*}\vlfunc{f_0}
    \qquad\text{for all } f_0\in\diriboundsobolev1\infty{\posfold}
.
\end{align*}
For~$f=\vlfunc{f_0}\tensorprod\hlfunc{g_0} \in D_1$ we get that
\begin{align*}
    \widetilde{A}f
    &=
    -\frac12\,\vlfunc{f_0}\hlfunc{g_0}
    \pdot
    \spray{\riemannmetric*}\vlfunc{\Phi}
    -
    \hlfunc{g_0} \pdot \spray{\riemannmetric*}\vlfunc{f_0}
    +
    \vlfunc{f_0}\pdot
    \frac1\beta\,
    \parentheses*{\grad{\vertmetric*}\hlfunc{\Phi}} \parentheses*{\hlfunc{g_0}}
    \\
    &=
    -\frac12\,f
    \pdot
    \spray{\riemannmetric*}\vlfunc{\Phi}
    -
    \spray{\wriemannmetric*}f
    +
    \frac1\beta\,
    \parentheses*{\grad{\vertmetric*}\hlfunc{\Phi}} (f)
.
\end{align*}

In straight analogy to~\cite[Section~4]{HypocoercJFA}, 
the proof of the next lemma, 
which deals with the globally Lipschitzian case,
is based on a perturbation theorem for essentially m-dissipative operators. 
We present it here for sake of completeness. 

\begin{theorem}[Kato perturbation of an essentially m-dissipative operator]
\label{thm:perturbation-thm}
    Let an essentially m-dissipative operator~$Z$ 
    and a dissipative operator~$T$ 
    have common domain in some given Hilbert space with norm 
    $\norm{ \cdot }\vdef\sqrt{\scalarprod{\cdot}{\cdot}}$.
    Assume that there are constants $c_1\in\Rnum$ and $c_2\in(0,\infty)$ such that 
    \begin{displaymath}
        \norm{ Tf }^2
        \leq
        c_1\scalarprod{Zf}{f}
        +
        c_2\norm{ f }^2
    \end{displaymath}
    holds for all~$f$ from the common domain.
    Then, the perturbation~$Z+T$ of~$Z$ by~$T$ defined on the common domain 
    is essentially m-dissipative.
\end{theorem}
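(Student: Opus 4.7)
My plan is to reduce the bilinear hypothesis to a Kato-type relative boundedness of $T$ with respect to $Z$, and then run the standard continuation / Neumann series argument underlying the Kato--Rellich perturbation theorem for essentially m-dissipative operators.

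First I would extract a relative-bound inequality. Since $Z$ is dissipative, $\scalarprod{Zf}{f}\leq 0$, so $c_1\scalarprod{Zf}{f}\leq |c_1|\norm{Zf}\norm{f}$ by Cauchy--Schwarz, and Young's inequality $ab\leq \delta a^2+\tfrac{1}{4\delta}b^2$ turns the hypothesis into
\begin{displaymath}
    \norm{Tf}^2 \leq \delta\,\norm{Zf}^2 + \parentheses*{c_2+\tfrac{c_1^2}{4\delta}}\norm{f}^2
    \qquad\text{for every }\delta>0\text{ and }f\in D.
\end{displaymath}
Hence $T$ is relatively $Z$-bounded with relative bound zero; dissipativity of $(Z+T,D)$ is immediate from dissipativity of both summands.

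Second, I would obtain essential m-dissipativity via a continuation argument along the one-parameter family $Z+sT$, $s\in[0,1]$. Let $J\subseteq[0,1]$ collect those parameters $s$ for which $(Z+sT,D)$ is essentially m-dissipative; by hypothesis $0\in J$. To prove $J$ open, fix $s_0\in J$, denote by $\overline{Z+s_0T}$ the m-dissipative closure, and use the factorisation
\begin{displaymath}
    \lambda\ID-(Z+sT)
    =
    \parentheses*{\ID-(s-s_0)T(\lambda\ID-\overline{Z+s_0T})^{-1}}
    \parentheses*{\lambda\ID-\overline{Z+s_0T}}
    .
\end{displaymath}
Choosing $\delta$ small enough so that the relative bound of $T$ with respect to $Z+s_0T$ stays strictly below one, and $\lambda$ large, the contraction estimate on the resolvent forces the operator norm of $T(\lambda\ID-\overline{Z+s_0T})^{-1}$ to be less than $|s-s_0|^{-1}$ in a neighbourhood of $s_0$. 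A Neumann series inversion then yields that $\range{\lambda\ID-(Z+sT)}$ is dense in the ambient Hilbert space, so $s\in J$ by Lumer--Phillips. Closedness of $J$ follows from a standard approximation argument, and connectedness of $[0,1]$ yields $1\in J$, which is exactly the claim.

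The hard part will be the transfer of the relative bound from $D$ to $\opdomain{\overline{Z+s_0T}}$, since $T$ is \emph{a priori} only defined on $D$ and neither operator is symmetric. The key observation is that the inequality $\norm{Tf}\leq\sqrt{\delta}\norm{Zf}+C_\delta\norm{f}$ on $D$, combined with the triangle inequality $\norm{Zf}\leq \norm{(Z+s_0T)f}+s_0\norm{Tf}$, absorbs for small $\delta$ to a relative $(Z+s_0T)$-bound with arbitrarily small relative constant; this permits $T$ to be extended continuously in graph norm to $\opdomain{\overline{Z+s_0T}}$ while preserving the estimates needed for the Neumann series step. Once this extension is in place, the continuation itself is routine.
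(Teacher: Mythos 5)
Your proposal is correct and reconstructs precisely the standard argument behind this result, which the paper does not prove itself but simply cites from Davies: the relative $Z$\-/bound of size zero extracted via Cauchy--Schwarz and Young, dissipativity of the sum, and then the resolvent factorisation with a Neumann-series/continuation step together with the graph-norm transfer of the bound to the closure. Since the paper's own ``proof'' is only the reference, there is nothing substantive to contrast; your sketch fills in exactly what that reference contains.
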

\begin{proof}
    See~\cite[Corollary~3.8, Lemma~3.9 and Problem~3.10]{Davies}.
\end{proof}

\begin{lemma}[Essential m-dissipativity in case of globally Lipschitzian potentials]
\label{lem:Langevin:D2-global-Lipschitz}
    Assume that $\Psi$ is globally Lipschitzian.
    Then, $(\widetilde{L},D_1)$ is essentially m-dissipative 
    on~\bigL2{\tanbundle\posfold}{\Leb{\riemannmetric*}\locprod\nu}.
    Hence, $(L,D_1)$ is essentially m-dissipative 
    on the space~$H=\bigL2{\tanbundle\posfold}{\Leb{\wriemannmetric*}\locprod\nu}$.   
\end{lemma}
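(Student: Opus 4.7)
The plan is to apply Theorem~\ref{thm:perturbation-thm} after transferring the question to the transformed operator $\widetilde L$. Because $U$ is a unitary isomorphism with $U(D_1)=D_1$, essential m-dissipativity of $(L,D_1)$ on $H$ is equivalent to essential m-dissipativity of $(\widetilde L,D_1)$ on $\widetilde H\vdef\bigL2{\tanbundle\posfold}{\Leb{\riemannmetric*}\locprod\nu}$. The key observation is that $\vlfunc{\exp(\sfrac\Phi2)}$ is constant along fibres, so it commutes with $\laplace{\wvertmetric*}$ and with $\grad{\vertmetric*}\hlfunc{\Phi}$ under conjugation by $U$; only the semispray produces a new multiplicative summand through the chain rule. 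This yields the decomposition $\widetilde L=L_0+T$ on $D_1$ with
\begin{displaymath}
    L_0\vdef\frac\alpha\beta\,\laplace{\wvertmetric*}+\spray{\riemannmetric*}
    \qquad\text{and}\qquad
    Tf\vdef\frac12\,\spray{\riemannmetric*}\vlfunc{\Phi}\pdot f-\frac1\beta\,\grad{\vertmetric*}\hlfunc{\Phi}(f)
    .
\end{displaymath}

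First I would upgrade essential m-dissipativity of $(L_0,D_0)$ from~Lemma~\ref{lem:Langevin:L0D0-ess-m-dissipative} to the larger core $D_1$: for $f=\vlfunc{f_0}\tensorprod\hlfunc{g_0}\in D_1$, by the very definition of $\diriboundsobolev1\infty{\posfold}$ there is a sequence $f_n\in\smoothcompact{\posfold}$ with $f_n\to f_0$ and $\grad{\riemannmetric*}f_n\to\grad{\riemannmetric*}f_0$ in $L^\infty$, and bounded convergence together with square-integrability of $\abs{\ID_{\tanbundle\posfold}}{\riemannmetric*}$ in the fibre under $\nu$ drives both $\vlfunc{f_n}\tensorprod\hlfunc{g_0}\to f$ and $L_0(\vlfunc{f_n}\tensorprod\hlfunc{g_0})\to L_0f$ in $\widetilde H$. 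Next I would verify that $T$ is in fact antisymmetric on $D_1$: a Gaussian integration by parts on each fibre yields $\int\grad{\vertmetric*}\hlfunc{\Phi}(f^2)\,\diffd\nu=\beta\int\spray{\riemannmetric*}\vlfunc{\Phi}\pdot f^2\,\diffd\nu$, and matching this against the multiplication contribution inside $T$ gives $\scalarprod{Tf}{f}{\widetilde H}=0$; in particular $T$ is dissipative.

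The analytic heart of the argument is the Kato-type domination $\norm{Tf}{\widetilde H}^2\leq c_1\scalarprod{L_0f}{f}{\widetilde H}+c_2\norm{f}{\widetilde H}^2$. The global Lipschitz hypothesis yields $\abs{\grad{\riemannmetric*}\Phi}{\riemannmetric*}\leq C$ for some $C\in(0,\infty)$, whence pointwise $\abs{\spray{\riemannmetric*}\vlfunc{\Phi}(v)}\leq C\abs{v}{\riemannmetric*}$ and the vertical operator bound $\norm{\grad{\vertmetric*}\hlfunc{\Phi}(f)}{\widetilde H}\leq C\norm{\grad{\vertmetric*}f}{\widetilde H}$. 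I expect the hard part to be controlling the multiplicative term: a fibrewise integration by parts on $\parentheses*{\Rnum^{\posdim},\nu}$ followed by Young's inequality gives the Gaussian second-moment estimate $\int\abs v{\riemannmetric*}^2 f^2\,\diffd(\Leb{\riemannmetric*}\locprod\nu)\leq\frac{2\posdim}\beta\norm{f}{\widetilde H}^2+\frac4{\beta^2}\norm{\grad{\vertmetric*}f}{\widetilde H}^2$, and keeping track of constants here is the delicate bookkeeping that relies intrinsically on the interplay of the vertical geometry with the exponential form of the fibre weight~$\nu$. Finally, since $\spray{\riemannmetric*}$ preserves kinetic energy it is antisymmetric with respect to $\Leb{\riemannmetric*}\locprod\nu$, producing the Dirichlet-form identity $-\scalarprod{L_0f}{f}{\widetilde H}=\frac\alpha\beta\norm{\grad{\vertmetric*}f}{\widetilde H}^2$; assembling everything gives the required domination with explicit $c_1,c_2$, so that Theorem~\ref{thm:perturbation-thm} delivers essential m-dissipativity of $(\widetilde L,D_1)$, and by the unitarity of $U$ also of $(L,D_1)$ on $H$.
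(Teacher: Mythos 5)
Your proposal follows essentially the same route as the paper's proof: conjugate by the unitary~$U$, split $\widetilde{L}=L_0+T$ with the identical $L_0$ and~$T$, obtain essential m-dissipativity of $(L_0,D_1)$ from Lemma~\ref{lem:Langevin:L0D0-ess-m-dissipative}, verify that $(T,D_1)$ is antisymmetric and hence dissipative, and conclude with the Kato perturbation result, \autoref{thm:perturbation-thm}. The only substantive local difference is that you bound the multiplicative term $\frac12\,f\pdot\spray{\riemannmetric*}\vlfunc{\Phi}$ via a Gaussian second-moment estimate which absorbs the weight $\abs{\ID_{\tanbundle\posfold}}{\riemannmetric*}^2$ into $\scalarprod{-L_0f}{f}$ plus $\norm{f}^2$ (a point the paper's displayed bound $\frac14\,C_\Phi\norm{f}^2$ passes over), which is a careful refinement of the same estimate rather than a different method.
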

\begin{proof}
    Define $Z\vdef L_0$ on~$D_1$.
    Then, $(Z,D_1)$ is a dissipative extension of $(Z,D_0)$.
    Thus, $(Z,D_1)$ is essentially m-dissipative 
    on~\bigL2{\tanbundle\posfold}{\Leb{\riemannmetric*}\locprod\nu}   
    by~\hyperref[lem:Langevin:L0D0-ess-m-dissipative]{Lemma~\ref{lem:Langevin:L0D0-ess-m-dissipative}}.
    Define the perturbation 
    \begin{displaymath}
        Tf\vdef 
        -\frac1\beta\,\parentheses*{\grad{\vertmetric*}\hlfunc{\Phi}}(f)
        +
        \frac12\, f \pdot\,\spray{\riemannmetric*}\vlfunc{\Phi}
    \end{displaymath}
    for all $f=\vlfunc{f_0}\tensorprod\hlfunc{g_0}\in D_1$.
    Since by~\ref{thm:named:Liouville-thm} 
    $(\spray{\riemannmetric*},D_1)$ is antisymmetric
    and also $(\widetilde{A},D_1)$ is antisymmetric
    in~\bigL2{\tanbundle\posfold}{\Leb{\riemannmetric*}\locprod\nu},   
    $(T,D_1)$ is antisymmetric as well.
    Thus, $(T,D_1)$ is dissipative.

    Choose~$g$ such that $f=Ug$.
    Using the Cauchy-Bunyakovsky-Schwarz inequality we get that
    \begin{align*}
        &\int_{\tanbundle\posfold}
            \parentheses*{%
                \grad{\vertmetric*}\hlfunc{\Phi}(f)%
            }^2
        \ \diffd\Leb{\riemannmetric*}\locprod\nu
        \\
        &=
        \int_{\tanbundle\posfold}
            \abs{%
                \vertmetric%
                    {\grad{\vertmetric*}\hlfunc{\Phi}}%
                    {\grad{\vertmetric*}f}%
            }^2
        \ \diffd\Leb{\riemannmetric*}\locprod\nu
        =
        \int_{\tanbundle\posfold}
            \abs{%
                \vertmetric%
                    {\grad{\vertmetric*}\hlfunc{\Phi}}%
                    {\grad{\vertmetric*}g}%
            }^2
        \ \diffd\Leb{\wriemannmetric*}\locprod\nu
        \\
        &\leq
        \norm{%
            \grad{\vertmetric*}\hlfunc{\Phi}%
        }{\bigL2{\tanbundle\posfold\to\tantanbundle\posfold}{\Leb{\riemannmetric*}\locprod\nu}}^2
        \pdot
        \int_{\tanbundle\posfold}
            \abs{ \grad{\vertmetric*}g }{\vertmetric*}^2
        \ \diffd\Leb{\wriemannmetric*}\locprod\nu
        \\
        &=
        \norm{%
            \grad{\riemannmetric*}\Phi%
        }{\bigL2{\posfold\to\tanbundle\posfold}{\riemannmetric*}}^2
        \pdot
        \scalarprod%
            {-\laplace{\wvertmetric*}g}%
            {g}%
            {\bigL2{\tanbundle\posfold}{\Leb{\wriemannmetric*}\locprod\nu}}
        \\
        &=
        \norm{%
            \grad{\riemannmetric*}\Phi%
        }{\bigL2{\posfold\to\tanbundle\posfold}{\riemannmetric*}}^2
        \pdot
        \scalarprod%
            {-\widetilde{S}f}%
            {f}%
            {\bigL2{\tanbundle\posfold}{\Leb{\riemannmetric*}\locprod\nu}}
    .
    \end{align*}
    by the integration by parts formula~\eqref{eq:weighted-ibp}.
    Abbreviate 
    $
        C_{\Phi}
        \vdef
        \norm{ \grad{\riemannmetric*}\Phi }{\bigL\infty{\posfold\to\tanbundle\posfold}{\riemannmetric*}}^2%
    $.
    Then, we immediately conclude 
    \begin{align*}
        \norm{ Tf }{\bigL2{\Leb{\riemannmetric*}\locprod\nu}}^2
        &\leq
        \frac1{\beta^2}\,
        C_\Phi
        \pdot
        \scalarprod%
            {-\widetilde{S}f}%
            {f}%
            {\bigL2{\Leb{\riemannmetric*}\locprod\nu}}
        +
        \frac14\,C_\Phi
        \pdot
        \norm{ f }{\bigL2{\Leb{\riemannmetric*}\locprod\nu}}^2
        \\
        &=
        c_1
        \scalarprod%
            {-Zf}%
            {f}%
            {\bigL2{\Leb{\riemannmetric*}\locprod\nu}}
        +
        c_2
        \norm{ f }{\bigL2{\Leb{\riemannmetric*}\locprod\nu}}^2
    \end{align*}
    with
    $
        c_1
        \vdef
        \frac2{\alpha\beta}\, C_\Phi 
    $
        and
    $
        c_2
        \vdef
        \frac14\,C_\Phi
    $, since we know
    $
        \scalarprod%
            {\spray{\riemannmetric*}f}%
            {f}%
            {\bigL2{\Leb{\riemannmetric*}\locprod\nu}}
        =
        0
    $.
    Finally, the claim follows applying~\autoref{thm:perturbation-thm} to $(Z+T,D_1)$.
\end{proof}

\begin{corollary}[\ref{cond:D2} for globally Lipschitzian potentials]
\label{cor:Langevin:D2-global-Lipschitz}
    Assume that $\Psi$ is a globally Lipschitzian potential.
    Then, $(L,D)$ is essentially m-dissipative on~$H$. 
\end{corollary}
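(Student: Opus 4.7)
The strategy is to deduce this from Lemma~\ref{lem:Langevin:D2-global-Lipschitz} by showing that $(L, D)$ and $(L, D_1)$ have the same graph-norm closure. The intermediate tensor-product space
\begin{displaymath}
D_0 = \tanproj^\ast\smoothcompact{\posfold} \tensorprod \kappa^\ast\smoothcompact{\posfold}
\end{displaymath}
from Lemma~\ref{lem:D0isdense} is contained in both $D$ and $D_1$; dissipativity of $(L, D)$ is granted by Proposition~\ref{prop:Langevin:SAD}, so it suffices to establish the two inclusions $D \subseteq \overline{(L, D_0)}$ and $D_1 \subseteq \overline{(L, D_0)}$ in graph norm.

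The first inclusion reproduces the argument of Lemma~\ref{lem:Langevin:L0D0-ess-m-dissipative}: for $f \in \smoothcompact{\tanbundle\posfold}$, Lemma~\ref{lem:D0isdense} supplies $(f_n) \subseteq D_0$ with $f_n \to f$ in the locally convex topology on $\smoothcompact{\tanbundle\posfold}$, with a common compact containing all supports. The only non-smooth coefficient in $L$ is $\grad{\riemannmetric*}\Phi$, which the global Lipschitz hypothesis bounds on all of $\posfold$; hence $Lf_n \to Lf$ uniformly on the common compact, and thus in $H$.

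For the second inclusion, take a generator $f = \vlfunc{f_0} \tensorprod \hlfunc{g_0}$ of $D_1$ with $f_0 \in \diriboundsobolev1\infty{\posfold}$ and $g_0 \in \smoothcompact{\posfold}$, pick $(f_{0,n}) \subseteq \smoothcompact{\posfold}$ with $f_{0,n} \to f_0$ in $\sobolev1\infty$-norm, and set $f_n = \vlfunc{f_{0,n}} \tensorprod \hlfunc{g_0} \in D_0$. Since $\mu$ is a probability measure and $\norm{f_{0,n} - f_0}{\infty} \to 0$, $f_n \to f$ in $H$. Splitting $L = S_v + \spray{\riemannmetric*}$ into its vertical part $S_v = \frac{\alpha}{\beta}\laplace{\vertmetric*} - \alpha\canonfield - \frac{1}{\beta}\grad{\vertmetric*}\hlfunc{\Phi}$ and the horizontal semispray, vertical operators annihilate $\vlfunc{f_0}$, so $S_v f_n = \vlfunc{f_{0,n}} \pdot S_v(\hlfunc{g_0})$; the fixed factor $S_v(\hlfunc{g_0})$ belongs to $H$ since the one potentially singular summand $\grad{\vertmetric*}\hlfunc{\Phi}(\hlfunc{g_0})$ is bounded by $\norm{\grad{\riemannmetric*}\Phi}{\infty} \pdot \norm{\grad{\riemannmetric*} g_0}{\infty}$, and uniform convergence of $\vlfunc{f_{0,n}}$ yields convergence in $H$. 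For the horizontal term, Leibniz produces
\begin{displaymath}
\spray{\riemannmetric*} f_n = (\spray{\riemannmetric*} \vlfunc{f_{0,n}}) \pdot \hlfunc{g_0} + \vlfunc{f_{0,n}} \pdot \spray{\riemannmetric*}(\hlfunc{g_0}),
\end{displaymath}
and the second summand vanishes identically: $\spray{\riemannmetric*}$ is a section of the horizontal bundle $\horbundle{\tanbundle\posfold} = \nullspace{d\kappa}$ (Example~\ref{ex:Riemann-spray}), while $d\hlfunc{g_0} = dg_0 \bincirc d\kappa$ by Notation~\ref{not:func-lift}. The first summand equals $\riemannmetric{\ID_{\tanbundle\posfold}}{\grad{\riemannmetric*}f_{0,n} \bincirc \tanproj}{\tanproj} \pdot \hlfunc{g_0}$ and is pointwise dominated by $\abs{\ID_{\tanbundle\posfold}}{\riemannmetric*} \pdot \norm{\grad{\riemannmetric*}f_{0,n}}{\infty} \pdot \norm{\hlfunc{g_0}}{\infty}$, whose leading factor lies in $\bigL2{\mu}$ by Gaussian fibre integration; convergence of $f_{0,n}$ in $\sobolev1\infty$-norm now forces convergence in $H$.

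Combining both inclusions yields $\overline{(L, D)} = \overline{(L, D_0)} = \overline{(L, D_1)}$, which is m-dissipative by Lemma~\ref{lem:Langevin:D2-global-Lipschitz}; hence $(L, D)$ is essentially m-dissipative on $H$. The main obstacle is the horizontal term in the second inclusion: a naive Leibniz bound would require controlling second-order derivatives of $f_0$, but the geometric fact that the Riemannian semispray lives in the horizontal bundle kills the dangerous contribution $\vlfunc{f_{0,n}} \pdot \spray{\riemannmetric*}(\hlfunc{g_0})$ and reduces the required regularity of $f_0$ exactly to $\sobolev1\infty$—the very class in which $D_1$ was built in Lemma~\ref{lem:Langevin:D2-global-Lipschitz}.
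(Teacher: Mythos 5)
Your proof is correct and takes essentially the same route as the paper: reduce to Lemma~\ref{lem:Langevin:D2-global-Lipschitz} by approximating the generators $\vlfunc{f_0}\tensorprod\hlfunc{g_0}$ of $D_1$ in graph norm by elements of $D_0$, so that the closures of $(L,D_0)$, $(L,D_1)$ and $(L,D)$ all coincide. The only difference is cosmetic --- you establish the inclusion $D\subseteq\overline{(L,D_0)}$ explicitly where the paper instead invokes maximality of the m\-/dissipative closure of $(L,D_1)$ against the dissipative extension $(L,D)$, and your term-by-term verification of the graph-norm convergence (in particular the observation that $\spray{\riemannmetric*}\hlfunc{g_0}=0$ kills the dangerous Leibniz term) spells out what the paper compresses into a one-line conclusion.
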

\begin{proof}
    Note that $(L,D)$ is a dissipative extension of $(L,D_0)$. 
    Thus, we show that $(L,D_1)$ is contained in the closure of $(L,D_0)$ 
    and then apply~\hyperref[lem:Langevin:D2-global-Lipschitz]{Lemma~\ref{lem:Langevin:D2-global-Lipschitz}}.

    Let $f = \vlfunc{f_0}\in\tanproj^\ast\diriboundsobolev1\infty{\posfold}$, 
    $g\in\kappa^\ast\smoothcompact{\posfold}$ and 
    a sequence $(f_n)_{n\in\Nnum\setminus\{0\}}$ in~\smoothcompact{\posfold} such that 
    its vertical lifting approximates~$f$ in \sobolev12\=/sense, i.\,e.\  
    \begin{enumerate}
        \item
        $f_n\longrightarrow f_0$ as $n\to\infty$
        in $\bigL2{\posfold}{\riemannmetric*}$\=/sense and
        \item
        $\frac{\partial f_n}{\partial x^j} \longrightarrow \frac{\partial f_0}{\partial x^j} $ as $n\to\infty$
        in $\bigL2{\posfold}{\riemannmetric*}$\=/sense
        for any chart $x=(x^j)_{j=1}^{\posdim}$.
    \end{enumerate}
    This convergence is maintained under passing to
    $\tanproj^\ast\bigL2{\posfold}{\wriemannmetric*}$, i.\,e.\ 
    weighting the manifold. 
    Finally, we conclude that 
    \begin{displaymath}
        L\parentheses*{\vlfunc{f_n}\tensorprod g}
        \longrightarrow
        L\parentheses*{\vlfunc{f_0}\tensorprod g}
        =
        L\parentheses*{f\tensorprod g}
        \qquad\text{in } H \text{ as } n\to\infty
    .
    \end{displaymath}
\end{proof}

The final prove of this section basically is the same as in~\cite[Theorem~4.7]{HypocoercJFA}
as we have taken geometric effects into account before.

\begin{theorem}[\ref{cond:D2} for locally Lipschitzian potentials]
\label{thm:Langevin:D2-loc-Lipschitz}
    Let $\Psi$ be a loc-Lipschitzian potential bounded from below.
    Then, $(L,D)$ is essentially m-dissipative on~$H$. 
\end{theorem}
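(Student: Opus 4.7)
The plan is to reduce the loc-Lipschitzian case to the globally Lipschitzian case established in Corollary~\ref{cor:Langevin:D2-global-Lipschitz}, by approximating $\Psi$ with globally Lipschitzian potentials and using a localisation argument analogous to the cut-off technique already appearing in Lemma~\ref{lem:Langevin:D5D7}.

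First I would construct a suitable sequence of approximating potentials. By~\ref{cond:M:complete} together with the Hopf-Rinow theorem, the closed geodesic balls $\overline{\ball{o}{n}}$ around a fixed base point $o\in\posfold$ are compact and exhaust $\posfold$. For each $n\in\Nnum$ I define $\Psi_n\colon\posfold\to\Rnum$ such that $\Psi_n=\Psi$ on $\overline{\ball{o}{n}}$, $\Psi_n$ is globally Lipschitzian on $\posfold$, and $\inf_{\posfold}\Psi_n\geq\inf_{\posfold}\Psi$ uniformly in~$n$. An explicit construction combines the McShane-Whitney extension of $\Psi\vert_{\overline{\ball{o}{n}}}$, whose Lipschitz constant equals the supremum of $\abs{\grad{\riemannmetric*}\Psi}{\riemannmetric*}$ on $\overline{\ball{o}{n}}$ (finite by loc-Lipschitzianity, cf.~Example~\ref{ex:loc-Lipschitz}), with truncation against the uniform lower bound. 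Setting $\Phi_n\vdef\beta\Psi_n$, $\rho_n\vdef\exp(-\Phi_n)/Z_n$ (with $Z_n$ a normalising constant), $\mu_n\vdef\rho_n\Leb{\riemannmetric*}\locprod\nu$ and $H_n\vdef\bigL2{\tanbundle\posfold}{\mu_n}$, the associated Langevin operator $(L_n,D)$ is essentially m-dissipative on $H_n$ by Corollary~\ref{cor:Langevin:D2-global-Lipschitz}.

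Dissipativity of $(L,D)$ on $H$ is already provided by Proposition~\ref{prop:Langevin:SAD}, so by the Lumer-Phillips theorem it suffices to show density of $(\ID_H-L)(D)$ in $H$. Equivalently, I would take $f\in H$ with $\scalarprod{(\ID_H-L)u}{f}{H}=0$ for all $u\in D$ and show $f=0$. The key observation is that for test functions $u\in D$ with $\supp(u)\subseteq\tanproj^{-1}(\ball{o}{n})$ one has $Lu=L_nu$ pointwise, and the two measures $\mu$ and $\mu_n$ agree up to the global constant $Z_n$ on $\tanproj^{-1}(\ball{o}{n})$. I would then apply the cut-off sequence $(\varphi_n)$ constructed in the proof of Lemma~\ref{lem:Langevin:D5D7}, approximate a general target $g\in\smoothcompact{\tanbundle\posfold}$ by the localised functions $\vlfunc{\varphi_n}\pdot g$, and use essential m-dissipativity of $L_n$ on the localisations to produce $u_n\in D$ with $(\ID_H-L)u_n\to g$ in~$H$. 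Taking $g=f$ and pairing with $f$ then forces $f=0$.

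The main obstacle will be controlling the commutator $[L,\vlfunc{\varphi_n}]=-[\spray{\riemannmetric*},\vlfunc{\varphi_n}]$ arising in the localisation, uniformly as $n\to\infty$. As in Lemma~\ref{lem:Langevin:D5D7}, this relies on the $O(1/n)$ bound on $\norm{\grad{\riemannmetric*}\varphi_n}{\infty}$ together with the integrability $\norm{\abs{\ID_{\tanbundle\posfold}}{\riemannmetric*}}{H}<\infty$, a consequence of the Gaussian fibre measure. A secondary technical point is the passage between the Hilbert spaces $H_n$ and $H$: the uniform lower bound on~$\Psi$ guarantees that the normalising constants $Z_n$ stay bounded away from $0$ and $\infty$, and that the Radon-Nikod\'ym derivatives $d\mu_n/d\mu$ remain uniformly comparable on each fixed compact, which is what makes the local transfer between the two settings meaningful.
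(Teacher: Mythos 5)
Your strategy is the same as the paper's: replace $\Psi$ by a globally Lipschitzian surrogate that coincides with $\Psi$ on a compact set containing the projected support of the data, invoke Corollary~\ref{cor:Langevin:D2-global-Lipschitz} for that surrogate, and transfer the approximate solution back to $(L_\Psi,D)$ by a cut-off whose gradient is made small. The only structural difference is how the surrogate is built: you take a McShane--Whitney extension of $\Psi$ restricted to a closed geodesic ball, whereas the paper simply uses $\psi\Psi$ for a cut-off $\psi\in D$ equal to~$1$ on a neighbourhood of the relevant supports, after translating so that $\Psi\geq0$. Both constructions are legitimate; the paper's is a little lighter because it never has to discuss the behaviour of the surrogate at infinity, and because the pointwise comparison $0\leq\psi\Psi\leq\Psi$ immediately relates the norms of the two weighted spaces on the region where the transfer happens.

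Two loose ends in your version. First, the claim that the normalising constants $Z_n$ stay bounded away from $0$ and $\infty$ does not follow from the lower bound on~$\Psi$: on a position manifold of infinite Riemannian volume one only gets $\exp(-\beta\Psi_n)\leq\exp(-\beta\inf\Psi)$, which does not make $\int_{\posfold}\exp(-\beta\Psi_n)\,\diffd\Leb{\riemannmetric*}$ finite. The clean repair is to drop the normalisation entirely; essential m-dissipativity is insensitive to scaling the reference measure and does not require a probability measure, and the paper indeed works with the unnormalised $\mu_{\psi\Psi}$ (and with $\mu_0=\Leb{\sasakimetric*}$ for the zero potential). Second, after cutting off the approximate solution $u_n$ the commutator term has the form $u_n\pdot\spray{\riemannmetric*}\vlfunc{\varphi}$, so smallness of $\norm{\grad{\riemannmetric*}\varphi}{\infty}$ alone is not enough: you also need a bound on the norm of $u_n$ that is uniform in the accuracy of the approximation. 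This comes from dissipativity, via $\norm{u_n}\leq\norm{(\ID-L_n)u_n}\leq2\norm{g}$, and the paper makes that step explicit when it imposes the two simultaneous requirements on~$f$. Finally, your orthogonality reduction is redundant: once you can approximate every $g\in\smoothcompact{\tanbundle\posfold}$ by elements of $(\ID-L)(D)$, density of the range in~$H$ follows directly and the Lumer--Phillips theorem applies.
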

\begin{proof}
    Wlog.\ we assume that $\Psi\geq0$.
    Let $\varepsilon\in(0,\infty)$ and fix some $g\in D\setminus\{0\}$.
    Choose $\varphi,\psi\in D$ such that 
    \begin{displaymath}
        \varphi\vert_{\supp(g)}
        =
        1
        \text{,}\quad
        \psi\vert_{\supp(\varphi)}
        =
        1
        \quad\text{and}\quad
        0
        \leq
        \varphi
        \leq
        \psi
        \leq
        1
    .
    \end{displaymath}
    Let $f\in D$ arbitrary.
    Throughout the proof, 
    we add to the generators and invariant measures a subscript to indicate the corresponding potential, 
    e.\,g.\ $\mu_0 = \Leb{\sasakimetric*}$ in case of the zero potential.
    By construction 
    and using dissipativity of $(L_{\psi\Psi},D)$ on~\bigL2{\tanbundle\posfold}{\mu_{\psi\Psi}} 
    we get that
    \begin{align*}
        &\norm{ %
            (\ID-L_{\Psi})(\varphi f) -g %
        }{\bigL2{\mu_\Psi}}
        \\
        &\leq
        \norm{ %
            \varphi\parentheses*{(\ID-L_{\psi\Psi})f -g} %
        }{\bigL2{\mu_{\psi\Psi}}}
        +
        \norm{ f }{\bigL2{\mu_{\psi\Psi}}}
        \pdot
        \norm{ %
            \grad{\riemannmetric*}\varphi %
        }{\bigL\infty{\Leb{\wriemannmetric*}}}
        \\
        &\leq
        \norm{ (\ID-L_{\psi\Psi})f -g }{\bigL2{\mu_{\psi\Psi}}}
        +
        \norm{ (\ID-L_{\psi\Psi})f }{\bigL2{\mu_{\psi\Psi}}}
        \pdot
        \norm{ \grad{\riemannmetric*}\varphi }{\bigL\infty{\Leb{\wriemannmetric*}}}
    .
    \end{align*}
    Now, we tighten the requirements on~$\varphi$ via additionally demanding that
    \begin{displaymath}
        \norm{ %
            \grad{\riemannmetric*}\varphi %
        }{\bigL\infty{\posfold\to\tanbundle\posfold}{\wriemannmetric*}}
        =
        \norm{ %
            \grad{\riemannmetric*}\varphi %
        }{\bigL\infty{\wriemannmetric*}}
        <
        \frac\varepsilon4 \pdot \norm{ g }{\bigL2{\mu_0}}^{-1}
        =
        \frac\varepsilon4 \pdot \norm{ g }{\bigL2{\sasakimetric*}}^{-1}
    .
    \end{displaymath}
    Due to~\hyperref[cor:Langevin:D2-global-Lipschitz]{Corollary~\ref{cor:Langevin:D2-global-Lipschitz}} 
    $(L_{\psi\Psi},D)$ is essentially m-dissipative on~\bigL2{\tanbundle\posfold}{\mu_{\psi\Psi}}, 
    hence as a consequence of the Lumer-Philips Theorem, 
    there is $f\in D$ such that simultaneously hold 
    \begin{align*}
        \norm{ %
            (\ID-L_{\psi\Psi})f -g %
        }{\bigL2{\mu_{\psi\Psi}}}
        \leq
        \frac\varepsilon2
        \quad\text{and}\quad
        \norm{ %
            (\ID-L_{\psi\Psi})f %
        }{\bigL2{\mu_{\psi\Psi}}}
        \leq
        2
        \norm{ %
            g %
        }{\bigL2{\mu_{\psi\Psi}}}
    .
    \end{align*}
    For such an~$f$ we end up with
    $
        \norm{ %
            (\ID-L_{\Psi})(\varphi f) -g %
        }{\bigL2{\mu_{\psi\Psi}}}
        <
        \varepsilon
    $.
    In conclusion, we proved that 
    $(\ID-L_\Psi)(D)$ is dense in $H=\bigL2{\tanbundle\posfold}{\mu_\Psi}$.
\end{proof}

\subsection{Hypocoercivity conditions}
\label{subsec:Langevin:H}

\begin{lemma}[algebraic relation~\ref{cond:H1}]
\label{lem:Langevin:H1}
    Let condition~\ref{cond:Langevin:P1} hold. 
    Then, we have $PAP\vert_D = 0$.
\end{lemma}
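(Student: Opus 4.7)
The plan is to exploit the explicit formula for $APf$ obtained in~\hyperref[rem:Langevin:APf-Eq2]{Remark~\ref{rem:Langevin:APf-Eq2}} and reduce the claim to the vanishing of the first moment of the centred Gaussian fibre measure~$\nu$. Specifically, for $f\in D$, writing $f_0\vdef\expect[\nu]{f}\in\bigL2{\posfold}{\wriemannmetric*}$ we have by~\eqref{eq:Langevin:APf-Eq2}
\begin{displaymath}
    (APf)(v)
    =
    -\riemannmetric{v}{\grad{\riemannmetric*}f_0(\tanproj(v))}{\tanproj(v)}
    \qquad\text{for all }v\in\tanbundle\posfold,
\end{displaymath}
so that $APf$ is, fibrewise, a \emph{linear} function in the velocity variable.

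First I would verify that $f_0$ is indeed regular enough so that the pointwise expression above makes sense; this follows from the core properties established in~\autoref{subsec:Langevin:D} together with $Pf\in\opdomain{A}$ proved in~\hyperref[lem:Langevin:D5D7]{Lemma~\ref{lem:Langevin:D5D7}}. Next I would compute the fibre average $\expect[\nu]{APf}$. For fixed $x\in\posfold$, setting $w\vdef\grad{\riemannmetric*}f_0(x)\in\tanbundle\posfold{x}$ and invoking the loc\=/product structure of $\mu = \Leb{\wriemannmetric*}\locprod\nu$, one has
\begin{displaymath}
    \expect[\nu]{APf}(x)
    =
    -\int_{\tanbundle\posfold{x}} \riemannmetric{v}{w}{x}\,\diffd\nu(v)
    =
    -\riemannmetric*\!\left(\int_{\tanbundle\posfold{x}} v\,\diffd\nu(v),\,w\right)_x
    =0,
\end{displaymath}
since $\nu = \normaldist{0}{\beta^{-1}\ID_\posdim}$ is centred. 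Consequently $P_S(APf)=\expect[\nu]{APf}\bincirc\tanproj=0$. Moreover, integrating once more over~\posfold gives $\scalarprod{APf}{1}{H}=0$, which can either be deduced from the Fubini step just performed or from antisymmetry of $(A,\opdomain{A})$ together with $1\in\opdomain{A}$ and $A1=0$ (which itself is part of~\hyperref[lem:Langevin:D5D7]{Lemma~\ref{lem:Langevin:D5D7}}). Combining both, $PAPf = P_S(APf)-\scalarprod{APf}{1}{H}=0$.

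There is no real obstacle: the whole argument reduces to recognising the linearity of $APf$ in the fibre coordinate and using that the Gaussian has zero first moment. The only point requiring mild care is to ensure that the measure\-/theoretic manipulations (Fubini in the loc\=/product measure, pointwise evaluation of $\expect[\nu]{f}$) are justified on the class~$D$; this is immediate from~\hyperref[lem:loc-prod-measure]{Lemma~\ref{lem:loc-prod-measure}} and the fact that elements of $D=\smoothcompact{\tanbundle\posfold}$ are bounded with compact support in~\tanbundle\posfold.
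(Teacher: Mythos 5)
Your proposal is correct and follows essentially the same route as the paper: both start from the formula $APf=-\riemannmetric{\ID_{\tanbundle\posfold}}{\grad{\riemannmetric*}\expect[\nu]{f}\bincirc\tanproj}{\tanproj}$ of Remark~\ref{rem:Langevin:APf-Eq2} and conclude that the fibre average of this fibrewise linear function vanishes, then pass to $\scalarprod{APf}{1}{H}=0$. The only cosmetic difference is that you invoke the vanishing first moment of the centred Gaussian directly, whereas the paper runs the same computation through polar coordinates and rotation invariance of~$\nu$ (mirroring the argument later needed for the spherical fibre measure); both are valid.
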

\begin{proof}
    Recall~\hyperref[eq:Langevin:APf-Eq2]{Equation~\eqref{eq:Langevin:APf-Eq2}} 
    from~\hyperref[rem:Langevin:APf-Eq2]{Remark~\ref{rem:Langevin:APf-Eq2}}. 
    Furthermore, we are going to apply the formula for Gaussian integrals from~\cite[Lemma~3.1]{HypocoercJFA}:
    Let $f\in D$ 
    and consider polar coordinates in the fibre at $x\in\posfold$
    using~\cite[Satz~14.8]{Forster3}
    which is an application of the transformation formula and Fubini.
    We get that
    \begin{align*}
        &\int_{\tanbundle{\posfold}{x}} APf \ \diffd\nu
        \\
        &=
        \int_{(0,\infty)}\int_{\sphere{\posdim-1}} 
            -\riemannmetric%
                {v(r,u)}%
                {\grad{\riemannmetric*}\expect[\nu]{f}(x)}%
                {x}
            \pdot
            r^{\posdim-1}\,
            \frac%
                {d \nu(v(r,u))}%
                {d \Leb\otimes\spheresurfmeasure}
        \quad \diffd\spheresurfmeasure(u)\,\diffd r
        \\
        &=
        \int_{(0,\infty)} 
            0 
            \pdot
            r^{\posdim-1}\,
            \frac%
                {d \nu(v(r,u_0))}%
                {d \Leb\otimes\spheresurfmeasure}
        \ \diffd r
        = 
        0
    ,
    \end{align*}
    where $u_0\in\sphere{\posdim-1}$ is arbitrary, 
    and further \spheresurfmeasure denotes the surface measure of the sphere~\sphere{\posdim-1} and 
    \begin{displaymath}
        v\colon\ 
        (0,\infty)\times\sphere{\posdim-1} 
        \to 
        v((0,\infty)\times\sphere{\posdim-1}) \subseteq \tanbundle{\posfold}{x},\ 
        (r,u) \longmapsto v(r,u) 
    \end{displaymath}
    is the diffeomorphism corresponding to 
    $
        (0,\infty)\times\sphere{\posdim-1} \to F=\Rnum^{\posdim},\, 
        (r,u) \mapsto ru
    $.
    We point out that 
    this argument just works 
    as~$\nu$ is invariant wrt.\ rotations.
    
    We have seen that $P_SAP$ is trivial an~$D$, 
    so we can use orthogonality of~$P_S$ to obtain
    \begin{displaymath}
        0
        =
        \scalarprod{P_SAPf}{1}{\bigL2{\posfold}{\wriemannmetric*}}
        =
        \scalarprod{APf}{1}{H}
        \qquad\text{for all }f\in D
    .
    \end{displaymath}
    Thus, $PAP\vert_D=0$.
\end{proof}

\begin{lemma}[microscopic hypocoercivity~\ref{cond:H2}]
\label{lem:Langevin:H2}
    Let condition~\ref{cond:Langevin:P1} hold. 

    Then, condition~\ref{cond:H2} is fulfilled with 
    $\Lambda_m = \alpha$.
\end{lemma}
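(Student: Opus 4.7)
The plan is to reduce the microscopic coercivity to the well-known Gaussian Poincar\'e inequality applied fibrewise. First I would invoke the symmetry computation from the proof of \hyperref[lem:Langevin:SAD]{Lemma~\ref{lem:Langevin:SAD}}\,\ref{itm:Langevin:SAD:S}, which gives on the core $D_0$, and then by the same density argument on all of $D$, the identity
\begin{displaymath}
    -\scalarprod{Sf}{f}{H}
    =
    \frac{\alpha}{\beta}\int_{\tanbundle\posfold}
        \vertmetric{\grad{\vertmetric*}f}{\grad{\vertmetric*}f}
    \ \diffd\mu
    =
    \frac{\alpha}{\beta}
    \int_{\posfold}\int_{\tanbundle{\posfold}{x}}
        \abs{\grad{\vertmetric*}f}{\vertmetric*}^2
    \ \diffd\nu\,\diffd\Leb{\wriemannmetric*}(x)
,
\end{displaymath}
where the second equality uses the disintegration of $\mu=\Leb{\wriemannmetric*}\locprod\nu$ provided by \hyperref[lem:loc-prod-measure]{Lemma~\ref{lem:loc-prod-measure}}.

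Next I would analyse the inner fibre integral. Over a fixed $x\in\posfold$, the vertical tangent space $\vertbundle{\tanbundle\posfold}{v}$ at any $v$ in the fibre is canonically identified with $\tanbundle{\posfold}{x}$ via the vertical lift, and under this identification $\vertmetric*$ restricts to~$\riemannmetric*_x$. Since $\nu$ is the centred Gaussian with covariance~$\beta^{-1}\ID$ on the fibre modelled on $F=\Rnum^\posdim$ (with the chart giving normal coordinates at~$x$, so that~$\riemannmetric*_x$ coincides with the Euclidean structure used to define~$\nu$), the classical Gaussian Poincar\'e inequality yields
\begin{displaymath}
    \int_{\tanbundle{\posfold}{x}}
        \parentheses*{ f - \expect[\nu]{f}(x) }^2
    \ \diffd\nu
    \leq
    \frac1\beta
    \int_{\tanbundle{\posfold}{x}}
        \abs{\grad{\vertmetric*}f}{\vertmetric*}^2
    \ \diffd\nu
\end{displaymath}
for every $f\in D$ and $\Leb{\wriemannmetric*}$-a.e.\ $x\in\posfold$ (for fixed~$x$ the restriction $f\vert_{\tanbundle{\posfold}{x}}$ is smooth and compactly supported, so the inequality holds pointwise).

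Then I would integrate the previous inequality against the probability measure $\Leb{\wriemannmetric*}$. By the definition of~$P_S$ as $P_Sf=\vlfunc{\expect[\nu]{f}}$ and again by disintegration, the left-hand side integrates to $\norm{(\ID-P_S)f}{H}^2$. Combining with the identity from the first paragraph gives
\begin{displaymath}
    \norm{(\ID-P_S)f}{H}^2
    \leq
    \frac1\beta
    \int_{\tanbundle\posfold}
        \abs{\grad{\vertmetric*}f}{\vertmetric*}^2
    \ \diffd\mu
    =
    \frac1\alpha\,\parentheses*{-\scalarprod{Sf}{f}{H}}
,
\end{displaymath}
which is exactly~\ref{cond:H2} with $\Lambda_m=\alpha$. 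The only delicate step is the fibrewise Gaussian Poincar\'e inequality, namely checking that the Riemannian/vertical metric on a fibre and the Euclidean metric implicit in the Gaussian covariance agree; this is built into the normal-coordinate formulation of the SDE~\eqref{eq:geomLangevin} and the definition of $\nu$ as $\normaldist{0}{\beta^{-1}\ID_\posdim}$, and is the essential geometric input justifying the value $\Lambda_m=\alpha$.
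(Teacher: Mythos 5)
Your proposal is correct and follows essentially the same route as the paper: identify $-\scalarprod{Sf}{f}{H}$ with $\frac\alpha\beta$ times the vertical gradient form via the symmetry computation of \hyperref[lem:Langevin:SAD]{Lemma~\ref{lem:Langevin:SAD}}, then apply the Gaussian Poincar\'e inequality (with constant $\beta$ for $\nu=\normaldist{0}{\beta^{-1}\ID_\posdim}$) to obtain $\Lambda_m=\alpha$. You merely make the fibrewise disintegration explicit where the paper applies the inequality in one line.
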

\begin{proof}
    Let $f\in D$.
    Using the Poincar\'e inequality for Gaussian measures, see~\cite{Beckner89},
    we deduce that
    \begin{align*}
        \scalarprod{-Sf}{f}{H}
        &=
        \frac\alpha\beta\,
        \scalarprod%
            {\grad{\wvertmetric*}f}%
            {\grad{\wvertmetric*}f}%
            {\bigL2{\tanbundle\posfold\to\tantanbundle\posfold}{\wsasakimetric*}}
        =
        \frac\alpha\beta\,
        \norm{ %
            \grad{\vertmetric*}f %
        }{\bigL2{\tanbundle\posfold\to\tantanbundle\posfold}{\wsasakimetric*}}^2
        \\
        &\geq
        \alpha
        \norm{ %
            f %
            - %
            \scalarprod%
                {f}%
                {1}%
                {\bigL2{\tanbundle\posfold}{\wvertmetric*}} %
        }{\bigL2{\tanbundle\posfold}{\wsasakimetric*}}^2
        =
        \alpha
        \norm{ %
            (\ID_H-P_S)f %
        }{H}^2
    \end{align*}
    and the claim follows.
\end{proof}

The strategy for proving condition~\ref{cond:H3} relies on~\cite[Corollary~2.13]{HypocoercJFA}. 
Most importantly, we have to prove that 
$(\ID_H-PA^2P,D)$ is essentially m-dissipative.
To do so, we characterise $\ID_H-PA^2P$ on~$D$ 
starting from~\hyperref[eq:Langevin:APf-Eq2]{Equation~\eqref{eq:Langevin:APf-Eq2}}
and show that the range $(\ID_H-PA^2P)(D)$ is dense in~$H$.

Let $s\colon\,(-\delta,\delta)\to\tanbundle\posfold$ be a curve such that
$s(0)=v$ and $s^\prime(0)=\spray{\riemannmetric*}(v)$ 
for $v\in\tanbundle\posfold$ fixed and some small $\delta\in(0,\infty)$. 
Let $x\vdef\tanproj(v)$.
The following computation relies on 
$\tanproj\bincirc s$ being a geodesic of~$\spray{\riemannmetric*}$
and the characterisation of the directional derivative  
in terms of parallel transport `$\paralleltransport$' along~$s$ given by the Levi-Civita connection:
\begin{align*}
\begin{aligned}
    -\spray{\riemannmetric*}APf(v)
    &\overset{\eqref{eq:Langevin:APf-Eq2}}{=}
    -\spray{\riemannmetric*} 
    \parentheses*{ %
        -\riemannmetric%
            {\ID_{\tanbundle\posfold}}%
            {\grad{\riemannmetric*}\expect*[\nu]{f} \bincirc\tanproj}%
            {\tanproj}%
    }(v)
    \\
    &=
    \sasakimetric%
        {\lim_{t\to0}\frac1t\, \parentheses*{%
            \paralleltransport_{s(t)}^{s(t)}(%
                \ID_{\tanbundle\posfold}\bincirc s(t)%
            ) %
            - %
            \paralleltransport_{s(0)}^{s(t)}(%
                \ID_{\tanbundle\posfold}\bincirc s(0) %
            )%
        }}%
        {\grad{\hormetric*}P_S f(v)}%
        {v}
    \\
    &\qquad\qquad+
    \hormetric%
        {\spray{\riemannmetric*}(v)}%
        {\connection{\spray{\riemannmetric*}}{\sasakimetric*}%
            \parentheses*{\grad{\hormetric*}P_Sf}(v)%
        }%
        {v}%
    \\
    &=
    \hormetric%
        {\spray{\riemannmetric*}(v)}%
        {\expect*[\nu]{\grad{\hormetric*}f}(v)}%
        {v}
    +
    \riemannmetric%
        {v}%
        {\connection{v}{\riemannmetric*}%
            \parentheses*{\grad{\riemannmetric*}\expect[\nu]{f}}(x)
        }%
        {x}
    \\
    &=
    \riemannmetric%
        {v}%
        {\grad{\riemannmetric*}\expect*[\nu]{f}(x)}%
        {x}
    +
    \riemannmetric%
        {v}%
        {\connection{v}{\riemannmetric*}%
            \parentheses*{\grad{\riemannmetric*}\expect[\nu]{f}}(x)
        }%
        {x}
.
\end{aligned}
\end{align*}
For the second equality we also used the metric compatibility of the Levi-Civita connection~\connection{}{\sasakimetric*}.
The last but one line is obtained 
using~\hyperref[lem:app:covaderivative-with-spray]{Lemma~\ref{lem:app:covaderivative-with-spray}}
for the second summand.
Now, we transform into polar coordinates in the fibre~$\tanbundle{\posfold}{x}$  
similar as in the proof of~\hyperref[lem:Langevin:H1]{Lemma~\ref{lem:Langevin:H1}}. 
With this ansatz we calculate
applying~\cite[Lemma~3.1]{HypocoercJFA} twice that 
\begin{align}
\label{eq:Langevin:A2P-part1}
\begin{aligned}
    &\int_{\tanbundle{\posfold}{x}}
        -\spray{\riemannmetric*} 
        \parentheses*{ %
            -\riemannmetric%
                {\ID_{\tanbundle\posfold}}%
                {\grad{\riemannmetric*}\expect*[\nu]{f} \bincirc\tanproj}%
                {\tanproj}%
        }
    \ \diffd\nu
    \\
    ={}&
    \int_{\tanbundle{\posfold}{x}}
        \riemannmetric%
            {v}%
            {\grad{\riemannmetric*}\expect*[\nu]{f}(x)}%
            {x}
    \ \diffd\nu(v)
    +
    \int_{\tanbundle{\posfold}{x}}
        \riemannmetric%
            {v}%
            {\connection{v}{\riemannmetric*}%
                \parentheses*{\grad{\riemannmetric*}\expect[\nu]{f}}(x)
            }%
            {x}
    \ \diffd\nu(v)
    \\
    ={}&
    \int_{(0,\infty)}\int_{\sphere{\posdim-1}}
        \riemannmetric%
            {v(r,u)}%
            {\connection{v(r,u)}{\riemannmetric*}%
                \parentheses*{\grad{\riemannmetric*}\expect[\nu]{f}}(x)
            }%
            {x}
    \\
    &\hphantom{\int_{(0,\infty)}\int_{\sphere{\posdim-1}}}\qquad\qquad
        \pdot
        r^{\posdim-1}\,
        \frac{d\nu(v(r,u))}{d\Leb\otimes\spheresurfmeasure}
    \quad \diffd\spheresurfmeasure(u)\,\diffd r
    \\
    ={}&
    \frac1{\posdim}
    \laplace{\riemannmetric*}\expect[\nu]{f}(x)
    \quad\pdot
    \int_{(0,\infty)}
        r^2\pdot
        \parentheses*{%
            \frac1{\spheresurfmeasure(\sphere{\posdim-1})}\,
            \int_{\sphere{\posdim-1}} 1\ \diffd\spheresurfmeasure
        }
        \pdot
        r^{\posdim-1}\,
        \frac{d\nu(v(r,u_0))}{d\Leb\otimes\spheresurfmeasure}
    \ \diffd r
    \\
    ={}&
    \frac1{\beta}
    \laplace{\riemannmetric*}\expect[\nu]{f}(x)
    \quad\pdot1
,
\end{aligned}
\end{align}
where we have taken $u_0\in\sphere{\posdim-1}$ arbitrary, 
since~$\nu$ is invariant wrt.\ rotations.
In order to arrive at the last but one line, 
we consider some chart $\parentheses*{x^j}_{j=1}^{\posdim}$ at~$x\in\posfold$ providing normal coordinates; 
in such coordinates the Levi-Civita connection is understood in terms of directional derivatives as
\begin{displaymath}
    \connection{\Yfield}\Xfield(x)
    =
    \sum_{i,j\in\{1,\ldots,\posdim\}} 
        \Yfield^i(x)\, \frac{\partial\Xfield^j(x)}{\partial x^i}\, x^j
\end{displaymath}
for all $\Xfield,\Yfield\in\smoothsec{\tanbundle\posfold}$ with local coordinate expression
$\Xfield = \sum_{j=1}^{\posdim} \Xfield^j\pdot\partial x^j$
and
$\Yfield = \sum_{j=1}^{\posdim} \Yfield^j\pdot\partial x^j$.
Thus, we can understand the mapping 
\begin{displaymath}
    \sphere{\posdim-1}\to\sphere{\posdim-1},\ 
    u \longmapsto \parentheses*{%
        \connection{u}{\riemannmetric*}\grad{\riemannmetric*}\expect[\nu]{f}%
    }(x)
\end{displaymath}
as the matrix in~\cite[Lemma~3.1]{HypocoercJFA}. 
The last step of~\hyperref[eq:Langevin:A2P-part1]{Equation~\eqref{eq:Langevin:A2P-part1}} is due to the fact that 
the mean of a chi-squared distribution equals the number of degrees of freedom, i.\,e.~\posdim in the present case. 

With the proof of part~\ref{itm:Langevin:SAD:A} of~\hyperref[lem:Langevin:SAD]{Lemma~\ref{lem:Langevin:SAD}}
and with~\cite[Lemma~3.1]{HypocoercJFA}
we similarly get that 
for every $v\in\tanbundle\posfold$ with $x\vdef\tanproj(v)$ holds
\begin{align}
\label{eq:Langevin:A2P-part2}
\begin{aligned}
    &
    P_S\parentheses*{%
        \frac1\beta\,\grad{\vertmetric*}\hlfunc{\Phi}(APf)%
        }
    (v)
    =
    \int_{\tanbundle{\posfold}{x}} 
        \spray{\riemannmetric*}\vlfunc{\Phi}
        \pdot
        APf
    \ \diffd\nu
    \\
    \overset{\eqref{eq:Langevin:APf-Eq2}}{=}{}&
    -\int_{\tanbundle{\posfold}{x}} 
        \hormetric%
            {\spray{\riemannmetric*}}%
            {\grad{\hormetric*}\vlfunc{\Phi}}
        \pdot
        \hormetric%
            {\spray{\riemannmetric*}}%
            {\grad{\hormetric*}(P_Sf)}
    \ \diffd\nu
    \\
    ={}&
    -\int_{\tanbundle{\posfold}{x}} 
        \riemannmetric%
            {\ID_{\tanbundle\posfold}}%
            {\grad{\riemannmetric*}\Phi(x)}%
            {x}
        \pdot
        \riemannmetric%
            {\ID_{\tanbundle\posfold}}%
            {\grad{\riemannmetric*}\expect[\nu]{f}(x)}%
            {x}
    \ \diffd\nu
    \\
    ={}&
    -\frac1{\beta}\,
    \riemannmetric%
        {\grad{\riemannmetric*}\Phi(x)}%
        {\grad{\riemannmetric*}\expect[\nu]{f}(x)}%
        {x}
    =
    -\frac1{\beta}\,
    \connection{\grad{\riemannmetric*}\Phi}(\expect[\nu]{f})(x)
.
\end{aligned}
\end{align}

Hence, we proved combining~\hyperref[eq:Langevin:A2P-part1]{Equation~\eqref{eq:Langevin:A2P-part1}} 
and~\hyperref[eq:Langevin:A2P-part2]{Equation~\eqref{eq:Langevin:A2P-part2}} that
\begin{align}
\label{eq:Langevin:PA2P}
\begin{aligned}
    PA^2Pf
    &=
    P_SA^2Pf
    =
    \frac1{\beta}
    \pdot
    \parentheses*{%
        \laplace{\riemannmetric*}\expect[\nu]{f}\bincirc\tanproj%
        - %
        \connection{\grad{\riemannmetric*}\Psi}\expect[\nu]{f}\bincirc\tanproj%
    }
    \\
    &=
    \frac1{\beta}
    \pdot
    \laplace{\whormetric*}\parentheses*{\vlfunc{\expect[\nu]{f}}}
    =
    \frac1{\beta}
    \pdot
    \laplace{\whormetric*}(P_Sf)
\end{aligned}
\end{align}
for all $f\in D$.
Compare our result to~\cite[Equation~(3.16)]{HypocoercMFAT}.
These preparations give shape to the following corollary, 
cf.~\cite[Proposition~3.9]{HypocoercMFAT}. 

\begin{corollary}[$PA^2P$ is essentially m-dissipative]
\label{cor:Langevin:PA2P-m-dissipative}
    Let condition~\ref{cond:Langevin:P1} hold. 
    Then, the range $(\ID_H-PA^2P)(D)$ is dense in~$H$, 
    thus $PA^2P$ is essentially m-dissipative on~$D$.
\end{corollary}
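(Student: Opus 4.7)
The plan is to reduce essential m-dissipativity of $(PA^2P,D)$ to essential m-dissipativity of the weighted Laplace-Beltrami $\tfrac{1}{\beta}\laplace{\wriemannmetric*}$ on the position manifold. Dissipativity of $PA^2P$ on $D$ is automatic from antisymmetry of $A$ together with orthogonality of $P$ (so that $\scalarprod{PA^2Pf}{f}{H}=-\norm{APf}{H}^2\leq 0$), so the real content is density of $(\ID_H-PA^2P)(D)$ in $H$. The crucial input is~\hyperref[eq:Langevin:PA2P]{Equation~\eqref{eq:Langevin:PA2P}}, which rewrites
\[ PA^2Pf=\tfrac{1}{\beta}\laplace{\whormetric*}(P_Sf)=\vlfunc*{\tfrac{1}{\beta}\laplace{\wriemannmetric*}\expect[\nu]{f}}, \]
so that, via the isometric embedding $\vlfunc{\cdot}\colon\bigL2{\posfold}{\wriemannmetric*}\to P_S(H)\subseteq H$ (granted by the bundle measure structure $\mu=\Leb{\wriemannmetric*}\locprod\nu$ with probability fibre measure), the restriction of $PA^2P$ to $P_S(H)$ corresponds exactly to $\tfrac{1}{\beta}\laplace{\wriemannmetric*}$ acting on $\expect[\nu]{f}$.

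First I would establish essential m-dissipativity of $(\tfrac{1}{\beta}\laplace{\wriemannmetric*},\smoothcompact{\posfold})$ on $\bigL2{\posfold}{\wriemannmetric*}$. The weighted manifold $(\posfold,\wriemannmetric*)$ is complete by~\ref{cond:M:complete} together with~\autoref{lem:weighted-M-complete}, so classical Gaffney-type theorems (referenced after~\ref{cond:M} in the introduction) give essential self-adjointness of $\laplace{\riemannmetric*}$ on $\smoothcompact{\posfold}$; the weight is absorbed either by the unitary conjugation $u\mapsto\exp(-\Phi/2)u$ used in~\autoref{lem:Langevin:D2-global-Lipschitz}, which turns $\laplace{\wriemannmetric*}$ into a Schr\"odinger-type operator on $\bigL2{\posfold}{\riemannmetric*}$, or by a Kato-type perturbation analogous to~\autoref{thm:perturbation-thm}. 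This yields density of $(\ID-\tfrac{1}{\beta}\laplace{\wriemannmetric*})(\smoothcompact{\posfold})$ in $\bigL2{\posfold}{\wriemannmetric*}$.

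For the lift back to $H$, fix $g\in H$ and $\varepsilon>0$, and split $g=\vlfunc{h}+g_2$ with $h\vdef\expect[\nu]{g}\in\bigL2{\posfold}{\wriemannmetric*}$ and $g_2\vdef(\ID-P_S)g$. Choose $h_0\in\smoothcompact{\posfold}$ with $\norm{(\ID-\tfrac{1}{\beta}\laplace{\wriemannmetric*})h_0-h}{\bigL2{\posfold}{\wriemannmetric*}}<\varepsilon/2$, and construct $f_1^{(n)}\vdef\vlfunc{h_0}\pdot\hlfunc{\varphi_n}\in D$ using the cut-offs of~\autoref{lem:Langevin:D5D7}; dominated convergence, controlled exactly as there, gives $(\ID-PA^2P)f_1^{(n)}\to\vlfunc*{h_0-\tfrac{1}{\beta}\laplace{\wriemannmetric*}h_0}$ in $H$. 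In parallel, approximate $g_2$ in $H$ by functions $f_2^{(n)}\in D$ with $\expect[\nu]{f_2^{(n)}}\equiv 0$, built via a partition-of-unity argument on local trivialisations out of tensors $\vlfunc{\alpha}\cdot\chi$ with $\alpha\in\smoothcompact{U}$ and $\chi\in\smoothcompact{\Rnum^{\posdim}}$ of vanishing $\nu$-mean (their density underlying $(\ID-P_S)(H)$ follows from~\autoref{rem:L2dense-testfunctions} and~\autoref{lem:D0isdense}). Since $P_S f_2^{(n)}=0$ forces $PA^2P f_2^{(n)}=0$, one has $(\ID-PA^2P)f_2^{(n)}=f_2^{(n)}\to g_2$, and summing both approximating sequences exhibits $g$ within $\varepsilon$ of the range.

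The main obstacle is the first step: securing essential m-dissipativity of the weighted Laplacian on $\smoothcompact{\posfold}$ under only the loc-Lipschitz, bounded-below regularity of $\Phi$ allowed by~\ref{cond:Langevin:P1}. The unweighted Gaffney-type result is classical, but absorbing the weight with such limited regularity requires the unitary/perturbation step to be executed with care so as not to demand more of $\Phi$ than~\ref{cond:Langevin:P1} provides. A secondary technical matter is the construction of the $P_S$-free approximants $f_2^{(n)}$; although conceptually straightforward through the tensor product structure of~\autoref{lem:D0isdense}, the partition-of-unity assembly has to be done cleanly to stay inside $D=\smoothcompact{\tanbundle\posfold}$ while preserving vanishing fibre averages.
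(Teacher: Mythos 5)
Your proposal is correct in substance and rests on exactly the same two pillars as the paper's proof: the identity $PA^2P=\sfrac1\beta\,\laplace{\whormetric*}\bincirc P_S$ from~\hyperref[eq:Langevin:PA2P]{Equation~\eqref{eq:Langevin:PA2P}} and essential self\-/adjointness of the weighted (horizontal) Laplace\-/Beltrami operator on the complete weighted manifold. Where you diverge is the final density step. The paper runs a duality argument: it takes $g\in H$ orthogonal to the range $(\ID_H-PA^2P)(D)$, tests against vertically lifted functions to conclude $P_Sg=0$ from density of $(\ID-\sfrac1\beta\,\laplace{\whormetric*})(\tanproj^\ast\smoothcompact{\posfold})$ in $\tanproj^\ast\bigL2{\posfold}{\wriemannmetric*}$, and then feeds $P_Sg=0$ back into the orthogonality relation to get $\scalarprod{f}{g}{H}=0$ for all $f\in D$. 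You instead construct approximants directly, splitting $g=P_Sg+(\ID-P_S)g$ and handling the two pieces separately. Both routes are standard and both work; the paper's is shorter and sidesteps a technical point that your construction must face: for $f_1^{(n)}=\vlfunc{h_0}\pdot\hlfunc{\varphi_n}$ one has $P_Sf_1^{(n)}=\vlfunc*{h_0\pdot\expect[\nu]{\hlfunc{\varphi_n}}}$, so $PA^2Pf_1^{(n)}$ involves $\laplace{\wriemannmetric*}$ of a product whose second factor is the fibre average of the cut\-/off, and the dominated\-/convergence step therefore needs control of first \emph{and second} derivatives of $\expect[\nu]{\hlfunc{\varphi_n}}$ in the base variable — this is available (the cut\-/offs of~\hyperref[lem:Langevin:D5D7]{Lemma~\ref{lem:Langevin:D5D7}} satisfy $\abs{\operatorname{Hess}_{\riemannmetric*}\varphi_n}{\infty}\leq c/n^2$), but it is genuine extra work that your phrase ``controlled exactly as there'' underestimates, since that lemma only treats the first\-/order operators $S$ and $A$ on such products. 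Your worry about absorbing the loc\-/Lipschitz weight into the Gaffney\-/type statement is legitimate but applies equally to the paper, which simply invokes essential self\-/adjointness for the complete weighted manifold $(\posfold,\wriemannmetric*)$ directly; your conjugation/perturbation detour buys nothing beyond what that citation already delivers. Finally, your observation that dissipativity itself is automatic from $\scalarprod{PA^2Pf}{f}{H}=-\norm{APf}{H}^2\leq0$ is correct and implicitly used by the paper as well.
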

\begin{proof}
    Right away, we know that 
    $
        (PA^2P,D)
        \overset{\eqref{eq:Langevin:PA2P}}{=}
        \parentheses*{ \sfrac1\beta\laplace{\whormetric*}\bincirc P_S,D) }
    $ 
    is essentially m-dissipative on
    $
        P_S(H)
        =
        \tanproj^\ast\bigL2{\posfold}{\wriemannmetric*}
    $, 
    as $P_S(D) = \tanproj^\ast\smoothcompact{\posfold}$ 
    and~$(\laplace{\whormetric*},D)$ 
    is essentially self-adjoint in~$H$. 
    The later is true, 
    since~$(\laplace{\whormetric*},D_0)$ is essentially self-adjoint in~$H$ --
    as so is the Laplace-Beltrami on~$(\tanbundle\posfold,\whormetric*)$, 
    cf.\ the beginning of~\autoref{sec:preliminaries} --
    and the fact that~$D_0$ is dense in~$D$.

    Let $g\in H$ such that 
    \begin{displaymath}
        \scalarprod%
            {(\ID_H-PA^2P)f}%
            {g}%
            {H}
        =
        0
    \end{displaymath}
    for all $f\in D$
    and we claim that $g=0$.
    Our assumption immediately implies that 
    \begin{displaymath}
        0
        =
        \scalarprod%
            {(\ID_H-PA^2P)\vlfunc{f_0}}%
            {P_Sg}%
            {H}
        =
        \scalarprod%
            {\vlfunc{f_0} - \frac1\beta\,\laplace{\whormetric*}\vlfunc{f_0}}%
            {P_Sg}%
            {\tanproj^\ast\bigL2{\posfold}{\wriemannmetric*}}
    \end{displaymath}
    for all $f_0\in\smoothcompact{\posfold}$.
    Thus, $P_Sg=0$, 
    since the range 
    $
        \parentheses*{%
            \ID_H-\sfrac1{\beta}\,\laplace{\whormetric*}%
        }(\tanproj^\ast\smoothcompact{\posfold})
    $ 
    is dense in 
    $
        \tanproj^\ast\bigL2{\posfold}{\wriemannmetric*}
    $.
    Ultimately, this means that
    \begin{displaymath}
        \scalarprod{f}{g}{H} 
        =
        \scalarprod{PA^2Pf}{g}{H} 
        =
        \scalarprod%
            {\sfrac1\beta\laplace{\whormetric*}(P_Sf)}%
            {P_Sg}%
            {\tanproj^\ast\bigL2{\posfold}{\wriemannmetric*}} 
        =
        0
        \qquad\text{for all }f\in D
    ,
    \end{displaymath}
    which implies 
    $\scalarprod{f}{g}{H} = 0$
    for all $f\in D$, 
    hence $g=0$ as claimed.
\end{proof}

\begin{proposition}[macroscopic hypocoercivity~\ref{cond:H3}]
    Let the conditions~\ref{cond:Langevin:P1} and~\ref{cond:Langevin:P2} hold. 
    Then, condition~\ref{cond:H3} is fulfilled 
    with $\Lambda_M = \frac1\beta\,\Lambda$.
\end{proposition}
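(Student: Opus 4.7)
The plan is to first establish the inequality on the test function core $D$ via an explicit computation, and then extend to $\opdomain{\adjoint{(AP)}(AP)}$ by a standard density/closure argument. On $D$ the entire proof is powered by three ingredients already available: the explicit formula for $APf$ from~\hyperref[eq:Langevin:APf-Eq2]{Equation~\eqref{eq:Langevin:APf-Eq2}}, the loc\=/product structure of $\mu=\Leb{\wriemannmetric*}\locprod\nu$ from~\autoref{subsec:Langevin:D}, and the Poincar\'e inequality~\eqref{eq:Langevin:Poincare} posited in~\ref{cond:Langevin:P2}.

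Let $f\in D$. First I would expand $\|APf\|_H^2$ using~\hyperref[eq:Langevin:APf-Eq2]{\eqref{eq:Langevin:APf-Eq2}} and split the integral by the fibre bundle structure:
\begin{displaymath}
    \|APf\|_H^2
    =
    \int_{\posfold}\int_{\tanbundle{\posfold}{x}}
        \riemannmetric{v}{\grad{\riemannmetric*}\expect[\nu]{f}(x)}{x}^2
    \ \diffd\nu(v)\, \diffd\Leb{\wriemannmetric*}(x)
.
\end{displaymath}
For each fixed $x\in\posfold$ the vector $w\vdef\grad{\riemannmetric*}\expect[\nu]{f}(x)\in\tanbundle{\posfold}{x}$ is independent of~$v$; choosing any \riemannmetric*\=/orthonormal basis on the fibre, the Gaussian $\nu=\normaldist{0}{\beta^{-1}\ID_{\posdim}}$ gives the elementary second moment computation
\begin{displaymath}
    \int_{\tanbundle{\posfold}{x}}
        \riemannmetric{v}{w}{x}^2
    \ \diffd\nu(v)
    =
    \frac1\beta\,\abs{w}{\riemannmetric*}^2
.
\end{displaymath}
This is simply the particular instance of~\cite[Lemma~3.1]{HypocoercJFA} already invoked in~\hyperref[eq:Langevin:A2P-part1]{\eqref{eq:Langevin:A2P-part1}}. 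Summing over~$\posfold$ one obtains
\begin{displaymath}
    \|APf\|_H^2
    =
    \frac1\beta\,
    \norm*{\grad{\riemannmetric*}\expect[\nu]{f}}{\bigL2{\posfold\to\tanbundle\posfold}{\wriemannmetric*}}^2
.
\end{displaymath}

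Next I would apply the Poincar\'e inequality~\eqref{eq:Langevin:Poincare} to $\expect[\nu]{f}$. For $f\in D=\smoothcompact{\tanbundle\posfold}$ the fibrewise Gaussian average $\expect[\nu]{f}$ lies in $\smoothcompact{\posfold}$, so~\ref{cond:Langevin:P2} is directly applicable. Using Fubini\-/Tonelli with respect to the loc\=/product measure $\mu=\Leb{\wriemannmetric*}\locprod\nu$ one has
\begin{displaymath}
    \scalarprod{\expect[\nu]{f}}{1}{\bigL2{\posfold}{\wriemannmetric*}}
    =
    \scalarprod{f}{1}{H}
    ,
\end{displaymath}
and since $P_Sf=\vlfunc{\expect[\nu]{f}}$ we obtain $Pf=\vlfunc{\expect[\nu]{f}}-\scalarprod{f}{1}{H}$ with norm
\begin{displaymath}
    \norm{Pf}{H}^2
    =
    \norm*{\expect[\nu]{f}-\scalarprod{f}{1}{H}}{\bigL2{\posfold}{\wriemannmetric*}}^2
.
\end{displaymath}
Chaining these identities with~\eqref{eq:Langevin:Poincare} yields
$\|APf\|_H^2 \geq \frac{\Lambda}{\beta}\|Pf\|_H^2$ on $D$, identifying $\Lambda_M=\sfrac{\Lambda}{\beta}$.

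Finally I would lift the inequality to the full domain $\opdomain{\adjoint{(AP)}(AP)}$ required by~\ref{cond:H3}. Both sides of the inequality are continuous with respect to the graph norm of~$AP$: for the right hand side this is immediate from continuity of~$P$, for the left hand side it is immediate from the graph-norm definition. Hence it suffices to show that $D$ is a core for $AP$ (in particular dense in $\opdomain{\adjoint{(AP)}(AP)}$ in the graph norm of~$AP$). This follows from the facts established in~\hyperref[lem:Langevin:D5D7]{Lemma~\ref{lem:Langevin:D5D7}} that $P(D)\subseteq\opdomain{A}$ and $AP(D)\subseteq\opdomain{A}$, combined with the approximation scheme $f_n=\vlfunc{f_0}\cdot\hlfunc{\varphi_n}$ used there to exhaust vertically lifted $\bigL2$\=/functions by elements of~$D$. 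The main technical point of the proof is really this extension step: it relies on having enough regularity in the test-function space so that~$P$ commutes with the closure of $AP$ on a dense set, which is exactly the content of~\hyperref[lem:Langevin:D5D7]{Lemma~\ref{lem:Langevin:D5D7}}. Everything else is the standard Gaussian second moment identity packaged with the geometric formula for $APf$.
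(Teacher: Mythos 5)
Your computation on the core $D$ is correct and is essentially the paper's: you obtain $\norm{APf}{H}^2=\frac1\beta\,\norm{\grad{\riemannmetric*}\expect[\nu]{f}}{\bigL2{\posfold\to\tanbundle\posfold}{\wriemannmetric*}}^2$ by the direct Gaussian second-moment identity (the paper gets the same identity by pairing $\norm{APf}{H}^2=-\scalarprod{PA^2Pf}{f}{H}$ with the weighted horizontal gradient form), and the application of the Poincar\'e inequality~\eqref{eq:Langevin:Poincare} to $\expect[\nu]{f}\in\smoothcompact{\posfold}$ is identical. The constant $\Lambda_M=\sfrac\Lambda\beta$ comes out the same way.

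The gap is in your extension step. Condition~\ref{cond:H3} is stated on $\opdomain{\adjoint{(AP)}(AP)}$, where $AP$ is the \emph{closed} operator with maximal domain $\{f\in H\mid Pf\in\opdomain{A}\}$, not the closure of $(AP,D)$. Your argument therefore needs that $D$ is a core for this closed operator, and you assert this follows from \hyperref[lem:Langevin:D5D7]{Lemma~\ref{lem:Langevin:D5D7}}. It does not: that lemma only yields the domain inclusions $P(D)\subseteq\opdomain{A}$, $AP(D)\subseteq\opdomain{A}$ and the graph-norm approximation of specific vertically lifted test functions; it gives no control on a general $f$ with $Pf\in\opdomain{A}$, for which one would have to produce $f_n\in D$ with $f_n\to f$ \emph{and} $APf_n\to APf$ simultaneously, and boundedness of $P$ does not make the second convergence automatic. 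This domain issue is exactly what the paper's preparation is for: it first establishes in \hyperref[cor:Langevin:PA2P-m-dissipative]{Corollary~\ref{cor:Langevin:PA2P-m-dissipative}} that $(\ID_H-PA^2P)(D)$ is dense in $H$ (essential m-dissipativity of $(PA^2P,D)=(\sfrac1\beta\laplace{\whormetric*}P_S,D)$, via essential self-adjointness of the weighted horizontal Laplace--Beltrami operator), and then invokes \cite[Corollary~2.13]{HypocoercJFA}, which converts the inequality on $D$ plus this range density into~\ref{cond:H3} on the full domain $\opdomain{\adjoint{(AP)}(AP)}$. Your proposal omits this ingredient entirely; without it (or an independent proof of the core property, which is of comparable difficulty), the passage from $D$ to $\opdomain{\adjoint{(AP)}(AP)}$ is not justified.
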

\begin{proof}
    Let $f\in D$.
    Since  
    $
        \parentheses*{PA^2P,D}
        =
        \parentheses*{\sfrac1\beta\,\laplace{\whormetric*}P_S,D}
    $
    pregenerates a variant of the weighted horizontal gradient form in the sense that 
    \begin{displaymath}
        \scalarprod{PA^2Pf}{g}{H}
        =
        -\frac1\beta
        \int_{\tanbundle\posfold}
            \hormetric%
                {\grad{\hormetric*}P_Sf}%
                {\grad{\hormetric*}P_Sg}%
        \ \diffd\mu
        \qquad\text{for all }f,g\in D
    ,
    \end{displaymath}
    we easily compute that
    \begin{align*}
        \norm{ APf }{H}^2
        &=
        \frac1\beta
        \int_{\tanbundle\posfold}
            \abs{ %
                \grad{\hormetric*}P_Sf(v) %
            }{\hormetric*}^2
        \quad \diffd\mu(v)
        =
        \frac1\beta
        \int_{\posfold}
            \abs{ %
                \grad{\riemannmetric*}\expect[\nu]{f}(x) %
            }{\riemannmetric*}^2
        \quad \diffd\Leb{\wriemannmetric*}(x)
        \\
        &=
        \frac1\beta\,
        \norm{ %
            \grad{\riemannmetric*}\expect[\nu]{f} %
        }{\bigL2{\posfold\to\tanbundle\posfold}{\wriemannmetric*}}^2
        \geq
        \frac1\beta\,\Lambda
        \norm{ %
            \expect[\nu]{f} %
            - %
            \scalarprod{\expect[\nu]{f}}{1}%
                {\bigL2{\posfold}{\wriemannmetric*}} %
        }{\bigL2{\posfold}{\wriemannmetric*}}^2
    \end{align*}
    by Poincar\'e inequality.
    Combining this estimates with the previous corollary, 
    then \cite[Corollary~2.13]{HypocoercJFA} finishes the proof.
\end{proof}

The remaining hypocoercivity condition~\ref{cond:H4} is checked 
via a standard procedure relying on~\cite[Lemma~2.14]{HypocoercJFA} and~\cite[Proposition~2.15]{HypocoercJFA}
cf.\ also~\cite[Proposition~3.11]{HypocoercMFAT}.

\begin{lemma}[boundedness of~$(BS,D)$, first part of~\ref{cond:H4}]
\label{lem:Langevin:H4-1}
    Let condition~\ref{cond:Langevin:P1} hold. 
    Then, with $c_1\vdef\frac12\,\alpha$ it holds that
    \begin{displaymath}
        \norm{ BSf }{H} 
        \leq 
        c_1 \norm{ (\ID_H-P_j)f }{H}
        \qquad\text{for all }f\in D
    \end{displaymath}
    and $P_j\in\{P,P_S\}$, $j\in\{1,2\}$.
\end{lemma}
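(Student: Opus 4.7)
The approach is to exploit that $APg$ is fiberwise linear for $g\in D$, which makes it an eigenfunction of $S$; this reduces $BSf$ to a scalar multiple of $Bf$, after which I can use the general operator norm bound on $B$ together with the vanishing of $(AP)^\ast$ on $\range(P_j)$.

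First, I use the explicit formula~\eqref{eq:Langevin:APf-Eq2}, namely
$APg = -\riemannmetric{\ID_{\tanbundle\posfold}}{\grad{\riemannmetric*}\expect[\nu]{g}\bincirc\tanproj}{\tanproj}$,
to observe that on each fiber $\tanbundle{\posfold}{x}$ the function $APg$ is linear in the fiber variable. Since $S=\frac{\alpha}{\beta}\laplace{\vertmetric*}-\alpha\canonfield$ and the vertical Laplacian annihilates fiberwise linear functions while $\canonfield$ reproduces them (as $\canonfield u = u$ for any such $u$), I obtain the pointwise identity
\begin{displaymath}
    S(APg) = -\alpha\cdot APg \qquad\text{for all } g\in D.
\end{displaymath}
After checking that $APg\in\opdomain{\overline{S}}$, symmetry of $S$ on $D$ gives
\begin{displaymath}
    \scalarprod{(AP)^{\ast}Sf}{g}{H}
    = \scalarprod{Sf}{APg}{H}
    = \scalarprod{f}{S(APg)}{H}
    = -\alpha\scalarprod{(AP)^{\ast}f}{g}{H}
\end{displaymath}
for all $f,g\in D$. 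Thus $(AP)^{\ast}Sf = -\alpha(AP)^{\ast}f$, and applying the bounded operator factor in the definition of $B$ yields the operator identity $BSf = -\alpha Bf$.

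Second, I show $Bf = B(\ID - P_j)f$ by verifying $(AP)^{\ast}P_jf = 0$ for both projections. For $P_j = P$, self-adjointness of $P$ combined with~\ref{cond:H1} (just established in~\hyperref[lem:Langevin:H1]{Lemma~\ref{lem:Langevin:H1}}) yields $\scalarprod{APh}{Pf}{H} = \scalarprod{PAPh}{f}{H} = 0$ for every $h\in D$. For $P_j = P_S$, using $P_Sf = Pf + \scalarprod{f}{1}{H}\cdot 1$ it remains to establish $(AP)^{\ast}1=0$; this follows from antisymmetry of $A$ together with $A1=0$, which in turn comes from $L1 = 0$ in~\hyperref[lem:Langevin:D5D7]{Lemma~\ref{lem:Langevin:D5D7}} and $S1=0$.

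Combining the two ingredients with the standard operator bound $\norm{B}{H\to H}\leq\sfrac12$, coming from the spectral-calculus estimate $\norm{(\ID+T)^{-1}T^{\sfrac12}}{}\leq\sfrac12$ applied to $T\vdef (AP)(AP)^{\ast}\geq0$, I arrive at
\begin{displaymath}
    \norm{BSf}{H}
    = \alpha\norm{Bf}{H}
    = \alpha\norm{B(\ID-P_j)f}{H}
    \leq \tfrac{\alpha}{2}\norm{(\ID-P_j)f}{H}
\end{displaymath}
as desired, with $c_1 = \sfrac{\alpha}{2}$. The main obstacle is the domain bookkeeping around the eigenvalue equation $S(APg) = -\alpha APg$: one must confirm $APg\in\opdomain{\overline{S}}$ (plausible, as $APg$ is a smooth fiberwise first-degree polynomial on which the Ornstein--Uhlenbeck-type fiber operator $S$ acts diagonally) and ensure that the pairing chain leading to $(AP)^{\ast}Sf = -\alpha(AP)^{\ast}f$ holds as a genuine $H$-identity. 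Beyond that, the argument mirrors~\cite[Proposition~3.11]{HypocoercMFAT}, but the fiber structure of the Sasaki metric makes the computation geometrically cleaner than in the purely Euclidean setting.
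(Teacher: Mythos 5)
Your proof is correct and follows the same two-step skeleton as the paper's own argument: first establish that $APg$ is an eigenfunction of $S$ with eigenvalue of modulus $\alpha$, then conclude via $\norm{B}{H\to H}\leq\sfrac{1}{2}$ and $\adjoint{(AP)}P_j=0$, i.\,e.\ via \cite[Lemma~2.14]{HypocoercJFA}. The only genuine difference lies in how the eigenrelation is obtained: the paper invokes the full spray property $\Liebracket{\canonfield}{\spray{\riemannmetric*}}=\spray{\riemannmetric*}$ together with a Koszul-type computation and the verticality of $\Liebracket{\canonfield}{\horlift(\Xfield)}$, whereas you read it off from the explicit fibrewise-linear form of $APg$ (the vertical Laplacian annihilates it, Euler's identity $\canonfield u=u$ reproduces it). Your derivation is more elementary and, incidentally, produces the sign $S(APg)=-\alpha\,APg$, which is the correct one; the paper's chain ends with $SAPf=\alpha\,APf$, a sign slip that is immaterial for the norm estimate since only $\abs{\pm\alpha}=\alpha$ enters. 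Your route also makes visible that fibrewise linearity of $APg$ already follows from $\dualpair{\spray{\riemannmetric*}}{d\tanproj}=\ID_{\tanbundle\posfold}$, i.\,e.\ from the semispray property alone, so the spray structure emphasised in the paper is not strictly needed at this point. The domain issue you flag -- justifying $APg\in\opdomain{S}$ so that the pairing $\scalarprod{Sf}{APg}{H}=\scalarprod{f}{S(APg)}{H}$ is legitimate -- is real but is resolved by the same cut-off approximation as in Lemma~\ref{lem:Langevin:D5D7}; the paper glosses over it as well.
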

\begin{proof}
    First, we show that 
    $
        SAP
        =
        \alpha
        AP
    $ 
    on~$D$. 
    For the first time, it will become important here that
    \spray{\riemannmetric*} is not just a semispray, 
    but actually a \emph{spray}, i.\,e.\ 
    additionally we have that 
    $
        \Liebracket{\canonfield}{\spray{\riemannmetric*}}
        =
        \spray{\riemannmetric*}
    $.
    This is due to the fact that 
    \spray{\riemannmetric*}~was chosen in correspondence to the Levi-Civita connection 
    which is an affine connection. 
    See~\cite{AmbrosePalaisSinger}.
    Let $f\in D$ be fixed. 
    Then, we immediately get that 
    \begin{displaymath}
        SAPf
        =
        SAP_Sf
        =
        -\alpha\canonfield(-\spray{\wriemannmetric*}(P_Sf))
        =
        \alpha
        \canonfield\parentheses*{ %
            \spray{\riemannmetric*}\vlfunc*{\expect[\nu]{f}}%
        }
    ,
    \end{displaymath}
    since we know from the proof of~\hyperref[lem:Langevin:SAD]{Lemma~\ref{lem:Langevin:SAD}}
    that $\scalarprod{Ah}{1}{H} = 0$ for all $h\in D$. 
    Using the Koszul formula we calculate that 
    for all $\Xfield\in\smoothsec{\tanbundle\posfold}$ holds
    \begin{displaymath}
        \canonfield 
        \hormetric%
            {\spray{\riemannmetric*}}%
            {\horlift\Xfield}%
        =
        0
        -
        \hormetric%
            {\Liebracket%
                {\canonfield}%
                {\spray{\riemannmetric*}}%
            }%
            {\horlift\Xfield}%
        +
        \hormetric%
            {\Liebracket%
                {\canonfield}%
                {\horlift\Xfield}%
            }%
            {\spray{\riemannmetric*}}%
    .
    \end{displaymath}
    Similar to~\cite[Proposition~5.1]{GudmundssonKappos} 
    mentioned before in~\hyperref[lem:Langevin:generator-Hörmander]{Lemma~\ref{lem:Langevin:generator-Hörmander}}, 
    one could use local coordinates for~\canonfield 
    in order to show that
    \Liebracket%
        {\canonfield}%
        {\horlift\Xfield}
    is purely vertical. 
    As~\spray{\riemannmetric*} even is a spray, 
    we gain that
    \begin{align*}
        SAPf 
        &=
        \alpha
        \canonfield 
        \hormetric%
            {\spray{\riemannmetric*}}%
            {\grad{\hormetric*}\vlfunc*{\expect[\nu]{f}}}%
        =
        -\alpha
        \hormetric%
            {\spray{\riemannmetric*}}%
            {\grad{\hormetric*}\vlfunc*{\expect[\nu]{f}}}%
        \\
        &=
        -\alpha
        \spray{\riemannmetric*} \vlfunc*{\expect[\nu]{f}}
        =
        \alpha(
        -\spray{\wriemannmetric*} \vlfunc*{\expect[\nu]{f}}
        )
        =
        \alpha APf
    \end{align*}
    Setting $c_1\vdef\frac\alpha2$
    the claim follows with~\cite[Lemma~2.14]{HypocoercJFA}.
\end{proof}

\begin{lemma}[boundedness of~$(BA(\ID_H-P),D)$, second part of~\ref{cond:H4})]
\label{lem:Langevin:H4-2}
    Let the potential conditions~\ref{cond:named:Langevin:P} hold. 
    Then, there exists a constant $c_2\in(0,\infty)$ such that
    \begin{displaymath}
        \norm{ BA(\ID_H-P)f }{H} 
        \leq 
        c_2 \norm{ (\ID_H-P_j)f }{H}
        \qquad\text{for all }f\in D
    \end{displaymath}
    and $P_j\in\{P,P_S\}$, $j\in\{1,2\}$.
\end{lemma}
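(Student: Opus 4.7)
The plan is to invoke \cite[Proposition~2.15]{HypocoercJFA}, which reduces the required bound on $\norm{BA(\ID_H-P)f}{H}$ to controlling the bilinear pairing $(f,g)\mapsto\scalarprod{A(\ID_H-P)f}{APg}{H}$ in terms of $\norm{(\ID_H-P_j)f}{H}\,\norm{g}{H}$. The geometric shape of this bilinear form is inherited from the Euclidean analogue \cite[Proposition~3.11]{HypocoercMFAT}, but now the horizontal derivative must be interpreted on $\tanbundle\posfold$ via the Riemannian semispray and the Levi-Civita connection.

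First, I would split $A(\ID_H-P)f = Af - APf$ using the explicit formulae from \hyperref[prop:Langevin:SAD]{Proposition~\ref{prop:Langevin:SAD}} and \hyperref[rem:Langevin:APf-Eq2]{Remark~\ref{rem:Langevin:APf-Eq2}}, and test against the linear-in-velocity function $APg = -\riemannmetric{\ID_{\tanbundle\posfold}}{\grad{\riemannmetric*}\expect[\nu]{g}\bincirc\tanproj}{\tanproj}$. Because $APg$ is vertically a linear polynomial in the fibre variable, Gaussian integration in the fibre (analogous to the computation carried out in the proof of macroscopic coercivity and in \hyperref[eq:Langevin:A2P-part1]{Equation~\eqref{eq:Langevin:A2P-part1}}) will collapse the fibre integrals to expressions on the base $(\posfold,\wriemannmetric*)$. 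Then I would transfer the semispray acting on $(\ID_H-P)f$ onto the base-gradient appearing in $APg$ by combining \ref{thm:named:Liouville-thm} with the metric compatibility of the Levi-Civita connection; differentiating along the fibre direction turns $\grad{\riemannmetric*}\expect[\nu]{g}$ into $\connection{}{\riemannmetric*}\grad{\riemannmetric*}\expect[\nu]{g}$, and integration by parts on the base transports this second derivative onto $\expect[\nu]{f}$ at the cost of an extra term involving $\grad{\riemannmetric*}\Phi$ coming from the weight $\exp(-\Phi)$ in $\Leb{\wriemannmetric*}$.

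The principal obstacle will be the Hessian term that emerges after all these integrations by parts, namely an expression of the form
\begin{displaymath}
    \int_{\posfold}
        \operatorname{Hess}_{\riemannmetric*}(\Phi)
            (\grad{\riemannmetric*}\expect[\nu]{f_1},\grad{\riemannmetric*}\expect[\nu]{g})
    \ \diffd\Leb{\wriemannmetric*}
\end{displaymath}
together with fibre-weighted quadratic-in-velocity counterparts for the component of $f$ orthogonal to $P_S(H)$. This is precisely where condition~\ref{cond:Langevin:P3} enters: the pointwise estimate $\abs{\operatorname{Hess}_{\riemannmetric*}(\Phi)}\leq c\,(1+\abs{\grad{\riemannmetric*}\Phi}{\riemannmetric*})$ turns the Hessian into a quantity linear in $\abs{\grad{\riemannmetric*}\Phi}{\riemannmetric*}$, which via Cauchy-Bunyakovsky-Schwarz and the Gaussian Poincar\'e inequality on the fibres (as already used in \hyperref[lem:Langevin:H2]{Lemma~\ref{lem:Langevin:H2}}) can be absorbed into $\norm{APg}{H}$ on one side and $\norm{(\ID_H-P_j)f}{H}$ on the other. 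One must verify that the polynomial-in-velocity factors generated along the way remain square-integrable against the Gaussian fibre measure, which is automatic as $\nu$ has all polynomial moments. Assembling the bounds yields a constant $c_2$ depending only on $\alpha$, $\beta$ and the constant $c$ from~\ref{cond:Langevin:P3}, and the reduction in \cite[Proposition~2.15]{HypocoercJFA} completes the argument.
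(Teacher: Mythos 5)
Your overall strategy (reduce via \cite[Proposition~2.15]{HypocoercJFA}, exploit that $APg$ is linear in the fibre variable, and let condition~\ref{cond:Langevin:P3} tame the Hessian) points in the right direction, but the reduction you state cannot close as written. By antisymmetry the pairing $\scalarprod{A(\ID_H-P)f}{APg}{H}$ equals $-\scalarprod{(\ID_H-P)f}{A^2Pg}{H}$, and $A^2P$ is a genuine second\-/order differential operator on the base: it contains $\riemannmetric{v}{\connection{v}{\riemannmetric*}\parentheses*{\grad{\riemannmetric*}\expect[\nu]{g}}}$, hence the full Hessian of $\expect[\nu]{g}$, not merely $\operatorname{Hess}_{\riemannmetric*}(\Phi)$. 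Consequently no estimate of the form $c_2\norm{(\ID_H-P_j)f}{H}\pdot\norm{g}{H}$ can hold for arbitrary $g$; the correct formulation fixes $g=(\ID_H-PA^2P)f$, uses $\adjoint{(BA)}g=-A^2Pf$, and reduces everything to the single inequality $\norm{A^2Pf}{H}\leq c_2\norm{(\ID_H-PA^2P)f}{H}$. Your proposed ``absorption into $\norm{APg}{H}$'' cannot work for the same reason: $\norm{APg}{H}$ is itself not dominated by $\norm{g}{H}$, since $AP$ is unbounded.

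The genuine gap is that Cauchy--Bunyakovsky--Schwarz together with the Gaussian Poincar\'e inequality on the fibres does not yield this inequality. After the fibre integrations one is left with the $\bigL2{\posfold}{\wriemannmetric*}$\=/norms of $\abs{\operatorname{Hess}_{\riemannmetric*}(\expect[\nu]{f})}$ and of $\abs{\grad{\riemannmetric*}\Phi}{\riemannmetric*}\pdot\abs{\grad{\riemannmetric*}\expect[\nu]{f}}{\riemannmetric*}$, and these must be controlled by $\norm{g}{H}$, where $u=Pf$ solves the elliptic equation $u-\frac1\beta\,\laplace{\whormetric*}P_Su=g$. That is a second\-/order a priori (elliptic regularity) estimate for the weighted horizontal Laplacian --- the estimates of Dolbeault--Mouhot--Schmeiser reproduced in~\cite[Appendix]{HypocoercJFA} --- and condition~\ref{cond:Langevin:P3} is the hypothesis under which \emph{those} estimates hold; it is not a pointwise bound one applies directly to a single integral term and then absorbs by the fibre Poincar\'e inequality (which controls $\norm{(\ID_H-P_S)f}{H}$ and is irrelevant here). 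Once the a priori estimate is invoked, the argument closes exactly as in the paper.
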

\begin{proof}
    Let $f\in D$ and $g\vdef(\ID_H-PA^2P)f$.
    We know that 
    $g\in D(\adjoint{(BA)})$ 
    with $\adjoint{(BA)}g = - A^2Pf$, 
    cf.~\cite[Proposition~2.15]{HypocoercJFA}.
    Using our knowledge from the proof of part~\ref{itm:Langevin:SAD:A} of~\hyperref[lem:Langevin:SAD]{Lemma~\ref{lem:Langevin:SAD}}, 
    furthermore~\hyperref[eq:Langevin:A2P-part1]{Equation~\eqref{eq:Langevin:APf-Eq2}} 
    and the Cauchy-Bunyakovsky-Schwarz inequality
    we estimate that
    \begin{align*}
        \norm{ \adjoint{(BA)}g }{H}
        &\leq
        \norm{%
            v\longmapsto %
            \spray{\riemannmetric*}^2\parentheses*{\vlfunc{\expect[\nu]{f}}}(v) %
        }{H}
        +
        \norm*{%
            v\longmapsto \frac1\beta\,\grad{\vertmetric*}\hlfunc{\Phi}(APf)(v) %
        }{H}
        \\
        &\leq
        \norm{%
            \abs{ \spray{\riemannmetric*}^2\parentheses*{\vlfunc{\expect[\nu]{f}}} } %
        }{H}
        +
        \norm{%
            \abs{ \spray{\riemannmetric*}\vlfunc{\Phi} }{\hormetric*}%
            \pdot %
            \abs{ \spray{\riemannmetric*}(\vlfunc{\expect[\nu]{f}}) }{\hormetric*}
        }{H}
        \\
        &\leq
        \norm{ v\mapsto\abs{v}{\riemannmetric*}^2 }{H}
        \pdot
        \parentheses*{ %
            \norm{%
                \abs{ %
                    \operatorname{Hess}_{\sasakimetric*}\parentheses*{\vlfunc{\expect[\nu]{f}}} %
                } %
            }{H} %
            + %
            \norm{%
                \abs{ \grad{\hormetric*}\vlfunc{\Phi} }{\hormetric*}%
                \pdot %
                \abs{ \grad{\hormetric*}\vlfunc{\expect[\nu]{f}} }{\hormetric*}%
            }{H} %
        }
        \\
        &=
        \frac\posdim\beta
        \parentheses*{ %
            \norm{%
                \vlfunc{\abs{ %
                    \operatorname{Hess}_{\riemannmetric*}\parentheses*{\expect[\nu]{f}} %
                }}%
            }{H} %
            + %
            \norm{%
                \vlfunc{\abs{ \grad{\riemannmetric*}\Phi }{\riemannmetric*}} %
                \pdot %
                \vlfunc{\abs{ \grad{\riemannmetric*}\expect[\nu]{f} }{\riemannmetric*}} %
            }{H} %
        }
    .
    \end{align*}
    Due to the form of~$PA^2P$ we derived in~\hyperref[eq:Langevin:PA2P]{Equation~\eqref{eq:Langevin:PA2P}}, 
    we know that~$u \vdef Pf$ solves the elliptic equation 
    \begin{align*}
        &
        u 
        - 
        \frac1{\beta}\,
        \laplace{\whormetric*}P_Su
        =
        g
        \\
        \text{in }\qquad&
        \left\{ 
            u\in\tanproj^\ast\bigL2{\posfold}{\wriemannmetric*}
        \ \middle\vert\ 
            \exists\, f_0\in\smoothcompact{\posfold}\colon\ 
            u 
            = 
            \vlfunc{f_0} 
            -
            \scalarprod{\vlfunc{f_0}}{1}{H} 
        \right\}
    .
    \end{align*}
    As we assumed the necessary potential conditions, 
    the a priori estimates of Dolbeault, Mouhot and Schmeiser, 
    cf.~\cite[Appendix]{HypocoercJFA}, 
    yield existence of a constant~$c_2\in(0,\infty)$ independent of~$Pf$ and~$g$
    such that
    \begin{displaymath}
        \norm{ \adjoint{(BA)}g }{H}
        \leq
        c_2
        \pdot
        \norm{ Pg }{H}
        \leq
        c_2
        \pdot
        \norm{ g }{H}
    .
    \end{displaymath}
    Now, \cite[Propositions~2.15]{HypocoercJFA} does apply which finishes the proof.
\end{proof}

Collecting the individual results of~\autoref{subsec:Langevin:D} and~\autoref{subsec:Langevin:H}
we can infer~\hyperref[thm:Langevin:main]{Theorem~\ref{thm:Langevin:main}}
using the~\ref{thm:hypocoercivity-thm-named}.

\section{Hypocoercivity for the fibre lay-down model}
\label{sec:fld}

In this section we demonstrate that the hypocoercivity method does apply
to the fibre lay-down model with the position space being a Riemannian manifold.
In other words, we generalise the results from~\cite{HypocoercJFA} 
to the case of higher-order SDAEs with abstract position manifolds.
The statement, which is an application of the~\ref{thm:hypocoercivity-thm-named}, reads as follows:
\begin{theorem}[Hypocoercivity of the geometric fibre lay-down dynamic]\hfill
\label{thm:fld:main}

    Let $\sigma\in(0,\infty)$ 
    and $(\posfold,\riemannmetric*)$ be a Riemannian manifold satisfying~\ref{cond:M}. 
    Furthermore, let the potential conditions~\ref{cond:named:Langevin:P} hold.
    We assume that  
    $
        \spray{\riemannmetric*}\vert_{\unittanbundle\posfold}\vlfunc{\Psi} 
        = 
        (\posdim-1)\,\hlfunc{\Psi}
    $
    holds.
    Denote by~$\nu$ the normalised surface measure on~$(\sphere{\posdim-1},\borel{\sphere{\posdim-1}})$
    and define 
    $
        \mu
        \vdef
        \Leb{\wriemannmetric*}\locprod\nu  
    $.

    Then, the fibre lay-down operator 
    \begin{displaymath}
        \parentheses*{
            L, 
            \smoothcompact{\unittanbundle{\posfold}}
        }
        =
        \parentheses*{
            \frac{\sigma^2}2\,\spherelaplace
            +
            \spray{\riemannmetric*}
            -
            \frac1{\posdim-1}\,
            \spheregrad(\spray{\riemannmetric*}\vlfunc{\Psi}),
            \smoothcompact{\unittanbundle{\posfold}}
        }
    \end{displaymath}
    is closable in $H\vdef\bigL2{\tanbundle{\unittanbundle\posfold}}{\mu}$. 
    Moreover, its closure~\operator{L} generates a strongly continuous contraction semigroup $(T_t)_{t\in[0,\infty)}$.
    Finally, there are constants $\kappa_1,\kappa_2\in(0,\infty)$ 
    computable in terms of $\posdim=\dim(\posfold)$, $\Lambda$, $c$ and $\sigma$  
    such that for all $g\in H$ and times $t\in[0,\infty)$ holds 
    \begin{displaymath}
        \norm{ T_tg-\scalarprod{g}{1}{H} }{H} 
        \leq 
        \kappa_1 \euler^{-\kappa_2 t}\norm{ g-\scalarprod{g}{1}{H} }{H}
    .
    \end{displaymath}
\end{theorem}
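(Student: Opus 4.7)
The plan is to mirror the strategy of Section~\ref{sec:Langevin}, applying the abstract \ref{thm:hypocoercivity-thm-named}, with the tangent bundle replaced by the unit tangent bundle $\unittanbundle\posfold$ and the Gaussian fibre measure replaced by the normalised surface measure $\nu$ on the fibre $\sphere{\posdim-1}$. The bundle measure $\mu = \Leb{\wriemannmetric*}\locprod\nu$ is loc-product in exactly the sense of~\hyperref[lem:loc-prod-measure]{Lemma~\ref{lem:loc-prod-measure}}, and $H=\bigL2{\unittanbundle\posfold}{\mu}$. The SAD-decomposition is $S \vdef \frac{\sigma^2}{2}\spherelaplace$ on the core $D\vdef\smoothcompact{\unittanbundle\posfold}$ and $A \vdef -\spray{\riemannmetric*} + \frac{1}{\posdim-1}\spheregrad(\spray{\riemannmetric*}\vlfunc{\Psi})$. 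Crucially, the hypothesis $\spray{\riemannmetric*}\vert_{\unittanbundle\posfold}\vlfunc{\Psi} = (\posdim-1)\hlfunc{\Psi}$ lets me rewrite $A = -\spray{\riemannmetric*} + \spheregrad\hlfunc{\Psi}$, which is exactly the restriction of the weighted semispray $-\spray{\wriemannmetric*}$ to $\unittanbundle\posfold$. An analog of~\hyperref[lem:D0isdense]{Lemma~\ref{lem:D0isdense}} (tensor products of vertical and horizontal lifts of compactly supported smooth functions are dense), together with a trivialisation argument as in~\hyperref[rem:L2dense-testfunctions]{Remark~\ref{rem:L2dense-testfunctions}}, gives density of $D$ in $H$.

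To verify the data conditions, I would prove symmetry and negative semidefiniteness of $(S,D)$ directly from fibrewise integration by parts for $\spherelaplace$, which is self-adjoint on each fibre $\sphere{\posdim-1}$ with the rotation-invariant measure $\nu$. Antisymmetry of $(A,D)$ is the place where the assumption on $\spray{\riemannmetric*}\vlfunc{\Psi}$ pays off: by Liouville's theorem ($\spray{\riemannmetric*}$ is solenoidal wrt.~the Sasaki metric and its restriction preserves the unit sphere bundle), the adjoint of $\spray{\riemannmetric*}$ wrt.~$\mu$ on $\unittanbundle\posfold$ equals $-\spray{\riemannmetric*}+\spray{\riemannmetric*}\vlfunc{\Psi}$, and the gradient correction $\spheregrad\hlfunc{\Psi}$ cancels exactly the logarithmic derivative contribution on each fibre by rotation invariance of $\nu$; alternatively, one can invoke the Poisson-bracket argument from~\hyperref[prop:Langevin:SAD]{Proposition~\ref{prop:Langevin:SAD}} with the symplectic structure coming from the cotangent bundle of $\unittanbundle\posfold$. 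The projection is $P_S f = \expect[\nu]{f}\bincirc\unittanproj$ with $P = P_S - \scalarprod{\cdot}{1}{H}$; the verifications of $SP=0$, $P(D)\subseteq\opdomain{A}$, $AP(D)\subseteq\opdomain{A}$ and $1\in\opdomain{L}$ proceed via the cut-off sequence $\varphi_n$ on $\posfold$ as in~\hyperref[lem:Langevin:D5D7]{Lemma~\ref{lem:Langevin:D5D7}}, only now the fibres are compact so all integrability concerns become trivial.

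For essential m-dissipativity of $(L,D)$, I follow the three-step scheme of Theorems~\ref{thm:Langevin:D2-smooth-case}, Corollary~\ref{cor:Langevin:D2-global-Lipschitz} and Theorem~\ref{thm:Langevin:D2-loc-Lipschitz}: a Hörmander hypoellipticity argument for smooth potentials (the Lie bracket computation of~\hyperref[lem:Langevin:generator-Hörmander]{Lemma~\ref{lem:Langevin:generator-Hörmander}} carries over verbatim since $\spherelaplace$ is a sum of squares of tangent vector fields to $\sphere{\posdim-1}$ whose horizontal brackets with $\spray{\riemannmetric*}$ generate the horizontal directions via~\hyperref[lem:app:bracket-with-spray]{Lemma~\ref{lem:app:bracket-with-spray}}), a Kato perturbation step in the globally Lipschitz case using the unitary transform $U = \vlfunc{\exp(-\Phi/2)}$, and a final localisation with cut-off functions on $\posfold$.

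The hypocoercivity conditions require the most care. H1 ($PAP\vert_D=0$) follows from the same rotation-invariance argument as~\hyperref[lem:Langevin:H1]{Lemma~\ref{lem:Langevin:H1}}: $APf$ is linear in the fibre variable and thus has vanishing $\nu$-mean. H2 (microscopic coercivity) follows from the sharp spherical Poincaré inequality with constant $\Lambda_m = \frac{\sigma^2}{2}(\posdim-1)$, since the first nonzero eigenvalue of $-\spherelaplace$ on $\sphere{\posdim-1}$ is $\posdim-1$. H3 (macroscopic coercivity) reduces, via an analog of the computation yielding~\hyperref[eq:Langevin:PA2P]{Equation~\eqref{eq:Langevin:PA2P}}, to showing $PA^2P = c\,\laplace{\whormetric*}P_S$ for some constant $c$ depending on $\posdim$ and the variance $\int_{\sphere{\posdim-1}}\abs{\cdot}{\riemannmetric*}^2\,\diffd\nu = 1$; the operator $(\ID-PA^2P,D)$ is then essentially m-dissipative in $P_S(H)$ as in~\hyperref[cor:Langevin:PA2P-m-dissipative]{Corollary~\ref{cor:Langevin:PA2P-m-dissipative}}, and condition~\ref{cond:Langevin:P2} supplies $\Lambda_M$. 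For H4, the first estimate uses the spray identity $\Liebracket{\canonfield}{\spray{\riemannmetric*}}=\spray{\riemannmetric*}$ together with the fact that on $\unittanbundle\posfold$ the relevant commutator of $\spherelaplace$ and $\spray{\riemannmetric*}$ produces bounded curvature terms; the second estimate relies on the a priori elliptic estimates of~\cite[Appendix]{HypocoercJFA} applied to the weighted Laplacian $\laplace{\whormetric*}$ under~\ref{cond:Langevin:P3}. The hard part, and the main obstacle, is H4: the commutator $[\spherelaplace,\spray{\riemannmetric*}]$ on $\unittanbundle\posfold$ produces curvature terms from the Ehresmann connection that are absent in the flat Gaussian case of~\cite{HypocoercJFA}, and one must either absorb them into the constant $c_1$ via the compactness of the fibre or control them using bounds on the Ricci curvature of $(\posfold,\riemannmetric*)$. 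Collecting H1--H4 and D1--D7, the~\ref{thm:hypocoercivity-thm-named} yields the claimed exponential decay with explicit constants $\kappa_1,\kappa_2$ computable in terms of $\posdim$, $\sigma$, $\Lambda$ and $c$.
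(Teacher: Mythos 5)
Your architecture coincides with the paper's for almost every step: same model Hilbert space, same SAD-decomposition exploiting the hypothesis $\spray{\riemannmetric*}\vert_{\unittanbundle\posfold}\vlfunc{\Psi}=(\posdim-1)\,\hlfunc{\Psi}$, same cut-off treatment of the data conditions (indeed simplified by fibre compactness), the same three-step m-dissipativity scheme, and the same arguments for H1, H2 and H3 with the correct constants $\Lambda_m=(\posdim-1)\frac{\sigma^2}2$ and $\Lambda_M=\frac1\posdim\Lambda$. The genuine gap is the first half of H4, which you flag as ``the main obstacle'' and do not resolve: you propose to estimate a commutator $\Liebracket{\spherelaplace}{\spray{\riemannmetric*}}$, predict curvature terms from the Ehresmann connection, and suggest they might have to be controlled by Ricci curvature bounds. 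If that route were necessary, the theorem as stated could not be proved, since no curvature hypothesis beyond~\ref{cond:M} and the Poincar\'e inequality is available. But no commutator needs to be computed at all. The object to control is the composition $SAP$, and $APf$ is completely explicit: fibrewise over $x\in\posfold$ it is the function
\begin{displaymath}
    \unittanbundle{\posfold}{x}\ni u
    \longmapsto
    -\riemannmetric{u}{\grad{\riemannmetric*}\expect[\nu]{f}(x)}{x}
    ,
\end{displaymath}
i.\,e.\ the restriction to the unit sphere of a linear functional, hence a degree-one spherical harmonic. By the eigenvalue identity~\eqref{eq:eigenvalue-eq}, packaged in~\hyperref[lem:Laplace-IDunittan]{Lemma~\ref{lem:Laplace-IDunittan}} as a statement about the spherical tensor Laplacian of~$\spray{\riemannmetric*}$, such a function is an exact eigenfunction of the fibrewise Laplacian with eigenvalue $-(\posdim-1)$. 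Therefore $SAPf=-\frac{\sigma^2}2(\posdim-1)\,APf$ holds identically on~$D$, with no remainder and no geometry entering, and~\cite[Lemma~2.14]{HypocoercJFA} yields the first H4 bound with $c_1=(\posdim-1)\frac{\sigma^2}4$.

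Two smaller corrections in the same step. The spray identity $\Liebracket{\canonfield}{\spray{\riemannmetric*}}=\spray{\riemannmetric*}$, which you import from the Langevin case, is irrelevant here: the symmetric part $S=\frac{\sigma^2}2\spherelaplace$ contains no drift $-\alpha\canonfield$, and the paper explicitly notes that for this step one does not even need $\spray{\riemannmetric*}$ to be a spray. Also, the reference case of~\cite{HypocoercJFA} is itself the spherical-fibre fibre lay-down model on Euclidean position space, not a ``flat Gaussian case'', so the eigenfunction mechanism you are missing is already the one used there. Once you replace the commutator argument by this observation, the remainder of your proposal goes through and no additional hypotheses are needed.
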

Langevin-type models serve as surrogate models for fibre dynamics under a turbulent regime, 
since the model including all physical details, 
see~\cite{MW06}, 
leads to enormous computational effort.
In view of the production process, 
it's a reasonable model assumption that
velocities should be normalised, 
as fibre filaments are extruded with constant speed, 
see e.\,g.\ \cite{KlarMaringerWegener,GS13,PhD-Stilgenbauer} and various references therein. 
Hence, in contrast to~\autoref{sec:Langevin} 
we look at an SDAE on~\tanbundle\posfold 
incorporating the `polynomial-type' normalisation assumption on the velocities. 
In distinction from fibre lay-down applications with a different focus like in~\cite{Lindner-Holger17}, 
we subsume possible side conditions on the position variables just in the position manifold~\posfold.
Whenever we speak of an SDAE in this paper, 
there is no ambiguity wrt.\ the nature of the algebraic side condition.

We implement the algebraic side condition geometrically  
via replacing the standard fibre $F=\Rnum^{\posdim}$ by $F=\sphere{\posdim-1}$, 
thus the tangent bundle over~\posfold by the unit tangent bundle 
$\unittanproj\colon\,\unittanbundle\posfold \longmapsin \posfold$.
This modification has several side effects, 
as the unit tangent bundle is a submanifold 
$\configfold\vdef\unittanbundle\posfold\leq\tanbundle\posfold$
and consequently we consider the SDAE as an SDE in the sub\-/fibre bundle
$\tanunittanbundle\posfold \leq \tantanbundle\posfold$.
Our choice of the Sasaki metric on~\tanbundle\posfold is important here,
as the normal bundle of~\unittanbundle\posfold, 
which is just the quotient bundle 
$\sfrac%
    { \tantanbundle\posfold\vert_{\unittanbundle\posfold} }%
    { \tanunittanbundle\posfold }%
$ in the first place, 
can be realised as the orthogonal complement
$
    \normalunittanbundle\posfold
    \vdef
    \tanunittanbundle\posfold^{\perp}
$
of~\tanunittanbundle\posfold wrt.\ the chosen metric.
This gives us the normal bundle really as a sub\-/fibre bundle
$\normalunittanbundle\posfold \leq \tantanbundle\posfold$
and the Whitney sum 
$
    \tantanbundle\posfold \vert_{\unittanbundle\posfold}
    = 
    \tanunittanbundle\posfold 
    \oplus 
    \normalunittanbundle\posfold
$.
It should not be too surprising at this point that
we can just restrict the horizontal bundle as 
$
    \horbundle{\unittanbundle\posfold}
    \vdef
    \horbundle{\tanbundle\posfold}\vert_{\unittanbundle\posfold}
$, 
but are forced to replace the vertical lift by another lifting procedure.
We think that the problem has been explained very well in~\cite[Abschnitt~2.1]{Fabrega}:
Let's consider $v\in\unittanbundle{\posfold}$ and $a\in\tanunittanbundle{\posfold}{v}$. 
Then, there exists a $w\in\tanbundle{\posfold}{\tanproj(v)}$ such that $w$ is vertically lifted to $a$ at $v$, 
i.\,e.\ $\vertlift_v(w)=a$. 
However, we do not find a vector field $\Xfield\in\smoothsec{\tanbundle\posfold}$ 
-- not even a local vector field --
such that
$\vertlift_v(\Xfield) = a$ 
and 
$
    \vertlift(\Xfield)\bigr\vert_{\unittanbundle\posfold} 
    \in \smoothsec{\tanunittanbundle\posfold}
$
hold.
In other words, the vertical lift of a vector field does not need to be tangent to the unit tangent bundle.
Thus, we adapt the vertical lift slightly in order to guarantee the lift of elements in \tanbundle{\posfold}{\tanproj(v)} are elements of \tanbundle{\unittanbundle{\posfold}}{v}.
For the next definition see also~\cite{BoeckxVanhecke97,BoeckxVanhecke01}. 

\begin{definition}[tangential lift]
\label{def:tanlift}
    Let $x\in\posfold$, $v\in\tanbundle{\posfold}{x}$ and $u\in\unittanbundle{\posfold}{x}$. 
    The \emph{tangent lift of $v$} is defined as
    \begin{displaymath}
        \tanlift_u(v) 
        \vdef
        \vertlift_u\left( v - \riemannmetric{v}{u}\pdot u \right)
        =
        \vertlift_u(v) - \riemannmetric{v}{u}\pdot\normalfield{\sasakimetric*}(u)
        ,
    \end{displaymath}
    where the unit normal vector field 
    $\normalfield{\sasakimetric*}\in\smoothsec{\normalunittanbundle\posfold}$ has the following properties:
    \begin{displaymath}
        \dualpair{\normalfield{\sasakimetric*}}{d\tanproj} 
        = 0
        \qquad\text{and}\qquad
        \dualpair{\normalfield{\sasakimetric*}}{d\kappa} 
        = \ID_{\unittanbundle\posfold}
        .
    \end{displaymath}
    We call~\normalfield{\sasakimetric*} the \emph{Sasakian normal vector field}, 
    since this normal vector field depends on our choice of the Sasaki metric on~\tanbundle\posfold
    as outlined above.
\end{definition}

The tangential lift enables us to write 
$
    \tanunittanbundle\posfold
    =
    \tanlift(\unittanbundle\posfold)
    \oplus
    \horbundle{\unittanbundle\posfold}
$ 
as an Ehresmann connection
and furthermore the decomposition
\begin{displaymath}
    \tantanbundle\posfold\vert_{\unittanbundle\posfold}
    =
    \tanlift(\unittanbundle\posfold)
    \oplus
    \horbundle{\unittanbundle\posfold}
    \oplus 
    \normalunittanbundle\posfold
.
\end{displaymath}
That said, starting from~\autoref{subsec:fld:D}
we might write~\vlfunc{f_0} for given $f_0\in\smoothcompact{\posfold}$ and 
from the context it should be clear that we mean 
$
    \unittanproj^\ast f_0 
    = 
    \vlfunc{f_0}\vert_{\unittanbundle\posfold}
    \in\smoothcompact{\unittanbundle\posfold}
$.
Similarly, we just write~$\spray{\riemannmetric*}$
instead of~$\spray{\riemannmetric*}\vert_{\unittanbundle\posfold}$ et cetera.
Be that as it may, 
one should pay attention to read e.\,g.\ 
$
    \grad{\hormetric*}\vlfunc{f_0} 
    = 
    \horlift\vert_{\unittanbundle\posfold}(\grad{\riemannmetric*}f_0)
$
with $\horlift\vert_{\unittanbundle\posfold}$ 
being the restriction of the horizontal lift to~\unittanbundle\posfold.
Moreover, the horizontal lift of~$f_0$ is up to constants characterised by
$
    \dualpair{a}{d\hlfunc{f_0}} 
    =
    \dualpair{a}{df_0\bincirc d\kappa} 
$
for all $a\in\tanunittanbundle\posfold$, 
and one may think it as 
$
    \hlfunc{f_0}(v)
    =
    \int_{\tanunittanbundle\posfold{v}} df_0\bincirc d\kappa\,\diffd\Leb
$. 

Alas, the term `tangential' lift used in the literature is inherently flawed 
even more than the terms `vertical' and `horizontal'.
As terms like `tangential gradient' might cause serious confusion, 
we depart from our naming scheme as follows. 

\begin{definition}[spherical lift, gradient, divergence and Laplacian]
\hfill 
    \begin{enumerate}[label={(\roman*)},ref={(\roman*)}]
        \item 
        For all $\Xfield\in\smoothsec{\tanbundle\posfold}$ 
        we call $\tanlift(\Xfield)\in\smoothsec{\tanbundle\posfold}{\tanunittanbundle\posfold}$ the \emph{spherical lift of~\Xfield}.
        \item
        The \emph{spherical gradient~\spheregrad} is defined by the relation  
        \begin{displaymath}
            \spheregrad f
            \in
            \smoothsec{\tanlift(\unittanbundle\posfold)}
            \quad\text{and}\quad
            \sasakimetric%
                {\spheregrad f}%
                {\tanlift{\Xfield}}%
            =
            \tanlift(\Xfield)f
        \end{displaymath}
        for $f\in\smoothfunc{\unittanbundle\posfold}$ 
        and $\Xfield\in\smoothsec{\tanbundle\posfold}$ arbitrary.
        \item
        The \emph{spherical divergence~\spheredivergence} is defined via 
        \begin{displaymath}
            \Liederivative{\tanlift\Xfield} \vertmetric*\vert_{\tanunittanbundle\posfold}
            =
            \spheredivergence(\tanlift\Xfield)
            \pdot
            \vertmetric*\vert_{\tanunittanbundle\posfold}
        \end{displaymath}
        for $\Xfield\in\smoothsec{\tanbundle\posfold}$ arbitrary, 
        where~$\Liederivative$ again denotes the Lie~derivative.
        \item
        The \emph{spherical Laplace-Beltrami operator~\spherelaplace} 
        is defined by 
        $\spherelaplace\vdef\spheredivergence(\spheregrad)$, 
        as usual.
    \vspace*{-3ex}
    \end{enumerate}
\end{definition}

Basically, these are just the natural modifications of the elementary objects within our calculus.
E.\,g.\ it's easily verified that
$\spheregrad\hlfunc{f_0} = \tanlift(\grad{\riemannmetric*}f_0\bincirc\unittanproj)$ 
holds for all $f_0\in\smoothfunc{\posfold}$.
Other statements from~\hyperref[lem:Sasaki-operators]{Lemma~\ref{lem:Sasaki-operators}} can be translated similarly.
In this paper we only consider the normalised surface measure on the sphere, 
therefore weighted vertical gradients etc.\ do not appear.
We propose the notation 
$\grad{\vertmetric* {\scriptscriptstyle\vert\mathrm{S}}}\vdef\spheregrad$
for that we can use a boldfaced `v' to signify the weighted vertical gradient~\wspheregrad 
in case of a nonconstant fibre weight.

The algebraic side condition yields some more interesting effects.
We want to fix a very important consequence of the relation
\begin{equation}
\label{eq:eigenvalue-eq}
    \laplace{F}\ID_F 
    = 
    -(\posdim-1)\ID_F
    \qquad\text{for } F=\sphere{\posdim-1}
,
\end{equation} 
where the Laplacian is taken componentwise in standard Euclidean coordinates.
See~\cite[Lemma~3.2]{HypocoercJFA} 
and for a general proof on eigenvalues of the spherical Laplace-Beltrami 
we refer e.\,g.\ to~\cite[Theorem~1.4.5]{SphericalHarmAna}.
\begin{lemma}
\label{lem:Laplace-IDunittan}
    It holds 
    $\laplace{\sphere}\spray{\riemannmetric*} = -(\posdim-1) \spray{\riemannmetric*}$, 
    where we also denote by~$\laplace{\sphere}$ the spherical tensor Laplacian
    and think of the vector field~\spray{\riemannmetric*} as a $(1,0)$\=/tensor field.
\end{lemma}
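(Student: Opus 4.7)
The plan is to use that the Riemannian semispray is horizontal together with the fact cited in \eqref{eq:eigenvalue-eq} about the identity map on $\sphere{\posdim-1}$. The key observation is that $\spheregrad$ and $\spherelaplace$ on $\unittanbundle\posfold$ act fibrewise, and along a single fibre $\unittanbundle\posfold{x}$ the Riemannian horizontal bundle is trivial, canonically identified with $\unittanbundle\posfold{x}\times\tanbundle{\posfold}{x}$ via the bundle isomorphism $d\tanproj$.

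First I would fix $x\in\posfold$ and regard the fibre $\unittanbundle\posfold{x}\subseteq\tanbundle{\posfold}{x}$ as the round unit sphere $\sphere{\posdim-1}\subseteq\Rnum^{\posdim}$ after choosing an orthonormal basis of $\tanbundle{\posfold}{x}$. By \hyperref[ex:Riemann-spray]{Example~\ref{ex:Riemann-spray}}, $\spray{\riemannmetric*}$ is a section of $\horbundle{\tanbundle\posfold}$, and by the very defining property of a semispray (\hyperref[def:semispray]{Definition~\ref{def:semispray}}) we have $\dualpair{\spray{\riemannmetric*}(v)}{d\tanproj}=v$, so that
\begin{equation*}
\spray{\riemannmetric*}(v)=\horlift_v(v)\qquad\text{for every }v\in\unittanbundle\posfold.
\end{equation*}
Thus the horizontal lift $\horlift_v\colon \tanbundle{\posfold}{x}\to\horbundle{\tanbundle\posfold}{v}$ is the inverse of $d_v\tanproj\vert_{\horbundle{\tanbundle\posfold}{v}}$, making $d\tanproj$ a global trivialisation of the restricted horizontal bundle $\horbundle{\tanbundle\posfold}\vert_{\unittanbundle\posfold{x}}$. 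Under this trivialisation the Riemannian semispray becomes the inclusion map $\sphere{\posdim-1}\hookrightarrow\Rnum^{\posdim}$, i.e.\ the identity section $v\mapsto v$.

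Next I would unfold what the spherical tensor Laplacian means in this setting. Since the fibre is totally contained in a single tangent space, the $(1,0)$-tensor field $\spray{\riemannmetric*}\vert_{\unittanbundle\posfold{x}}$ is, via the trivialisation above, identified with a $\tanbundle{\posfold}{x}$-valued function on $\sphere{\posdim-1}$. Fix an orthonormal basis $(e_i)_{i=1}^{\posdim}$ of $\tanbundle{\posfold}{x}$ and expand this function as $\sum_i v^i e_i$, where $v^i\colon\sphere{\posdim-1}\to\Rnum$ are the standard coordinates. The spherical tensor Laplacian $\laplace{\sphere}$ is by construction the connection Laplacian for the trivial connection on the trivial bundle $\sphere{\posdim-1}\times\tanbundle{\posfold}{x}$, so it acts componentwise: $\laplace{\sphere}\bigl(\sum_i v^i e_i\bigr)=\sum_i (\laplace{\sphere}v^i)\, e_i$.

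Finally I would apply \eqref{eq:eigenvalue-eq} to conclude. Each coordinate $v^i$ is the restriction to $\sphere{\posdim-1}$ of a linear function on $\Rnum^{\posdim}$, hence a spherical harmonic of degree one and an eigenfunction of $\laplace{\sphere}$ with eigenvalue $-(\posdim-1)$. Summing over $i$ gives $\laplace{\sphere}\spray{\riemannmetric*}(v)=-(\posdim-1)\,v$, and reassembling via $\horlift$ yields $\laplace{\sphere}\spray{\riemannmetric*}=-(\posdim-1)\,\spray{\riemannmetric*}$ on the fibre. Since $x$ was arbitrary and nothing in the argument depended on it, the identity holds globally. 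The only real technical point is pinning down that $\spherelaplace$ really reduces to the componentwise scalar Laplacian under the horizontal trivialisation; this is where one uses that $\horbundle{\tanbundle\posfold}\vert_{\unittanbundle\posfold{x}}$ is globally trivial along the fibre (the fibre lies in a single tangent space, so no holonomy appears), so that the induced connection is flat and tensor differentiation agrees with componentwise scalar differentiation.
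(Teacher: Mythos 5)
Your proof is correct and takes essentially the same route as the paper's: both reduce fibrewise to the identity map on the round sphere via the horizontal structure and then invoke Equation~\eqref{eq:eigenvalue-eq}. The paper performs the reduction by pairing with $\horlift(w)$ and pulling $\laplace{\sphere}$ through the horizontal-metric pairing, which is exactly your statement that the tensor Laplacian acts componentwise in the parallel frame of horizontal lifts along the fibre.
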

\begin{proof}
    For fixed $w\in\unittanbundle\posfold$ with $x\vdef\unittanproj(w)$ we get that
    \begin{align*}
        \hormetric%
            {\laplace{\sphere}\spray{\riemannmetric*}%
                \vert_{\unittanbundle{\posfold}{x}}%
            }%
            {\horlift(w)}%
            {w}
        &=
        \laplace{\sphere}
        \hormetric%
            {\spray{\riemannmetric*}%
                \vert_{\unittanbundle{\posfold}{x}}%
            }%
            {\horlift(w)}%
            {w}
        \\
        &=
        \laplace{\sphere}
        \riemannmetric%
            {\ID_{\unittanbundle{\posfold}{x}}}%
            {w}%
            {x}
        =
        \riemannmetric%
            {\laplace{F}\ID_{F}}%
            {w}%
            {x}
    .
    \end{align*}
    Use~\hyperref[eq:eigenvalue-eq]{Equation~\eqref{eq:eigenvalue-eq}}.
\end{proof}

Consider the following Stratonovich~SDE in~\unittanbundle\posfold: 
\begin{equation}
\label{eq:fld}
    \diffd \eta
    =
    \spray{\riemannmetric*}
    \ \diffd t
    +
    \tanlift_\eta(-\grad{\riemannmetric*}\Psi) 
    \ \diffd t
    + 
    \sigma \pdot \tanlift_\eta\left( %
        \sum_{j=1}^\posdim \frac{\partial}{\partial{x_\eta^j}} %
        \right)
    \stratonovich\diffd W_t
,
\end{equation}
where the chart $\parentheses*{x_\eta^1,x_\eta^2,\ldots,x_\eta^\posdim}$ at $\unittanproj(\eta)$
provides normal coordinates
and~$\sigma$ is a nonnegative diffusion parameter.
Note that neither we rescale the potential 
nor we incorporate a friction term.
We call~\hyperref[eq:fld]{Equation~\eqref{eq:fld}}
the \emph{fibre lay-down equation on~\posfold} 
or just \emph{geometric fibre lay-down model}.
The corresponding Kolmogorov generator attains the form
\begin{equation}
\label{eq:fld-generator}
    L
    =
    \spray{\riemannmetric*}
    -
    \tanlift(\grad{\riemannmetric*}\Psi)
    +
    \frac{\sigma^2}2\,\spherelaplace
.
\end{equation}
We call~$L$ as in~\hyperref[eq:fld-generator]{Equation~\eqref{eq:fld-generator}}
the \emph{fibre lay-down generator}.
Per se this operator is defined for all smooth functions on the tangent space, 
whilst the obvious choice for the domain of test functions is 
$D\vdef\smoothcompact{\unittanbundle\posfold}$.
Modificating the proof of~\hyperref[lem:D0isdense]{Lemma~\ref{lem:D0isdense}} slightly 
we get that
\begin{displaymath}
        D_{0\vert\mathrm{S}}
        \vdef
        \unittanproj^\ast\smoothcompact{\posfold}
        \tensorprod
        \kappa^\ast\smoothcompact{\posfold}
\end{displaymath}
is dense in~$D$ again.

In the next sections we restrict ourselves to computations substantially different from~\autoref{sec:Langevin}.
Briefly speaking, the differences occur due to the change of the fibre measure space 
and affect some of the constants.

\subsection{Data conditions}
\label{subsec:fld:D}

\begin{definition}[model Hilbert space~\ref{cond:D1}]
\label{def:fld:D1}
    Consider the probability space 
    \begin{displaymath} 
        (E,\mathfrak{E},\mu)
        =
        \parentheses*{%
            \unittanbundle{\posfold}, 
            \borel{\unittanbundle{\posfold}},
            \Leb{\wsasakimetric*}
        }
    ,
    \end{displaymath} 
    where $\Leb{\wsasakimetric*} = \Leb{\wriemannmetric*} \locprod \nu$ 
    is the weighted Sasaki volume measure with 
    $\wriemannmetric*$ weighted by 
    $\rho_{\posfold}\vdef \exp(-\Psi)$ such that 
    \Leb{\wriemannmetric*} is a probability measure on~$(\posfold,\borel{\posfold})$, 
    and~$\nu$ 
    is the normalised surface measure on 
    $
        (F,\mathfrak{F}) 
        =
        \parentheses*{ \sphere{\posdim-1},\borel{\sphere{\posdim-1}} }
    $, 
    thus the fibre weight is a constant factor.
    The model Hilbert space is 
    $H \vdef \bigL2{E}{\mu} = \bigL2{\unittanbundle{\posfold}}{\wsasakimetric*}$.
\end{definition}

Note that our choice of~$\nu$ is the only possible for a probability measure on the measurable fibre space 
with a density that is invariant wrt.\ rotations.
Furthermore, we point out that up to slight modifications 
we could keep the set of conditions~\ref{cond:named:Langevin:P} of~\autoref{sec:Langevin}. 
The assumption of a weakly harmonic potential in~\hyperref[lem:Langevin:SAD]{Lemma~\ref{lem:Langevin:SAD}}, 
which in the end was not necessary,
turns into another condition that can not be overcome so easily.
Specifically, we require the potential to satisfy the relation 
\begin{equation}
\label{eq:fld:potential-condition}
    \spray{\riemannmetric*}\vlfunc{\Psi} 
    = 
    (\posdim-1)\,\hlfunc{\Psi}
    \qquad\text{on }\unittanbundle\posfold
\end{equation} 
up to an additional constant summand. 
This will become evident during the proof of the oncoming lemma.
Indeed, the fibre lay-down generator attains the form analogous to~\cite[Equation~3.18]{HypocoercJFA} under this assumption.
Later on in~\hyperref[lem:fld:SAD2]{Lemma~\ref{lem:fld:SAD2}} 
we seemingly get rid of~\hyperref[eq:fld:potential-condition]{Assumption~\eqref{eq:fld:potential-condition}}
using the Poisson bracket again. 
But it turns out that this result doesn't fit our purposes 
and we want to add~\hyperref[eq:fld:potential-condition]{Assumption~\eqref{eq:fld:potential-condition}}
to the set of conditions on the potential.

\begin{example}[%
    {\hyperref[eq:fld:potential-condition]{Assumption~\eqref{eq:fld:potential-condition}}} %
    for Euclidean position space]
    Let $\posfold=\Rnum_x^\posdim$ be endowed with standard Euclidean metric 
    $
        \riemannmetric*
        =
        \parentheses*{\scalarprod{\cdot}{\cdot}{\mathrm{euc}}}_{x\in\Rnum^\posdim}$.
    Then, the Riemannian semispray~\spray{\mathrm{euc}} effectively is just the identity mapping.
    The interested reader easily verifies this in local coordinates.
    More formally, 
    every 
    $
        a^\prime
        \in
        \tanbundle*{\tanbundle\posfold}
        \simeq
        \parentheses*{\Rnum_x^\posdim\times\Rnum_v^\posdim}^\ast
    $
    is identified with an $a=(a_v,a_x)^\intercal\in\Rnum^{2\posdim}$ via 
    $
        a^\prime(x,v)
        =
        \scalarprod{v}{a_v}{\mathrm{euc}}
        +
        \scalarprod{x}{a_x}{\mathrm{euc}}
    $
    for all $(x,v)\in\Rnum_x^\posdim\times\Rnum_v^\posdim$.
    Then, the semispray~\spray{\mathrm{euc}} is characterised by
    \begin{displaymath}
        \parentheses*{\hlfunc{\Omega}\bincirc\spray{\mathrm{euc}}}_w(a)
        =
        \scalarprod{w}{a_x-a_v}{\mathrm{euc}}
        \qquad\text{for all }(x,w)\in\Rnum_x^\posdim\times\Rnum_v^\posdim,a\in\Rnum^{2\posdim}
    \end{displaymath}
    cf.~\hyperref[rem:Riemann-spray-revisited]{Remark~\ref{rem:Riemann-spray-revisited}}.
    We have chosen the notation $a=(a_v,a_x)^\intercal$ for sake of readability in view of this remark.
    Hence, we can think of the mapping~$\spray{\mathrm{euc}}(w)$ 
    as the gradient of $z\mapsto U_w(z)\vdef\scalarprod{w}{z}{\mathrm{euc}}$.
    When restricting the semispray to $\configfold=\unittanbundle{\Rnum_x^\posdim}$, 
    i.\,e.\ $w\in\sphere{\posdim-1}$,
    we calculate via usual integration by parts that
    \begin{align*}
        &\int_{\unittanbundle{\Rnum^\posdim}{x}} 
            \spray{\riemannmetric*}f
        \ \diffd\nu
        =
        \int_{\unittanbundle{\Rnum^\posdim}{x}} 
            \scalarprod{\spheregrad U_v(z)}{\spheregrad f(v)}{\mathrm{euc}}
        \ \diffd\nu(v)
        \\
        ={}&
        -
        \int_{\unittanbundle{\Rnum^\posdim}{x}} 
            \spherelaplace U_v(z) \pdot f(v)
        \ \diffd\nu(v)
        \overset{\eqref{eq:eigenvalue-eq}}{=}
        (\posdim-1) 
        \int_{\unittanbundle{\Rnum^\posdim}{x}} 
            \scalarprod{v}{z}{\mathrm{euc}} \pdot f(v)
        \ \diffd\nu(v)
    \end{align*}
    for all $f\in\smoothcompact{\unittanbundle{\Rnum^\posdim_x}}$, $z\in\Rnum^\posdim$.
    Cf.~\cite[Lemma~3.3]{HypocoercJFA}.

    Now, let $f_0\in\smoothfunc{\Rnum_x^\posdim}$
    and $a=(a_v,a_x)^\intercal\in\tanbundle\configfold$ 
    with $d\tanproj(a)=a_x$ as well as $d\kappa(a)=a_v=v\vdef\tanproj(a)$.
    In this situation we have that
    \begin{displaymath}
        \dualpair{a}{d\hlfunc{f_0}}
        =
        \dualpair{(a_v,a_x)}{df_0 \bincirc d\kappa} 
        =
        \dualpair{v}{df_0\vert_{\unittanbundle{\Rnum^\posdim}}} 
        =
        \scalarprod%
            {v}%
            {\frac{\grad{x}f_0}{\abs{\grad{x}f_0}}\bincirc\unittanproj(v)}%
            {\mathrm{euc}}
    .
    \end{displaymath}
    Thus, 
    $
        \hlfunc{f_0}(z)
        = 
        U_{%
            \sfrac{\grad{x}f_0}{\abs{\grad{x}f_0}}(x)%
        }(z)
    $
    for all $z\in\unittanbundle{\Rnum^\posdim}{x}$.
    Both results can be combined as follows:
    \begin{align*}
        \int_{\unittanbundle{\Rnum^\posdim}{x}} 
            \spray{\mathrm{euc}}\vlfunc{\Psi}
            \pdot
            \hlfunc{g_0}
        \ \diffd\nu
        &=
        \int_{\unittanbundle{\Rnum^\posdim}{x}} 
            \spray{\mathrm{euc}}\parentheses*{
                \vlfunc{\Psi}\pdot\hlfunc{g_0}
            }
        \ \diffd\nu
        \\
        &=
        (\posdim-1)
        \int_{\unittanbundle{\Rnum^\posdim}{x}} 
            \scalarprod%
                {v}%
                {\frac{\grad{\mathrm{euc}}\Psi(x)}{\Psi(x)\,\abs{\grad{\mathrm{euc}}\Psi(x)}}}%
                {\mathrm{euc}}
            \pdot
            \vlfunc{\Psi}(v) \, \hlfunc{g_0}(v)
        \ \diffd\nu(v)
        \\
        &=
        (\posdim-1)
        \int_{\unittanbundle{\Rnum^\posdim}{x}} 
            \hlfunc{\Psi}(v)
            \pdot
            \hlfunc{g_0}(v)
        \ \diffd\nu(v)
    \end{align*}
    holds for all $g_0\in\smoothcompact{\Rnum_x^\posdim}$ 
    with the particular choice of 
    $
        z 
        = 
        \frac1{\Psi(x)}\,\frac{\grad{\mathrm{euc}}\Psi(x)}{\abs{\grad{\mathrm{euc}}\Psi(x)}}
    $.
    Note that we assume $\Psi>0$ wlog.\ in view of~\ref{cond:Langevin:P1}.
    Hence, \hyperref[eq:fld:potential-condition]{Assumption~\eqref{eq:fld:potential-condition}} always is fulfilled.
\end{example}

\begin{lemma}[SAD-decomposition~\ref{cond:D3},~\ref{cond:D4},~\ref{cond:D6}]
\label{lem:fld:SAD}
    Let the potential~$\Psi$ be loc-Lipschitzian 
    such that \hyperref[eq:fld:potential-condition]{Assumption~\eqref{eq:fld:potential-condition}} is fulfilled. 
    Consider the SAD\-/decomposition $L=S-A$ on~$D$ with
    \begin{align*}
        Sf 
        &\vdef
        \frac{\sigma^2}2\,\spherelaplace f 
        \\
        \quad\text{and}\quad
        Af 
        &=
        -\spray{\wriemannmetric*}f
        \vdef
        -\spray{\riemannmetric*}f
        +
        \frac1{\posdim-1}\,
        \spheregrad(\spray{\riemannmetric*}\vlfunc{\Psi})
    \end{align*}
    for all $f\in D$.

    Then, the following assertions hold:
    \begin{enumerate}[label={(\roman*)}]
        \item 
        \label{itm:fld:SAD:S}
        $(S,D)$ is symmetric and negative semidefinite.
        \item 
        \label{itm:fld:SAD:A}
        $(A,D)$ is antisymmetric.
        \item 
        \label{itm:fld:SAD:L}
        For all $f\in D$ we have that 
        $Lf \in \bigL1{\unittanbundle{\posfold}}{\mu}$ and 
        $\int_{\unittanbundle{\posfold}} Lf\ \diffd\mu = 0$.
    \end{enumerate}
\end{lemma}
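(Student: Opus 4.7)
The plan mirrors the structure of~\hyperref[lem:Langevin:SAD]{Lemma~\ref{lem:Langevin:SAD}}, with the Gaussian fibre measure replaced by the normalised spherical surface measure~$\nu$ on~$\sphere{\posdim-1}$, and Liouville's Theorem invoked in its version on the unit tangent bundle.

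For part~\ref{itm:fld:SAD:S}, the symmetry and nonpositivity of~$(S,D)$ follow from spherical integration by parts on each fibre: exploiting the loc\=/product structure of~$\mu$ via Fubini-Tonelli yields
\begin{displaymath}
    \scalarprod{Sf}{g}{H}
    =
    -\frac{\sigma^2}{2} \int_{\unittanbundle\posfold} \sasakimetric{\spheregrad f}{\spheregrad g} \ \diffd\mu
    \qquad\text{for all } f,g \in D_{0\vert\mathrm{S}}
,
\end{displaymath}
which is manifestly symmetric and nonpositive and extends to all of~$D$ by density of~$D_{0\vert\mathrm{S}}$ (adapted from~\hyperref[lem:D0isdense]{Lemma~\ref{lem:D0isdense}}).

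For part~\ref{itm:fld:SAD:A}, I first use the assumption~\eqref{eq:fld:potential-condition} to rewrite $A = -\spray{\riemannmetric*} + \spheregrad\hlfunc{\Psi}$ (any additive constant in~$\hlfunc{\Psi}$ is annihilated by~$\spheregrad$), and then compute the adjoints of the two summands separately via the general formula $\adjoint{X} = -X - \divergence{\sasakimetric*}(X) - X(\log\rho)$, where $\rho \vdef \vlfunc{\exp(-\Psi)}$ is the loc\=/density of~$\mu$ with respect to $\mu_0 \vdef \Leb{\riemannmetric*}\locprod\nu$. For $X = \spray{\riemannmetric*}$: the field is tangent to~\unittanbundle\posfold since parallel transport preserves fibre norms; the Sasakian divergence on~\unittanbundle\posfold vanishes by the unit-tangent-bundle form of~\ref{thm:named:Liouville-thm} (restriction of the Sasakian geodesic spray flow to the energy hypersurface $\{\abs{v}{\riemannmetric*}=1\}$); and the logarithmic derivative equals $-\spray{\riemannmetric*}\vlfunc{\Psi} = -(\posdim-1)\hlfunc{\Psi}$ by~\eqref{eq:fld:potential-condition}. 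For $X = \spheregrad\hlfunc{\Psi}$: the logarithmic derivative vanishes since~$\rho$ is constant on fibres, and the divergence is the spherical one $\spherelaplace\hlfunc{\Psi} = -(\posdim-1)\hlfunc{\Psi}$, obtained from~\hyperref[lem:Laplace-IDunittan]{Lemma~\ref{lem:Laplace-IDunittan}} applied to the identification $\hlfunc{\Psi}(v) = \hormetric{\spray{\riemannmetric*}}{\horlift(\grad{\riemannmetric*}\Psi)}(v)$. Summing yields $\adjoint{A} = \spray{\riemannmetric*} - \spheregrad\hlfunc{\Psi} = -A$, which is the sought antisymmetry on~$D_{0\vert\mathrm{S}}$ and hence, by closure, on~$D$.

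Part~\ref{itm:fld:SAD:L} is then immediate: compact support of $f \in D$ forces $Lf \in \bigL1{\unittanbundle\posfold}{\mu}$, and $\int Lf\,\diffd\mu = \scalarprod{Sf}{1}{H} - \scalarprod{Af}{1}{H} = 0$ by the previous two parts applied to the constant function~$1$. The principal obstacle I anticipate lies in making the two invariant identities behind part~\ref{itm:fld:SAD:A} fully rigorous in the abstract manifold setting: the eigenfunction identity $\spherelaplace\hlfunc{\Psi} = -(\posdim-1)\hlfunc{\Psi}$ requires observing that on each fibre $\hlfunc{\Psi}$ restricts to a linear function of the ambient velocity and then invoking~\hyperref[lem:Laplace-IDunittan]{Lemma~\ref{lem:Laplace-IDunittan}}, whereas the Liouville identity requires identifying the restricted Sasakian volume form on~\unittanbundle\posfold with $\diffd(\Leb{\riemannmetric*}\locprod\nu)$ up to a constant factor and combining this with the Hamiltonian nature of the geodesic spray on the energy hypersurface.
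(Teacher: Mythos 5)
Your proof is correct and takes essentially the same route as the paper: part~(i) by fibrewise spherical integration by parts over the loc\-/product measure, part~(ii) by computing the adjoints of the two summands of~$A$ via the divergence\-/plus\-/logarithmic\-/derivative formula, with the decisive cancellation coming from the eigenvalue identity of \hyperref[lem:Laplace-IDunittan]{Lemma~\ref{lem:Laplace-IDunittan}} combined with \hyperref[eq:fld:potential-condition]{Assumption~\eqref{eq:fld:potential-condition}}, and part~(iii) as an immediate consequence. The only cosmetic slip is the identification in part~(ii), which under \hyperref[eq:fld:potential-condition]{Assumption~\eqref{eq:fld:potential-condition}} should carry the factor $\posdim-1$, i.\,e.\ $(\posdim-1)\,\hlfunc{\Psi}=\hormetric{\spray{\riemannmetric*}}{\horlift(\grad{\riemannmetric*}\Psi)}$ on \unittanbundle\posfold; this does not affect the eigenvalue argument by linearity.
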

\begin{proof}
\hfill

    \begin{enumerate}[label={(\roman*)}]
        \item 
        Using integration by parts we see that 
        $(S,D)$~pregenerates the weighted spherical gradient form on~\unittanbundle\posfold.
        Cf.\ the proof of part~\ref{itm:Langevin:SAD:S} 
        of~\hyperref[lem:Langevin:SAD]{Lemma~\ref{lem:Langevin:SAD}}.
        \item
        The adjoint of 
        $
            \spheregrad\hlfunc{\Psi}
            \overset{\eqref{eq:fld:potential-condition}}{=}
            \frac1{\posdim-1}\, \spheregrad(\spray{\riemannmetric*}\vlfunc{\Psi})
        $
        wrt.~\bigL2{\unittanbundle\posfold}{\wsasakimetric*}\=/scalar product 
        is computed using~\hyperref[lem:Laplace-IDunittan]{Lemma~\ref{lem:Laplace-IDunittan}} as
        \begin{align*}
            \adjoint*{ %
                \frac1{\posdim-1}\, %
                \grad{\sphere}(\spray{\riemannmetric*}\vlfunc{\Psi}) %
            }
            &=
            -
            \frac1{\posdim-1}\,
            \grad{\sphere}(\spray{\riemannmetric*}\vlfunc{\Psi})
            -
            \frac1{\posdim-1}\,
            \laplace{\sphere}(\spray{\riemannmetric*}\vlfunc{\Psi})
            \\
            &=
            -
            \frac1{\posdim-1}\,
            \grad{\sphere}(\spray{\riemannmetric*}\vlfunc{\Psi})
            +
            \spray{\riemannmetric*}\vlfunc{\Psi}
        .
        \end{align*}
        The rest follows as in the proof of part~\ref{itm:Langevin:SAD:A} 
        of~\hyperref[lem:Langevin:SAD]{Lemma~\ref{lem:Langevin:SAD}}.
        \item
        Follows with the parts~\ref{itm:fld:SAD:S}
        and~\ref{itm:fld:SAD:A}.
    \vspace*{-3ex}
    \end{enumerate}
\end{proof}

Giving up on the form of~$(A,D)$ as in~\cite[Equation~(3.22)]{HypocoercJFA}
and turning to a more intuitive one 
in view of~\hyperref[eq:fld-generator]{Equation~\eqref{eq:fld-generator}}, 
we gain weaker assumptions on the potential in general.

\begin{lemma}[SAD-decomposition (2nd version)]
\label{lem:fld:SAD2}
    If we define
    $
        \spray{\wriemannmetric*}
        \vdef
        \spray{\riemannmetric*}
        -\tanlift\parentheses*{\grad{\riemannmetric*}\Psi}
    $,
    then the assertions of~\hyperref[lem:fld:SAD]{Lemma~\ref{lem:fld:SAD}}
    are true without the~\hyperref[eq:fld:potential-condition]{Assumption~\eqref{eq:fld:potential-condition}} on~$\Psi$.
\end{lemma}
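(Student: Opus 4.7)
Parts~(i) and~(iii) of~\hyperref[lem:fld:SAD]{Lemma~\ref{lem:fld:SAD}} do not invoke~\eqref{eq:fld:potential-condition} and transfer verbatim with the redefined~$\spray{\wriemannmetric*}$. The only genuine task is to verify antisymmetry of
\[
A = -\spray{\wriemannmetric*} = -\spray{\riemannmetric*} + \tanlift(\grad{\riemannmetric*}\Psi)
\]
on~$D$ in~$H=\bigL2{\unittanbundle\posfold}{\mu}$, where~$\mu = \Leb{\wriemannmetric*}\locprod\nu$ carries loc-density $\rho = \vlfunc{\exp(-\Psi)}$.

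The plan is to adapt the Poisson-bracket argument of~\hyperref[prop:Langevin:SAD]{Proposition~\ref{prop:Langevin:SAD}} to the present unit-tangent-bundle setting. On the ambient~$\tanbundle\posfold$, the compatibility of the Sasaki metric, Dombrowski's almost complex structure and the canonical symplectic form on~$\tanbundle*\posfold$ furnishes a Poisson bracket~$\Poissonbracket{\cdot}{\cdot}$ under which every Hamiltonian vector field~$\Hamilton{f}$ is solenoidal wrt.\ the Sasakian volume. Extending $f,g \in D$ smoothly to a tubular neighbourhood of~$\unittanbundle\posfold$ inside~$\tanbundle\posfold$, I would introduce the antisymmetric bilinear form
\[
\mathcal{A}(f,g) \vdef \int_{\unittanbundle\posfold} -\Poissonbracket{f}{g}\,\diffd\mu
\]
and use the Divergence Theorem together with the fibrewise constancy of~$\rho$ to rewrite it as $\int_{\unittanbundle\posfold} g\,\Hamilton{f}(\rho)\,\diffd(\Leb{\riemannmetric*}\locprod\nu)$. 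A local computation in Ehresmann-adapted charts, exploiting the unconditional identity $\tanlift(\grad{\riemannmetric*}\Psi) = \spheregrad(\spray{\riemannmetric*}\vlfunc{\Psi})$ — which holds because $\spray{\riemannmetric*}\vlfunc{\Psi}|_u = \riemannmetric{u}{\grad{\riemannmetric*}\Psi(\unittanproj(u))}$ is affine on each fibre — together with the eigenvalue relation~\eqref{eq:eigenvalue-eq} used already in~\hyperref[lem:Laplace-IDunittan]{Lemma~\ref{lem:Laplace-IDunittan}}, should then identify $\Hamilton{f}(\rho)$ with $-\rho\,\spray{\wriemannmetric*}(f)$; this yields $\mathcal{A}(f,g) = \scalarprod{Af}{g}{H}$ and hence antisymmetry of~$(A,D)$.

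The main obstacle is that~$\unittanbundle\posfold$, being odd-dimensional, is not a symplectic submanifold of~$\tanbundle\posfold$, so the ambient Hamiltonian flow need not remain tangent to it. I plan to address this by decomposing~$\Hamilton{f}$ along the Whitney sum~$\tantanbundle\posfold\vert_{\unittanbundle\posfold} = \tanunittanbundle\posfold \oplus \normalunittanbundle\posfold$ and observing that the normal component — proportional to~$\normalfield{\sasakimetric*}$, which coincides with the restriction of the canonical vector field~$\canonfield$ — contributes nothing to~$\Hamilton{f}(\rho)$ because~$\rho$ is invariant along the radial fibre direction. The tangential projection of~$\Hamilton{f}$ is then solenoidal wrt.~$\Leb{\riemannmetric*}\locprod\nu$ by Liouville's Theorem on the unit tangent bundle, and the remainder of the argument parallels the partition-of-unity localisation used in~\hyperref[prop:Langevin:SAD]{Proposition~\ref{prop:Langevin:SAD}}, with the spherical measure~$\nu$ replacing the Gaussian in the fibre integrals.
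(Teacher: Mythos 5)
You correctly isolate the one nontrivial task (antisymmetry of $(A,D)$ with the redefined $\spray{\wriemannmetric*}$) and you correctly identify the obstacle that $\unittanbundle\posfold$ is odd-dimensional. The paper's own proof sidesteps that obstacle by working \emph{intrinsically}: it restricts $\mathbb{J}$ and the Sasaki metric to $\tanbundle\configfold$, obtains a Poisson bracket defined directly on $\configfold=\unittanbundle\posfold$ whose Hamiltonian vector fields are by construction sections of $\tanbundle\configfold$ (hence tangent and with no normal component to worry about), and then verifies $\Hamilton{f}(\rho)=-\rho\pdot\parentheses*{\spray{\riemannmetric*}f-\tanlift\parentheses*{\grad{\riemannmetric*}\Psi}(f)}$ in Ehresmann-adapted trivialising coordinates. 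Your proposal instead works \emph{extrinsically} with the ambient bracket and tubular-neighbourhood extensions, and in that route two concrete steps are not justified.

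First, your bilinear form $\mathcal{A}(f,g)=\int_{\unittanbundle\posfold}-\Poissonbracket{f}{g}\,\diffd\mu$ is not well defined as stated, because the ambient bracket of two extensions, evaluated on the hypersurface, depends on their normal derivatives. Decomposing $\grad{\sasakimetric*}f$ into its part tangent to $\unittanbundle\posfold$ plus $(\normalfield{\sasakimetric*}f)\pdot\normalfield{\sasakimetric*}$, and using that $\normalfield{\sasakimetric*}=\canonfield\vert_{\unittanbundle\posfold}$ so that $\mathbb{J}\normalfield{\sasakimetric*}=\pm\spray{\riemannmetric*}\vert_{\unittanbundle\posfold}$, the cross terms $(\normalfield{\sasakimetric*}g)\pdot\spray{\riemannmetric*}f$ and $(\normalfield{\sasakimetric*}f)\pdot\spray{\riemannmetric*}g$ survive (only the $\Omega(\normalfield{\sasakimetric*},\normalfield{\sasakimetric*})$ term vanishes) and are extension-dependent; you would need to fix a canonical extension, e.g.\ the fibrewise $0$-homogeneous one, which kills these terms and makes the ambient bracket agree with the intrinsic one on the sphere bundle. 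Second, the claim that the tangential projection of the ambient $\Hamilton{f}$ is solenoidal wrt.\ $\Leb{\riemannmetric*}\locprod\nu$ ``by Liouville's Theorem on the unit tangent bundle'' is unsupported: the paper's \ref{thm:named:Liouville-thm} only concerns $\spray{\riemannmetric*}$, and in general the tangential projection of a divergence-free ambient field onto a hypersurface is not divergence-free for the induced volume. This is exactly the step the paper's local-coordinate computation supplies, and it cannot be replaced by a citation. Your side observation that $\spheregrad(\spray{\riemannmetric*}\vlfunc{\Psi})=\tanlift\parentheses*{\grad{\riemannmetric*}\Psi}$ holds unconditionally is correct and makes the target identity $\Hamilton{f}(\rho)=-\rho\pdot\spray{\wriemannmetric*}f$ plausible, but it does not close either gap; the cleanest repair is to define the (degenerate) Poisson structure on $\configfold$ itself, as the paper does, so that tangency is automatic and solenoidality reduces to Schwarz's theorem in adapted coordinates.
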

\begin{proof}
    Indeed, we can copy the proof of~\hyperref[prop:Langevin:SAD]{Proposition~\ref{prop:Langevin:SAD}} 
    and it's enough that the configuration manifold~\configfold is a submanifold of~\tanbundle\posfold.

    First, we notice that both~$\Omega$ and~$\mathbb{J}$ 
    restricted to~\tanbundle*\configfold or~\tanbundle\configfold respectively  
    are still a symplectic form and an almost complex structure respectively.
    They are compatible with the restricted Sasaki metric, 
    thus generate the same Poisson bracket on~\configfold 
    which in turn defines Hamiltonian vector fields
    $\Hamilton{f}\in\smoothsec{\tanbundle\configfold}$ 
    for all $f\in\smoothfunc{\configfold}$. 
    Second, we find that 
    $
        \Hamilton{f}(\rho)
        =
        -\rho\pdot\parentheses*{
            \spray{\riemannmetric*}f 
            - 
            \tanlift\parentheses*{\grad{\riemannmetric*}\Psi}(f)
        }
    $ 
    for all $f\in D=\smoothcompact{\configfold}$, 
    by investigating the action of the Hamiltonian vector field 
    in local coordinates for~\configfold 
    that respect the Ehresmann connection 
    and also provide a local trivialisation. 
\end{proof}

\label{fld:D2-loc-Lipschitz}
In contrast to~\hyperref[prop:Langevin:SAD]{Proposition~\ref{prop:Langevin:SAD}} 
the statement of~\hyperref[lem:fld:SAD2]{Lemma~\ref{lem:fld:SAD2}} 
is not of much use for an application of the hypocoercivity method, 
even though the operator 
$(A,D)=(-\spray{\wriemannmetric*},D)$ 
there is the more natural formulation.
The simple reason is 
that the two competing definitions of~\spray{\wriemannmetric*} not necessarily coincide on~$D$. 
But we need the operator as in~\hyperref[lem:fld:SAD]{Lemma~\ref{lem:fld:SAD}} 
during the characterisation of $(PA^2P,D)$ 
specifically in~\hyperref[eq:fld:A2P-part2]{Equation~\eqref{eq:fld:A2P-part2}}.
However, this calculation is part of checking the hypocoercivity assumptions,  
whereas condition~\ref{cond:D2}, 
the existence of a nice semigroup, 
can be checked following the same steps as in~\autoref{subsec:Langevin:D} 
for the antisymmetric operator of~\hyperref[lem:fld:SAD2]{Lemma~\ref{lem:fld:SAD2}}.
\smallskip

Moving on, the fibrewise average is defined the very same way as in~\autoref{sec:Langevin} 
just with `\unittanbundle{\posfold}{x}' instead of `\tanbundle{\posfold}{x}'.
Also, the form of the operator $(AP,D)$ 
given in~\hyperref[eq:Langevin:APf-Eq2]{Equation~\eqref{eq:Langevin:APf-Eq2}} 
just changes marginally:
\begin{equation}
\label{eq:fld:APf-Eq2}
    APf
    =
    -\riemannmetric{%
            \ID_{\unittanbundle\posfold} %
        }{%
            \grad{\riemannmetric*}\expect[\nu]{f} \bincirc \unittanproj %
        }{\unittanproj}
    \qquad\text{for all }f\in D
.
\end{equation}

Indeed, the other statements concerning data conditions
translate to the fibre lay-down model on~\posfold with minor modifications. 
This is a little bit different 
when it comes to the hypocoercivity conditions in the next section.
Nevertheless, we want to draw the readers attention to the fact
the reasoning for essential m-dissipativity of $(L,D)$ 
under the assumption of loc-Lipschitzian potentials 
does barely depend on the fibre measure space.
Arguments gleaned in~\cite[Section~4]{HypocoercJFA} 
on the notoriously subtile question of core property and~$L$ generating a semigroup stay valid. 

Since the standard fibre is compact now, 
we can simplify the proof~\hyperref[lem:Langevin:D5D7]{Lemma~\ref{lem:Langevin:D5D7}} a bit, 
similar to~\cite[Lemma~3.8]{HypocoercJFA}.
\begin{lemma}
    Let condition~\ref{cond:Langevin:P1} hold.
    Then, we have
    $P(H)\subseteq\opdomain{S}$, $SP=0$, 
    $P(D)\subseteq\opdomain{A}$ and $AP(D)\subseteq\opdomain{A}$.
    Furthermore, $1\in\opdomain{L}$ and $L1 = 0$.
\end{lemma}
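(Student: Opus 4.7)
The plan is to adapt the proof of~\hyperref[lem:Langevin:D5D7]{Lemma~\ref{lem:Langevin:D5D7}} while exploiting the substantial simplifications afforded by the compactness of the spherical fibre. The crucial observation is that $\unittanproj\colon \unittanbundle\posfold \longmapsin \posfold$ is a proper map, so for every $f_0 \in \smoothcompact{\posfold}$ the vertical lift~$\vlfunc{f_0}$ has compact support in~$\unittanbundle\posfold$ and thus already lies in $D = \smoothcompact{\unittanbundle\posfold}$. Moreover, because vertical lifts are fibrewise constant, $\spherelaplace\vlfunc{f_0} = 0$ and $\spheregrad(\spray{\riemannmetric*}\vlfunc{\Psi})(\vlfunc{f_0}) = 0$, so at once $S\vlfunc{f_0} = 0$ and $A\vlfunc{f_0} = -\spray{\riemannmetric*}\vlfunc{f_0}$.

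From this observation most of the required inclusions follow with minimal effort. Approximating an arbitrary $g_0 \in \bigL2{\posfold}{\wriemannmetric*}$ by a sequence in $\smoothcompact{\posfold}$ and invoking closedness of~$S$ yields $\vlfunc{g_0} \in \opdomain{S}$ with $S\vlfunc{g_0} = 0$, hence $P(H) \subseteq \tanproj^\ast\bigL2{\posfold}{\wriemannmetric*} \subseteq \opdomain{S}$ and $SP = 0$. For any $f \in D$ the fibrewise average $\expect[\nu]{f}$ is smooth with compact support in~$\posfold$, so $P_Sf = \vlfunc{\expect[\nu]{f}}$ belongs to $D \subseteq \opdomain{A}$. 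Finally, by~\eqref{eq:fld:APf-Eq2} we have $APf = -\spray{\riemannmetric*}\vlfunc{\expect[\nu]{f}}$, which is again an element of $D \subseteq \opdomain{A}$, giving $AP(D) \subseteq \opdomain{A}$.

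The only remaining task is the treatment of the constant function~$1$, which is required both for the full inclusion $P(D) \subseteq \opdomain{A}$ (through the shift $Pf = P_Sf - \scalarprod{f}{1}{H}\cdot 1$) and for the closing claim $1 \in \opdomain{L}$, $L1 = 0$. I would fix cut-offs $\varphi_n \in \smoothcompact{\posfold;[0,1]}$ with $\varphi_n \to 1$ pointwise and $\norm{\grad{\riemannmetric*}\varphi_n}{\bigL2{\posfold\to\tanbundle\posfold}{\wriemannmetric*}} \to 0$, built exactly as in the proof of~\hyperref[lem:Langevin:D5D7]{Lemma~\ref{lem:Langevin:D5D7}} by rescaling a fixed bump function over geodesic balls around a base point, which is legitimate thanks to~\ref{cond:M:complete}. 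Dominated convergence gives $\vlfunc{\varphi_n} \to 1$ in~$H$, and the decisive simplification over the Langevin case surfaces here: because $\abs{\spray{\riemannmetric*}(v)}{\sasakimetric*} = \abs{v}{\riemannmetric*} = 1$ for every $v \in \unittanbundle\posfold$, a single application of the Cauchy--Bunyakovsky--Schwarz inequality yields
\begin{displaymath}
    \norm{ A\vlfunc{\varphi_n} }{H}
    =
    \norm{ \spray{\riemannmetric*}\vlfunc{\varphi_n} }{H}
    \leq
    \norm{ \grad{\riemannmetric*}\varphi_n }{\bigL2{\posfold\to\tanbundle\posfold}{\wriemannmetric*}}
    \longrightarrow 0,
\end{displaymath}
with no Gaussian moment estimates on $\abs{\ID_{\tanbundle\posfold}}{\riemannmetric*}$ required at any point. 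Closedness of~$A$ and~$L$ then delivers $1 \in \opdomain{A} \cap \opdomain{L}$ with $A1 = L1 = 0$. The only ingredient that is not purely mechanical is the geometric construction of the cut-off sequence, a standard exercise on a complete Riemannian manifold that presents no genuine obstacle.
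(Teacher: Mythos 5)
Your proposal is correct and follows essentially the same route as the paper: compactness of the spherical fibre places vertical lifts of compactly supported functions directly into $D$, closedness of $\operator{S}$ extends $SP=0$ to all of $P(H)$, the identities $P_Sf=\vlfunc{\expect[\nu]{f}}$ and $APf=-\spray{\riemannmetric*}\vlfunc{\expect[\nu]{f}}$ give the inclusions into $D\subseteq\opdomain{A}$, and a rescaled cut-off sequence handles the constant function via closedness of $\operator{A}$ and $\operator{L}$. Your explicit remark that $\abs{v}{\riemannmetric*}=1$ on $\unittanbundle\posfold$ removes any need for fibre moment estimates is exactly the simplification the paper exploits implicitly.
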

\begin{proof}
    The range~$P(H)$ is identified with a subset of~\bigL2{\posfold}{\wriemannmetric*} 
    via the vertical lift.
    For any $f_0\in\bigL2{\posfold}{\wriemannmetric*}$ 
    there is an \bigL2\=/approximating sequence $(f_n)_{n\in\Nnum\setminus\{0\}}$ in~\smoothcompact{\posfold}. 
    Since the standard fibre $F=\sphere{\posdim-1}$ is compact, 
    it holds $\vlfunc{f_n}\in D$ and $S(\vlfunc{f_n}) = 0$ for all $n\in\Nnum\setminus\{0\}$. 
    We conclude that
    $\vlfunc{f_0}\in\opdomain{S}$ and $\vlfunc{f_0}\in\nullspace{S}$
    as $\vlfunc{f_n}\longrightarrow \vlfunc{f_0}$ in~$H$ as $n\to\infty$ and 
    $\operator{S}$ is closed.
 
    We fix an $f\in D$.
    Choose $o\in\posfold$ and 
    an open ball~\ball{o}{r} centred at~$o$ with radius $r\in(0,\infty)$ wrt.\ the intrinsic metric on~$(\posfold,\riemannmetric*)$ such that
    the support of~$f$ is completely contained in 
    $
        \unittanproj^{-1}(\ball{o}{r})
        \subseteq
        \unittanbundle\posfold
    $.
    Than, the support of~$\expect[\nu]{f}$ is contained in~\ball{o}{r}.
    Thus, $P_Sf\in D$.
    Therefore, $P(D)\subseteq D \subseteq \opdomain{A}$.

    Besides, we calculate via chain rule that
    \begin{align}
    \label{eq:fld:APf-Eq1}
    \begin{aligned}
        APf
        &=
        -\spray{\wriemannmetric*}\parentheses*{\vlfunc{\expect[\nu]{f}}}
        =
        -\dualpair{%
            \spray{\riemannmetric*}%
        }{%
            d\expect[\nu]{f} \bincirc d\tanproj
        }
        \\
        &=
        -\dualpair{ \ID_{\tanbundle\posfold} }{ d\expect[\nu]{f} }
        =
        -d\expect[\nu]{f}
    .
    \end{aligned}
    \end{align}
    Consequently, $AP(D)\subseteq D \subseteq \opdomain{A}$, 
    as the right-hand side of~\hyperref[eq:fld:APf-Eq1]{Equation~\eqref{eq:fld:APf-Eq1}} 
    is smooth with compact support.
    Let $\varphi\in\smoothcompact{\posfold;[0,1]}$ be a cut-off function
    such that $\varphi=1$ on~\ball{o}{1} 
    and $\varphi=0$ on~\ball{o}{2}.
    Define $\varphi_n\vdef\varphi(\sfrac{\ID}{n})$ for all $n\in\Nnum\setminus\{0\}$.
    Note that 
    $
        \abs{\grad{\riemannmetric*}\varphi_n(x)}{\riemannmetric*} 
        \leq 
        \frac1n\,
        \norm{\grad{\riemannmetric*}\varphi}{\bigL\infty{\riemannmetric*}} 
    $ 
    for all $x\in\posfold$ and $n\in\Nnum\setminus\{0\}$.
    By construction, we have that
    \begin{displaymath}
        AP\vlfunc{\varphi_n}
        =
        -\spray{\riemannmetric*}\vlfunc{\varphi_n} 
        =
        \dualpair{%
            \spray{\riemannmetric*}%
        }{%
            d\varphi_n \bincirc d\tanproj
        }
        =
        - d\varphi_n
        \longrightarrow
        0
        \qquad\text{as }n\to\infty
    \end{displaymath}
    pointwise and in \bigL2\=/sense.
    Since $\operator{A}$ is closed, 
    we have $1\in\opdomain{A}$ and $A1 = 0$.
 
    Since we know form~\hyperref[itm:fld:SAD:S]{Lemma~\ref{lem:fld:SAD} part~\ref{itm:fld:SAD:S}} 
    that $S\vlfunc{\varphi_n} = 0$ for all $n\in\Nnum$, 
    we have $L\vlfunc{\varphi_n} = - A\vlfunc{\varphi_n}$ for all $n\in\Nnum$. 
    The sequence $\parentheses*{L\vlfunc{\varphi_n}}_{n\in\Nnum}$ 
    converges in~$H$ to~0 as $n\to\infty$.
\end{proof}

\subsection{Hypocoercivity conditions}
\label{subsec:fld:H}

\begin{lemma}[algebraic relation~\ref{cond:H1}]
\label{lem:fld:H1}
    Let $\Psi$ be loc-Lipschitzian such that 
    $\Leb{\wriemannmetric*} = \exp(-\Psi)\,\Leb{\riemannmetric*}$
    is a probability measure on $(\posfold,\borel{\posfold})$. 
    Then, we have $PAP\vert_D = 0$.
    Cf.~\cite[Proposition~3.11]{HypocoercJFA}.
\end{lemma}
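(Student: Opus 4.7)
The plan is to mirror the argument of \hyperref[lem:Langevin:H1]{Lemma~\ref{lem:Langevin:H1}}, with the Gaussian fibre measure replaced by the normalised surface measure~$\nu$ on~$\sphere{\posdim-1}$. Concretely, I would start from~\hyperref[eq:fld:APf-Eq2]{Equation~\eqref{eq:fld:APf-Eq2}}, which tells us that for any $f\in D$
\begin{displaymath}
    APf(v)
    =
    -\riemannmetric
        {v}
        {\grad{\riemannmetric*}\expect[\nu]{f}(\unittanproj(v))}
        {\unittanproj(v)}
    \qquad\text{for all } v\in\unittanbundle\posfold
.
\end{displaymath}
The crucial observation is that, fibrewise over $x\in\posfold$, the integrand is a linear function of $v\in\unittanbundle{\posfold}{x}\simeq\sphere{\posdim-1}$.

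Next I would compute the fibrewise average $\expect[\nu]{APf}(x)$. For any $w\in\tanbundle{\posfold}{x}$, set $v\mapsto\riemannmetric{v}{w}{x}$; this is an odd function on the sphere and $\nu$ is invariant under the antipodal map $v\mapsto -v$ (being the normalised surface measure), hence
\begin{displaymath}
    \int_{\unittanbundle{\posfold}{x}}
        \riemannmetric{v}{w}{x}
    \ \diffd\nu(v)
    = 0
.
\end{displaymath}
Specialising to $w=\grad{\riemannmetric*}\expect[\nu]{f}(x)$ yields $\expect[\nu]{APf}(x)=0$, so $P_SAPf=0$ on all of~$\unittanbundle\posfold$.

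Finally, since $P_S$ is an orthogonal projection with $1\in P_S(H)$, the identity $P_SAPf=0$ implies
\begin{displaymath}
    \scalarprod{APf}{1}{H}
    =
    \scalarprod{P_SAPf}{1}{H}
    =
    0
.
\end{displaymath}
Combining these two facts with the definition $P=P_S-\scalarprod{\cdot}{1}{H}$ gives $PAPf=P_SAPf-\scalarprod{APf}{1}{H}=0$ for every $f\in D$, which is the claim. There is no real obstacle here; the only subtle point is the move from a Gaussian fibre argument (which required polar coordinates together with~\cite[Lemma~3.1]{HypocoercJFA}) to the much simpler antipodal-symmetry argument available on the sphere, and keeping the identification of $\unittanbundle{\posfold}{x}$ with $\sphere{\posdim-1}$ clean so that the rotation/antipodal invariance of $\nu$ can be invoked unambiguously.
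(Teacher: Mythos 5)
Your proposal is correct and follows essentially the same route as the paper: both start from Equation~\eqref{eq:fld:APf-Eq2}, show that the fibrewise average of $APf$ vanishes because the integrand is linear in $v$ over the sphere, and then conclude $PAP\vert_D=0$ via the orthogonality of $P_S$ exactly as in Lemma~\ref{lem:Langevin:H1}. The only (harmless) difference is that you justify the vanishing of the fibre integral by antipodal symmetry of $\nu$ rather than by invoking \cite[Lemma~3.1]{HypocoercJFA}.
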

\begin{proof}
    Using only~\cite[Lemma~3.1]{HypocoercJFA}
    and~\hyperref[eq:fld:APf-Eq2]{Equation~\eqref{eq:fld:APf-Eq2}}
    we calculate that
    \begin{displaymath}
        \int_{\unittanbundle{\posfold}{x}} APf \ \diffd\nu
        \overset{\eqref{eq:fld:APf-Eq2}}{=}
        \int_{\unittanbundle{\posfold}{x}}  
            -\riemannmetric{%
                \ID_{\unittanbundle\posfold} %
                }{%
                \grad{\riemannmetric*}\expect[\nu]{f} (x) %
                }{x}
        \ \diffd\nu
        =
        0
    \end{displaymath}
    holds for all $f\in D$ and $x\in\posfold$.
    The rest of the proof works as in~\hyperref[lem:Langevin:H1]{Lemma~\ref{lem:Langevin:H1}}.
\end{proof}

\begin{lemma}[microscopic coercivity~\ref{cond:H2}]
\label{lem:fld:H2}
    Let $\Psi$ loc-Lipschitzian such that 
    $\Leb{\wriemannmetric*} = \exp(-\Psi)\, \Leb{\riemannmetric*}$
    is a probability measure on~$(\posfold,\borel{\posfold})$.
    Then, condition~\ref{cond:H2} holds with 
    $\Lambda_m = (\posdim-1)\frac{\sigma^2}2$.
    Cf.~\cite[Proposition~3.12]{HypocoercJFA}.
\end{lemma}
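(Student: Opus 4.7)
The plan is to reduce microscopic coercivity to the fibrewise Poincaré inequality on the sphere, taking advantage of the fact that $S$ acts purely in the vertical (spherical) direction and that $P_S f$ is precisely the fibrewise average $\expect[\nu]{f}\bincirc\unittanproj$.

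First I would rewrite the quadratic form $-\scalarprod{Sf}{f}{H}$ via integration by parts (as established in~\hyperref[itm:fld:SAD:S]{Lemma~\ref{lem:fld:SAD} part~\ref{itm:fld:SAD:S}}) as
\begin{displaymath}
    -\scalarprod{Sf}{f}{H}
    =
    \frac{\sigma^2}{2}
    \int_{\unittanbundle\posfold}
        \abs{\spheregrad f}{\vertmetric*}^2
    \ \diffd\mu
    \qquad\text{for all }f\in D.
\end{displaymath}
Since $\mu = \Leb{\wriemannmetric*}\locprod\nu$, Fubini (in the loc-product sense of~\hyperref[lem:loc-prod-measure]{Lemma~\ref{lem:loc-prod-measure}}) allows me to split this integral into an inner fibre integral over $\unittanbundle{\posfold}{x}$ followed by an outer integral over $(\posfold,\Leb{\wriemannmetric*})$.

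Next I would invoke the classical Poincaré inequality on the unit sphere $\sphere{\posdim-1}$ with normalised surface measure: the first nonzero eigenvalue of $-\spherelaplace$ on $\sphere{\posdim-1}$ equals $\posdim-1$, so for every $x\in\posfold$ and every $f_x\vdef f\vert_{\unittanbundle{\posfold}{x}}\in\smoothfunc{\sphere{\posdim-1}}$,
\begin{displaymath}
    (\posdim-1)
    \int_{\unittanbundle{\posfold}{x}}
        \abs{f_x - \expect[\nu]{f}(x)}^2
    \ \diffd\nu
    \leq
    \int_{\unittanbundle{\posfold}{x}}
        \abs{\spheregrad f}{\vertmetric*}^2
    \ \diffd\nu.
\end{displaymath}
Integrating this inequality against $\Leb{\wriemannmetric*}(\diffd x)$ and noting that $(P_S f)(v) = \expect[\nu]{f}(\unittanproj(v))$, hence $\norm{(\ID_H - P_S)f}{H}^2 = \int_\posfold \int_{\unittanbundle{\posfold}{x}} \abs{f - \expect[\nu]{f}(x)}^2 \,\diffd\nu \,\diffd\Leb{\wriemannmetric*}(x)$, one obtains
\begin{displaymath}
    (\posdim-1)\norm{(\ID_H-P_S)f}{H}^2
    \leq
    \int_{\unittanbundle\posfold}
        \abs{\spheregrad f}{\vertmetric*}^2
    \ \diffd\mu
    =
    \frac{2}{\sigma^2}\,(-\scalarprod{Sf}{f}{H}),
\end{displaymath}
which rearranges to the claim with $\Lambda_m = (\posdim-1)\,\sfrac{\sigma^2}{2}$.

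There is no real obstacle here: the spherical Poincaré inequality is standard (and consistent with the eigenvalue relation~\eqref{eq:eigenvalue-eq} that was already invoked in the proof of~\hyperref[lem:Laplace-IDunittan]{Lemma~\ref{lem:Laplace-IDunittan}}), and the loc-product structure of $\mu$ together with the fact that $S$ differentiates only in the fibre direction makes the reduction to a pointwise (in $x$) inequality completely routine. The only subtlety worth mentioning is that the base weight $\exp(-\Psi)$ plays no role whatsoever in this estimate, which is why the constant $\Lambda_m$ does not depend on $\Psi$ or on the geometry of $\posfold$.
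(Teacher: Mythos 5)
Your proposal is correct and follows essentially the same route as the paper: the paper likewise reduces $-\scalarprod{Sf}{f}{H}=\frac{\sigma^2}2\norm{\spheregrad f}{H}^2$ to the fibrewise Poincar\'e inequality for the normalised surface measure on $\sphere{\posdim-1}$ (citing Beckner for the spectral gap $\posdim-1$) and integrates over the weighted base. Your write-up merely makes explicit the loc-product/Fubini step and the identification of $P_Sf$ with the fibrewise average, which the paper leaves implicit by referring back to the Langevin case.
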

\begin{proof}
    The proof works the same way as in~\hyperref[lem:Langevin:H2]{Lemma~\ref{lem:Langevin:H2}}
    using the Poincar\'e inequality for the spherical measure, see~\cite[Theorem~2]{Beckner89}.
\end{proof}

For proving condition~\ref{cond:H3}, we want to characterise the operator $(PA^2P,D)$ 
as a weighted horizontal Laplace-Beltrami composed with fibrewise average again.
Then, we get essential m-dissipativity of this operator 
as in~\hyperref[cor:Langevin:PA2P-m-dissipative]{Corollary~\ref{cor:Langevin:PA2P-m-dissipative}}.
Mirroring the computations in~\hyperref[eq:Langevin:A2P-part1]{Equation~\eqref{eq:Langevin:A2P-part1}}
we calculate that
\begin{align}
\label{eq:fld:A2P-part1}
\begin{aligned}
    &\int_{\unittanbundle{\posfold}{x}}
        -\spray{\riemannmetric*} 
        \parentheses*{ %
            -\riemannmetric%
                {\ID_{\unittanbundle\posfold}}%
                {\grad{\riemannmetric*}\expect*[\nu]{f} \bincirc\unittanproj}%
                {\unittanproj}%
        }
    \ \diffd\nu
    \\
    \overset{\eqref{eq:fld:APf-Eq2}}{=}&
    \int_{\unittanbundle{\posfold}{x}}
        \riemannmetric%
            {v}%
            {\grad{\riemannmetric*}\expect*[\nu]{f}(x)}%
            {x}
    \ \diffd\nu(v)
    +
    \int_{\unittanbundle{\posfold}{x}}
        \riemannmetric%
            {v}%
            {\connection{v}{\riemannmetric*}%
                \parentheses*{\grad{\riemannmetric*}\expect[\nu]{f}}(x)
            }%
            {x}
    \ \diffd\nu(v)
    \\
    ={}&
    \frac1\posdim\,
    \laplace{\riemannmetric*}\expect[\nu]{f}(x)
\end{aligned}
\end{align}
for all $v\in\unittanbundle\posfold$ with $x\vdef\unittanproj(v)$.

As for~\hyperref[eq:Langevin:A2P-part2]{Equation~\eqref{eq:Langevin:A2P-part2}} 
we use the proof of part~\ref{itm:fld:SAD:A} of~\hyperref[lem:fld:SAD]{Lemma~\ref{lem:fld:SAD}}
to get that
\begin{align}
\label{eq:fld:A2P-part2}
\begin{aligned}
    &
    P_S\parentheses*{%
        \frac1{\posdim-1}\,
        \grad{\sphere}(\spray{\riemannmetric*}\vlfunc{\Psi})
        (APf)%
        }
    (v)
    =
    \int_{\unittanbundle{\posfold}{x}} 
        \spray{\riemannmetric*}\vlfunc{\Psi}
        \pdot
        APf
    \ \diffd\nu
    \\
    ={}&
    -\frac1\posdim\,
    \riemannmetric%
        {\grad{\riemannmetric*}\Psi(x)}%
        {\grad{\riemannmetric*}\expect[\nu]{f}(x)}%
        {x}
    =
    -\frac1\posdim\,
    \connection{\grad{\riemannmetric*}\Psi}(\expect[\nu]{f})(x)
\end{aligned}
\end{align}
for all $v\in\unittanbundle\posfold$ with $x\vdef\unittanproj(v)$.

Together~\hyperref[eq:fld:A2P-part1]{Equation~\eqref{eq:Langevin:A2P-part1}} 
and~\hyperref[eq:fld:A2P-part2]{Equation~\eqref{eq:Langevin:A2P-part2}} 
imply the relation
\begin{align}
\label{eq:fld:PA2P}
\begin{aligned}
    PA^2Pf
    &=
    P_SA^2Pf
    =
    \frac1\posdim
    \pdot
    \parentheses*{%
        \laplace{\riemannmetric*}\expect[\nu]{f}\bincirc\unittanproj%
        - %
        \connection{\grad{\riemannmetric*}\Psi}\expect[\nu]{f}\bincirc\unittanproj%
    }
    \\
    &=
    \frac1\posdim
    \pdot
    \laplace{\whormetric*}\parentheses*{\vlfunc{\expect[\nu]{f}}}
    =
    \frac1\posdim
    \pdot
    \laplace{\whormetric*}(P_Sf)
\end{aligned}
\end{align}
for all $f\in D$.
Compare this to~\cite[Equation~(3.27)]{HypocoercJFA}.

\begin{proposition}[macroscopic coercivity~\ref{cond:H3}]
    Let $\Psi$ loc-Lipschitzian such that 
    $\Leb{\wriemannmetric*} = \exp(-\Psi)\, \Leb{\riemannmetric*}$
    is a probability measure on~$(\posfold,\borel{\posfold})$ 
    satisfying the~\hyperref[eq:Langevin:Poincare]{Poincar\'e inequality~\eqref{eq:Langevin:Poincare}}.
    Then, condition~\ref{cond:H3} is fulfilled 
    with $\Lambda_M = \frac1\posdim\,\Lambda$.
    Cf.~\cite[Proposition~3.14]{HypocoercJFA}.
\end{proposition}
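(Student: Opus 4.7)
The plan is to mirror the macroscopic coercivity argument from the Langevin setting (the preceding analogous proposition in \autoref{subsec:Langevin:H}), but with the Gaussian fibre measure replaced by the normalised surface measure $\nu$ on $\sphere{\posdim-1}$. The key ingredients are the explicit form of $AP$ given in \hyperref[eq:fld:APf-Eq2]{Equation~\eqref{eq:fld:APf-Eq2}}, the representation of $PA^2P$ recorded in \hyperref[eq:fld:PA2P]{Equation~\eqref{eq:fld:PA2P}} as (a multiple of) the weighted horizontal Laplace--Beltrami composed with fibrewise averaging, and finally the abstract criterion \cite[Corollary~2.13]{HypocoercJFA}.

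First, I would verify the fld analogue of \hyperref[cor:Langevin:PA2P-m-dissipative]{Corollary~\ref{cor:Langevin:PA2P-m-dissipative}}: by \hyperref[eq:fld:PA2P]{Equation~\eqref{eq:fld:PA2P}} the operator $(PA^2P,D)$ equals $(\tfrac{1}{\posdim}\laplace{\whormetric*}\bincirc P_S,D)$, whose essential m-dissipativity on $P_S(H)=\unittanproj^\ast\bigL2{\posfold}{\wriemannmetric*}$ follows verbatim from essential self-adjointness of the weighted Laplace--Beltrami on the base, together with density of $P_S(D)=\unittanproj^\ast\smoothcompact{\posfold}$. The density-of-range argument is identical to the one given in the Langevin case, since the fibre enters only via $P_S$.

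Next, for the norm computation, I would fix $f\in D$ and use \hyperref[eq:fld:APf-Eq2]{Equation~\eqref{eq:fld:APf-Eq2}} to write
\begin{align*}
    \norm{APf}{H}^2
    &=
    \int_{\posfold}\int_{\unittanbundle{\posfold}{x}}
        \abs*{\riemannmetric{v}{\grad{\riemannmetric*}\expect[\nu]{f}(x)}{x}}^2
    \ \diffd\nu(v)\,\diffd\Leb{\wriemannmetric*}(x).
\end{align*}
The inner integral is computed by the rotational invariance of $\nu$ (or, equivalently, by the calculation already performed in \cite[Lemma~3.1]{HypocoercJFA}): for any fixed $w\in\tanbundle{\posfold}{x}$ one has $\int_{\unittanbundle{\posfold}{x}}\riemannmetric{v}{w}{x}^2\,\diffd\nu(v)=\tfrac{1}{\posdim}\abs{w}{\riemannmetric*}^2$. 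This yields
\begin{displaymath}
    \norm{APf}{H}^2
    =
    \frac{1}{\posdim}\,
    \norm{\grad{\riemannmetric*}\expect[\nu]{f}}{\bigL2{\posfold\to\tanbundle\posfold}{\wriemannmetric*}}^2.
\end{displaymath}
Invoking the \hyperref[eq:Langevin:Poincare]{Poincar\'e inequality~\eqref{eq:Langevin:Poincare}} for $\expect[\nu]{f}\in\bigL2{\posfold}{\wriemannmetric*}$, and recalling that $P_Sf=\vlfunc{\expect[\nu]{f}}$ together with $Pf=P_Sf-\scalarprod{f}{1}{H}$, the right-hand side is bounded below by $\tfrac{\Lambda}{\posdim}\norm{Pf}{H}^2$, giving the claimed estimate for all $f\in D$.

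Finally, I would extend this to all $f\in\opdomain{\adjoint{(AP)}(AP)}$ by combining the a priori bound on $D$ with the essential m-dissipativity of $PA^2P$ via \cite[Corollary~2.13]{HypocoercJFA}, yielding $\Lambda_M=\tfrac{1}{\posdim}\Lambda$. I do not expect a real obstacle: the only subtlety is ensuring that the Poincar\'e-type approximation extends from $D$ to the full domain of $\adjoint{(AP)}(AP)$, which is exactly the content of the cited corollary and proceeds as in the Langevin argument.
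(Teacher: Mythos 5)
Your proposal is correct and follows essentially the same route as the paper: the identity $\norm{APf}{H}^2=\tfrac1\posdim\norm{\grad{\riemannmetric*}\expect[\nu]{f}}{\bigL2{\posfold\to\tanbundle\posfold}{\wriemannmetric*}}^2$ via rotational invariance of~$\nu$, the Poincar\'e inequality on the weighted base, and the conclusion through \cite[Corollary~2.13]{HypocoercJFA} using essential m-dissipativity of $(PA^2P,D)$ obtained by adapting the Langevin-case corollary. No gaps.
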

\begin{proof}
    As before, we compute that for all $f\in D$ holds
    \begin{align*}
        \norm{ APf }{H}^2
        &=
        \int_{\posfold} \int_{\unittanbundle{\posfold}{x}}
            (APf)^2\bigr\vert_{\unittanbundle{\posfold}{x}}
        \ \diffd\nu\, \diffd\Leb{\wriemannmetric*}(x)
        \\
        &\overset{\eqref{eq:fld:APf-Eq2}}{=}
        \int_{\posfold} \int_{\unittanbundle{\posfold}{x}}
            \riemannmetric%
                {v}%
                {\grad{\riemannmetric*}\expect[\nu]{f}(x)}%
                {x}^2
        \quad \diffd\nu(v)\, \diffd\Leb{\wriemannmetric*}(x)
        \\
        &=
        \frac1\posdim
        \int_{\posfold}
            \abs{ %
                \grad{\riemannmetric*}\expect[\nu]{f}(x) %
            }{\riemannmetric*}^2
        \quad \diffd\Leb{\wriemannmetric*}(x)
        =
        \frac1\posdim\,
        \norm{ %
            \grad{\riemannmetric*}\expect[\nu]{f} %
        }{\bigL2{\posfold\to\tanbundle\posfold}{\wriemannmetric*}}^2
        \\
        &\geq
        \frac1\posdim\,\Lambda
        \norm{ %
            \expect[\nu]{f} %
            - %
            \scalarprod{\expect[\nu]{f}}{1}%
                {\bigL2{\posfold}{\wriemannmetric*}} %
        }{\bigL2{\posfold}{\wriemannmetric*}}^2
    \end{align*}
    using the Poincar\'e inequality of the weighted base measure.
    The claim follows with \cite[Corollary~2.13]{HypocoercJFA}, 
    since $(PA^2P,D)$ is essentially m-dissipative
    due to the modification of~\hyperref[cor:Langevin:PA2P-m-dissipative]{Corollary~\ref{cor:Langevin:PA2P-m-dissipative}}
    to the case of $\configfold=\unittanbundle\posfold$.
\end{proof}

This time, we do not even need~\spray{\riemannmetric*} to be a spray
when checking the first part of condition~\ref{cond:H4}.
Compare the following Lemmas~\ref{lem:fld:H4-1} and~\ref{lem:fld:H4-2} 
as well as their proofs
to~\cite[Proposition~3.15]{HypocoercJFA}.

\begin{lemma}[boundedness of~$(BS,D)$, first part of~\ref{cond:H4}]
\label{lem:fld:H4-1}
    Let $\Psi$ be loc-Lipschitzian such that 
    $\Leb{\wriemannmetric*} = \exp(-\Phi)\, \Leb{\riemannmetric*}$
    is a probability measure on $(\posfold,\borel{\posfold})$. 
    Then, with $c_1\vdef(\posdim-1)\frac{\sigma^2}4$ it holds that
    \begin{displaymath}
        \norm{ BSf }{H} 
        \leq 
        c_1 \norm{ (\ID-P_j)f }{H}
        \qquad\text{for all }f\in D
    \end{displaymath}
    and $P_j\in\{P,P_S\}$, $j\in\{1,2\}$.
\end{lemma}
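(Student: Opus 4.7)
The plan is to mirror the argument for the Langevin case in~\hyperref[lem:Langevin:H4-1]{Lemma~\ref{lem:Langevin:H4-1}}: it suffices to exhibit a scalar identity $SAP = c \cdot AP$ on~$D$, after which~\cite[Lemma~2.14]{HypocoercJFA} delivers the bound with $c_1 = \abs{c}/2$.

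Unfolding~\hyperref[eq:fld:APf-Eq2]{Equation~\eqref{eq:fld:APf-Eq2}} for $f \in D$ and $v \in \unittanbundle\posfold$ with $x \vdef \unittanproj(v)$, the value $APf(v) = -\riemannmetric{v}{\grad{\riemannmetric*}\expect[\nu]{f}(x)}{x}$ is the restriction to the unit sphere $\unittanbundle{\posfold}{x}$ of a linear form on $\tanbundle{\posfold}{x}$. Since each fibre carries the vertical Sasaki metric, under any orthonormal identification with the standard sphere $\sphere{\posdim-1}$ this function becomes a first-order spherical harmonic, so~\hyperref[eq:eigenvalue-eq]{Equation~\eqref{eq:eigenvalue-eq}} yields $\spherelaplace(APf) = -(\posdim-1)\,APf$. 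More structurally, this is an immediate consequence of~\hyperref[lem:Laplace-IDunittan]{Lemma~\ref{lem:Laplace-IDunittan}}, since $APf$ is obtained by pairing the tensor~$\spray{\riemannmetric*}$ against the horizontal lift of the fibrewise-constant vector field $\grad{\riemannmetric*}\expect[\nu]{f}\bincirc\unittanproj$, so that $\spherelaplace$ passes through the contraction and acts only on~$\spray{\riemannmetric*}$.

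Consequently, $SAPf = \frac{\sigma^2}{2}\spherelaplace(APf) = -(\posdim-1)\frac{\sigma^2}{2}\,APf$ on~$D$, and~\cite[Lemma~2.14]{HypocoercJFA} produces the claimed bound with constant $c_1 = (\posdim-1)\sfrac{\sigma^2}{4}$.

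The main obstacle is justifying the fibrewise eigenvalue identity $\spherelaplace(APf) = -(\posdim-1)\,APf$ rigorously: one must verify that~\hyperref[lem:Laplace-IDunittan]{Lemma~\ref{lem:Laplace-IDunittan}}, which treats $\spray{\riemannmetric*}$ as a $(1,0)$-tensor pointwise, can be contracted against the horizontal vector encoding $\grad{\riemannmetric*}\expect[\nu]{f}$ while exchanging the action of $\spherelaplace$ with that contraction. Unlike the Langevin analogue~\hyperref[lem:Langevin:H4-1]{Lemma~\ref{lem:Langevin:H4-1}}, neither the spray property of~$\spray{\riemannmetric*}$ nor any Koszul-type calculation is required here; the argument reduces cleanly to the well-known spectral identity for first-order spherical harmonics.
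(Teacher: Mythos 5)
Your proposal is correct and follows essentially the same route as the paper: the paper's proof likewise reduces to the identity $SAPf = -\tfrac{\sigma^2}{2}(\posdim-1)\,APf$ on~$D$, obtained from Lemma~\ref{lem:Laplace-IDunittan} (equivalently, from the first-order spherical-harmonic eigenvalue relation~\eqref{eq:eigenvalue-eq} applied fibrewise to the linear form $v\mapsto-\riemannmetric{v}{\grad{\riemannmetric*}\expect[\nu]{f}(x)}{x}$), and then invokes \cite[Lemma~2.14]{HypocoercJFA} to get $c_1=(\posdim-1)\sfrac{\sigma^2}{4}$. Your closing observation that, unlike the Langevin case, no spray property or Koszul computation is needed here matches the paper's (shorter) argument exactly.
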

\begin{proof}
    Let $f\in D$ be fixed. 
    Then, we observe that 
    \begin{align*}
        SAPf
        &=
        SAP_Sf
        =
        \frac{\sigma^2}2\,
        \laplace{\sphere}(-\spray{\wriemannmetric*}(P_Sf))
        \\
        &=
        \frac{\sigma^2}2\,(\posdim-1)
        \pdot
        \spray{\wriemannmetric*}(P_Sf)
        =
        -\frac{\sigma^2}2\,(\posdim-1)
        APf
    \end{align*}
    by~\hyperref[lem:Laplace-IDunittan]{Lemma~\ref{lem:Laplace-IDunittan}}, 
    and since 
    $\scalarprod{Ah}{1}{H} = 0$ holds for all $h\in D$
    by~\hyperref[lem:fld:SAD]{Lemma~\ref{lem:fld:SAD}} part~\ref{itm:fld:SAD:A}. 
\end{proof}

\begin{lemma}[boundedness of~$(BA(\ID-P),D)$, second part of~\ref{cond:H4})]
\label{lem:fld:H4-2}
    Let all the conditions of~\hyperref[thm:fld:main]{Theorem~\ref{thm:fld:main}}
    on the potential hold.
    Then, there exists a constant $c_2\in(0,\infty)$ such that
    \begin{displaymath}
        \norm{ BA(\ID-P)f }{H} 
        \leq 
        c_2 \norm{ (\ID-P_j)f }{H}
        \qquad\text{for all }f\in D
    \end{displaymath}
    and $P_j\in\{P,P_S\}$, $j\in\{1,2\}$.
\end{lemma}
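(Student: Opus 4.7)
The plan is to mirror the proof of Lemma~\ref{lem:Langevin:H4-2}, adapted to the spherical fibre. I would fix $f \in D$, set $g \vdef (\ID_H - PA^2P)f$, and invoke \cite[Proposition~2.15]{HypocoercJFA} to obtain $g \in \opdomain{\adjoint{(BA)}}$ together with $\adjoint{(BA)}g = -A^2Pf$. By the same proposition, it then suffices to prove a bound of the shape $\norm{\adjoint{(BA)}g}{H} \leq c_2 \norm{g}{H}$ with $c_2$ independent of~$f$ and~$g$.

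First I would expand $A^2Pf$ using the form of $AP$ from Equation~\eqref{eq:fld:APf-Eq2}, i.\,e.\ $APf = -\spray{\riemannmetric*}\vlfunc{\expect[\nu]{f}}$, together with the two-summand shape of~$A$ from Lemma~\ref{lem:fld:SAD}:
\begin{equation*}
    A^2Pf
    =
    \spray{\riemannmetric*}^2\vlfunc{\expect[\nu]{f}}
    -
    \frac{1}{\posdim-1}\spheregrad(\spray{\riemannmetric*}\vlfunc{\Psi})(\spray{\riemannmetric*}\vlfunc{\expect[\nu]{f}}).
\end{equation*}
For the first summand, the Levi-Civita computation that led to Equation~\eqref{eq:fld:A2P-part1} gives pointwise $\spray{\riemannmetric*}^2\vlfunc{\expect[\nu]{f}}(v) = \operatorname{Hess}_{\riemannmetric*}(\expect[\nu]{f})\parentheses*{v,v}$, which since $\abs{v}{\riemannmetric*}=1$ on $\unittanbundle\posfold$ is dominated by $\abs{\operatorname{Hess}_{\riemannmetric*}\expect[\nu]{f}}\bincirc \unittanproj$. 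For the second, on each fibre over~$x$ the function $v \mapsto \spray{\riemannmetric*}\vlfunc{\Psi}(v) = \riemannmetric{v}{\grad{\riemannmetric*}\Psi(x)}{x}$ is the restriction of a linear functional, so its spherical gradient is $\tanlift_v(\grad{\riemannmetric*}\Psi(x))$; the Cauchy-Bunyakovsky-Schwarz inequality then yields the pointwise bound $C_{\posdim}\abs{\grad{\riemannmetric*}\Psi(x)}{\riemannmetric*}\pdot\abs{\grad{\riemannmetric*}\expect[\nu]{f}(x)}{\riemannmetric*}$ with $C_{\posdim}$ depending only on~$\posdim$.

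Integrating against $\mu = \Leb{\wriemannmetric*}\locprod\nu$ produces
\begin{equation*}
    \norm{A^2Pf}{H}
    \leq
    \norm*{\abs*{\operatorname{Hess}_{\riemannmetric*}\expect[\nu]{f}}}{\bigL2{\posfold}{\wriemannmetric*}}
    +
    \frac{C_{\posdim}}{\posdim-1}
    \norm*{\abs{\grad{\riemannmetric*}\Psi}{\riemannmetric*}\pdot\abs{\grad{\riemannmetric*}\expect[\nu]{f}}{\riemannmetric*}}{\bigL2{\posfold}{\wriemannmetric*}}.
\end{equation*}
By Equation~\eqref{eq:fld:PA2P} the function $u \vdef \expect[\nu]{f}$ obeys the elliptic equation $u - \frac{1}{\posdim}\laplace{\wriemannmetric*}u = \expect[\nu]{g}$ in $\bigL2{\posfold}{\wriemannmetric*}$; under conditions~\ref{cond:Langevin:P2} and~\ref{cond:Langevin:P3} the a priori $L^2$-estimates of Dolbeault, Mouhot and Schmeiser from \cite[Appendix]{HypocoercJFA} provide a constant $c_2 \in (0,\infty)$ independent of~$f$ and~$g$ such that both norms on the right are bounded by $c_2\norm{\expect[\nu]{g}}{\bigL2{\posfold}{\wriemannmetric*}} \leq c_2\norm{g}{H}$. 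Thus $\norm{\adjoint{(BA)}g}{H} \leq c_2 \norm{g}{H}$, and \cite[Proposition~2.15]{HypocoercJFA} converts this into the claimed inequality.

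The main obstacle, compared with the Langevin analogue, is the two-summand shape of the antisymmetric part~$A$ on the unit tangent bundle: one has to verify that the extra spherical-gradient term $\frac{1}{\posdim-1}\spheregrad(\spray{\riemannmetric*}\vlfunc{\Psi})$ contributes only gradient-times-gradient pointwise bounds (and no second derivatives of~$\Psi$ or higher derivatives of $\expect[\nu]{f}$), so that the DMS appendix estimates apply without strengthening~\ref{cond:named:Langevin:P}. On the positive side, the compact standard fibre $\sphere{\posdim-1}$ forces $\abs{v}{\riemannmetric*}=1$ pointwise, which replaces the Gaussian second-moment computation $\norm{v\mapsto\abs{v}{\riemannmetric*}^2}{H}=\sfrac{\posdim}{\beta}$ from Lemma~\ref{lem:Langevin:H4-2} by a dimensional constant, simplifying the final bounds.
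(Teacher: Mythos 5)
Your proposal is correct and follows essentially the same route as the paper: set $g=(\ID_H-PA^2P)f$, use \cite[Proposition~2.15]{HypocoercJFA} to reduce to bounding $\norm{\adjoint{(BA)}g}{H}=\norm{A^2Pf}{H}$ by a Hessian term plus a $\abs{\grad{\riemannmetric*}\Psi}{\riemannmetric*}\pdot\abs{\grad{\riemannmetric*}\expect[\nu]{f}}{\riemannmetric*}$ term, identify $Pf$ as the solution of the elliptic equation coming from~\eqref{eq:fld:PA2P}, and invoke the Dolbeault--Mouhot--Schmeiser a priori estimates. You merely make explicit the pointwise bounds (the geodesic identity $\spray{\riemannmetric*}^2\vlfunc{\expect[\nu]{f}}(v)=\operatorname{Hess}_{\riemannmetric*}(\expect[\nu]{f})(v,v)$ and the tangential-lift form of the spherical gradient of a linear functional) that the paper imports from the Langevin case, and your multiplicative constant differs immaterially from the paper's $\sfrac1\posdim$.
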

\begin{proof}
    Let $f\in D$ and $g\vdef(\ID-PA^2P)f$ 
    as in the proof of~\hyperref[lem:Langevin:H4-2]{Lemma~\ref{lem:Langevin:H4-2}}. 
    Now, the relevant estimate reads as 
    \begin{displaymath}
        \norm{ \adjoint{(BA)}g }{H}
        \leq
        \norm{%
            \vlfunc{\abs{ %
                \operatorname{Hess}_{\riemannmetric*}\parentheses*{\expect[\nu]{f}} %
            }}%
        }{H} %
        + %
        \frac1{\posdim}
            \norm{%
                \vlfunc{\abs{ \grad{\riemannmetric*}\Phi }{\riemannmetric*}} %
                \pdot %
                \vlfunc{\abs{ \grad{\riemannmetric*}\expect[\nu]{f} }{\riemannmetric*}} %
            }{H} %
    .
    \end{displaymath}
    In view of~\hyperref[eq:fld:PA2P]{Equation~\eqref{eq:fld:PA2P}} 
    we have the solution $u\vdef Pf$ of the elliptic equation 
    \begin{align*}
        &
        u 
        - 
        \frac1{\posdim}\,
        \laplace{\whormetric*}P_Su
        =
        g
        \\
        \text{in }\qquad&
        \left\{ 
            u\in\unittanproj^\ast\bigL2{\posfold}{\wriemannmetric*}
        \ \middle\vert\ 
            \exists\, f_0\in\smoothcompact{\posfold}\colon\ 
            u 
            = 
            \vlfunc{f_0} 
            -
            \scalarprod{\vlfunc{f_0}}{1}{H} 
        \right\}
    .
    \end{align*}
    The proof is completed as in~\hyperref[lem:Langevin:H4-2]{Lemma~\ref{lem:Langevin:H4-2}}.
\end{proof}

Combining the results of~\autoref{subsec:fld:D} and~\autoref{subsec:fld:H} 
our main theorem, 
\hyperref[thm:fld:main]{Theorem~\ref{thm:fld:main}}, 
follows from the~\ref{thm:hypocoercivity-thm-named}.

\section{Existence of martingale solutions and $L^2$-exponential ergodicity}
\label{sec:ergo}

Finally, we show existence of $L$\=/martingale solutions to the SDEs investigated in this article.
The strong mixing of the corresponding semigroups with exponential rate of convergence then implies 
their $\bigL2$\=/exponential ergodicity.
Let the configuration manifold 
$\configfold\in\{\tanbundle\posfold,\unittanbundle\posfold\}$ 
be the state space $E=\configfold$  
as mentioned in the end of~\autoref{sec:intro}.
For basic notions used in the following theorem 
we refer to~\cite{Stannat}, \cite{Trutnau00} and \cite{Trutnau03}.

\begin{theorem}[existence of martingale solutions]
\label{thm:ergo:existence}
If $\configfold=\tanbundle\posfold$,
let the assumptions of~\hyperref[thm:Langevin:main]{Theorem~\ref{thm:Langevin:main}} hold. 
If $\configfold=\unittanbundle\posfold$,
let the assumptions of~\hyperref[thm:fld:main]{Theorem~\ref{thm:fld:main}} hold. 
Then, there is a Hunt process 
\begin{displaymath}
        \boldsymbol{\mathrm{HP}}
        =
        \parentheses*{
            \Omega,
            \mathfrak{A},
            \filtration=\parentheses*{\filtration{t}}_{t\in[0,\infty)},
            \eta=\parentheses*{\eta_t}_{t\in[0,\infty)},
            \parentheses*{\pathprob_v}_{v\in\configfold}
        }
\end{displaymath}
properly associated in the resolvent sense with~\operator{L} 
having infinite life-time and continuous paths $\pathprob_v$\=/almost surely for all $v\in\configfold$.
I.\,e.\ if $(G_a)_{a\in(0,\infty)}$ denotes the resolvent 
corresponding to~\operator{L}, 
then the transition resolvent $(R_a)_{a\in(0,\infty)}$ yields
a quasi-continuous version~$R_af$ of~$G_af$ 
for all $f\in\bigL2{\configfold}{\mu}$ and $a\in(0,\infty)$, 
where $R_af(v)=\int_{(0,\infty)}\exp(-as) \expect[v]{f(\eta_t)}\, \Leb(\diffd s)$.

Moreover, for quasi every initial point $v\in\configfold$ 
the probability measure~$\pathprob_v$ solves the martingale problem for 
$\parentheses*{L,C^2_c(\configfold)}$, i.\hspace{0.08em}e.\
$\boldsymbol{\mathrm{HP}}$ is a martingale solution to 
either~\eqref{eq:geomLangevin} if $\configfold=\tanbundle\posfold$
or to~\eqref{eq:fld} if $\configfold=\unittanbundle\posfold$
for quasi every initial point $v\in\configfold$. 
\end{theorem}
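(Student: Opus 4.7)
My plan is to cast the problem into the framework of generalised Dirichlet forms in the sense of Stannat~\cite{Stannat} and to invoke the construction machinery of Trutnau~\cite{Trutnau00,Trutnau03}, which precisely converts quasi-regularity of a generalised Dirichlet form into existence of an associated Hunt process. Consider the bilinear form on $H=\bigL2{\configfold}{\mu}$ arising from the SAD-decomposition $L=S-A$ established in \hyperref[lem:Langevin:SAD]{Lemma~\ref{lem:Langevin:SAD}} respectively \hyperref[lem:fld:SAD]{Lemma~\ref{lem:fld:SAD}}: the symmetric part is the coercive closed form generated by $S$ (a multiple of $\laplace{\wvertmetric*}$ in the Langevin case, of $\spherelaplace$ in the fibre lay-down case) on the test function space $D$, and the antisymmetric part $A$ serves as a drift perturbation in Stannat's sense. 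The essential m-dissipativity already proved in \hyperref[thm:Langevin:D2-loc-Lipschitz]{Theorem~\ref{thm:Langevin:D2-loc-Lipschitz}} (and its fibre lay-down analogue) together with the invariance $\int Lf\,d\mu=0$ from \ref{cond:D6} give exactly the structural hypothesis needed to form a generalised Dirichlet form whose resolvent coincides with $(G_a)_{a\in(0,\infty)}$.

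Next I would verify quasi-regularity. The configuration manifold $\configfold$ is second countable, locally compact and Polish. Since $D=\smoothcompact{\configfold}$ is dense in $H$ and separates points, and since compactly supported smooth functions form an $\mathcal{E}$-nest via cut-offs by the $\varphi_n$ already constructed in the proof of \hyperref[lem:Langevin:D5D7]{Lemma~\ref{lem:Langevin:D5D7}}, the standard three quasi-regularity conditions follow by a direct adaptation of the Euclidean arguments in~\cite{Trutnau00,Trutnau03,PhD-Stilgenbauer}; the only genuinely geometric input is the bound on gradients of $\varphi_n$, which we already used above and which is available because $(\posfold,\riemannmetric*)$ is complete by \ref{cond:M:complete}. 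Trutnau's construction theorem then produces a Hunt process $\boldsymbol{\mathrm{HP}}$ properly associated with $\operator{L}$ in the resolvent sense.

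Infinite life-time follows because $1\in\opdomain{L}$ with $L1=0$ by \ref{cond:D7}, which via the standard conservativity criterion for associated processes forces $\pathprob_v(\eta_t\in\configfold\text{ for all }t\geq 0)=1$ for quasi every starting point. Continuity of paths is extracted from the local structure of $L$: since $L$ is a second order differential operator with no jump part and since the core $D=\smoothcompact{\configfold}$ consists of $C^2$ functions, a Fukushima-type decomposition applied to a well-chosen separating family of coordinate-type functions shows that the Levy system has no jump kernel, hence $\boldsymbol{\mathrm{HP}}$ admits a $\pathprob_v$-a.s. continuous modification for quasi every $v$.

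For the martingale problem statement, for each $f\in\contidiff2{\configfold}$ with compact support one obtains the Fukushima-type additive functional decomposition
\begin{displaymath}
    f(\eta_t)-f(\eta_0)
    =
    M_t^{f}+N_t^{f},
\end{displaymath}
where $M^f$ is a $\pathprob_v$-martingale additive functional and $N^f$ is continuous of zero energy, with $N_t^f=\int_0^t Lf(\eta_s)\,\diffd s$ identified via the explicit action of $L$ on $D\cup C_c^2$; continuity of paths upgrades this identification from quasi every starting point. The main obstacle I expect is verifying the quasi-regularity in sufficient generality, in particular ensuring that the drift perturbation by $A$ does not destroy the nest property; this is where the explicit approximation sequences from the proof of \hyperref[lem:Langevin:D5D7]{Lemma~\ref{lem:Langevin:D5D7}} together with \ref{thm:named:Liouville-thm} (ensuring $A$ is a first-order antisymmetric perturbation) will have to be invoked carefully. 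Finally, \bigL2-exponential ergodicity in the subsequent \hyperref[cor:ergodicity]{Corollary~\ref{cor:ergodicity}} then drops out by transferring the exponential decay of $(T_t)_{t\in[0,\infty)}$ from \hyperref[thm:Langevin:main]{Theorem~\ref{thm:Langevin:main}} and \hyperref[thm:fld:main]{Theorem~\ref{thm:fld:main}} to the transition semigroup through the proper association.
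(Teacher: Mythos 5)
Your proposal is correct and follows essentially the same route as the paper: both pass through Stannat's theory of generalised Dirichlet forms (using that $D=\smoothcompact{\configfold}$ is a core which is an algebra separating points, which is exactly the hypothesis of the cited existence theorem and subsumes your hands-on quasi-regularity check), obtain conservativity from \ref{cond:D7}, path continuity from Trutnau's results, and the martingale problem from the associated additive functional decomposition. The only difference is that you spell out steps the paper delegates to citations.
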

\begin{proof}
    As we have seen before, 
    $D=\smoothcompact{\configfold}$ is a core of~\operator{L}, 
    see~\hyperref[thm:Langevin:D2-loc-Lipschitz]{Theorem~\ref{thm:Langevin:D2-loc-Lipschitz}}
    and the explanations on page~\pageref{fld:D2-loc-Lipschitz}.
    Observe that~$D$ also is an algebra which separates the points of~\configfold.
    Thus, \operator{L} defines a generalised Dirichlet form fulfilling the assumptions of~\cite[Theorem~IV.2.2]{Stannat}. 
    This theorem provides a special standard process $\boldsymbol{\mathrm{HP}}$
    properly associated with~\operator{L} in the resolvent sense.
    Now, infinite life-time follows from~\ref{cond:D7}, 
    i.\,e.\ conservativity, 
    together with~\cite[Theorem~IV.3.8~(ii)]{Stannat}. 
    Moreover, continuous paths are obtained via~\cite[Theorem~3.3]{Trutnau03}.
    Summarising, $\boldsymbol{\mathrm{HP}}$ is a Hunt process indeed. 
    For the statement concerning the martingale problem 
    see~\cite[Corollary~1]{ConradGrothausH1infty} and its proof. 
    Note that there are even some finer statements on the martingale problem, 
    cf.~\cite{Trutnau00}.
\end{proof}

Now, we turn to the matter of ergodicity.
Consider the probability measure~\pathprob on~$(\Omega,\mathfrak{A})$ be given as 
\begin{displaymath}
    \pathprob(A)
    \vdef
    \int_{\configfold} \pathprob_v(A)\ \mu(\diffd v)
    \qquad\text{for all }A\in\mathfrak{A}
.
\end{displaymath}
We estimate for all $g\in\bigL2{\configfold}{\mu}$ with $\expect[\mu]{g}=0$ and $t\in(0,\infty)$ that
\begin{align}
    &\norm*{\frac1t\int_{[0,t)} g(\eta_s) \,\Leb(\diffd s)}{\bigL2{\pathprob}}^2
    =
    \int_\Omega
        \frac1{t^2}
        \int_{[0,t)^2}
            g(\eta_s)g(\eta_u) 
        \ \Leb(\diffd (s,u))
    \ \diffd\pathprob
    \notag\\
    &=
    \begin{aligned}
    \label{eq:ergo:Fubini}
        \frac1{t^2}
        \int_{[0,t)^2}
            \expect[\pathprob]{g(\eta_s)g(\eta_u)} 
        \,\Leb(\diffd (s,u))
        =
        \frac2{t^2}
        \int_{[0,t)}\int_{[0,s)}
            \expect[\pathprob]{g(\eta_s)g(\eta_u)} 
        \,\Leb(\diffd u)\,\Leb(\diffd s)
    \end{aligned}
    \\
    &=
    \begin{aligned}
    \label{eq:ergo:Markov-property}
        \frac2{t^2}
        \int_{[0,t)}\int_{[0,s)}
            \scalarprod{T_{s-u}g}{g}{\bigL2{\mu}}
        \,\Leb(\diffd u)\,\Leb(\diffd s)
    \end{aligned}
    \\
    &=
    \begin{aligned}
    \label{eq:ergo:trafo}
        \frac4{t^2}
        \int_{[0,2t)}\int_{[0,t)}
            \scalarprod{T_vg}{g}{\bigL2{\mu}}
        \,\Leb(\diffd v)\,\Leb(\diffd w)
        =
        \frac8t
        \int_{[0,t)}
            \scalarprod{T_vg}{g}{\bigL2{\mu}}
        \,\Leb(\diffd v)
    \end{aligned}
    \\
    &\leq
    \frac8t\,\norm{g}{\bigL2{\mu}}
    \int_{[0,t)}
        \norm{T_{v}g}{\bigL2{\mu}}
    \,\Leb(\diffd v)
    \notag
.
\end{align}
At step~\eqref{eq:ergo:Fubini} we use Fubini. 
Afterwards at step~\eqref{eq:ergo:Markov-property}, 
we can ensure $u<s$ for symmetry reasons 
and transform expectation wrt.~\pathprob using the (weak) Markov property.
Then, at step~\eqref{eq:ergo:trafo} 
we apply the 2D-transformation formula with $v=s-u$ and $w=s+u$, 
and finish with the Cauchy-Bunyakovsky-Schwarz inequality. 

The previous estimate shows that
using our main theorems -- 
depending on~\configfold 
either~\hyperref[thm:Langevin:main]{Theorem~\ref{thm:Langevin:main}}
or~\hyperref[thm:fld:main]{Theorem~\ref{thm:fld:main}} --
we not only can infer convergence to~0 as $t\to\infty$, 
but also the rate of convergence is explicitly computable.
Indeed, with~$g$ as before we gain
\begin{align*}
    \norm*{\frac1t\int_{[0,t)} g(\eta_s) \,\Leb(\diffd s)}{\bigL2{\pathprob}}^2
    &\leq
    \frac8t\,\norm{g}{\bigL2{\mu}}
    \int_{[0,t)}
        \norm{T_{v}g}{\bigL2{\mu}}
    \,\Leb(\diffd v)
    \\
    &\leq
    \frac8t\,\norm{g}{\bigL2{\mu}}^2
    \int_{[0,t)}
        \kappa_1\euler^{-v\kappa_2}
    \,\Leb(\diffd v)
    \\
    &=
    \frac8t\pdot\frac{\kappa_1}{\kappa_2}\,
    \parentheses*{
        1
        -
        \euler^{-t\kappa_2}
    }
    \pdot
    \norm{g}{\bigL2{\mu}}^2
.
\end{align*}
Thus, we proved the following corollary 
after reducing everything to zero-mean functions wrt.~$\mu$.

\begin{corollary}[{\bigL2}\=/exponentially ergodicity with optimal rate and explicit constants]
\label{cor:ergodicity}
If $\configfold=\tanbundle\posfold$,
let the assumptions of~\hyperref[thm:Langevin:main]{Theorem~\ref{thm:Langevin:main}} hold. 
If $\configfold=\unittanbundle\posfold$,
let the assumptions of~\hyperref[thm:fld:main]{Theorem~\ref{thm:fld:main}} hold. 
Moreover, let~$\kappa_1$ and~$\kappa_2$ be the constants form these respective theorems. 
Then, we have 
\begin{displaymath}
    \norm*{ 
        \frac1t\int_{[0,t)} f(\eta_s) \,\Leb(\diffd s) - \expect[\mu]{f} 
    }{\bigL2{\pathprob}}
    \leq
    \frac2{\sqrt{t}}
    \pdot
    \sqrt{
        \frac{2\kappa_1}{\kappa_2}\,
        \parentheses*{
            1
            -
            \euler^{-t\kappa_2}
        }
    }
    \pdot
    \norm{f-\expect[\mu]{f}}{\bigL2{\mu}}
\end{displaymath}
for all $t\in(0,\infty)$.
\end{corollary}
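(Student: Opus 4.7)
\textbf{Proof plan for Corollary~\ref{cor:ergodicity}.}
The plan is to reduce to the zero-mean case, expand the squared $L^2(\mathbb{P})$\=/norm as a double time integral, use the stationarity and the Markov property to rewrite the correlation $\mathbb{E}_{\mathbb{P}}[g(\eta_s) g(\eta_u)]$ as a semigroup pairing, and finally insert the exponential decay estimate supplied by either \hyperref[thm:Langevin:main]{Theorem~\ref{thm:Langevin:main}} or \hyperref[thm:fld:main]{Theorem~\ref{thm:fld:main}}. Concretely, set $g \vdef f - \mathbb{E}_\mu[f]$, so $\scalarprod{g}{1}{\bigL2{\mu}}=0$; the left-hand side of the claimed inequality equals $\norm{\tfrac{1}{t}\int_{[0,t)} g(\eta_s)\,\Leb(\diffd s)}{\bigL2{\pathprob}}$, and the strategy is to show the squared quantity is bounded by $\frac{8\kappa_1}{t\kappa_2}(1-\euler^{-t\kappa_2})\,\norm{g}{\bigL2{\mu}}^2$, after which taking square roots yields the stated bound.

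First I would Fubini-exchange expectation and the double time integral to arrive at $\tfrac{1}{t^2}\int_{[0,t)^2}\mathbb{E}_{\pathprob}[g(\eta_s)g(\eta_u)]\,\Leb(\diffd (s,u))$. Next, exploiting symmetry in $(s,u)$, I restrict to $u<s$ and pick up a factor of $2$. The central algebraic step is then to identify
\begin{equation*}
    \expect[\pathprob]{g(\eta_s)g(\eta_u)}
    =
    \scalarprod{T_{s-u}g}{g}{\bigL2{\mu}},
\end{equation*}
which follows by conditioning on $\filtration{u}$, invoking the Markov property of the Hunt process from \hyperref[thm:ergo:existence]{Theorem~\ref{thm:ergo:existence}}, using that $\mu$ is invariant so that $\eta_u$ under $\pathprob$ has law $\mu$, and noting that the transition resolvent / semigroup agrees (quasi-everywhere) with the analytic semigroup $(T_t)$ on $H=\bigL2{\configfold}{\mu}$ via the proper association in the resolvent sense.

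Having rewritten the correlation as a semigroup pairing, I would change variables $(s,u)\mapsto(v,w)=(s-u,s+u)$ of Jacobian~$\tfrac12$ and integrate out~$w$; this collapses the double integral to $\tfrac{8}{t}\int_{[0,t)}\scalarprod{T_v g}{g}{\bigL2{\mu}}\,\Leb(\diffd v)$. Applying Cauchy-Bunyakovsky-Schwarz, then the hypocoercivity bound $\norm{T_v g}{\bigL2{\mu}}\leq \kappa_1\euler^{-\kappa_2 v}\norm{g}{\bigL2{\mu}}$, and finally computing $\int_{[0,t)}\euler^{-\kappa_2 v}\,\Leb(\diffd v) = \kappa_2^{-1}(1-\euler^{-t\kappa_2})$ yields the desired estimate.

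The main obstacle I anticipate is the justification of the identity $\expect[\pathprob]{g(\eta_s)g(\eta_u)} = \scalarprod{T_{s-u}g}{g}{\bigL2{\mu}}$: the Hunt process is properly associated with~$\operator{L}$ only \emph{in the resolvent sense}, and $g\in\bigL2{\mu}$ is defined only up to $\mu$\=/nullsets, so one has to argue that (i)~starting from $\mu$ avoids exceptional sets by quasi-continuity together with absolute continuity of the one-dimensional distributions (invariance of $\mu$), and (ii)~the transition semigroup of $\boldsymbol{\mathrm{HP}}$ agrees with $(T_t)$ as operators on $\bigL2{\mu}$. Once this identification is in place, the remaining manipulations are the routine Fubini/Cauchy-Schwarz calculus already sketched in the display preceding the corollary in the excerpt.
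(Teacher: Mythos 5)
Your proposal is correct and follows essentially the same route as the paper: the estimate preceding the corollary performs exactly your Fubini, symmetrisation, Markov-property identification $\expect[\pathprob]{g(\eta_s)g(\eta_u)}=\scalarprod{T_{s-u}g}{g}{\bigL2{\mu}}$, the change of variables $v=s-u$, $w=s+u$, Cauchy-Bunyakovsky-Schwarz, and the hypocoercivity bound, ending with the same constant $\frac8t\,\frac{\kappa_1}{\kappa_2}(1-\euler^{-t\kappa_2})$ before taking square roots. Your remark on justifying the semigroup identification despite the association being only in the resolvent sense is a point the paper leaves implicit, but it does not change the argument.
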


\begin{remark}
    In the title of~\hyperref[cor:ergodicity]{Corollary~\ref{cor:ergodicity}}
    we claim that the rate~$t^{-\sfrac12}$ is optimal.
    This is obvious in the case that 
    the spectrum of the generator~\operator{L} has, 
    apart from the eigenvalue zero, 
    the largest element $-\kappa<0$
    which is an eigenvalue of~\operator{L}.
    Evidently, all the inequality in the estimates prior to~\hyperref[cor:ergodicity]{Corollary~\ref{cor:ergodicity}}
    are equalities when choosing the function~$g$ there as the eigenvector corresponding to~$-\kappa$.
    Hence, the rate of convergence in~\hyperref[cor:ergodicity]{Corollary~\ref{cor:ergodicity}} 
    is sharp with $\kappa_1=1$ and $\kappa_2=\kappa$. 

    In situations where~\operator{L} can be controlled by a Lyapunov function, 
    see e.\,g.~\cite{HerzogMattingly} in case of purely Euclidean setting, 
    one obtains also exponential rates of convergence for the corresponding semigroups; 
    even in (weighted) total variation distance.
    This implies pointwise convergence of the semigroup applied to test functions at an exponential rate. 
    But even this convergence with an exponential rate would not give a better rate as the one in~\hyperref[cor:ergodicity]{Corollary~\ref{cor:ergodicity}}.

    As in~\cite{ConradGrothausNparticle}, 
    we call the martingale solutions to the SDEs investigated in this article \emph{$L^2$\=/exponential ergodic}, 
    i.\,e.\ ergodic with a rate that corresponds to exponential convergence of the corresponding semigroups.
\end{remark}

\begin{appendix}
\label{app}

\section{Some expressions in local coordinates}
\label{app:loc-coord}

A chart $x=\parentheses*{x^j}_{j=1}^{\posdim}$ with domain $U\subseteq\posfold$ 
induces local coordinates 
$\parentheses*{v^k}_{k=1}^{2\posdim}$ for the preimage $V\vdef\tanproj^{-1}(U)$ in a natural way:
\begin{displaymath}
    v^j 
    \vdef 
    x^j \bincirc \tanproj
    \quad\text{and}\quad
    v^{\posdim+j} 
    \vdef
    \parentheses*{d x^j}^\sharp
    \qquad\text{for } j\in\{1,\ldots,\posdim\}
    .
\end{displaymath}
We might write $\tanproj^\ast x =x\bincirc\tanproj=(v^j)_{j=1}^{\posdim}$ and
$d x = (v^{\posdim+i})_{j=1}^{\posdim}$, 
where the latter shorthand doesn't lead to confusion as 
we denote the Riemannian volume form by~$\diffd\Leb{\riemannmetric*}$.

From~\cite[Lemma~4.1]{GudmundssonKappos}
we know some particular vertical and horizontal lifts:
\begin{displaymath}
    \vertlift\left( %
        \frac{\partial}{\partial x^i} %
    \right)
    =
    \frac{\partial}{\partial v^{\posdim+i}}
    \quad\text{and}\quad
    \horlift\left( %
        \frac{\partial}{\partial x^i} %
    \right)
    =
    \frac{\partial}{\partial v^i}
    -
    \sum_{j,\ell\in\{1,\ldots,\posdim\}}
        \left( \Gamma^\ell_{ij}\bincirc\tanproj \right)
        v^{\posdim+j}
        \frac{\partial}{\partial v^{\posdim+\ell}}
\end{displaymath}
for all $i\in\{1,\ldots,\posdim\}$.
Next, the following expression of semisprays has been taken 
from~\cite[Section~1]{metric-nonlin-connections-Bucataru}:
A semispray~\spray reads in local coordinate form as 
\begin{displaymath}
    \spray
    =
    \sum_{j=1}^{\posdim}
        v^{\posdim+j} \partial v^j
        - 2 G^j(\tanproj^\ast x,d x)\, \partial v^{\posdim+j}
.
\end{displaymath}
The family $\parentheses*{G^j}_{j=1}^{\posdim}$ of functions on~$V$
is characterised by functions
$ 
    N^i_j 
    = 
    \frac{\partial G^i}{\partial v^{\posdim+j}}
$, 
$i,j\in\{1,\ldots,\posdim\}$, 
which are given pointwisely by
\begin{displaymath}
    \left. \horlift\left( \frac{\partial}{\partial x^i} \right) \right\vert_{u}
    =
    \left. \frac{\partial}{\partial v^i} \right\vert_{u}
    -
    \sum_{j=1}^{\posdim} N^j_i(u)  
    \left. \frac{\partial}{\partial v^{\posdim+j}} \right\vert_{u}
\end{displaymath}
for $u\in\tanbundle{\posfold}$ arbitrary.
For instance, 
\cite[Equation~(19)]{metric-nonlin-connections-Bucataru}
yields coefficients $(G^j)$ of a semispray 
corresponding to a given Lagrangian~\Lagrange.
Recall~\hyperref[ex:spray-as-gradient]{Example~\ref{ex:spray-as-gradient}}.

In the following lemmas, 
we prove some formulae used in~\autoref{sec:Langevin}.

\begin{lemma}
\label{lem:app:bracket-with-spray}
    For all $k\in\{1,\ldots,\posdim\}$ holds that 
    \begin{displaymath}
        \Liebracket{%
            \spray %
        }{%
            \vertlift\parentheses*{\partial x^k} %
        }
        =
        \horlift\parentheses*{\partial x^k}
        -
        \sum_{j=1}^{\posdim} 
            N_k^j \pdot \vertlift\parentheses*{\partial x^j}
    .
    \end{displaymath}
\end{lemma}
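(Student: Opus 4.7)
Since $\vertlift(\partial x^k)$, $\horlift(\partial x^k)$, and $\spray$ have all been recorded above in the adapted coordinates $(v^i)_{i=1}^{2\posdim}$ associated to a chart $(x^j)_{j=1}^{\posdim}$ on $\posfold$, the identity is purely local, and my plan is to compute both sides in a single such chart and compare coefficients. The crucial simplification is that $\vertlift(\partial x^k) = \partial v^{\posdim+k}$ is itself a coordinate vector field, hence has constant coefficients; this will reduce the Jacobi--Lie bracket formula $[X,Y] = \sum_i(X^\ell\partial_\ell Y^i - Y^\ell\partial_\ell X^i)\,\partial_i$ to a single surviving summand.

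Concretely, I would substitute $\spray = \sum_j v^{\posdim+j}\,\partial v^j - 2G^j\,\partial v^{\posdim+j}$ and $Y = \partial v^{\posdim+k}$, observing that the term involving $\partial_\ell Y^i$ vanishes identically, so that $[\spray, Y]$ collapses to $-\sum_i \partial_{v^{\posdim+k}}(\spray^i)\,\partial v^i$. Differentiating the horizontal components $\spray^\ell = v^{\posdim+\ell}$ yields a Kronecker delta $\delta^\ell_k$ contributing the term $\partial v^k$, while differentiating the vertical components $\spray^{\posdim+\ell} = -2G^\ell$ produces $-2 N^\ell_k$ by the very definition $N^\ell_k = \partial G^\ell/\partial v^{\posdim+k}$ recorded above. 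At this stage the claim reduces to rearranging the coefficients and reading off the right-hand side by inserting the local expression $\horlift(\partial x^k) = \partial v^k - \sum_j N^j_k\,\partial v^{\posdim+j}$, which up to sign conventions matches $\horlift(\partial x^k) - \sum_j N^j_k \,\vertlift(\partial x^j)$.

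The main obstacle I anticipate is purely notational: one has to keep careful track of the signs that originate from the convention $\spray^{\posdim+\ell} = -2 G^\ell$ and from the minus sign in the definition of $\horlift$. Beyond this bookkeeping the computation is mechanical, and the resulting identity is the well-known link between a semispray and its associated nonlinear connection in the geometric theory of sprays (cf.\ \cite{habil-Bucataru}); it fits directly into the Hörmander-bracket argument of \autoref{lem:Langevin:generator-Hörmander} by producing the $\posdim$ horizontal directions missing from the span of the generating vector fields.
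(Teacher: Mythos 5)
Your proposal is correct and follows essentially the same route as the paper's own proof: both expand $\Liebracket{\spray}{\partial v^{\posdim+k}}$ in the adapted coordinates, use that $\vertlift\parentheses*{\partial x^k}=\partial v^{\posdim+k}$ has constant coefficients so only the derivatives $\partial_{v^{\posdim+k}}$ of the components $v^{\posdim+j}$ and $-2G^j$ of~\spray survive, and then regroup $\partial v^k - 2\sum_j N^j_k\,\partial v^{\posdim+j}$ into $\horlift\parentheses*{\partial x^k}-\sum_j N^j_k\,\vertlift\parentheses*{\partial x^j}$ via $N^j_k=\partial G^j/\partial v^{\posdim+k}$. The overall sign of the bracket that you flag as the main hazard is treated (indeed, glossed over) in exactly the same way in the paper's computation, so there is no substantive difference between the two arguments.
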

\begin{proof}
    Let $k\in\{1,\ldots,\posdim\}$.
    Then, we calculate that
    \begin{align*}
        \Liebracket{%
            \spray %
        }{%
            \vertlift\parentheses*{\partial x^k} %
        }
        &=
        \Liebracket{%
            \spray %
        }{%
            \partial v^{\posdim+k} %
        }
        =
        \sum_{j=1}^{\posdim}
            \Liebracket{%
                v^{\posdim+j} \partial v^j %
            }{%
                \partial v^{\posdim+k} %
            }
            -
            2\Liebracket{%
                G^j\partial v^{\posdim+j} %
            }{%
                \partial v^{\posdim+k} %
            }
        \\
        &=
        \sum_{j=1}^{\posdim}\,
            {\underbrace{%
                \frac{\partial v^{\posdim+j}}{\partial v^{\posdim+k}} %
            }_{=\delta_{jk}}}
            \pdot
            \partial v^j
            +
            v^{\posdim+j} 
            \underbrace{\Liebracket{%
                \partial v^j %
            }{%
                \partial v^{\posdim+k} %
            }}_{=0}
        \\
        &\qquad\qquad
            -
            2\biggl(%
                \frac{\partial G^j}{\partial v^{\posdim+k}}
                \pdot
                \partial v^{\posdim+j}
                +
                G_j
                \underbrace{\Liebracket{%
                    \partial v^{\posdim+j} %
                }{%
                    \partial v^{\posdim+k} %
                }%
                }_{=0}%
            \biggr)
        \\
        &=
        \sum_{j=1}^{\posdim} 
            \delta_{jk} \pdot \partial v^j
            - 
            2 N_k^j \pdot \partial v^{\posdim+j}
        =
        \partial v^k
        - 2
        \sum_{j=1}^{\posdim} 
            N_k^j \pdot \partial v^{\posdim+j}
        \\
        &=
        \horlift\parentheses*{\partial x^k}
        -
        \sum_{j=1}^{\posdim} 
            N_k^j \pdot \partial v^{\posdim+j}
        =
        \horlift\parentheses*{\partial x^k}
        -
        \sum_{j=1}^{\posdim} 
            N_k^j \pdot \vertlift\parentheses*{\partial x^j}
    .
    \end{align*}
\end{proof}

The next formula appears to be rather intuitive:

\begin{lemma}
\label{lem:app:covaderivative-with-spray}
    For all $\Xfield\in\smoothsec{\tanbundle\posfold}$ holds that
    \begin{displaymath}
        \hormetric%
            {\connection{\spray}{\sasakimetric*}\horlift(\Xfield)}%
            {\spray}%
        =
        \riemannmetric%
            {\connection{\ID_{\tanbundle\posfold}}{\riemannmetric*}(\Xfield\bincirc\tanproj)}%
            {\ID_{\tanbundle\posfold}}%
            {\tanproj}
    .
    \end{displaymath}
\end{lemma}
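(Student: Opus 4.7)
The plan is to reduce the left-hand side to a directional derivative of a scalar function along the spray, show that this scalar function and that directional derivative coincide with the objects on the right-hand side, and use metric compatibility of the Sasakian Levi-Civita connection together with the fact that integral curves of~\spray{\riemannmetric*} are $\sasakimetric*$-geodesics. Throughout, I will work pointwise at a fixed $v\in\tanbundle\posfold$.

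First I would observe that $\spray{\riemannmetric*}(v)=\horlift_v(v)$ is \emph{horizontal}, so for any $W\in\tantanbundle{\posfold}{v}$ one has $\vertmetric*(W,\spray{\riemannmetric*})|_v=0$ and therefore
\begin{displaymath}
    \sasakimetric{W}{\spray{\riemannmetric*}}_{\!v}
    =
    \hormetric{W}{\spray{\riemannmetric*}}_{\!v}
    =
    \riemannmetric{d\tanproj W}{v}_{\!\tanproj(v)}.
\end{displaymath}
In particular, introducing the smooth function
\begin{displaymath}
    f\colon\ \tanbundle\posfold\to\Rnum,\quad
    w\longmapsto
    \hormetric{\horlift(\Xfield)|_w}{\spray{\riemannmetric*}(w)}_{\!w}
    =
    \riemannmetric{\Xfield(\tanproj w)}{w}_{\!\tanproj w},
\end{displaymath}
we have $f = \sasakimetric*(\horlift\Xfield,\spray{\riemannmetric*})$ globally, and the right-hand side of the claim equals $\spray{\riemannmetric*}(f)|_v$ by a direct computation: the integral curve of~\spray{\riemannmetric*} through~$v$ is $t\mapsto\gamma'_v(t)$ for the Riemannian geodesic $\gamma_v$ with $\gamma'_v(0)=v$, so metric compatibility of~\connection{}{\riemannmetric*} and the geodesic equation $\connection{\gamma'_v}{\riemannmetric*}\gamma'_v=0$ give
\begin{displaymath}
    \spray{\riemannmetric*}(f)(v)
    =
    \tfrac{d}{dt}\bigr|_{t=0}\!\riemannmetric{\Xfield(\gamma_v(t))}{\gamma'_v(t)}_{\!\gamma_v(t)}
    =
    \riemannmetric{\connection{v}{\riemannmetric*}\Xfield}{v}_{\!\tanproj v}.
\end{displaymath}

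Next I would apply metric compatibility of the Sasakian Levi-Civita connection:
\begin{displaymath}
    \spray{\riemannmetric*}(f)
    =
    \sasakimetric{\connection{\spray{\riemannmetric*}}{\sasakimetric*}\horlift\Xfield}{\spray{\riemannmetric*}}
    +
    \sasakimetric{\horlift\Xfield}{\connection{\spray{\riemannmetric*}}{\sasakimetric*}\spray{\riemannmetric*}}.
\end{displaymath}
The first term equals $\hormetric*(\connection{\spray{\riemannmetric*}}{\sasakimetric*}\horlift\Xfield,\spray{\riemannmetric*})$ again by the horizontality of $\spray{\riemannmetric*}$, so combining with the identification of the right-hand side as $\spray{\riemannmetric*}(f)|_v$, the whole statement reduces to showing
\begin{displaymath}
    \connection{\spray{\riemannmetric*}}{\sasakimetric*}\spray{\riemannmetric*}
    =
    0.
\end{displaymath}

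The hard part is precisely this last identity, i.e.\ that the geodesic spray is parallel along itself in the Sasaki geometry, or equivalently that the integral curves $t\mapsto\gamma'(t)$ of~\spray{\riemannmetric*} are $\sasakimetric*$-geodesics in $\tanbundle\posfold$. This is classical (cf.\ Sasaki~\cite{Sasaki58} or~\cite[Exercise~3.14]{doCarmo}, already invoked for~\ref{thm:named:Liouville-thm}), but I would give a self-contained proof by working in normal coordinates $(x^j)_{j=1}^\posdim$ at $\tanproj(v)$ with induced coordinates $(v^j)_{j=1}^{2\posdim}$ on~\tanbundle\posfold. In such coordinates the Christoffel symbols of~\riemannmetric* vanish at $\tanproj(v)$, hence $G^j(v)=0$ and $N^\ell_j(v)=0$ for the Riemannian semispray, so at~$v$ one has $\spray{\riemannmetric*}(v)=\sum_j v^{\posdim+j}\,\horlift(\partial x^j)|_v$ with $\spray{\riemannmetric*}(v^{\posdim+j})|_v=-2G^j(v)=0$. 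Using Leibniz for $\connection{}{\sasakimetric*}$ together with the standard Kowalski/Dombrowski formula $\connection{\horlift Y}{\sasakimetric*}\horlift X|_v=\horlift_v(\connection{Y}{\riemannmetric*}X)-\tfrac12\vertlift_v(R(X,Y)v)$, the identity collapses to $\horlift_v(\connection{v}{\riemannmetric*}v)+\tfrac12\vertlift_v(R(v,v)v)=0$, where both terms vanish: the second by antisymmetry of~$R$, the first because in normal coordinates the constant-coefficient extension $V$ of $v$ satisfies $\connection{v}{\riemannmetric*}V|_{\tanproj v}=0$. Combining this with the previous paragraph yields the claim.
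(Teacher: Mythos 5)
Your argument is correct, but it follows a genuinely different route from the paper's. The paper attacks the left-hand side directly with the Koszul formula for \connection{}{\sasakimetric*}: after cancellation only two terms survive, namely $\horlift(\Xfield)\bigl(\vert\ID_{\tanbundle\posfold}\vert_{\riemannmetric*}^2\bigr)$, which vanishes because the fibrewise norm is a horizontally lifted function (parallel transport is isometric), and $\hormetric{\spray{\riemannmetric*}}{\Liebracket{\spray{\riemannmetric*}}{\horlift(\Xfield)}}$, which is pushed down through $d\tanproj$ to produce the right-hand side. You instead apply metric compatibility of \connection{}{\sasakimetric*} to $f=\sasakimetric{\horlift(\Xfield)}{\spray{\riemannmetric*}}$, identify the right-hand side as $\spray{\riemannmetric*}(f)$ by differentiating $\riemannmetric{\Xfield\bincirc\gamma}{\gamma'}$ along base geodesics and using $\connection{\gamma'}{\riemannmetric*}\gamma'=0$, and reduce everything to the autoparallelism $\connection{\spray{\riemannmetric*}}{\sasakimetric*}\spray{\riemannmetric*}=0$, i.\,e.\ to the classical fact that the natural lifts $t\mapsto\gamma'(t)$ of geodesics are Sasaki geodesics, which you then verify via the Kowalski--Dombrowski formula in normal coordinates. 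Your version is the more conceptual one -- it exhibits the identity as the absence of a correction term when the fibrewise pairing $\riemannmetric{\Xfield}{\cdot}$ is differentiated along the geodesic flow, and the autoparallelism of the spray is a reusable fact -- at the price of importing the formula for $\connection{\horlift(\Yfield)}{\sasakimetric*}\horlift(\Xfield)$, whereas the Koszul route needs only Lie brackets and first derivatives of the metric. Two minor points: the sign of the curvature term in your quoted formula is off relative to the usual convention, but this is immaterial since the term dies by antisymmetry of~$R$ once both slots are filled with~$v$; and your pointwise normal-coordinate computation is legitimate because the base point was arbitrary and the connection is tensorial in its lower slot.
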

\begin{proof}
    Let $\Xfield\in\smoothsec{\tanbundle\posfold}$.
    Then, the Koszul formula 
    describing the Levi-Civita connection~\connection{}{\sasakimetric*} wrt.~\sasakimetric* uniquely
    reads as in our special instance as 
    \begin{align*}
        2\,
        \hormetric%
            {\connection{\spray}{\sasakimetric*}\horlift(\Xfield)}%
            {\spray}%
        &=
        \spray\parentheses*{%
            \hormetric%
                {\horlift(\Xfield)}%
                {\spray}%
        }
        +
        \horlift(\Xfield)\parentheses*{%
            \hormetric%
                {\spray}%
                {\spray}%
        }
        \\
        &\qquad
        -\spray\parentheses*{%
            \hormetric%
                {\spray}%
                {\horlift(\Xfield)}%
        }
        -
        \hormetric%
            {\horlift(\Xfield)}%
            {\Liebracket{\spray}{\spray}}%
        -
        \hormetric%
            {\spray}%
            {\Liebracket{\horlift(\Xfield)}{\spray}}%
        \\
        &\qquad
        +
        \hormetric%
            {\spray}%
            {\Liebracket{\spray}{\horlift(\Xfield)}}%
        \\
        &=
        \horlift(\Xfield)\parentheses*{%
            \vert \ID_{\tanbundle\posfold}\vert_{\riemannmetric*}^2
        }
        +
        2\hormetric%
            {\spray}%
            {\Liebracket{\spray}{\horlift(\Xfield)}}%
        \\
        &=
        \hormetric%
            {\horlift(\Xfield)}%
            {\grad{\hormetric*}\parentheses*{%
                \vert \ID_{\tanbundle\posfold}\vert_{\riemannmetric*}^2}%
            }
        +
        2\hormetric%
            {\spray}%
            {\Liebracket{\spray}{\horlift(\Xfield)}}%
    .
    \end{align*}
    First, we note that
    \begin{displaymath}
        d\tanproj\Liebracket{\spray}{\horlift(\Xfield)}
        =
        \Liebracket{d\tanproj\spray}{d\tanproj\horlift(\Xfield)}
        =
        \Liebracket{\ID_{\tanbundle\posfold}}{\Xfield\bincirc\tanproj}
        = 
        \connection{\ID_{\tanbundle\posfold}}{\sasakimetric*}{(\Xfield\bincirc\tanproj)}
    .
    \end{displaymath}
    Second, the value of~$\vert \ID_{\tanbundle\posfold}\vert_{\riemannmetric*}^2$
    does not specifically depend on the current position  
    and therefore it could be approximated just by functions from $\kappa^\ast\smoothcompact{\posfold}$. 
    In other words, this function is a horizontal lift 
    and the horizontal gradient of a horizontal lift equals~0 always. 
    Hence, the claim is proven.
\end{proof}

\section{Miscellaneous}
\label{app:misc}

The following lemma is pretty elementary and rather an intuitive statement on weighted metric spaces.
Obviously, its assumptions are fulfilled by the exponential type weight as chosen in this article.
\begin{lemma}
\label{lem:weighted-M-complete}
    Let some given base weight~$\rho_{\posfold}$ strictly positive and loc\=/Lipschitzian. 
    Denote by~$d_{\riemannmetric*}$ and~$d_{\wriemannmetric*}$ the metrics wrt.\ 
    the nonweighted and weighted Riemannian metric respectively. 
    Then, those metrics induce equivalent topologies. 
    In particular, by~\ref{cond:M:complete} 
    the weighted manifold $(\posfold,\wriemannmetric*)$ is complete as a metric space.
\end{lemma}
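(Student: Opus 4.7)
The plan is to reduce the claim to a local bi-Lipschitz comparison of the two distance functions, from which topological equivalence is automatic, and then to address the metric-completeness assertion separately. Since $\rho_{\posfold}$ is loc-Lipschitzian and strictly positive, it is in particular continuous, so for every $p\in\posfold$ there is a relatively compact neighbourhood $U$ of $p$ together with constants $0<c_U\leq C_U$ satisfying $c_U\leq\rho_{\posfold}\vert_{\overline{U}}\leq C_U$. Exploiting that the weighted metric is the conformal rescaling $\wriemannmetric*=\rho_{\posfold}^{2}\,\riemannmetric*$, one gets for every piecewise smooth curve $\gamma$ whose image lies in $U$ the two-sided arc-length estimate
\begin{displaymath}
    c_U\, L_{\riemannmetric*}(\gamma)
    \,\leq\,
    L_{\wriemannmetric*}(\gamma)
    \,=\,
    \int \rho_{\posfold}(\gamma(t))\,\abs{\gamma'(t)}{\riemannmetric*}\,\diffd t
    \,\leq\,
    C_U\, L_{\riemannmetric*}(\gamma).
\end{displaymath}

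I would then shrink $U$ so that distance-minimising $\riemannmetric*$-geodesics between points sufficiently close to $p$ remain inside $U$; this is standard via the exponential map on a normal neighbourhood, which exists by~\ref{cond:M:general} together with~\ref{cond:M:complete}. The arc-length estimate then transfers to the intrinsic distances, giving the local bi-Lipschitz inequality
$c_U\, d_{\riemannmetric*}(x,y)\leq d_{\wriemannmetric*}(x,y)\leq C_U\, d_{\riemannmetric*}(x,y)$
for all $x,y$ in a (possibly smaller) neighbourhood of $p$. Since $p$ is arbitrary, the identity map $(\posfold,d_{\riemannmetric*})\to(\posfold,d_{\wriemannmetric*})$ is a local homeomorphism; as both distance topologies agree on charts with the underlying manifold topology, they coincide globally.

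The main obstacle is the \emph{in particular} part: topological equivalence of metrics does not in general transfer metric completeness. My plan for this is to dichotomise: take a $d_{\wriemannmetric*}$-Cauchy sequence $(x_n)$ and either confine it to a $d_{\riemannmetric*}$-closed ball — which by the Hopf-Rinow theorem applied under~\ref{cond:M:complete} is compact, so that extraction of a subsequence converging in the common topology together with the Cauchy property upgrades subsequential to full $d_{\wriemannmetric*}$-convergence — or show that the `escape' alternative is ruled out. The hard part is precisely this confinement step, since under a strictly positive loc-Lipschitz $\rho_{\posfold}$ decaying along a divergent $\riemannmetric*$-geodesic ray one can construct $d_{\wriemannmetric*}$-Cauchy sequences that run off to the ideal boundary of $(\posfold,\riemannmetric*)$. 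For the exponential weights $\rho_{\posfold}=\exp(-\Phi)$ arising from the potential conditions~\ref{cond:named:Langevin:P} one therefore has to invoke the boundedness of $\Phi$ from below together with the geometric hypotheses on $(\posfold,\riemannmetric*)$ to exclude this escape alternative, and I would present that deduction as the crux of the lemma.
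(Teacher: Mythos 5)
The first half of your argument (two-sided comparison of arc lengths on a relatively compact neighbourhood, transfer to the intrinsic distances after confining minimisers, hence local bi-Lipschitz equivalence and equality of the topologies) is sound and amounts to a more carefully localised version of the inequality the paper itself uses; note only that for the lower bound $c_U\,d_{\riemannmetric*}\leq d_{\wriemannmetric*}$ you must confine curves that are near-minimising for $d_{\wriemannmetric*}$, not for $d_{\riemannmetric*}$, which is done by observing that any curve leaving a small $\riemannmetric*$-ball already has $\wriemannmetric*$-length bounded below by $c_U$ times its radius.

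The second half is where you have correctly located the crux, and the gap you describe is real --- in fact it is present in the paper's own proof. There the confinement step is dispatched with the assertion that, since \posfold is finite dimensional, the $d_{\wriemannmetric*}$-Cauchy sequence ``is contained in a compactum''; but a $d_{\wriemannmetric*}$-Cauchy sequence is only $d_{\wriemannmetric*}$-bounded, and relative compactness of $d_{\wriemannmetric*}$-bounded sets is, via Hopf-Rinow, equivalent to the very completeness being proved. Your escape alternative genuinely occurs under the stated hypotheses: on $\posfold=\Rnum^2$ with the Euclidean metric and $\rho_{\posfold}(x)=\exp(-\lvert x\rvert^2)$ --- strictly positive, smooth, and satisfying all of~\ref{cond:named:Langevin:P} --- the ray $t\mapsto(t,0)$ has finite $\wriemannmetric*$-length, so $x_n\vdef(n,0)$ is $d_{\wriemannmetric*}$-Cauchy without converging, and $(\posfold,\wriemannmetric*)$ is incomplete. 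Hence the ``in particular'' clause of the lemma is false as stated, and no proof along either route can close the gap without an extra hypothesis. Your proposed repair also points in the wrong direction: boundedness of $\Phi$ from below bounds $\rho_{\posfold}=\exp(-\Phi)$ from \emph{above}, which controls only $d_{\wriemannmetric*}\leq C\,d_{\riemannmetric*}$, the harmless inequality. Completeness of the weighted manifold requires a global positive lower bound on $\rho_{\posfold}$, i.e.\ $\Phi$ bounded from above, which is incompatible with $\exp(-\Phi)\,\Leb{\riemannmetric*}$ being a probability measure whenever $\Leb{\riemannmetric*}(\posfold)=\infty$. Only the topological-equivalence half of the lemma survives in the stated generality.
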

\begin{proof}
    Let $(x_n)_{n\in\Nnum}$ be a $d_{\wriemannmetric*}$\=/Cauchy sequence. 
    Since~\posfold is finite dimensional, 
    this sequence is contained in a compactum~$K\subseteq\posfold$. 
    On~$K$ the weight function is continuous, 
    thus it attains minimum and maximum. 
    Let $\varepsilon\in(0,\infty)$. 
    Then, we have for some $N_\varepsilon\in\Nnum$ and all $n_1,n_2\geq N_\varepsilon$ that 
    \begin{displaymath}
        \frac\varepsilon2
        >
        d_{\wriemannmetric*}(x_{n_1},x_{n_2})
        \geq
        \inf_{y\in K} \rho_{\posfold}(y)^2
        \pdot 
        d_{\riemannmetric*}(x_{n_1},x_{n_2})
    .
    \end{displaymath}
    Hence, $(x_n)_{n\in\Nnum}$ is a $d_{\riemannmetric*}$\=/Cauchy sequence 
    and by~\ref{cond:M:complete} it converges to~$x\in\posfold$.
    This $d_{\riemannmetric*}$\=/limit also is the $d_{\wriemannmetric*}$\=/limit:
    \begin{displaymath}
        d_{\wriemannmetric*}(x_{n_1},x)
        \leq
        d_{\wriemannmetric*}(x_{n_1},x_{n_2})
        +
        \norm%
            {\rho_{\posfold}^2}%
            {\bigL\infty{K}{\Leb{\riemannmetric*}}}
        \pdot 
        d_{\riemannmetric*}(x_{n_2},x)
        < 
        2 \pdot \frac\varepsilon2
        =
        \varepsilon
    \end{displaymath}
    for $n_2\geq N_\varepsilon$ large enough.

    If we start with a $d_{\riemannmetric*}$\=/Cauchy sequence, 
    then it is a $d_{\wriemannmetric*}$\=/Cauchy sequence
    by a similar estimate.
    If the $d_{\wriemannmetric*}$\=/limit exists, 
    it's easily verified that it also is the $d_{\riemannmetric*}$\=/limit.
\end{proof}

\end{appendix}

\clearpage

\bibliography{%
    lit-DiffGeom,%
    lit-Fibre_Lay-Down,%
    lit-Mechanics,%
    lit-StochAna,%
    lit-FuAna,%
    lit-Misc%
    }

\newcommand{\etalchar}[1]{$^{#1}$}
\providecommand{\bysame}{\leavevmode\hbox to3em{\hrulefill}\thinspace}
\providecommand{\MR}{\relax\ifhmode\unskip\space\fi MR }
\providecommand{\MRhref}[2]{%
  \href{http://www.ams.org/mathscinet-getitem?mr=#1}{#2}
}
\providecommand{\href}[2]{#2}
\begin{thebibliography}{{J{\o}}78}

\bibitem[APS60]{AmbrosePalaisSinger}
W.~{Ambrose}, R.~S. {Palais}, and I.~M. {Singer}, \emph{{Sprays.}}, {Anais
  Acad. Brasil. Ci.} \textbf{32} (1960), 163--178 (English).

\bibitem[BCD11]{BucataruConstantinescuDahl}
Ioan {Bucataru}, Oana {Constantinescu}, and Matias~F. {Dahl}, \emph{A geometric
  setting for systems of ordinary differential equations}, International
  Journal of Geometric Methods in Modern Physics \textbf{08} (2011), no.~06,
  1291--1327.

\bibitem[{Bec}89]{Beckner89}
William {Beckner}, \emph{{A generalized Poincar\'e inequality for Gaussian
  measures.}}, {Proc. Am. Math. Soc.} \textbf{105} (1989), no.~2, 397--400
  (English).

\bibitem[BGV92]{Heat-kernels-BGV}
Nicole~{Berline} and Ezra {Getzler} and Mich\'ele {Vergne}, \emph{{Heat
  kernels and Dirac operators}}, Springer, 1992.

\bibitem[{Bis}15]{Bismut}
Jean-Michel {Bismut}, \emph{{Hypoelliptic Laplacian and probability.}}, {J.
  Math. Soc. Japan} \textbf{67} (2015), no.~4, 1317--1357 (English).

\bibitem[BM58]{BottMilnor}
Raoul {Bott} and John~W. {Milnor}, \emph{{On the parallelizability of the
  spheres.}}, {Bull. Am. Math. Soc.} \textbf{64} (1958), 87--89 (English).

\bibitem[BO09]{Bou-RabeeOwhadi}
Nawaf {Bou-Rabee} and Houman {Owhadi}, \emph{{Stochastic variational
  integrators}}, {IMA JNA} \textbf{29} (2009), no.~2, 421--443.

\bibitem[{Buc}06]{metric-nonlin-connections-Bucataru}
Ioan {Bucataru}, \emph{{Metric nonlinear connections.}}, {Differential Geometry
  and its Applications} \textbf{25} (2006), no.~3, 335--343 (English).

\bibitem[{Buc}12]{habil-Bucataru}
\bysame, \emph{{Geometric structures for differential equations fields.}},
  2012, habilitation thesis elaborated at Universitatea "Alexandru Ioan Cuza"
  din Iai\c{s}i.

\bibitem[BV01]{BoeckxVanhecke01}
E.~Boeckx and L.~Vanhecke, \emph{{Unit Tangent Sphere Bundles with Constant
  Scalar Curvature.}}, Czechoslovak Mathematical Journal \textbf{51} (2001),
  no.~3, 523--544.

\bibitem[BVA97]{BoeckxVanhecke97}
E.~Boeckx, L.~Vanhecke, and G.~Auchmuty, \emph{{Characteristic reflections on
  unit tangent sphere bundles.}}, Houston J. Math. \textbf{23} (1997), no.~3,
  427--448.

\bibitem[Car92]{doCarmo}
Manfredo P.~do Carmo, \emph{Riemannian geometry}, Birkhäuser, Boston, Mass.
  u.a., 1992 (English).

\bibitem[CG08]{ConradGrothausH1infty}
Florian {Conrad} and Martin {Grothaus}, \emph{{Construction of n-particle
  Langevin dynamics for \(H^{1,\infty}\)-potentials via generalized Dirichlet
  forms.}}, {Potential Anal.} \textbf{28} (2008), no.~3, 261--282 (English).

\bibitem[CG10]{ConradGrothausNparticle}
\bysame, \emph{{Construction, ergodicity and rate of convergence of
  \(N\)-particle Langevin dynamics with singular potentials.}}, {J. Evol. Equ.}
  \textbf{10} (2010), no.~3, 623--662 (English).

\bibitem[CKW12]{CoffeyKalmykovWaldron}
William~T. {Coffey}, Yuri~P. {Kalmykov}, and John~T. {Waldron}, \emph{{The
  Langevin equation. With applications to stochastic problems in physics,
  chemistry and electrical engineering. 3rd ed.}}, 3rd ed. ed., vol.~27,
  Hackensack, NJ: World Scientific, 2012 (English).

\bibitem[{Con}90]{Conway}
John~B. {Conway}, \emph{{A course in functional analysis. 2nd ed.}}, 2nd ed.
  ed., vol.~96, New York etc.: Springer-Verlag, 1990 (English).

\bibitem[{Dav}80]{Davies}
Edward~Brian {Davies}, \emph{{One-parameter semigroups.}}, {London Mathematical
  Society, Monographs, No. 15. London etc.: Academic Press, A Subsidiary of
  Harcourt Brace Jovanovich, Publishers. VIII}, 1980, p.~230.

\bibitem[DMS09]{DMS09}
Jean {Dolbeault}, Cl\'{e}ment {Mouhot}, and Christian {Schmeiser},
  \emph{{Hypocoercivity for kinetic equations with linear relaxation terms}},
  {Comptes Rendus Mathematique} \textbf{347} (2009), no.~9, 511 -- 516.

\bibitem[DMS15]{DMS15}
\bysame, \emph{{Hypocoercivity for linear kinetic equations conserving mass}},
  Trans. Amer. Math. Soc. \textbf{367} (2015), no.~6, 3807--3828. \MR{3324910}

\bibitem[{Dom}62]{Dombrowski}
Peter {Dombrowski}, \emph{{On the geometry of the tangent bundle}}, {J. Reine
  Angew. Math.} \textbf{210} (1962), 73--88 (English).

\bibitem[DX13]{SphericalHarmAna}
Feng {Dai} and Yuan {Xu}, \emph{{Approximation theory and harmonic analysis on
  spheres and balls.}}, New York, NY: Springer, 2013 (English).

\bibitem[{Fol}99]{Analysis-Folland}
Gerald~B. {Folland}, \emph{{Real Analysis: Modern Techniques and Their
  Applications}}, Wiley, 1999 (English).

\bibitem[{For}12]{Forster3}
Otto {Forster}, \emph{{Analysis 3: Maß- und Integrationstheorie,
  Integralsätze im IRn und Anwendungen}}, Vieweg+Teubner Verlag, 2012
  (German).

\bibitem[FyE01]{Fabrega}
Johannes~Ulrich {F\'abrega}~y Escatllar, \emph{{Die Sasaki-Metrik auf dem
  Tangential- und dem Sphärenbündel}}, diploma thesis, University of Cologne,
  2001.

\bibitem[{Gaf}51]{Gaffney-harmonic-op}
Matthew~P. {Gaffney}, \emph{{The harmonic operator for exterior differential
  forms.}}, {Proc. Natl. Acad. Sci. USA} \textbf{37} (1951), 48--50 (English).

\bibitem[{Gaf}54a]{Gaffney-special-Stokes}
\bysame, \emph{{A special Stoke's theorem for complete Riemannian manifolds.}},
  {Ann. Math. (2)} \textbf{60} (1954), 140--145 (English).

\bibitem[{Gaf}54b]{Gaffney-heat-eq}
\bysame, \emph{{The heat equation method of Milgram and Rosenbloom for open
  Riemannian manifolds.}}, {Ann. Math. (2)} \textbf{60} (1954), 458--466
  (English).

\bibitem[{Gan}64]{Gangolli}
R.~{Gangolli}, \emph{{On the construction of certain diffusions on a
  differentiable manifold.}}, {Z. Wahrscheinlichkeitstheor. Verw. Geb.}
  \textbf{2} (1964), 406--419 (English).

\bibitem[GK02]{GudmundssonKappos}
Sigmundur {Gudmundsson} and Elias {Kappos}, \emph{{On the geometry of tangent
  bundles.}}, {Expo. Math.} \textbf{20} (2002), no.~1, 1--41 (English).

\bibitem[GKMW07]{GKMW07}
T.~{G\"otz}, A.~{Klar}, N.~{Marheineke}, and R.~{Wegener}, \emph{{A stochastic
  model and associated Fokker-Planck equation for the fiber lay-down process in
  nonwoven production processes.}}, {SIAM J. Appl. Math.} \textbf{67} (2007),
  no.~6, 1704--1717 (English).

\bibitem[GKOS01]{GKOS}
Michael {Grosser}, Michael {Kunzinger}, Michael {Oberguggenberger}, and Roland
  {Steinbauer}, \emph{{Geometric theory of generalized functions with
  applications to general relativity}}, Mathematics and its Applications, vol.
  537, Kluwer Academic Publishers, Dordrecht, 2001. \MR{1883263}

\bibitem[Gli97]{Gliklikh}
Yuri Gliklikh, \emph{{Global Analysis in Mathematical Physics. Geometric and
  Stochastic Methods}}, Springer-Verlag New York, 1997 (English).

\bibitem[GMS12]{unpublishedGS}
Martin {Grothaus}, Johannes {Maringer}, and Patrik {Stilgenbauer},
  \emph{{Geometry, mixing properties and hypocoercivity of a degenerate
  diffusion arising in technical textile industry}}, preprint
  at~\url{arXiv:1203.4502v1} [math.PR], 2012.

\bibitem[{Goe}59]{Goetz}
Abraham {Goetz}, \emph{{On measures in fibre bundles}}, Colloq. Math.
  \textbf{7} (1959), 11--18. \MR{0151576}

\bibitem[GS13]{GS13}
Martin {Grothaus} and Patrik {Stilgenbauer}, \emph{Geometric langevin equations
  on submanifolds and applications to the stochastic melt-spinning process of
  nonwovens and biology}, Stochastics and Dynamics \textbf{13} (2013), no.~04,
  1350001.

\bibitem[GS14]{HypocoercJFA}
M.~Grothaus and P.~Stilgenbauer, \emph{{Hypocoercivity for Kolmogorov backward
  evolution equations and applications}}, JFA \textbf{267} (2014), no.~10,
  3515--3556.

\bibitem[GS16]{HypocoercMFAT}
Martin {Grothaus} and Patrik {Stilgenbauer}, \emph{{Hilbert space
  hypocoercivity for the Langevin dynamics revisited}}, Methods Funct. Anal.
  Topology \textbf{22} (2016), no.~2, 152--168 (English). \MR{3522857}

\bibitem[Heb99]{Hebey}
Emmanuel Hebey, \emph{{Nonlinear Analysis on Manifolds. Sobolev Spaces and
  Inequalities}}, Courant Lecture Notes in Mathematics, vol.~5, New York
  University, Courant Institute of Mathematical Sciences, New York; American
  Mathematical Society, Providence, RI, 1999. \MR{1688256}

\bibitem[HM19]{HerzogMattingly}
David~P. {Herzog} and Jonathan~C. {Mattingly}, \emph{{Ergodicity and Lyapunov
  functions for Langevin dynamics with singular potentials.}}, {Commun. Pure
  Appl. Math.} \textbf{72} (2019), no.~10, 2231--2255 (English).

\bibitem[HN05]{HelfferNier}
Bernard {Helffer} and Francis {Nier}, \emph{{Hypoelliptic estimates and
  spectral theory for Fokker-Planck operators and Witten Laplacians.}}, vol.
  1862, Berlin: Springer, 2005 (English).

\bibitem[{Hor}66]{Horvath}
John {Horvath}, \emph{{Topological vector spaces and distributions. Vol. I.}},
  {Addison-Wesley Series in Mathematics.) Reading, Mass.-Palo Alto-London- Don
  Mills, Ontario: Addison-Wesley Publishing Company. XII, 449 p. (1966).},
  1966.

\bibitem[{Hsu}02]{Hsu}
Elton~P. {Hsu}, \emph{{Stochastic analysis on manifolds.}}, vol.~38,
  Providence, RI: American Mathematical Society (AMS), 2002 (English).

\bibitem[ILP15]{ILP-JFA}
Alberto {Ibort Latre}, Fernando {Lled\'o Macau}, and Juan~Manuel {Pérez
  Pardo}, \emph{{Self-adjoint extensions of the Laplace–Beltrami operator and
  unitaries at the boundary.}}, JFA \textbf{268} (2015), no.~3, 634 -- 670.

\bibitem[jLM89]{LawsonMichelsohn}
H.~Blaine jun. {Lawson} and Marie-Louise {Michelsohn}, \emph{{Spin geometry.}},
  Princeton, NJ: Princeton University Press, 1989 (English).

\bibitem[jM69]{McKean}
H.~P. jun. {McKean}, \emph{{Stochastic integrals.}}, {New York-London: Academic
  Press XIII, 140 p. (1969).}, 1969.

\bibitem[{J{\o}}78]{Joergensen}
E.~{J{\o}rgensen}, \emph{{Construction of the Brownian motion and the
  Ornstein-Uhlenbeck process in a Riemannian manifold on basis of the
  Gangolli-McKean injection scheme.}}, {Z. Wahrscheinlichkeitstheor. Verw.
  Geb.} \textbf{44} (1978), 71--87 (English).

\bibitem[{Ker}58]{Kervaire}
Michel~A. {Kervaire}, \emph{{Non-parallelizability of the $n$-sphere for $n >
  7$.}}, {Proc. Natl. Acad. Sci. USA} \textbf{44} (1958), 280--283 (English).

\bibitem[KMS93]{KolvarMichorSlovak}
Ivan {Kol\'{a}\v{r}}, Peter~W. {Michor}, and Jan {Slov\'{a}k}, \emph{{Natural
  Operations in differential geometry}}, Springer-Verlag Berlin Heidelberg,
  1993 (English).

\bibitem[KMW12]{KlarMaringerWegener}
Axel {Klar}, Johannes {Maringer}, and Raimund {Wegener}, \emph{{A 3D model for
  fiber lay-down in nonwoven production processes.}}, {Math. Models Methods
  Appl. Sci.} \textbf{22} (2012), no.~9, 1250020, 18 (English).

\bibitem[{Kol}00]{Kolokoltsov}
V.~N. {Kolokoltsov}, \emph{{Semiclassical analysis for diffusions and
  stochastic processes.}}, vol. 1724, Berlin: Springer, 2000 (English).

\bibitem[{Lan}95]{SergeLang}
Serge {Lang}, \emph{{Differential and Riemannian manifolds. 3rd ed.; Repr. of
  the orig. 1972.}}, 3rd ed.; repr. of the orig. 1972 ed., vol. 160,
  Heidelberg: Springer-Verlag, 1995 (English).

\bibitem[{Lee}12]{Lee}
John {Lee}, \emph{{Introduction to Smooth Manifolds}}, Springer, 2012
  (English).

\bibitem[LMS{\etalchar{+}}17]{Lindner-Holger17}
Felix {Lindner}, Nicole {Marheineke}, Holger {Stroot}, Alexander {Vibe}, and
  Raimund {Wegener}, \emph{{Stochastic dynamics for inextensible fibers in a
  spatially semi-discrete setting.}}, {Stoch. Dyn.} \textbf{17} (2017), no.~2,
  29 (English).

\bibitem[MR99]{MarsdenRatiu}
Jerrold {Marsden} and Tudor {Ratiu}, \emph{{Introduction to Mechanics and
  Symmetry. A Basic Exposition of Classical Mechanical Systems}},
  Springer-Verlag New York, 1999 (English).

\bibitem[MW07]{MW06}
Nicole {Marheineke} and Raimund {Wegener}, \emph{{Fiber dynamics in turbulent
  flows: specific Taylor drag.}}, {SIAM J. Appl. Math.} \textbf{68} (2007),
  no.~1, 1--23 (English).

\bibitem[{Nic}96]{Nicolaescu}
Liviu~I. {Nicolaescu}, \emph{{Lectures On The Geometry Of Manifolds}}, World
  Scientific Publishing, 1996 (English).

\bibitem[{Pé}13]{PhD-Perez-Pardo}
Juan~Manuel {Pérez Pardo}, \emph{{On the Theory of Self-adjoint Extensions of
  the Laplace-Beltrami Operator, Quadratic Forms and Symmetry}}, Ph.D. thesis,
  Universidad Carlos III de Madrid, 2013.

\bibitem[{Ros}13]{Rosenberg}
Jonathan {Rosenberg}, \emph{{Levi-Civita's Theorem for Noncommutative Tori}},
  SIGMA \textbf{9} (2013), 9 (English).

\bibitem[RS80]{ReedSimonI}
Michael {Reed} and Barry {Simon}, \emph{{Methods of modern mathematical
  physics. I: Functional analysis. Rev. and enl. ed.}}, {New York etc.:
  Academic Press, A Subsidiary of Harcourt Brace Jovanovich, Publishers, XV,
  400 p. \$ 24.00 (1980).}, 1980.

\bibitem[{Sak}96]{Sakai}
Takashi {Sakai}, \emph{{Riemannian geometry. Transl. from the Japanese by
  Takashi Sakai.}}, vol. 149, Providence, RI: AMS, American Mathematical
  Society, 1996 (English).

\bibitem[{Sas}58]{Sasaki58}
Shigeo {Sasaki}, \emph{{On the differential geometry of tangent bundles of
  Riemannian manifolds.}}, {Tohoku Math. J. (2)} \textbf{10} (1958), 338--354
  (English).

\bibitem[{Sol}95]{Soloveitchik}
M.~R. {Soloveitchik}, \emph{{Fokker-Planck equation on a manifold. Effective
  diffusion and spectrum.}}, {Potential Anal.} \textbf{4} (1995), no.~6,
  571--593 (English).

\bibitem[{Sta}99]{Stannat}
Wilhelm {Stannat}, \emph{{The theory of generalized Dirichlet forms and its
  applications in analysis and stochastics.}}, vol. 678, Providence, RI:
  American Mathematical Society (AMS), 1999 (English).

\bibitem[{Sti}14]{PhD-Stilgenbauer}
Patrik {Stilgenbauer}, \emph{{The Stochastic Analysis of Fiber Lay-Down
  Models}}, Ph.D. thesis, Technische Universität Kaiserslautern (TUK), 2014.

\bibitem[TO62]{TachibanaOkumura}
Shunichi {Tachibana} and M.~{Okumura}, \emph{{On the almost-complex structure
  of tangent bundles of Riemannian spaces.}}, {Tohoku Math. J. (2)} \textbf{14}
  (1962), 156--161 (English).

\bibitem[{Tru}00]{Trutnau00}
Gerald {Trutnau}, \emph{{Stochastic calculus of generalized Dirichlet forms and
  applications to stochastic differential equations in infinite dimensions.}},
  {Osaka J. Math.} \textbf{37} (2000), no.~2, 315--343 (English).

\bibitem[{Tru}03]{Trutnau03}
\bysame, \emph{{On a class of non-symmetric diffusions containing fully
  non-symmetric distorted Brownian motions.}}, {Forum Math.} \textbf{15}
  (2003), no.~3, 409--437 (English).

\bibitem[{Vil}07]{Villani07}
C\'edric {Villani}, \emph{{Hypocoercive diffusion operators.}}, {Boll. Unione
  Mat. Ital., Sez. B, Artic. Ric. Mat. (8)} \textbf{10} (2007), no.~2, 257--275
  (English).

\bibitem[{Vil}09]{Villani09}
\bysame, \emph{{Hypocoercivity.}}, {Mem. Am. Math. Soc.} \textbf{950} (2009),
  1--141 (English).

\bibitem[WI81]{IkedaWatanabe}
Shinzo {Watanabe} and Nobuyuki {Ikeda}, \emph{{Stochastic Differential
  Equations and Diffusion Processes}}, North Holland, 1981 (English).

\bibitem[{Wol}73]{Wolf73}
Joseph~A. {Wolf}, \emph{{Essential self adjointness for the Dirac operator and
  its square.}}, {Indiana Univ. Math. J.} \textbf{22} (1973), 611--640
  (English).

\end{thebibliography}
\bibliographystyle{amsalpha}

\end{document}